\newtheorem{tw}{Theorem}[section]
\newtheorem{lm}[tw]{Lemma}
\newtheorem{wn}[tw]{Corollary}
\newtheorem{stw}[tw]{Proposition}
\newenvironment{dow}{\it Proof.\rm}{\hfill $\Box$}
\theoremstyle{definition}
\newtheorem{df}[tw]{Definition}
\newtheorem{uw}[tw]{Remark}
\newtheorem{prz}[tw]{Example}
\newcommand{\BR}{{\mathbb R}}
\newcommand{\BX}{{\mathbb X}}
\newcommand{\FF}{{\mathcal{F}}}
\newcommand{\GG}{{\mathcal{G}}}
\newcommand{\BB}{{\mathcal{B}}}
\newcommand{\MM}{{\mathcal{M}}}
\newcommand{\RR}{{\mathcal{R}}}
\newcommand{\EE}{{\mathcal{E}}}
\newcommand{\TT}{{\mathcal{T}}}
\numberwithin{equation}{section}
\begin{document}
\title{Trace operator and the Dirichlet problem for elliptic equations on arbitrary bounded open sets\footnote{This work was supported by Polish Science Centre (grant no. 2016/23/B/ST1/01543).}}
\author{Tomasz Klimsiak}
\date{}
\maketitle

\begin{abstract}
We consider the Dirichlet problem on general, possibly nonsmooth bounded domain, for elliptic linear equation with  uniformly elliptic divergence form operator. We investigate carefully the relationship  between weak, soft and the Perron-Wiener-Brelot solutions of the problem. To this end, we extend the usual notion of the trace operator to Sobolev space  $H^1(D)$ with $D$ being an arbitrary  bounded open subset of $\BR^d$. In the second part of the paper, we prove some existence results for the Dirichlet problem  for semilinear equations with measure data on the right-hand side and $L^1$-data on the Martin boundary of $D$.
\end{abstract}

\noindent {\small\bf Keywords:} Trace operator, elliptic equation, Dirichlet problem, Martin boundary.

\noindent {\small\bf Mathematics Subject Classification (2010):}
35J25, 35J61, 60J45.

\footnotetext{T. Klimsiak: Institute of Mathematics, Polish
Academy of Sciences, \'Sniadeckich 8, 00-956 Warszawa, Poland, and
Faculty of Mathematics and Computer Science, Nicolaus Copernicus
University, Chopina 12/18, 87-100 Toru\'n, Poland. {\em E-mail address:} tomas@mat.umk.pl}


\section{Introduction}
\label{sec1}

Let $D$ be a  bounded open subset of $\BR^d$, $d\ge 2$. The main purpose of the first part of the present paper is to investigate the relationship between solutions of the weak Dirichlet problem: for given $\psi\in
H^1(D)$ find $u\in H^1(D)$ such that
\begin{equation}
\label{eq1.2} -Au=0\quad \mbox{in}\quad D,\quad u-\psi\in H^1_0(D)
\end{equation}
(problem wDP$(A,D,\psi)$ for short) and solutions of the
Dirichlet problem, which formally can be formulated as follows:
for given measurable  $\psi:\partial
D\rightarrow\BR$   find
$u\in H^1_{loc}(D)$ such that
\begin{equation}
\label{eq1.3} -Au=0\quad \mbox{in}\quad D,\quad
u=\psi\quad\mbox{on}\quad\partial D
\end{equation}
(problem DP$(A,\partial D,\psi)$ for short). We stress that,
contrary to (\ref{eq1.2}), in (\ref{eq1.3}) the boundary data
$\psi$  are given only on $\partial D$. In the paper we consider weak, soft and Perron-Wiener-Brelot (PWB-solutions for short) solutions to (\ref{eq1.3}). In the case where
$D$ is irregular, careful analysis of  the relationship between these notions of solutions of (\ref{eq1.3}) and solutions of (\ref{eq1.2}) requires the study of more general then (\ref{eq1.3}) Dirichlet problem
\begin{equation}
\label{eq1.19}
-Au=0\quad \mbox{in}\quad D,\quad
u=\psi\quad\mbox{on}\quad\partial_M D
\end{equation}
where $\psi:\partial_MD\rightarrow\BR$ and $\partial_M D$ is the Martin boundary of $D$. Therefore, in fact, in the paper we also consider problem (\ref{eq1.19}).

In the second part of the paper we apply the results of the first part to study the Dirichlet problem for semilinear equations with general measure data on the right-hand side and $L^1$-boundary data.

In (\ref{eq1.2}) and (\ref{eq1.3}), $A$ is a divergence form
operator
\begin{equation}
\label{eq1.1} Au=\sum_{i,j=1}^d (a_{ij}u_{x_i})_{x_j}
\end{equation}
and the equation $-Au=0$ holds in the weak sense, i.e.
\begin{equation} \label{eq1.2.0}
\EE(u,v):=(a\nabla u,\nabla v)_{L^2(D;m)}=0,\quad v\in C^1_c(D).
\end{equation}
In the whole paper we assume that
$a=(a_{ij})_{i,j=1,\dots,d}:\BR^d\rightarrow\BR^d\otimes\BR^d$ is
a bounded symmetric matrix-valued measurable function such that
for some $\lambda>0$,
\begin{equation}
\label{eq1.12}
\sum^d_{i,j=1}a_{ij}(x)\xi_i\xi_j\ge\lambda|\xi|^2,\quad
x\in\BR^d,\,\xi=(\xi_2,\dots,\xi_d)\in\BR^d.
\end{equation}

Our problems concerning the linear  Dirichlet problem are
classical. The relationship between solutions of (\ref{eq1.2}) and
(\ref{eq1.3}) is quite well understood in the case where $D$ has regular boundary $\partial D$ and $\psi\in C(\partial D)$. In the present paper we concentrate on the case where $D$ is an arbitrary open set. The second goal is to extend the existing theory to possibly discontinuous boundary
data. To achieve our goals, we extend the usual notion of the trace operator to the space $H^1(D)$ with general bounded open subset $D$ of $\BR^d$. In fact, in order to study semilinear equations with measure data, we extend the trace operator to even wider space $\TT$ defined  later on.

To describe the content of the paper, we must first explain what we
mean by a solution to (\ref{eq1.3}) in case $\partial D$ is
irregular. We start with recalling some classical results for
$\psi\in C(\partial D)$. Let $m$ denote the Lebesgue measure on
$\BR^d$. It is well know that for each $\psi\in H^1(D)$ there exists a unique solution $u\in H^1(D)$ of the weak Dirichlet problem  (\ref{eq1.2}) and that $u$ has an $m$-version belonging to $C(D)$ (see
\cite{Nash}). Moreover, if $\psi_1-\psi_2\in H^1_0(D)$, then
the solution of wDP$(A,D,\psi_1)$ is equal to the solution of  wDP$(A,D,\psi_2)$. Therefore, we can define the  positive linear  operator
\[
B:H^1(D)/H^1_0(D)\rightarrow H^1(D)
\]
which assigns to each $\psi\in H^1(D)/H^1_0(D)$ the unique solution of
(\ref{eq1.2}). By \cite{LSW}, $B$ is continuous, and moreover, for
$\psi\in (H^1(D)/H^1_0(D))\cap C(\overline{D})$, we have
\begin{equation}
\label{eq1.2.1} \|B\psi\|\le C(\lambda,D)\max_{\partial D}|\psi|,
\end{equation}
where for $u\in H^1_{loc}(D)$, we write
\[
\|u\|= \sup_{D'\subset\subset D} \mbox{dist}(D',D)\|\nabla
u\|_{L^2(D',m)}+\sup_{D}|u|.
\]
Let $\mathcal{X}=\{u\in H^1_{loc}(D)\cap C(D):\|u\|<\infty\}$.  By
(\ref{eq1.2.1}), we can extend  $B$ to a positive linear continuous
operator
\begin{equation}
\label{eq1.15} B: C(\partial D)\rightarrow \mathcal{X}.
\end{equation}
For given $\psi\in C(\partial D)$, the function
$B\psi\in\mathcal{X}$ is called a weak solution of (\ref{eq1.3}). Of course, the solution $u:=B\psi$ to (\ref{eq1.3}) satisfies the second condition in (\ref{eq1.3}) only formally because in general, $u$ is not continuous up to the boundary (unless $D$ is regular).

To  encompass broader class of boundary data, in the
paper we propose a definition of a solution to (\ref{eq1.3}) based
on the notion of the  harmonic measure. Solutions in the sense of
this definition will be called soft solutions. To fix notation,
recall that for a given $x\in D$ the harmonic measure associated
with $x$, $D$ and the operator $A$ is the unique Borel measure on $\partial D$ such that
\begin{equation}
\label{eq.bb}
B\psi(x)=\int_{\partial D}\psi(y)\,\omega^A_{x,D}(dy), \quad
\psi\in C(\partial D).
\end{equation}
(the measure $\omega^A_{x,D}(dy)$ exists and is unique because $B$
defined by (\ref{eq1.15}) is positive and continuous). If there is
no ambiguity, we drop $D$ in the notation. Note that, by  Harnack's
inequality (see \cite{Moser}), for any $x,y$ in the same connected
component of $D$, the  measures $\omega^A_{x}$ and $\omega^A_{y}$ are
mutually absolutely continuous and that it may happen that
$\omega^A_{x}$ is completely singular with respect to the surface
Lebesgue measure $\sigma$ on $\partial D$ even if $D$ is smooth
(see \cite{CFK,ModicaMortola}).

Let
$\{G^D_\alpha,\,\alpha>0\}$ denote the resolvent  operator of $A$ on $D$ (with zero Dirichlet condition), and for a
positive $f\in L^2(D;m)$, let $G^Df=\sup_{\alpha>0}G^D_\alpha f$.
Set $\delta= G^D1$  and
\begin{equation}
\label{eq1.16}
\breve H^1_\delta(D)=\{u\in H^1_{loc}\cap L^2(D;m):\|u\|_{\breve H^1_\delta}<\infty\},
\end{equation}
where $\|u\|_{\breve H^1_\delta}=
\|u\|_{L^2(D;m)}+\|\sqrt\delta \nabla u\|_{L^2(D;m)}$.
By a soft solution to (\ref{eq1.3}) with
$\psi\in L^2(\partial D; \omega^A_m)$, where
\begin{equation}
\label{eq1.13}
\omega^A_m(dy)=\int_D\omega^A_x(dy)\,m(dx),
\end{equation}
we mean a unique function
$u\in \breve H^1_\delta(D)\cap C(D)$ such that
\[
\EE(u,v)=0,\quad v\in C^1_c(D),
\]
and for some (and hence every) increasing sequence of open sets
$\{D_n\}\subset\subset D$ with $\bigcup_{n\ge 1} D_n=D$, we have
\begin{equation}
\label{eq1.17}
\int_{\partial D_n} u(y)\omega^A_{x,D_n}(dy)\rightarrow \int_{\partial D}\psi(y)\omega^A_{x,D}(dy)
\end{equation}
for $m$-a.e. $x\in D$. In the paper we show that,  for $\psi\in
C(\partial D)$, soft solutions of (\ref{eq1.3}) are weak solution
of (\ref{eq1.3}).

Our definition of soft solution to (\ref{eq1.3}) resembles the
definition adopted in the literature in the case of regular
domains and regular coefficients $a_{ij}$  (see
\cite{Chabrowski,MarcusVeron}). The difference is that in these
papers the harmonic measures in (\ref{eq1.17}) are replaced by the
surface measure $\sigma$.  In the case where the domain and the
coefficients are regular, such a modification of the definition of
a solution is possible because then the  harmonic measures are
absolutely continuous with respect to the surface measure (see
\cite{Dahlberg,FJK,Fefferman}).

Another way of defining a solution to (\ref{eq1.3}) is  the method of sub and
superharmonic functions
(so called Perron-Wiener-Brelot method, see \cite{BH}). Recall that, for a given measurable $\psi:\partial D\rightarrow\BR$, a function $u$ is called  a PWB-solution of (\ref{eq1.3}) if
\begin{align*}
u&=\sup\{v: v \mbox{ is subharmonic and }
\limsup_{y\rightarrow x}v(y)\le \psi(x),\, x\in\partial D  \}\\
& =\inf\{v:v \mbox{ is superharmonic and }
\liminf_{y\rightarrow x}v(y)\ge \psi(x),\, x\in\partial D\}.
\end{align*}
We prove that, if $\psi\in C(\partial D)$, then PWB-solution of (\ref{eq1.3}) satisfies
\begin{equation}
\label{eq1.4}
u(x)=\int_{\partial D}\psi(y)\,\omega^A_{x}(dy),\quad x\in D.
\end{equation}
Therefore,  if  $\psi\in C(\partial D)$ then  weak and PWB-solutions
of (\ref{eq1.3}) coincide. Our result generalizes the corresponding result from \cite{ArendtDaners} proved in the case where $A=\Delta$ (see also
\cite{Hildebrandt,Simader}). In fact, we show that, if
$\psi\in\BB(\partial D)$ and the right-hand side of (\ref{eq1.4})
is finite for every $x\in D$, then the PWB-solution of
(\ref{eq1.3}) exists and is given by (\ref{eq1.4}). It is worth
mentioning that our  result
(and the corresponding one in \cite{ArendtDaners}) follows easily from a result proved in \cite{BH} and the fact that
\[
\omega^A_{x,D}(dy)=P_x(X_{\tau_D}\in dy),\quad x\in D,
\]
where $\mathbb X=(\{X_t,\, t\ge 0\},\, \{P_x,\,x\in \BR^d\}
,\,\{\FF_t,\, t\ge 0\})$ is a
diffusion process associated with the operator $A$, and
\begin{equation}
\label{eq1.14}
\tau_D=\inf\{t>0:X_t\in \BR^d\setminus D\}.
\end{equation}
However, our proof is much more elementary. In contrast to
\cite{BH}, it is not based on the abstract theory of balayage
spaces.

We now briefly describe our main results on the relation between
solutions of (\ref{eq1.2}) and (\ref{eq1.3}). We first assume that
$D$ is a Lipschitz domain. Let ${\rm Tr}:H^1(D)\rightarrow
L^2(D;\sigma)$ denote the trace operator. In the paper we show
that, if $\psi\in H^1(D)$, then the solution $u$ of (\ref{eq1.2}) has the representation
\begin{equation}
\label{eq1.5}
u(x)=\int_{\partial D}\widetilde{{\rm Tr}(\psi)}(y)\,\omega^A_{x}(dy),
\end{equation}
where $\widetilde{\rm{Tr}(\psi)}$ is a $\sigma$-version of the trace $\rm{Tr}(\psi)$ of $\psi$ (determined  $\omega^A_{m}$-a.e.) defined as
\[
\widetilde{{\rm Tr}(\psi)}:=\bar\psi_{|\partial D},
\]
where $\bar\psi$ is a quasi-continuous version of an extension of $\psi$ to $H^1(\BR^d)$. In different words, if
$u$ is a solution to wDP($A,D,\psi$), then $u$ is a PWB-solution to DP$(A,\partial D, \widetilde{{\rm Tr}(\psi)})$.
It is worth mentioning that a version $\bar\psi$ does not depend on the operator $A$ since by (\ref{eq1.12}),
\[
\lambda\mbox{Cap}_\Delta\le\mbox{Cap}_A\le \|a\|_{\infty}\mbox{Cap}_\Delta.
\]
Observe that in (\ref{eq1.5}), one can not take  any
$\sigma$-version of $\rm{Tr}(\psi)$ because as mentioned before, in
general, $\omega^A_{x,D}$ is not absolutely continuous with
respect to the surface measure $\sigma$. This is one of the main
differences between PWB-solutions and weak solutions. Namely,
contrary to PWB-solutions, weak solutions to the Dirichlet problem
do not depend on the $\sigma$-version of the boundary data.

In the general case, when $D$ is an arbitrary bounded open set, to
establish a  relation between weak Dirichlet problem and Dirichlet
problem, we introduce the trace operator on $H^1(D)$. It is well
known that there exists a positive function
$g_D\in\BB(D\times D)$, called Green function for $D$ (and $A$)  such that
\[
G^Df=\int_Dg_D(\cdot,y)f(y)\,m(dy)\quad m\mbox{-a.e.}
\]
for any positive $f\in L^2(D;m)$ and the functions $g_D(x,\cdot)$ and $g_D(\cdot,y)$ are  excessive
for all $x,y\in D$. Set $\kappa(x,y)=g_D(x,y)\delta^{-1}(y)$ and
define the metric $\varrho$ on $D$ by
\begin{equation}
\label{eq1.6}
\varrho(x,y)=\sum_{n\ge 1}2^{-n}\frac{|\hat{\kappa}f_n(x)-\hat\kappa f_n(y)|}{1+|\hat{\kappa}f_n(x)-\hat\kappa f_n(y)|},
\end{equation}
where $\hat{\kappa}f_n=\int\kappa (x,\cdot)f_n(x)\,m(dx)$  and
$\{f_n\}$ is a dense subset of $C_0(D)$. Let $D^*$ be the
completion of $D$ with respect to the metric $\varrho$, and let
$\partial_M D:=D^*-D$ ($\partial_M D$ is the so called Martin
boundary of $D$). We define the harmonic measure on the Martin
boundary by
\[
h^A_{x}(dy)=P_x(X_{\tau_D-}\in dy),\quad x\in D,
\]
where $X_{\tau_D-}=\lim_{t\nearrow \tau_D}X_t$ and the  limit is
taken with respect to the metric $\varrho$. Similarly to
(\ref{eq1.13}), we put $h^A_{m}(dy)=\int h^A_{x}(dy)\,m(dx)$. We
prove that there exists a trace operator
\[
\gamma_A:H^1(D)\rightarrow L^2(\partial_M D; h^A_m),
\]
i.e. a  continuous linear operator such that $\gamma_A(\psi)=\psi_{|\partial D}$ for $\psi\in C(\overline D)\cap H^1(D)$.
The last equality  is meaningful since we show that there exists an imbedding
\begin{equation}
\label{eq1.7}
i_M: L^2(\partial D;\omega^A_{m})\hookrightarrow L^2(\partial_M D; h^A_m).
\end{equation}
More precisely, for every  $\psi\in L^2(\partial D;\omega^A_{m})$ there exists a unique
function $i_M(\psi)\in L^2(\partial_M D; h^A_m)$ such that
\[
\int_{\partial D} \psi(y)\, \omega^A_x(dy)=\int_{\partial_M D}i_M(\psi)(y)\,h^A_x(dy),\quad x\in D.
\]
We also prove that the trace theorem holds for the operator $\gamma_A$, i.e.
\[
\gamma_A(\psi)=0\quad\mbox{if and only if}\quad\psi\in H^1_0(D).
\]
It is interesting that in spite of the fact that the trace operator $\gamma_A$ depends on  $A$, the above result  holds independently of $A$.

In general, the embedding  (\ref{eq1.7}) is strict. We show that
\begin{equation}
\label{eq1.8}
\gamma_A^{-1}(L^2(\partial D; \omega^A_m))=H^1_c(D),
\end{equation}
where $H^1_c(D)$ is the set of those $u\in H^1(D)$ for which there exists $\psi\in\BB(\partial D)$ such that
\[
[0,\tau_D]\ni t\rightarrow (u\mathbf{1}_D+\psi\mathbf{1}_{\partial D})(X_t)\mbox{ is continuous }P_x\mbox{-a.s. for q.e. } x\in D,
\]
where q.e. is the abbreviation for quasi everywhere with respect to the capacity Cap$_A$.
The space $H^1_c(D)$ is a closed subspace of $H^1(D)$
and
\[
\mbox{cl}(H^1(D)\cap C(\overline{D}))\subset H^1_c(D).
\]
An immediate  corollary to (\ref{eq1.8}) is that  $L^2(\partial D;\omega^A_{m})$ is isomorphic to $L^2(\partial_M D; h^A_m)$ if and only if $H^1_c(D)=H^1(D)$.
Equivalently, there exists a trace operator on $H^1(D)$ to $L^2(\partial D;\omega^A_{m})$) if and only if $H^1_c(D)=H^1(D)$.
By using this trace operator we show that, if $u$ is a solution of wDP$(A,D,\psi)$, then  it is a weak solution of the  Dirichlet problem
\[
-Au=0\quad\mbox{in}\quad D,\quad u=\gamma_A(\psi)\quad\mbox{on}\quad \partial_M D
\]
(problem DP$(A,\partial_M D,\gamma_A(\psi))$ for short). More precisely,
$u\in H^1(D)$ and
\[
\EE(u,v)=0,\quad v\in H^1_0(D),\quad \gamma_A(u)=\gamma_A(\psi).
\]

Boundary behaviour of a quasi-continuous version of a function
from $H^1(D)$ was for the first time considered by Doob
\cite{Doob} in the case where $A=\Delta$  (see also
\cite{Oksendal,Stoica}). A substantial part of our paper is
devoted to extension of Doob's results to operators of the form
(\ref{eq1.1}) and wider class of functions and to investigate
properties of the trace operator $\gamma_A$. A similar in spirit
(but purely analytic)approach to the definition of the trace
operator on arbitrary bounded domains is considered in
\cite{Shvartsman} for spaces $W^{1,p}(D)$ with $p>d$. In
\cite{Shvartsman} the author also changes equivalently the
Euclidean metric on $D$  to a metric $\bar\varrho$ in such a way
that $u\in W^{1,p}(D)$ is uniformly continuous on $D$ with respect to
$\bar\varrho$ (since $p>d$, $u\in C(D)$). This allows him to
consider $u$ on $\partial D'$, where $D'$ is the completion of $D$
with respect to the  metric $\bar\varrho$. In our paper we
consider functions  $u\in H^1(D)$ which, in general, are not
continuous but only quasi-continuous, so the problem is more
involved. To  solve it requires us to use some notions and methods
of the potential theory.

In the second part of the paper we treat the Dirichlet  problem
for semilinear equations of the form
\begin{equation}
\label{eq1.9}
-Au=f(\cdot,u)+\mu\quad \mbox{in}\quad D,\qquad u=\psi\quad\mbox{on}\quad \partial_M D,
\end{equation}
where $\psi\in L^1(\partial_M D;h^A_m)$ and $\mu$ is a   Borel
measure on $D$ such that $\int_D\delta\,d|\mu|<\infty$. As for
$f$, we assume that it is continuous and nonincreasing with
respect to $u$. To deal with (\ref{eq1.9}), we first extend the
trace operator to the set $\TT$ ($H^1(D)\subset \TT$) of all functions $\psi\in\BB(D)$
for which there exists $g\in\BB(\partial_MD)$ such that the
process
\[
[0,\tau_D]\ni t\mapsto (\mathbf{1}_D\psi +\mathbf{1}_{\partial_MD}g)(X_t\mathbf{1}_{\{t<\tau_D\}} +X_{\tau_D-}\mathbf{1}_{\{t=\tau_D\}})
\]
is continuous at $\tau_D$ under the  measure $P_x$
for $m$-a.e. $x\in D$. For $\psi\in\TT$, we put
\[
\gamma_A(\psi):=g.
\]
Let $\|\cdot\|_{q.u.}$ denote the metric of quasi-uniform convergence.
We show that
\[
\gamma_A:(\TT,\|\cdot\|_{q.u.})\rightarrow L^0(\partial_MD;h^A_m)
\]
is continuous.

 Let $H_{\mathfrak{D}^p}$ be the space defined probabilistically as a class of harmonic functions $u$ on $D$
for which the family $\{|u|^p(X_{\tau_V}),\, V\subset\subset D\}$ is uniformly integrable under the measure $P_x$ for $m$-a.e. $x\in D$. We equipp $H_{\mathfrak{D}^p}$ with the metric induced by the norm
\[
\|u\|^p_{\mathfrak{D}^p}:=\int_D\sup_{V\subset\subset D}E_x|u(X_{\tau_V})|^p\,m(dx).
\]
For $p>1$, we define $H_{\mathfrak{S}^p}$ to be the space of all harmonic functions $u$ on $D$ such that
\[
\|u\|^p_{\mathfrak{S}^p}:=E_m\sup_{t<\tau_D}|u(X_t)|^p<\infty.
\]
We  show that  $H_{\mathfrak{D}^p}\subset \TT,\, p\ge 1$ and that
\[
\gamma_A: H_{\mathfrak{D}^p}\rightarrow L^p(\partial_M D;h^A_m)
\]
is an isometric isomorphism.  Moreover, for $p>1$,
\[
\gamma_A: H_{\mathfrak{S}^p}\rightarrow L^p(\partial_M D;h^A_m)
\]
is   a homeomorpism which implies that $H_{\mathfrak{S}^p}=H_{\mathfrak{D}^p}$. We also show that similarly to the case where $p=2$, for $p\ge 1$ we have
\[
L^p(\partial D;\omega^A_m)=L^p(\partial_M D;h^A_m)
\]
if and only if $H^1_c(D)=H^1(D)$.

In the paper we propose two different but equivalent definitions
of a solution to (\ref{eq1.9}). In both definitions by a solution  we mean a function  $u\in L^1(D;m)$ such that   $f(\cdot,u)\in L^1(E;\delta\cdot m)$.
Additionally, in the first definition we require that the equality
\[
u(x)=\int_Df(u)(y)g_D(x,y)\,m(dy)+\int_Dg_D(x,y)\,\mu(dy)+\int_{\partial_M D}\psi(y)h^A_x(dy)
\]
is satisfied for all $x\in D$. In the second definitoin,  so called Stampacchia's definition by duality,  we require that
\[
\int_D(u-\gamma^{-1}_A(\psi))G^D\eta\,dm=\int_D f(\cdot,u)G^D\eta\,dm+\int_D \widetilde{G^D\eta}\,d\mu,\quad \eta\in L^\infty(D;m),
\]
where $ \widetilde{G^D\eta}$ is a continuous $m$-version of $G^D\eta$ ($G^D$ is strongly Feller).
In general, there is no solution to (\ref{eq1.9}) (see \cite{BMP}). Following \cite{BMP1,BMP} and \cite{Kl:CVPDE}, for given $A,f$ denote by $\GG_0$ the set of all good measures for (\ref{eq1.9}), i.e the set of all bounded Borel measures $\mu$ on $D$  for which there exists a solution to (\ref{eq1.9}) with $\psi\equiv 0$.
Our main result says that for every $\mu\in\GG_0$ and $\psi\in L^1(\partial_M D;h^A_m)$
there exists a unique solution to (\ref{eq1.9}) (this agrees with the results of \cite{MarcusPonce}, where it is proved, that the class
of good measures does not depend on the boundary data in case of $A=\Delta$).  We also prove that for every $k>0$,
\begin{equation}
\label{eq1.18}
\|T_k(u)\|^2_{\breve H^1_\delta}\le 3k\lambda^{-1}(\|\psi\|_{L^1(\partial_MD;h^A_m)}
+\|f(\cdot,0)\|_{L^1(D;\delta\cdot m)}+\|\mu\|_{TV,\delta}),
\end{equation}
where $T_k(u)(x)=((-k)\vee u(x))\wedge k$. Furthermore, we show that for regular domains of class  $C^{1,1}$,
$u\in L^p(D;\delta\cdot m)$ with $p<\frac{d}{d-1}$. This, when combined with (\ref{eq1.18}), implies that $u\in W^{1,q}_\delta$ with $q<\frac{2d}{2d-1}$. Finaly, Let us note that, if $D$ is of class $C^{1,1}$, then there exist $c_1, c_2>0$ such that
\[
c_1\rho(x)\le \delta(x)\le c_2\rho(x),\quad x\in D
\]
where $\rho(x)=\mbox{dist}(x,\partial D)$. The theory of  so called very weak solutions to elliptic equations with data in $L^1(D;\rho\cdot  m)$ have attracted quite interest in recent years  (see, e.g.,  \cite{DiazRakotoson,Rakotoson} and references  therein).

\section{Weak and soft solutions to the Dirichlet problem} \label{sec2}

Let $D$ be and arbitrary bounded open subset of $\BR^d$. In this section, we provide  a stochastic representation for weak solutions of DP$(A,\partial D,\psi)$ with $\psi\in C(\partial D)$. Based on this result, we  give the definition of a soft solution to  DP$(A,\partial D,\psi)$ with $\psi\in L^2(\partial D;\omega^A_m)$, where $\omega^A_m$ is defined by (\ref{eq1.13}). We next show that, if $\psi\in C(\partial D)$, then soft solutions are weak solution to DP$(A,\partial D,\psi)$. In Section \ref{sec3}  we will  show that soft solutions of  DP$(A,\partial D,\psi)$ are solutions obtained via the Perron-Wiener-Brelot method.

In what follows, for given open bounded sets $V,U\subset\BR^d$, we write $V\subset\subset U$ if $\overline V\subset U$. By $\BB(\BR^d)$ we denote the
set of all Borel measurable functions on $\BR^d$, and by $\BB^+(\BR^d)$ the subset of $\BB(\BR^d)$ consisting of positive functions.
For a given positive Borel measure $\mu$ on $\BR^d$ and $g\in \BB^+(\BR^d)$, we denote by $g\cdot \mu$ the  Borel measure on $\BR^d$ defined by
\[
\int_{\BR^d}\eta\, d(g\cdot \mu):=\int_{\BR^d}\eta g\,d\mu,\quad \eta\in\BB^+(\BR^d).
\]
We denote by $g^\alpha_D$ the Green function for $D$ and operator $A-\alpha I$ ($\alpha\ge 0$), and by $\mbox{Cap}_A$  the capacity associated with the operator $A$. Recall that a function $u$ on $\BR^d$  is called quasi-continuous if and only if for every
$\varepsilon >0$ there exists a closed $F_\varepsilon\subset K$ such that Cap$_A(\BR^d\setminus F_\varepsilon)\le \varepsilon$ and $u_{|F_\varepsilon}$ is continuous.
For a positive Borel  measure $\mu$ on $D$ and $\alpha\ge 0$, we set
\[
R^D_\alpha\mu(x)=\int_Dg_D^\alpha(x,y)\,\mu(dy),\quad x\in D.
\]
Let $\BX$ be a diffusion on $\BR^d$ associated with the operator $A$ (see \cite{FOT,Ro:Stochastics,Stroock}). It is well known that, if $\mu=f\cdot m$ for some $f\in\BB^+(D)$, then
\begin{equation}
\label{eq2.as}
R^D_\alpha f(x):=R^D_\alpha(f\cdot m)=E_x\int_0^{\tau_D}e^{-\alpha t}f(X_t)\,dt,\quad x\in D,
\end{equation}
where $\tau_D$ is defined by (\ref{eq1.14}) and for $f\in L^p(D;m)$, $R^D_\alpha f$ is a quasi-continuous
$m$-version of $G^Df$ and if $f\in L^\infty(D;m)$ then $R^Df$ is a continuous $m$-version of $G^Df$ on $D$. Moreover, if $\mu$ is smooth in the sense of Dirichlet forms (see \cite[Section 2.2]{FOT} for the definition), then by  \cite[Lemma 2.2.10, Theorem 5.4.2]{FOT}  there exists a unique  positive continuous additive functional $A^\mu$ of $\mathbb X$ such that for q.e. $x\in D$,
\begin{equation}
\label{eq2.afr}
R^D_\alpha\mu(x)=E_x\int_0^{\tau_D}e^{-\alpha t}\,dA^\mu_t.
\end{equation}
We put $R^D:=R^D_0,\, g_D:=g^0_D$.
\begin{lm}
\label{lm2.1}
If $\psi\in C(\partial D)$, then  $E_\cdot\psi(X_{\tau_D})\in C(D)$.
\end{lm}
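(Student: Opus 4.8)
The plan is to show that the pointwise-defined function $u(x):=E_x[\psi(X_{\tau_D})]$ satisfies the mean value property with respect to $\BX$ and is bounded, and then to upgrade this to genuine continuity by interior regularity. First I would note that, since $\partial D$ is compact and $\psi\in C(\partial D)$, the data $\psi$ are bounded, so $|u(x)|\le\max_{\partial D}|\psi|$ for every $x\in D$; in particular $u$ is a bounded Borel function. As continuity is a local property, I fix $x_0\in D$ and choose $R>0$ so small that the closed ball $\overline{B}:=\overline{B(x_0,R)}\subset D$.

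The key structural step is the mean value property. For $x\in B:=B(x_0,R)$ the paths of $\BX$ are continuous and $X_{\tau_B}\in\partial B\subset D$, so $\tau_B<\tau_D$ and, by additivity of exit times, $\tau_D=\tau_B+\tau_D\circ\theta_{\tau_B}$, whence $X_{\tau_D}=X_{\tau_D}\circ\theta_{\tau_B}$ $P_x$-a.s. Applying the strong Markov property at $\tau_B$ I would obtain
\[
u(x)=E_x\big[E_{X_{\tau_B}}[\psi(X_{\tau_D})]\big]=E_x[u(X_{\tau_B})],\qquad x\in B.
\]
Thus on $B$ the function $u$ coincides with $v(x):=E_x[u(X_{\tau_B})]$, the $A$-harmonic function in $B$ with (bounded, Borel) boundary data $u_{|\partial B}$, and the same identity holds on every smaller concentric ball. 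Hence $u$ enjoys the mean value property with respect to $\BX$ on all balls compactly contained in $D$.

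It remains to pass from the mean value property to genuine continuity. Since $u$ is bounded and $A$-harmonic on $B$, both $u-\inf_B u$ and $\sup_B u-u$ are nonnegative $A$-harmonic functions, so Moser's Harnack inequality \cite{Moser} applies to each of them on concentric balls. The classical iteration then yields an oscillation estimate of the form
\[
\mathrm{osc}_{B(x_0,\rho)}\,u\le C\Big(\frac{\rho}{R}\Big)^{\gamma}\max_{\partial D}|\psi|,\qquad 0<\rho\le R,
\]
for some $C>0$ and $\gamma\in(0,1)$ depending only on $\lambda$, $\|a\|_\infty$ and $d$; this is precisely the interior H\"older regularity of De Giorgi--Nash--Moser \cite{Nash}. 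Letting $\rho\to0$ shows that $u$ is continuous at $x_0$, and since $x_0\in D$ was arbitrary we conclude $u\in C(D)$.

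The main obstacle is the passage from the probabilistic mean value property to the analytic oscillation/Harnack estimate: one must ensure that the \emph{pointwise} function $x\mapsto E_x[u(X_{\tau_B})]$ is the $A$-harmonic function to which Harnack's inequality may be applied at \emph{every} point, rather than merely an $m$-a.e. version of a weak solution. I would handle this either by establishing the Harnack inequality directly for nonnegative expectations of this type, via the transition density estimates for $\BX$, or by invoking the oscillation estimate for the diffusion $\BX$ itself, which is the content of the De Giorgi--Nash--Moser theory at the level of the process. The remaining points---boundedness of $u$, the fact that $X_{\tau_B}\in\partial B$ $P_x$-a.s., and the exit-time additivity underlying the strong Markov step---are routine.
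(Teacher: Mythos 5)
Your proof is viable in outline, but it takes a genuinely different route from the paper. The paper's argument is global and avoids interior regularity theory altogether: it extends $\psi$ to $C_b(\BR^d)$ by Tietze, sets $\psi_\alpha:=\alpha R^V_\alpha\psi$ for a bounded open $V$ with $D\subset\subset V$ (so that $\psi_\alpha\rightarrow\psi$ uniformly on $\overline D$), and uses the strong Markov property at $\tau_D$ to obtain the closed-form identity $E_x e^{-\alpha\tau_D}\psi_\alpha(X_{\tau_D})=\alpha R^V_\alpha\psi(x)-\alpha R^D_\alpha\psi(x)$; continuity then comes for free from the strong Feller property of the resolvents $(R^V_\alpha)$ and $(R^D_\alpha)$, plus a uniform approximation step. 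Your route instead combines the mean value property $u(x)=E_x u(X_{\tau_B})$ (that step is correct and routine, as you say) with De Giorgi--Nash--Moser oscillation decay. The obstacle you flag in your last paragraph is real and is the crux: quoting the analytic Harnack/oscillation estimates of \cite{Moser,Nash} off the shelf does not suffice, because they apply to $H^1_{loc}$ weak solutions and control only an $m$-a.e.\ representative, and the identification of your pointwise $u$ with that representative cannot be transferred to the exit distributions on spheres $\partial B'$ --- these carry no Lebesgue mass, and for merely measurable $a$ the harmonic measure of a ball can be singular with respect to surface measure (the paper itself cites \cite{CFK,ModicaMortola} for exactly this phenomenon). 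So your second proposed repair is the right one: an oscillation estimate at the level of the process, i.e.\ a coupling lower bound for exit distributions from concentric balls derived from the Aronson--Nash transition density estimates; this is available in the framework of \cite{Stroock} and yields $\mathrm{osc}_{B(x_0,\rho)}u\le C(\rho/R)^\gamma\max_{\partial D}|\psi|$ pointwise, not just a.e. With that ingredient your argument is complete and in fact proves more than the paper's Lemma 2.1: it gives interior H\"older continuity with a rate, and it never uses continuity of $\psi$, so it covers bounded Borel boundary data (essentially Lemma 6.1 of the paper, which the paper only reaches later, via Corollary 2.6). What the paper's proof buys in exchange is brevity and softness: no regularity theory, no coupling, only resolvent calculus --- but its Tietze step makes essential use of $\psi\in C(\partial D)$.
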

\begin{dow}
By Tietze's extension theorem, we may assume that  $\psi\in C_b(\BR^d)$. Let  $V\subset\BR^d$ be
a bounded open set such that $D\subset\subset V$, and let $\psi_\alpha:=\alpha R^V_\alpha \psi$. It is an elementary check  that $\psi_\alpha\rightarrow \psi$ uniformly on $\overline D$. By the strong Markov property
and (\ref{eq2.as}),
\begin{align*}
E_xe^{-\alpha\tau_D}\psi_\alpha(X_{\tau_D})&=\alpha E_xe^{-\alpha\tau_D}E_{X_{\tau_D}}\int_0^{\tau_V}e^{-\alpha r}\psi(X_r)\,dr\\&=\alpha E_xe^{-\alpha\tau_D}\int_{\tau_D}^{\tau_V}e^{-\alpha (r-\tau_D)}\psi(X_r)\,dr=\alpha E_x\int_{\tau_D}^{\tau_V}e^{-\alpha r}\psi(X_r)\,dr\\&=\alpha R^V_\alpha(x) \psi-\alpha R^D_\alpha\psi(x),\quad x\in D.
\end{align*}
Since $(R^V_\alpha)_{\alpha>0}$ and $(R^D_\alpha)_{\alpha>0}$ are strongly Feller, $E_\cdot e^{-\alpha\tau_D}\psi_\alpha(X_{\tau_D})\in C(D)$, from which the desired
assertion easily follows.
\end{dow}

\begin{lm}
\label{lm2.2}
Let $u\in H^1_0(D)$ be quasi-continuous. Then for every $x\in D$,
\[
u(X_t)\rightarrow 0\quad P_x\mbox{-a.s. as}\quad t\nearrow \tau_D.
\]
\end{lm}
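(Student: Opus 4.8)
The plan is to reduce the statement to the corresponding fact for Green potentials, for which the terminal behaviour along the paths of $\BX$ can be read off from a uniformly integrable martingale, and then to transfer this behaviour to a general $u\in H^1_0(D)$ by approximation. Since $|u|\in H^1_0(D)$ whenever $u\in H^1_0(D)$, and $|u|(X_t)\to0$ forces $u(X_t)\to0$, I may and do assume $u\ge0$; truncating at level $k$ and letting $k\to\infty$ further lets me assume that $u$ is bounded. I also use that, $\BX$ having continuous paths and $u$ being quasi-continuous, the map $t\mapsto u(X_t)$ is $P_x$-a.s. continuous on $[0,\tau_D)$ for q.e.\ $x$, so that only the behaviour as $t\nearrow\tau_D$ is at stake.

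The engine of the proof is the case of potentials. If $f\in L^\infty(D)$, $f\ge0$, and $w=R^Df$, then $w$ is bounded (because $\delta=R^D1$ is bounded, $D$ being bounded) and, by the Markov property and (\ref{eq2.as}), the process
\[
M_t:=w(X_{t\wedge\tau_D})+\int_0^{t\wedge\tau_D}f(X_s)\,ds
\]
is a uniformly integrable $P_x$-martingale whose terminal value is $\int_0^{\tau_D}f(X_s)\,ds$, since $w$ vanishes off $D$. Hence $w(X_t)=M_t-\int_0^{t}f(X_s)\,ds\to0$ as $t\nearrow\tau_D$, for every $x\in D$. Moreover, using $f\ge0$, Doob's inequality, the identity $\EE(w,w)=(f,R^Df)_{L^2(D;m)}$ and the occupation-time bound $E_m(\int_0^{\tau_D}f(X_s)\,ds)^2\le 2\|\delta\|_\infty(f,R^Df)_{L^2(D;m)}$, one obtains the maximal inequality
\begin{equation}
E_m\Big[\sup_{0\le t<\tau_D}w(X_t)^2\Big]\le C\,\EE(w,w) \tag{$\ast$}
\end{equation}
with $C$ independent of $f$.

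To reach a general nonnegative bounded $u\in H^1_0(D)=\FF$, I would use that $\{R^Df:f\in L^\infty(D)\}$ is dense in $\FF$ with respect to $\EE_1^{1/2}$ (as $R^D\colon L^2(D;m)\to\FF$ is continuous and its range is dense), choosing $w_n=R^Df_n\to u$ in $\FF$; by the previous paragraph each $w_n(X_t)\to0$ as $t\nearrow\tau_D$. Passing to a subsequence, $w_n\to u$ q.e., which identifies the pathwise limit of $w_n(X_\cdot)$ with $u(X_\cdot)$. The point is then to upgrade this to \emph{uniform} convergence on $[0,\tau_D)$: applying $(\ast)$-type bounds to the differences $w_n-w_m$ one shows that $\sup_{0\le t<\tau_D}|w_n(X_t)-u(X_t)|\to0$ in $P_m$-probability, whence, along a further subsequence, $P_m$-a.s.; interchanging the limits $n\to\infty$ and $t\nearrow\tau_D$ then gives $u(X_t)\to0$ as $t\nearrow\tau_D$, $P_x$-a.s.\ for $m$-a.e.\ $x$. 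This transfer is the hard part: away from $\tau_D$ the approximants converge quasi-uniformly, but in the boundary layer $t\nearrow\tau_D$ quasi-uniform convergence carries no information, so the uniform-in-$t$ control up to $\tau_D$ must come entirely from the maximal inequality, and it is here that the positivity of the potentials is indispensable, since the zero-energy (drift) part of the Fukushima decomposition can be dominated only through $\int_0^{\tau_D}|\,\cdot\,|\,ds$; consequently the signed errors $f_n-f_m$ have to be split into their positive and negative potential parts before $(\ast)$ can be used.

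Finally I would remove the exceptional set. The conclusion holds off a set $N$ with $\mbox{Cap}_A(N)=0$, hence $m(N)=0$; since $A$ is uniformly elliptic in divergence form, $\BX$ has an absolutely continuous transition function, so $P_x(X_s\in N)=0$ for every $x\in D$ and every $s>0$. Applying the Markov property at time $s$ and noting that, on $\{s<\tau_D\}$, the limit $\lim_{t\nearrow\tau_D}u(X_t)$ is the $\theta_s$-shift of the same limit for the process started at $X_s\notin N$, I obtain $P_x(\{s<\tau_D\}\cap\{u(X_t)\to0\})=P_x(s<\tau_D)$; letting $s\downarrow0$, so that $P_x(s<\tau_D)\to1$, yields the assertion for every $x\in D$.
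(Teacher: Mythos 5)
Your reduction to bounded nonnegative $u$, your analysis of positive potentials (the closed martingale $M_t=w(X_{t\wedge\tau_D})+\int_0^{t\wedge\tau_D}f(X_s)\,ds$ with $M_{\tau_D-}=\int_0^{\tau_D}f(X_s)\,ds$, which is fine because $\tau_D$ is predictable and the terminal variable is $\FF_{\tau_D-}$-measurable), the inequality $(\ast)$ for $f\ge0$, and your final upgrade from q.e.\ to every $x\in D$ via the absolutely continuous transition function are all sound. The genuine gap is in the transfer step, exactly where you yourself locate the difficulty. Having obtained $f_n$ only from the density of $\{R^Df:f\in L^\infty(D)\}$ in $H^1_0(D)$, the sole information about $g:=f_n-f_m$ is that $\EE(R^Dg,R^Dg)=(g,R^Dg)_{L^2(D;m)}$ is small. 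But $(\ast)$ applies only to positive potentials, so after the splitting you propose the bound reads $E_m\sup_{t<\tau_D}|R^Dg(X_t)|^2\le C\big[(g^+,R^Dg^+)+(g^-,R^Dg^-)\big]$, and this right-hand side is \emph{not} dominated by the energy of the difference: since $(g,R^Dg)=(g^+,R^Dg^+)+(g^-,R^Dg^-)-2(g^+,R^Dg^-)$ and the cross term is positive, one can have $(g,R^Dg)$ arbitrarily small while $(g^\pm,R^Dg^\pm)$ stay large (take two large positive densities whose potentials nearly coincide). Hence Cauchyness of $w_n$ in $H^1_0(D)$ gives no control on the split bound, the convergence $\sup_{t<\tau_D}|w_n-u|(X_t)\to0$ in $P_m$-probability is not established, and the positivity splitting you call indispensable is precisely what breaks the estimate.

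Your reason for rejecting the standard repair is also mistaken: quasi-uniform convergence means uniform convergence off an open set $G$ with $\mbox{Cap}_A(G)$ small, and since the path avoids $G$ for \emph{all} times with $P_m$-probability close to one, on that event $\sup_{t<\tau_D}|w_n-u|(X_t)\le\sup_{D\setminus G}|w_n-u|$ — so quasi-uniform convergence does control the boundary layer $t\nearrow\tau_D$, and invoking it (via \cite{LeJan} or \cite[Theorem 2.1.4]{FOT}) would close your argument. The paper itself avoids approximation altogether: it applies Fukushima's decomposition to the part process $\mathbb X^D$, writes $u(X^D_t)=-(A_{\tau_D}-A_t)-(M_{\tau_D}-M_t)$ using $u(X^D_{\tau_D})=u(\Delta)=0$ and continuity of the additive functionals at the lifetime, which gives the limit for q.e.\ $x$ at once, and then upgrades to every $x\in D$ by showing that $v(x)=P_x(\limsup_{t\nearrow\tau_D}|u(X_t)|>0)$ is excessive for $\mathbb X^D$ and vanishes q.e., hence everywhere by \cite[Proposition II.3.2]{BG}; your last step is a legitimate substitute for that upgrade, but the middle of your proof needs the capacitary argument you dismissed.
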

\begin{dow}
Let  $\mathbb X^D$ denote the part of the process $\mathbb X$ on $D$ (see \cite[Section 4.4]{FOT}).
By Fukushima's decomposition (see \cite[Theorem 5.2.2]{FOT}), for q.e. $x\in D$,
\[
u(X^D_t)=u(x)+A_t+M_t,\quad t\ge 0,\quad P_x\mbox{-a.s.}
\]
where $A$ is a continuous additive functional of $\mathbb X^D$  of
zero energy and $M$ is a continuous martingale additive functional
of $\mathbb X^D$ of finite energy. From this we get
\[
u(X^D_t)=-(A_{\tau_D}-A_t)-(M_{\tau_D}-M_t),\quad t\le \tau_D,\quad P_x\mbox{-a.s.},
\]
since $u(X^D_{\tau_D})=u(\Delta)=0$, where $\Delta$ is an extra point which is a one-point compactification of $D$ (see \cite[Theorem A.2.10]{FOT}).
Hence we get the assertion of
the lemma for q.e. $x\in D$. Now, set
\begin{align*}
v(x)=P_x(\limsup_{t\nearrow \tau_D}|u(X_t)|>0),\quad x\in D.
\end{align*}
One can  check that $v$ is an excessive function relative to $\mathbb X^D$. Indeed, by the strong Markov property,
\begin{align*}
E_xv(X_s^D)&=E_xP_{X^D_s}(\limsup_{t\nearrow \tau_D}|u(X^D_t)|>0)=P_x(\limsup_{t\nearrow \tau_D\circ\theta_s}|u(X^D_t\circ\theta_s)|>0)\\&
=P_x(\limsup_{t\nearrow (\tau_D-s)^+}|u(X_{t+s}^D)|>0)\le P_x(\limsup_{t\nearrow \tau_D}|u(X_t)|>0)=v(x)
\end{align*}
for every $x\in D$. Since we already know that $v=0$ q.e. on $D$, we have $v=0$ on $D$ by \cite[Proposition II.3.2]{BG}.
\end{dow}

\begin{df}
\label{df.w}
Let $\psi\in C(\partial D)$. The function $B\psi$, where $B$ is defined by (\ref{eq1.15}), is called a weak solution  of DP$(A,\partial D,\psi)$.
\end{df}

In Theorem \ref{tw2.1} below, we give a probabilistic representation of a weak solution of  DP$(A,\partial D,\psi)$. Its proof is based solely upon  Fukushima's decomposition of additive functionals of $\BX$. Then, we  give another proof
based upon results of \cite{Ro:SM}, where the author used the theory of weak convergence of diffusion processes. The second proof gives us even stronger result then that formulated in Theorem \ref{tw2.1}.

\begin{tw}
\label{tw2.1}
Let $\psi\in C(\partial D)$ and let $u$ be a weak solution to \mbox{\rm(\ref{eq1.3})}. Then
\begin{equation}
\label{eq2.1}
u(x)=E_x\psi(X_{\tau_D}),\quad x\in D.
\end{equation}
\end{tw}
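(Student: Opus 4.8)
The plan is to first establish \eqref{eq2.1} when the boundary datum is the restriction of a function in $H^1(D)\cap C(\overline D)$, and then to pass to a general $\psi\in C(\partial D)$ by approximation, exploiting the continuity of $B$ furnished by \eqref{eq1.2.1}. Throughout I will use that both sides of \eqref{eq2.1} are continuous on $D$: the left-hand side because $u=B\psi\in\mathcal{X}\subset C(D)$, and the right-hand side by Lemma \ref{lm2.1}. Hence it suffices to prove \eqref{eq2.1} for q.e. $x\in D$, since two continuous functions agreeing quasi everywhere agree everywhere (a set of zero capacity has Lebesgue measure zero, hence empty interior).

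Assume first that $\psi\in H^1(D)\cap C(\overline D)$, so that $u=B\psi\in H^1(D)$, the difference $w:=u-\psi$ lies in $H^1_0(D)$, and $\EE(u,v)=0$ for $v\in C^1_c(D)$, i.e. $u$ is harmonic. Applying Fukushima's decomposition to $u$ along the part process $\mathbb X^D$, exactly as in the proof of Lemma \ref{lm2.2}, I write $u(X^D_t)=u(x)+N_t+M_t$ with $N$ of zero energy and $M$ a martingale additive functional; the harmonicity of $u$ forces the zero-energy part $N$ to vanish (cf. \cite[Theorem 5.4.1]{FOT}), so that $u(X^D_t)=u(x)+M_t$ is a martingale under $P_x$ for q.e. $x\in D$. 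Since $u\in\mathcal{X}$ gives $\sup_D|u|<\infty$, this martingale is bounded, hence uniformly integrable, and $M_t$ converges $P_x$-a.s. and in $L^1(P_x)$. To identify the limit, I decompose $u(X_t)=w(X_t)+\psi(X_t)$ for $t<\tau_D$: by Lemma \ref{lm2.2} the quasi-continuous $w\in H^1_0(D)$ satisfies $w(X_t)\to0$ as $t\nearrow\tau_D$, while continuity of $\psi$ on $\overline D$ together with $X_t\to X_{\tau_D}\in\partial D$ gives $\psi(X_t)\to\psi(X_{\tau_D})$; thus $u(X^D_t)\to\psi(X_{\tau_D})$. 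Passing to the limit in the identity $u(x)=E_x[u(X^D_t)]$ via the $L^1(P_x)$-convergence yields $u(x)=E_x\psi(X_{\tau_D})$ for q.e. $x$, and hence for every $x\in D$ by the continuity noted above.

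For general $\psi\in C(\partial D)$, Tietze's theorem provides an extension $\tilde\psi\in C_b(\BR^d)$; set $\psi_n:=\tilde\psi*\rho_{1/n}$, smooth functions with $\psi_n\to\tilde\psi$ uniformly on $\overline D$, so in particular $\psi_n\in H^1(D)\cap C(\overline D)$ and $\psi_n\to\psi$ uniformly on $\partial D$. The previous step gives $B\psi_n(x)=E_x\psi_n(X_{\tau_D})$. On the left, \eqref{eq1.2.1} yields $\|B\psi_n-B\psi\|\le C(\lambda,D)\max_{\partial D}|\psi_n-\psi|\to0$, so $B\psi_n\to B\psi$ uniformly on $D$; on the right, $|E_x\psi_n(X_{\tau_D})-E_x\psi(X_{\tau_D})|\le\max_{\partial D}|\psi_n-\psi|\to0$ uniformly in $x$. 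Letting $n\to\infty$ gives \eqref{eq2.1} for all $x\in D$.

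I expect the core case to be the main obstacle. The two delicate points there are that the zero-energy additive functional in Fukushima's decomposition of the harmonic function $u$ must vanish, so that $u(X^D_\cdot)$ is a genuine bounded martingale, and that the $P_x$-a.s. boundary convergence $u(X_t)\to\psi(X_{\tau_D})$ must be secured, for which Lemma \ref{lm2.2} is precisely the input needed. Once these are in place, boundedness makes the interchange of limit and expectation immediate, and the passage to continuous boundary data is routine.
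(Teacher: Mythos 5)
Your overall skeleton matches the paper's: reduce to $\psi\in H^1(D)\cap C(\overline D)$, show $u(X)$ is a martingale up to $\tau_D$ via Fukushima's decomposition with vanishing zero-energy part, identify the boundary limit through Lemma \ref{lm2.2} applied to $u-\psi$ together with continuity of $\psi$ on $\overline D$, use boundedness of $u$ (from (\ref{eq1.2.1})) to pass to the limit in expectation, upgrade from q.e.\ to everywhere by continuity (Lemma \ref{lm2.1}), and finish the general case by uniform approximation exactly as the paper does. Where you genuinely diverge is the core step: you apply Fukushima's decomposition directly to $u$ along the part process $\mathbb X^D$, whereas the paper exhausts $D$ by Lipschitz subdomains $D_n\subset\subset D$, extends $u_{|D_n}$ to $\tilde u_n\in H^1(\BR^d)$ (possible precisely because $D_n$ is Lipschitz), applies the global decomposition \cite[Theorem 5.2.2]{FOT} on $\BR^d$ and kills the zero-energy part on $[0,\tau_{D_n}]$ via \cite[Theorem 5.4.1]{FOT}, obtaining $u(x)=E_xu(X_{\tau_{D_n}})$ and then letting $n\to\infty$.

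This divergence hides two soft spots in your version. First, the decomposition you invoke ``exactly as in the proof of Lemma \ref{lm2.2}'' is licensed there because the function lies in $H^1_0(D)$, which is the form domain of $\mathbb X^D$; your $u$ is only in $H^1(D)$, which for an irregular $D$ need not embed in $H^1_0(D)$ nor extend to $H^1(\BR^d)$ --- this is exactly why the paper takes the Lipschitz-exhaustion detour. The repair is to use the decomposition in the local Dirichlet space (\cite[Theorem 5.5.1]{FOT}, valid since $H^1(D)\subset H^1_{loc}(D)$), at the price that $M$ is then only a local martingale additive functional on $[0,\tau_D)$; boundedness of $u$ turns the stopped processes into genuine bounded martingales, so the argument survives. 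Second, the identity $u(x)=E_x[u(X^D_t)]$ you pass to the limit in is false as written: with the killing convention $u(\Delta)=0$ one has $u(X^D_t)=0$ on $\{t\ge\tau_D\}$, so already $u\equiv 1$ (harmonic, boundary datum $1$) violates it, since $E_x[u(X^D_t)]=P_x(t<\tau_D)<1$. The correct move is to stop at localizing times, $u(x)=E_xu(X_{t\wedge\tau_{D_n}})$ --- which after letting $t\to\infty$ is precisely the paper's (\ref{eq2.5exp}) --- or equivalently to close the bounded martingale $u(X_t)$, $t<\tau_D$, with its a.s.\ limit $\psi(X_{\tau_D})$ rather than with the value of the killed process. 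Both repairs are immediate given the boundedness you have already established, so the proof is correct in substance; but as stated, the two steps above do not go through, and the paper's construction with $\tilde u_n$ and $\tau_{D_n}$ is the clean way to avoid them.
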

\begin{dow}
We first assume that  $\psi\in H^1(D)\cap C(\overline D)$.
Let $\{D_n\}$ be an increasing sequence of Lipschitz open sets such that  $D_n\subset\subset D$ and $\bigcup_{n\ge 1} D_n=D$.
Since $D_n$ is Lipschitz,  there exists an extension of $u_{|D_n}$ to $\tilde
u_n\in H^1(\BR^d)$. By Fukushima's  decomposition (see
\cite[Theorem 5.2.2]{FOT}), there
exist a unique martingale additive functional $M^n$ of finite energy and a unique continuous additive functional $A^n$ of zero energy  such that
\begin{equation}
\label{eq2.2}
\tilde u_n(X_t)=\tilde u_n(x)+A^n_t+M^n_t,
\quad t\ge 0,\quad P_x\mbox{-a.s.}
\end{equation}
for q.e. $x\in \BR^d$. By \cite[Theorem 5.4.1]{FOT},  $A^n_t=0$,
$t\le\tau_{D_n}$, so in particular,
\begin{equation}
\label{eq2.5}
u(X_{\tau_{D_n}})=u(x)+M^n_{\tau_{D_n}},\quad P_x\mbox{-a.s.}
\end{equation}
for q.e. $x\in D_n$. Taking expectation in (\ref{eq2.5}), we get
\begin{equation}
\label{eq2.5exp}
u(x)=E_xu(X_{\tau_{D_n}})
\end{equation}
for q.e. $x\in D_n$.  Since $(u-\psi)\in H^1_0(D)$, we have by Lemma \ref{lm2.2} that $(u-\psi)(X_t)\rightarrow 0$ $P_x$-a.s. as $t\nearrow \tau_D$  for q.e. $x\in D$. Since $\psi\in C(\overline D)$, $u(X_t)\rightarrow \psi(X_{\tau_D})$ $P_x$-a.s. as $t\nearrow \tau_D$  for q.e. $x\in D$. By (\ref{eq1.2.1}),  $u$ is bounded, so by the Lebesgue dominated convergence theorem, $E_xu(X_{\tau_{D_n}})\rightarrow E_x\psi(X_{\tau_D})$ for  q.e. $x\in D$. From this,  (\ref{eq2.5exp}) and Lemma \ref{lm2.1} we obtain (\ref{eq2.1}).

We now assume that $D$ is an arbitrary bounded domain and $\psi\in C(\partial D)$. Choose a sequence  $\{\psi_n\}\subset H^1(D)\cap C(\overline D)$ such that $\psi_n\rightarrow\psi$ uniformly on $\partial D$. Let $u_n$ be a weak solution to (\ref{eq1.3}) with $\psi$ replaced by $\psi_n$.
By what has already been proved,
\begin{equation}
\label{eq2.8}
u_n(x)=E_x\psi_n(X_{\tau_D}),\quad x\in D.
\end{equation}
Letting $n\rightarrow\infty$ in (\ref{eq2.8}) and using  (\ref{eq1.2.1}) yields (\ref{eq2.1}).
\end{dow}

\begin{wn}
\label{wn2.1}
For every $x\in D$,
\[
P_x(X_{\tau_D}\in dy)=\omega^A_x(dy).
\]
\end{wn}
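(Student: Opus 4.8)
The plan is to read off the corollary directly from the stochastic representation of Theorem \ref{tw2.1} together with the defining property (\ref{eq.bb}) of the harmonic measure, and then to appeal to the uniqueness of a Borel measure on the compact set $\partial D$ determined by its integrals against continuous functions. The point is that both $\omega^A_x$ and the exit distribution $P_x(X_{\tau_D}\in\cdot)$ represent the same linear functional $\psi\mapsto B\psi(x)$ on $C(\partial D)$.

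To carry this out, fix $x\in D$ and an arbitrary $\psi\in C(\partial D)$. By Definition \ref{df.w}, the function $B\psi$ (with $B$ as in (\ref{eq1.15})) is the weak solution $u$ of DP$(A,\partial D,\psi)$, so Theorem \ref{tw2.1} applies and gives
\[
B\psi(x)=E_x\psi(X_{\tau_D}).
\]
On the other hand, the defining relation (\ref{eq.bb}) of the harmonic measure reads
\[
B\psi(x)=\int_{\partial D}\psi(y)\,\omega^A_x(dy).
\]
Since $D$ is bounded and $\BX$ has continuous sample paths, one has $\tau_D<\infty$ and $X_{\tau_D}\in\partial D$ $P_x$-a.s., so $P_x(X_{\tau_D}\in\cdot)$ is a (probability) Borel measure carried by $\partial D$ and $E_x\psi(X_{\tau_D})=\int_{\partial D}\psi(y)\,P_x(X_{\tau_D}\in dy)$. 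Comparing the two displays, for every $\psi\in C(\partial D)$,
\[
\int_{\partial D}\psi\,d\omega^A_x=\int_{\partial D}\psi(y)\,P_x(X_{\tau_D}\in dy).
\]
Because $\partial D$ is compact, two finite Borel measures that agree on all $\psi\in C(\partial D)$ coincide (uniqueness in the Riesz representation theorem), whence $P_x(X_{\tau_D}\in dy)=\omega^A_x(dy)$. As $x\in D$ was arbitrary, this is the assertion.

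The only step needing a little care — and the closest thing to an obstacle in an otherwise immediate corollary — is the justification that $P_x(X_{\tau_D}\in\cdot)$ is genuinely a measure on $\partial D$, i.e.\ that the diffusion exits the bounded set $D$ in finite time and lands on $\partial D$. This is guaranteed by the boundedness of $D$, the uniform ellipticity (\ref{eq1.12}), and the path-continuity of $\BX$; once it is in place, the equality of the two measures follows from Theorem \ref{tw2.1} with no further work.
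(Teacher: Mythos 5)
Your proposal is correct and is essentially the paper's own (implicit) argument: the corollary is stated there without proof precisely because it follows at once from Theorem \ref{tw2.1} combined with the defining relation (\ref{eq.bb}), the exit distribution $P_x(X_{\tau_D}\in\cdot)$ being a Borel measure on $\partial D$ representing the same functional $\psi\mapsto B\psi(x)$ on $C(\partial D)$, so that the uniqueness clause in (\ref{eq.bb}) (equivalently, Riesz representation uniqueness) forces the two measures to coincide. Your added care about $\tau_D<\infty$ and $X_{\tau_D}\in\partial D$ $P_x$-a.s.\ is a harmless and correct supplement, consistent with the paper's use of $\delta=E_\cdot\tau_D<\infty$.
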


\begin{uw}
\label{wn2.2}
Let $D$ be connected, $x\in D$ and $p\ge 1$. If $\psi\in \BB^+(\partial D)\cap L^p(\partial D; \omega_x^A)$, then $\psi\in L^p(\partial D;\omega_y^A)$
for every $y\in D$.
\end{uw}
\begin{dow}
By (\ref{eq.bb}) and Harnack's inequality for every $y\in D$ there exists $c_y>0$ such that
\begin{equation}
\label{harnack}
\omega^A_y\le c_y\omega^A_x.
\end{equation}
Hence we get the result.
\end{dow}

\begin{wn}
\label{wn2.3}
Let $D$ be connected. Assume that $\psi\in \BB^+(\partial D)\cap L^p(\partial D;\omega^A_x)$  for some $p\ge 1$, $x\in D$ and define $u$ by  \mbox{\rm(\ref{eq2.1})}. Then $u\in C(D)$ and for every $y\in D$,
\begin{equation}
\label{eq2.9}
|u(y)|\le \|\psi\|_{L^p(\partial D;\omega^A_y)}.
\end{equation}
\end{wn}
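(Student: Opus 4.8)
The plan is to dispose of the estimate (\ref{eq2.9}) first, and then to obtain continuity of $u$ by approximating $\psi$ with continuous boundary data, controlling the approximation uniformly on compact subsets of $D$ via Harnack's inequality. For (\ref{eq2.9}): by Corollary \ref{wn2.1}, $\omega^A_y(\cdot)=P_y(X_{\tau_D}\in\cdot)$ is a \emph{probability} measure on $\partial D$ (recall $\tau_D<\infty$ $P_y$-a.s.\ since $D$ is bounded), and by Remark \ref{wn2.2} we have $\psi\in L^p(\partial D;\omega^A_y)$ for every $y\in D$, so $u(y)=E_y\psi(X_{\tau_D})=\int_{\partial D}\psi\,d\omega^A_y$ is well defined and finite. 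Since $\psi\ge 0$, Jensen's (equivalently H\"older's) inequality for the probability measure $\omega^A_y$ gives
\[
u(y)=\int_{\partial D}\psi\,d\omega^A_y\le\Big(\int_{\partial D}\psi^p\,d\omega^A_y\Big)^{1/p}=\|\psi\|_{L^p(\partial D;\omega^A_y)},
\]
which is (\ref{eq2.9}). The same computation applied to $|\psi-\varphi|$ for any $\varphi\in\BB(\partial D)\cap L^p(\partial D;\omega^A_y)$ yields the contraction estimate
\[
|u(y)-E_y\varphi(X_{\tau_D})|\le\|\psi-\varphi\|_{L^p(\partial D;\omega^A_y)},\qquad y\in D.
\]

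To pass from pointwise to locally uniform estimates I would upgrade (\ref{harnack}) to a bound that is uniform on compacta. Fix a compact $K\subset D$ and $x\in K$. For any Borel $E\subset\partial D$ the function $y\mapsto\omega^A_y(E)=E_y\mathbf{1}_E(X_{\tau_D})$ is a nonnegative harmonic function on $D$; indeed, by the strong Markov property it satisfies the mean value property $\omega^A_y(E)=E_y\,\omega^A_{X_{\tau_V}}(E)$ for $V\subset\subset D$. Hence Harnack's inequality (\cite{Moser}) provides a constant $C_K$, depending only on $K$, $x$, $\lambda$ and $D$, such that $\omega^A_y(E)\le C_K\,\omega^A_x(E)$ for all $y\in K$. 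Since $C_K$ is independent of $E$, this says $\omega^A_y\le C_K\,\omega^A_x$ as measures, uniformly in $y\in K$, and therefore $\|\cdot\|_{L^p(\partial D;\omega^A_y)}\le C_K^{1/p}\|\cdot\|_{L^p(\partial D;\omega^A_x)}$ on $K$.

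Finally, since $\partial D$ is compact and $\omega^A_x$ is a finite Borel measure, $C(\partial D)$ is dense in $L^p(\partial D;\omega^A_x)$; I choose $\psi_n\in C(\partial D)$ with $\psi_n\to\psi$ in $L^p(\partial D;\omega^A_x)$ and set $u_n(y):=E_y\psi_n(X_{\tau_D})$. By Lemma \ref{lm2.1}, $u_n\in C(D)$. Combining the contraction estimate with the uniform Harnack bound,
\[
\sup_{y\in K}|u(y)-u_n(y)|\le\sup_{y\in K}\|\psi-\psi_n\|_{L^p(\partial D;\omega^A_y)}\le C_K^{1/p}\,\|\psi-\psi_n\|_{L^p(\partial D;\omega^A_x)}\longrightarrow 0,
\]
so $u_n\to u$ uniformly on every compact $K\subset D$. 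A locally uniform limit of continuous functions is continuous, whence $u\in C(D)$.

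The main obstacle is precisely the local uniformity established in the second paragraph. The pointwise absolute continuity of the harmonic measures furnished by Remark \ref{wn2.2} only yields pointwise convergence $u_n\to u$, which is insufficient for continuity; what makes the argument go through is that the Harnack constant $C_K$ can be chosen uniformly over the compact $K$ and \emph{simultaneously} for the whole family $\{\omega^A_y(E)\}_{E\subset\partial D}$, so that it controls the full measure $\omega^A_y$ rather than a single integral.
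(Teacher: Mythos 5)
Your argument follows the paper's proof in all essentials: inequality (\ref{eq2.9}) is Jensen's (H\"older's) inequality for the probability measure $\omega^A_y=P_y(X_{\tau_D}\in\cdot)$, which is what the paper means when it says the estimate ``follows immediately'' from (\ref{eq2.1}); and continuity is obtained exactly as in the paper by choosing $\psi_n\in C(\partial D)$ with $\psi_n\rightarrow\psi$ in $L^p(\partial D;\omega^A_x)$, invoking Lemma \ref{lm2.1} for $u_n\in C(D)$, and combining the contraction estimate with a locally uniform Harnack comparison of the harmonic measures. That you work on a general compact $K$ where the paper works on a ball $B(x,r)$ is immaterial (continuity is local, and on a ball a single Harnack constant suffices, as the paper notes).

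The one place where you deviate is the justification of the uniform bound $\omega^A_y\le C_K\,\omega^A_x$ for $y\in K$. You apply Moser's Harnack inequality directly to $y\mapsto\omega^A_y(E)$ for each Borel $E\subset\partial D$, justifying harmonicity only by the stochastic mean value property $\omega^A_y(E)=E_y\,\omega^A_{X_{\tau_V}}(E)$. Moser's theorem, however, is a statement about weak solutions in $H^1_{loc}$; the mean value property is not by itself its hypothesis, and within this paper the implication ``mean value property $\Rightarrow$ weak solution'' is precisely what Proposition \ref{stw2.1} and Proposition \ref{stw3.1} provide --- and their proofs cite the present corollary, so invoking them here would be circular. (The paper does use your mean-value route later, in Lemma \ref{lm6.1} and Corollary \ref{wn6.1}, but legitimately, since by then Corollary \ref{wn2.3} is available.) The gap is repaired the way the paper's Remark \ref{wn2.2} proceeds: for $\psi\in C(\partial D)$, $\psi\ge 0$, the function $B\psi$ defined via (\ref{eq1.15}) and (\ref{eq.bb}) is a genuine continuous weak solution, so Harnack gives $\int_{\partial D}\psi\,d\omega^A_y\le C_K\int_{\partial D}\psi\,d\omega^A_x$ for all $y\in K$ with $C_K$ independent of $\psi$; since $\omega^A_y$ and $\omega^A_x$ are finite Borel (hence Radon) measures on the compact set $\partial D$, testing against continuous functions already yields $\omega^A_y\le C_K\,\omega^A_x$ as measures, for all Borel $E$ at once. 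With this substitution your proof coincides with the paper's.
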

\begin{dow}
Inequality  (\ref{eq2.9}) follows immediately from (\ref{eq2.1}) and by Remark \ref{wn2.2} the right-hand
side of (\ref{eq2.9}) is finite for every $y\in D$.
Choose $r>0$ so that $B(x,r)\subset D$. By Harnack's inequality, there is $c>0$ such that $\omega^A_y\le c\omega^A_x$ for $y\in B(x,r)$. Choose
$\psi_n\in C(\partial D)$ so that
$\psi_n\rightarrow \psi$ in $L^2(\partial D,\omega^A_x)$  and define  $u_n$ by (\ref{eq2.1}) but with $\psi$ replaced by $\psi_n$. Then by (\ref{eq2.9}),
\[
|u(y)-u_n(y)|\le c   \|\psi-\psi_n\|_{L^p(\partial D;\omega^A_x)},\quad y\in B(x,r).
\]
This implies that $u\in C(D)$ because $u_n\in C(D)$ by Lemma \ref{lm2.1}.
\end{dow}
\medskip

We will see in Section \ref{sec3} that $u$ defined by  (\ref{eq2.1}) is the Perron-Wiener-Brelot solution of the problem DP$(A,\partial D,\psi)$.
Therefore the next corollary generalizes to operators of the form (\ref{eq1.1}) the corresponding result of  \cite{ArendtDaners} proved for  $A=\Delta$.

\begin{wn}
Assume that $\psi\in C(\partial D)$ and there exists $\Psi\in H^1_{loc}(D)\cap C(\overline D)$ such that
$\Psi_{|\partial D}=\psi$ and $-A\Psi\in H^{-1}(D)$. Let $u\in H^1_0(D)$ be such that
\[
-Au=A\Psi\quad\mbox{in}\quad H^{-1}(D).
\]
Then
\[
(u+\Psi)(x)=E_x\psi(X_{\tau_D}),\quad x\in D.
\]
\end{wn}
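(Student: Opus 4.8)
The plan is to put $w:=u+\Psi$, to show that $w$ is weakly $A$-harmonic, and then to reprove the asserted identity by running the argument of Theorem~\ref{tw2.1} with $\Psi$ playing the role of the boundary datum (continuous up to $\partial D$, but only locally in $H^1$). First I rewrite the equation: testing $-Au=A\Psi$ against $v\in C^1_c(D)$ and using $\langle Au,v\rangle=-\EE(u,v)$ and $\langle A\Psi,v\rangle=-\EE(\Psi,v)$ (the latter is meaningful since $\Psi\in H^1_{loc}(D)$ and $v$ has compact support), I obtain $\EE(u,v)=-\EE(\Psi,v)$, i.e. $\EE(w,v)=0$ for every $v\in C^1_c(D)$. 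Thus $-Aw=0$ weakly; by interior regularity (\cite{Nash}) $w$ has a continuous $m$-version on $D$, and $w_{|D'}\in H^1(D')$ for every $D'\subset\subset D$ because $u\in H^1(D)$ and $\Psi\in H^1_{loc}(D)$.

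Next I fix an increasing family of Lipschitz sets $D_n\subset\subset D$ with $\bigcup_nD_n=D$, extend $w_{|D_n}$ to $\tilde w_n\in H^1(\BR^d)$, and apply Fukushima's decomposition exactly as in the proof of Theorem~\ref{tw2.1}. Harmonicity of $w$ on $D_n$ makes the zero-energy part vanish up to $\tau_{D_n}$ (\cite[Theorem 5.4.1]{FOT}), which gives
\[
w(x)=E_xw(X_{\tau_{D_n}}),\qquad \text{q.e. } x\in D_n .
\]
Writing $w(X_{\tau_{D_n}})=u(X_{\tau_{D_n}})+\Psi(X_{\tau_{D_n}})$, Lemma~\ref{lm2.2} applied to a quasi-continuous version of $u\in H^1_0(D)$ gives $u(X_t)\to0$ as $t\nearrow\tau_D$, while $\Psi\in C(\overline D)$ and $X_{\tau_D}\in\partial D$ $P_x$-a.s. give $\Psi(X_{\tau_{D_n}})\to\psi(X_{\tau_D})$; hence $w(X_{\tau_{D_n}})\to\psi(X_{\tau_D})$ $P_x$-a.s. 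Since $\Psi$ is bounded, $E_x\Psi(X_{\tau_{D_n}})\to E_x\psi(X_{\tau_D})$ by dominated convergence, so from the displayed identity the limit $\lim_nE_xu(X_{\tau_{D_n}})=w(x)-E_x\psi(X_{\tau_D})$ exists for every $x\in D$. Consequently it suffices to prove $E_\cdot u(X_{\tau_{D_n}})\to0$ in $L^1(D;m)$: the identity then holds $m$-a.e., and for every $x\in D$ because both $w$ and $E_\cdot\psi(X_{\tau_D})$ are continuous on $D$ (the latter by Lemma~\ref{lm2.1}).

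The crux, and the step I expect to be the main obstacle, is exactly this convergence $E_\cdot u(X_{\tau_{D_n}})\to0$ for $u\in H^1_0(D)$, equivalently the uniform integrability of $\{u(X_{\tau_{D_n}})\}_n$. It cannot be read off from (\ref{eq1.2.1}), because $\Psi\notin H^1(D)$ in general, so $w$ is only locally in $H^1(D)$ and the usual a priori bound on the weak solution is unavailable. I see two routes. The analytic one is to prove the maximum-principle bound $|w|\le\|\Psi\|_{C(\overline D)}=:M$: since $\Psi\le M$ one has $0\le(w-M)^+\le u^+\in H^1_0(D)$, and once $(w-M)^+\in H^1_0(D)$ is known, testing $\EE(w,\cdot)=0$ against it forces $\lambda\|\nabla(w-M)^+\|^2_{L^2(D;m)}\le\EE((w-M)^+,(w-M)^+)=0$, whence $(w-M)^+\equiv0$; together with the symmetric bound this yields $|w|\le M$, and boundedness upgrades the a.s. convergence to $L^1$-convergence. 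Its delicate point is precisely the membership $(w-M)^+\in H^1_0(D)$ (one must check $(w-M)^+\in H^1(D)$ and invoke the domination property of $H^1_0(D)$), which is awkward because $\nabla w\notin L^2(D)$ in general. The alternative, probabilistic route keeps everything in $L^1(m)$: approximate $u$ by $u_k\in C^\infty_c(D)$ in $H^1_0(D)$ and quasi-uniformly, use that $u_k(X_{\tau_{D_n}})=0$ for $n$ large (as $\mathrm{supp}\,u_k\subset\subset D_n$), and control $\sup_nE_m|(u-u_k)(X_{\tau_{D_n}})|$ through the small-capacity exceptional set of the quasi-uniform convergence and hitting-probability estimates, after a truncation $T_K(u)$ to absorb the tail. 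Either way, this uniform-integrability estimate for functions in $H^1_0(D)$ is the heart of the matter; the remaining steps are routine.
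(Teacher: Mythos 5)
Your skeleton is the paper's proof: the paper likewise sets $w=u+\Psi$, observes $w\in C(D)\cap H^1_{loc}(D)$ with $\EE(w,v)=0$, regards $w$ as a weak solution to DP$(A,\partial D_n,w_{|\partial D_n})$ on an exhaustion $\{D_n\}$, applies Theorem \ref{tw2.1} to get $w(x)=E_xw(X_{\tau_{D_n}})$, $x\in D_n$, and then concludes in one line ``by continuity of $\Psi$ and Lemma \ref{lm2.2}'' that $E_xw(X_{\tau_{D_n}})\rightarrow E_x\psi(X_{\tau_D})$. The only place you diverge is that you refuse to take this last limit for granted, and you are right to: since $\Psi\notin H^1(D)$ in general, the bound (\ref{eq1.2.1}) is unavailable, $w$ need not be bounded, and harmonicity plus a.s.\ convergence to a bounded limit does not by itself give convergence of expectations (think of a Poisson-kernel-type positive harmonic function: it tends to $0$ along almost every path, yet its mean value over $\partial D_n$ is constant in $n$); so uniform integrability of $\{u(X_{\tau_{D_n}})\}$ must genuinely come from $u\in H^1_0(D)$, not merely from Lemma \ref{lm2.2}. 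The paper's own toolkit settles exactly this kind of step elsewhere --- in the proofs of Proposition \ref{stw4.1} and Corollary \ref{wn.tr2} --- by quoting \cite[Remark 2.13]{Kl:PA2} together with the Vitali convergence theorem; your probabilistic route (b) is in effect a bare-hands substitute for that citation and is the route to pursue, finishing, as you say, by continuity of $w$ and of $E_\cdot\psi(X_{\tau_D})$ (Lemma \ref{lm2.1}).

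Your analytic route (a), by contrast, stalls where you suspect and in fact a step earlier: not only is the membership $(w-M)^+\in H^1(D)$ unclear, but the pairing $\EE(w,(w-M)^+)$ is not even defined, since $\nabla w\notin L^2(D;m)$ in general. To test the equation against nonsmooth elements of $H^1_0(D)$ you must invoke the standing hypothesis $-A\Psi\in H^{-1}(D)$ --- which your write-up never uses, although it is placed in the statement precisely so that $\EE(u,v)=\langle A\Psi,v\rangle$ extends to all $v\in H^1_0(D)$ --- and even then $\langle -A\Psi,v\rangle$ need not coincide with $\int_D a\nabla\Psi\cdot\nabla v\,dm$ for such $v$ (the integral need not converge absolutely), so the coercivity inequality does not follow as written. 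In sum: same route as the paper; a legitimately flagged lacuna in the paper's one-line limit, best repaired by the citation above or by your route (b); and a route (a) that should be dropped or substantially reworked.
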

\begin{dow}
Let $\{D_n\}\subset\subset D$ be an increasing sequence such that $\bigcup_{n\ge 1} D_n$, and let $w=u+\Psi$. Then $w\in C(D)\cap H^1_{loc}(D)$ and $\EE(u,v)=0$ for $v\in C_c^2(D)$, so
$w$ is a weak solution to DP$(A,\partial D_n,w_{|\partial D_n})$. By Theorem \ref{tw2.1},
\[
w(x)=E_xw(X_{\tau_{D_n}}),\quad x\in D_n.
\]
By continuity of $\Psi$ and Lemma \ref{lm2.2}, $E_xw(X_{\tau_{D_n}})\rightarrow E_x\psi(X_{\tau_D}),\, x\in D$, which proves the corollary.
\end{dow}

Let $\bar\sigma$ denote the symmetric square root of $a$, i.e.
\begin{equation}
\label{eq2.12}
\bar\sigma(x)=a^{1/2}(x),\quad x\in \BR^d.
\end{equation}

\begin{stw}
\label{stw2.1.1}
Let $\psi\in C(\partial D)$ and let $u$ be a weak solution to \mbox{\rm(\ref{eq1.3})}. Then for every $x\in D$,
\begin{equation}
\label{eq2.333}
u(X_t)=\psi(X_{\tau_D})-\int_t^{\tau_D}\bar\sigma\nabla u(X_r)\,dB_r,\quad t\le \tau_D,\quad P_x\mbox{-a.s.}
\end{equation}
\end{stw}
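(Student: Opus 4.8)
The plan is to sharpen the argument of Theorem \ref{tw2.1}: instead of taking expectations in Fukushima's decomposition and discarding the martingale term, I would keep that term and identify it as an It\^o integral driven by $B$. The key input is the semimartingale representation of $\BX$ from \cite{Ro:SM}: for the uniformly elliptic divergence form operator $A$, and for \emph{every} starting point $x$, there is a Brownian motion $B$ such that the martingale additive functional in Fukushima's decomposition of $w(X_\cdot)$ equals $\int_0^\cdot\bar\sigma\nabla w(X_r)\,dB_r$ for $w\in H^1(\BR^d)$, with $\bar\sigma$ as in \eqref{eq2.12}. It is precisely the fact that this holds for every $x$, and not merely for q.e.\ $x$, that will yield the conclusion ``for every $x\in D$''.

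First I would record the corresponding identity on the exhausting sets, valid for an arbitrary weak solution $u$ and any $\psi\in C(\partial D)$. Keeping the increasing sequence of Lipschitz sets $\{D_n\}\subset\subset D$ with $\bigcup_n D_n=D$ from the proof of Theorem \ref{tw2.1}, extend $u_{|D_n}$ to $\tilde u_n\in H^1(\BR^d)$. Combining decomposition \eqref{eq2.2}, the vanishing of its zero-energy part on $[0,\tau_{D_n}]$ (\cite[Theorem 5.4.1]{FOT}), the identification of the martingale part recalled above, and the equality $\nabla\tilde u_n=\nabla u$ on $D_n$, I obtain
\[
u(X_s)-u(X_t)=\int_t^s\bar\sigma\nabla u(X_r)\,dB_r,\qquad t\le s\le\tau_{D_n},\quad P_x\mbox{-a.s.}
\]
for every $x\in D_n$. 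Since the integrands coincide on overlapping intervals, these identities are consistent in $n$ and determine a single local martingale $t\mapsto\int_0^t\bar\sigma\nabla u(X_r)\,dB_r$ on $[0,\tau_D)$.

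It then remains to let $s=\tau_{D_n}\nearrow\tau_D$, for which I need the boundary convergence $u(X_s)\to\psi(X_{\tau_D})$ $P_x$-a.s. For $\psi\in H^1(D)\cap C(\overline D)$ this is already contained in the proof of Theorem \ref{tw2.1}, via Lemma \ref{lm2.2} applied to $u-\psi\in H^1_0(D)$ together with $\psi\in C(\overline D)$. For general $\psi\in C(\partial D)$, I would approximate by $\psi_n\in H^1(D)\cap C(\overline D)$ with $\psi_n\to\psi$ uniformly on $\partial D$ and pass to the limit: by \eqref{eq1.2.1} the corresponding solutions satisfy $\sup_D|u-u_n|\to 0$, so combining $\limsup_{s\nearrow\tau_D}|u_n(X_s)-\psi_n(X_{\tau_D})|=0$ with the uniform convergence of $\psi_n$ gives $u(X_s)\to\psi(X_{\tau_D})$. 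Since $u$ is bounded by \eqref{eq1.2.1}, the convergence of $u(X_{\tau_{D_n}})$ to $\psi(X_{\tau_D})$ forces $\lim_{s\nearrow\tau_D}\int_t^s\bar\sigma\nabla u(X_r)\,dB_r$ to exist and to equal $\psi(X_{\tau_D})-u(X_t)$, which is exactly \eqref{eq2.333} after rearrangement.

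The main obstacle is the first step, namely the production of the driving Brownian motion $B$ and the identification of the martingale additive functional of $w(X_\cdot)$ with $\int\bar\sigma\nabla w\,dB$ for every starting point; this rests on the semimartingale theory for divergence form operators in \cite{Ro:SM,Ro:Stochastics} and on matching the energy measure of $w$ with the quadratic variation of the It\^o integral. Once this representation is available, the remaining points are routine: the consistency of the local martingales across the sets $D_n$, and the identification of the limit of the stochastic integrals, which is handed to us by the already established boundary convergence of $u(X_s)$ rather than by any direct estimate on $\bar\sigma\nabla u$ near $\partial D$.
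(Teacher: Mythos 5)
Your proposal takes essentially the same route as the paper: the same Lipschitz exhaustion $\{D_n\}$, the same appeal to \cite{Ro:SM} for the representation (\ref{eq2.13}) valid for \emph{every} starting point (this is indeed where ``for every $x\in D$'' comes from), and the same boundary convergence $u(X_t)\rightarrow\psi(X_{\tau_D})$ obtained from Lemma \ref{lm2.2} plus the uniform estimate (\ref{eq1.2.1}). The only deviation is the last step for general $\psi\in C(\partial D)$, and it is harmless: you pass to the pathwise limit of the stochastic integrals for $u$ itself (legitimate, since a.s.\ convergence of a continuous local martingale forces $\int_0^{\tau_D}|\bar\sigma\nabla u(X_r)|^2\,dr<\infty$ a.s., so the integral $\int_t^{\tau_D}$ is well defined), whereas the paper instead lets $n\rightarrow\infty$ in the identities (\ref{eq2.333bis}) for the approximating solutions $u_n$ via the It\^o-isometry identity (\ref{eq2.1317}) and the Burkholder--Davis--Gundy inequality, which in passing yields the $L^2$ bound (\ref{eq.mar}) that is reused in Proposition \ref{stw2.1}.
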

\begin{dow}
We first assume that $\psi\in C(\overline D)\cap H^1(D)$. Let $\{D_n\}$ be an increasing sequence of bounded open Lipschitz subsets of $D$ such that $D_n\subset\subset D$ and
$\bigcup_{n\ge 1} D_n=D$. It is clear that $u$ is a  weak solution to DP$(A,D_n,u_{|\partial D_n})$. By \cite{Ro:SM}, there is a Wiener process $B$ such that
\begin{equation}
\label{eq2.13}
u(X_t)=u(X_{\tau_{D_n}})-\int_t^{\tau_{D_n}}\bar\sigma\nabla u(X_r)\,dB_r,\quad t\le \tau_{D_n},\quad P_x\mbox{-a.s.},\quad x\in D_n.
\end{equation}
By (\ref{eq1.2.1}),  $u$ is bounded. Therefore, by using  the Burkholder-Davis-Gundy inequality,  it may be concluded from the above equation that there exists $c>0$ depending only on $\|u\|_\infty$ such that
\[
E_x\int_0^{\tau_{D_n}}|\bar\sigma\nabla u(X_r)|^2\,dr\le c,\quad x\in D_n.
\]
Applying  Fatou's  lemma gives
\begin{equation}
\label{eq.mar}
E_x\int_0^{\tau_D}|\bar\sigma\nabla u(X_r)|^2\,dr\le c,\quad x\in D.
\end{equation}
By Lemma \ref{lm2.2} and regularity of $\psi$, we have $u(X_t)\rightarrow \psi(X_{\tau_D}),\, t\nearrow \tau_D$ $P_x$-a.s. for $x\in D$. Therefore letting $n\rightarrow\infty$ in (\ref{eq2.13}) we obtain (\ref{eq2.333}).

We now assume that $\psi\in C(\partial D)$. Choose a sequence  $\{\psi_n\}\subset C(\overline D)\cap H^1(D)$ so that $\psi_n\rightarrow \psi$ uniformly on $\partial D$. Let $u_n$ be a weak solution to (\ref{eq1.3}) with $\psi$ replaced by $\psi_n$. By what has aready been proved,
\begin{equation}
\label{eq2.333bis}
u_n(X_t)=\psi_n(X_{\tau_D})-\int_t^{\tau_D}\bar\sigma\nabla u_n(X_r)\,dB_r,\quad t\le \tau_D,\quad P_x\mbox{-a.s.}
\end{equation}
By (\ref{eq1.2.1}), $u_n\rightarrow u$ pointwise and  in $H^1_{loc}$. Moreover, by It\^o's formula,
\begin{equation}
\label{eq2.1317}
|u_n(x)-u_m(x)|^2+E_x\int_0^{\tau_D}|\bar\sigma\nabla(u_n-u_m)|^2(X_r)\,dr= E_x|\psi_n-\psi_m|^2(X_{\tau_D})
\end{equation}
for every $x\in D$. Therefore  letting $n\rightarrow \infty$ in (\ref{eq2.333bis}) and using  the Burkholder-Davis-Gundy inequality we get the desired result.
\end{dow}

\begin{uw}
Observe that by taking expectation in (\ref{eq2.333}) with $t=0$ (it is clear that the process $\int_0^\cdot\bar\sigma\nabla u(X_r)\,dB_r$ is a bounded martingale) we get (\ref{eq2.1}).
\end{uw}
\medskip

In what follows,
\[
\delta(x):= R^D1(x)=E_x\tau_D,\quad x\in D.
\]

\begin{stw}
\label{stw2.1}
Let $\psi\in L^2(\partial D;\omega^A_m)$, and let  $u$ be defined by  \mbox{\rm(\ref{eq2.1})}. Then
\begin{enumerate}
\item [\rm(i)] $u\in \breve H^1_\delta(D)\cap C(D)$, where $\breve H^1_{\delta}(D)$ is defined by \mbox{\rm(\ref{eq1.16})}.
\item [\rm(ii)] $\EE(u,v)=0$ for every $ v\in C_c^\infty(D)$.
\item[\rm(iii)] \mbox{\rm(\ref{eq1.17})}  holds true
 for every $x\in D$ and  every increasing sequence $\{D_n\}$ of open subsets of $D$ such that $\{D_n\}\subset\subset D$ and  $\bigcup_{n\ge 1} D_n=D$.

\item[\rm(iv)] $\|g^{1/2}_D(x,\cdot)\nabla u\|_{L^2(D;m)}\le \lambda^{-1}\|\psi\|_{L^2(\partial D;\omega^A_x)}$ for every $x\in D$.
\item [\rm(v)] $\|u\|_{\breve H^1_\delta}\le (1\wedge \lambda)^{-1}\|\psi\|_{L^2(\partial D;\omega^A_m)}$.

 \item[\rm(vi)] \mbox{\rm(\ref{eq2.333})} is satisfied  for every $x\in D$.

 \item [\rm(vii)] $E_x\sup_{t<\tau_D}|u(X_t)|^2\le 4\|\psi\|^2_{L^2(\partial D;\omega^A_x)}$ for every $x\in D$.
\end{enumerate}
\end{stw}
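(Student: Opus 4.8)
The plan is to reduce everything to the case of continuous boundary data treated in Theorem \ref{tw2.1} and Proposition \ref{stw2.1.1}, using a single approximating sequence. We may assume $D$ is connected (otherwise we argue on each connected component, since the process started at $x$ stays in the component of $x$ up to $\tau_D$). Fix $x_0\in D$. Since $\|\psi\|^2_{L^2(\partial D;\omega^A_m)}=\int_D\|\psi\|^2_{L^2(\partial D;\omega^A_x)}\,m(dx)$ by (\ref{eq1.13}), we have $\psi\in L^2(\partial D;\omega^A_x)$ for $m$-a.e.\ $x$, hence by Remark \ref{wn2.2} for every $x\in D$; in particular $\psi\in L^2(\partial D;\omega^A_{x_0})$. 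Choose $\psi_n\in C(\partial D)$ with $\psi_n\to\psi$ in $L^2(\partial D;\omega^A_{x_0})$ and set $u_n:=E_\cdot\psi_n(X_{\tau_D})$, so that by Theorem \ref{tw2.1} each $u_n$ equals the weak solution $B\psi_n$ and by Proposition \ref{stw2.1.1} satisfies (\ref{eq2.333}). By Harnack's inequality (\ref{harnack}), $\|\psi_n-\psi\|_{L^2(\omega^A_x)}\le c_x^{1/2}\|\psi_n-\psi\|_{L^2(\omega^A_{x_0})}\to 0$ for every $x$, so all the convergences below hold pointwise in $x$.

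The engine is the energy identity for continuous data. Applying It\^o's formula to (\ref{eq2.333}), exactly as in (\ref{eq2.1317}), gives for every $x\in D$
\[
|u_n(x)|^2+E_x\int_0^{\tau_D}|\bar\sigma\nabla u_n(X_r)|^2\,dr=\|\psi_n\|^2_{L^2(\partial D;\omega^A_x)},
\]
together with its difference version (with $u_n-u_m$ and $\psi_n-\psi_m$). Rewriting the middle term through (\ref{eq2.as}) as $\int_Dg_D(x,y)|\bar\sigma\nabla u_n(y)|^2\,m(dy)$, the difference identity shows that $\{u_n(x)\}$ is Cauchy for each $x$ and that $\{g_D^{1/2}(x,\cdot)\bar\sigma\nabla u_n\}$ is Cauchy in $L^2(D;m)$; by ellipticity (\ref{eq1.12}) and positivity of $\delta$ on compacta this forces $\{u_n\}$ to be Cauchy in $H^1_{loc}(D)$ and in $\breve H^1_\delta(D)$. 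Denoting the pointwise limit by $u$ (which coincides with the function defined by (\ref{eq2.1})), passing to the limit yields, for every $x\in D$,
\[
|u(x)|^2+\int_Dg_D(x,y)|\bar\sigma\nabla u(y)|^2\,m(dy)=\|\psi\|^2_{L^2(\partial D;\omega^A_x)}.
\]

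From this master identity the estimates follow. Dropping $|u(x)|^2$ and using (\ref{eq1.12}) in the form $\lambda|\nabla u|^2\le|\bar\sigma\nabla u|^2$ gives (iv). Integrating the identity in $x$ over $D$ and using the symmetry of the Green function, $\int_Dg_D(x,y)\,m(dx)=R^D1(y)=\delta(y)$, together with $\int_D\|\psi\|^2_{L^2(\omega^A_x)}\,m(dx)=\|\psi\|^2_{L^2(\omega^A_m)}$, gives $\|u\|^2_{L^2(D;m)}+\lambda\|\sqrt\delta\nabla u\|^2_{L^2(D;m)}\le\|\psi\|^2_{L^2(\partial D;\omega^A_m)}$, whence (v) and, in particular, $u\in\breve H^1_\delta(D)$. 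Continuity of $u$ (the remaining part of (i)) follows as in Corollary \ref{wn2.3}: on a ball $B(x_0,r)\subset\subset D$ Harnack's inequality makes $u_n\to u$ uniform, and each $u_n\in C(D)$. Claim (ii) is immediate from $\EE(u_n,v)=0$ and $u_n\to u$ in $H^1_{loc}(D)$.

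It remains to treat the probabilistic statements. For (iii), claims (i)--(ii) show that $u\in H^1(D_n)\cap C(\overline{D_n})$ is $A$-harmonic, so Theorem \ref{tw2.1} applied on $D_n$ gives the mean-value identity $\int_{\partial D_n}u\,\omega^A_{x,D_n}(dy)=E_xu(X_{\tau_{D_n}})=u(x)$ for every $x\in D_n$; since the right-hand side of (\ref{eq1.17}) equals $E_x\psi(X_{\tau_D})=u(x)$ by (\ref{eq2.1}), (\ref{eq1.17}) holds (the sequence being eventually constant). For (vi) I would pass to the limit in (\ref{eq2.333}) written for $u_n$ (with the common Wiener process $B$): the stochastic integrals converge because $E_x\int_0^{\tau_D}|\bar\sigma\nabla(u_n-u)|^2\,dr\to0$, and the Burkholder--Davis--Gundy inequality upgrades this to uniform convergence on $[0,\tau_D]$, exactly as at the end of the proof of Proposition \ref{stw2.1.1}. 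Finally, (vii) follows from (vi): rearranging (\ref{eq2.333}) shows that $t\mapsto u(X_{t\wedge\tau_D})=u(x)+\int_0^{t\wedge\tau_D}\bar\sigma\nabla u(X_r)\,dB_r$ is an $L^2$-bounded martingale closing at $\psi(X_{\tau_D})$, so Doob's $L^2$ maximal inequality gives $E_x\sup_{t<\tau_D}|u(X_t)|^2\le4E_x|\psi(X_{\tau_D})|^2=4\|\psi\|^2_{L^2(\omega^A_x)}$. The main obstacle I anticipate is organizing the limit so that the pointwise (``for every $x$'') statements (iii), (iv), (vi), (vii) and the global $\breve H^1_\delta$-bound (v) are all obtained from one and the same sequence; this is precisely what forces the combination of Harnack's inequality (to transport $L^2(\omega^A_{x_0})$-convergence to every $x$) with the Green-function symmetry (to pass from the pointwise identity to the integrated bound).
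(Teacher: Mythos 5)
Your proposal is correct and follows essentially the same route as the paper's proof: approximation of $\psi$ by continuous $\psi_n$, the representations from Theorem \ref{tw2.1} and Proposition \ref{stw2.1.1}, the It\^o energy identity (\ref{eq2.1317})/(\ref{eq2.7}) to pass to the limit, and then BDG for (vi) and Doob's $L^2$-inequality for (vii). The only differences are organizational --- you fix convergence in $L^2(\partial D;\omega^A_{x_0})$ and transport it by Harnack, then integrate the pointwise identity over $x$ using $\int_D g_D(x,y)\,m(dx)=\delta(y)$, whereas the paper takes $\psi_n\rightarrow\psi$ in $L^2(\partial D;\omega^A_m)$ and extracts a subsequence converging for every $x$ --- and both variants rest on exactly the same identities, so this is a presentational rather than mathematical divergence.
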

\begin{dow}
Choose  $\{\psi_n\}\subset C(\partial D)$ so that  $\psi_n\rightarrow \psi$ in $L^2(\partial D;\omega^A_m)$. Note that by Remark \ref{wn2.2}, $\psi\in L^2(\partial D;\omega^A_x)$, $x\in D$, and by (\ref{harnack}) (up to subsequence),
$\psi_n\rightarrow \psi$ in $L^2(\partial D;\omega^A_x)$ for every $x\in D$.
By  Corollary \ref{wn2.3}, $u\in C(D)$.
Let $u_n$ be given by (\ref{eq2.1}) with $\psi$ replaced by $\psi_n$. By Proposition \ref{stw2.1.1}, for all $n\ge1$ and $x\in D$ we have
\begin{equation}
\label{eq2.6}
u_n(X_t)=\psi_n(X_{\tau_D})+\int_t^{\tau_{D}}\bar\sigma\nabla u_n(X_r)\,dB_r,\quad t\in [0,\tau_{D}],\quad P_x\mbox{-a.s.}
\end{equation}
Using  It\^o's formula, we obtain (\ref{eq2.1317}), from which we deduce that $u\in \breve H^1_\delta(D)$ and $u_n\rightarrow u$ in $\breve H^1_\delta(D)$. Therefore letting $n\rightarrow\infty$ in (\ref{eq2.6})
and arguing as in the proof of Proposition \ref{stw2.1.1} we get (vi). By It\^o's formula,
\begin{equation}
\label{eq2.7}
|u(x)|^2+E_x\int_0^{\tau_{D}}|\bar\sigma\nabla u|^2(X_r)\,dr=E_x|\psi(X_{\tau_{D}})|^2,\quad x\in D.
\end{equation}
Assertions (iv) and (v) follow from (\ref{eq2.7}). As for (ii), we know from Theorem \ref{tw2.1} that it holds for $u_n$. By (v), $u_n\rightarrow u$
in $H^1_{loc}(D)$, so (ii) holds for $u$, too.  Assertion (iii) follows easily from (vi). Inequality (vii) follows from Doob's $L^2$-inequality.
\end{dow}

\begin{df}
We say that $u$ is a soft solution  of DP$(A,\partial D,\psi)$ if (i)--(iii) of Proposition \ref{stw2.1} are satisfied.
\end{df}

\begin{stw}
\label{stw2.2}
For every $\psi\in L^2(\partial D;\omega^A_m)$ there exists a unique  soft solution to \mbox{\rm DP}$(A,\partial D,\psi)$.
\end{stw}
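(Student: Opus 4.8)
The plan is to derive both existence and uniqueness directly from the results already established, with Theorem~\ref{tw2.1} doing the real work in the uniqueness part.

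For existence I would simply take $u(x)=E_x\psi(X_{\tau_D})$, the function defined by~(\ref{eq2.1}). One first has to check that this is meaningful pointwise: since $\psi\in L^2(\partial D;\omega^A_m)$ and $\omega^A_m=\int_D\omega^A_x\,m(dx)$, we have $\psi\in L^2(\partial D;\omega^A_x)$ for $m$-a.e. $x\in D$, and then Remark~\ref{wn2.2} (Harnack) propagates integrability to every point in the connected component of such an $x$; as every component has positive Lebesgue measure, $u$ is well defined on all of $D$. Proposition~\ref{stw2.1}(i)--(iii) then says precisely that this $u$ satisfies (i)--(iii), i.e. $u$ is a soft solution.

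For uniqueness, let $u_1,u_2$ be two soft solutions and put $w=u_1-u_2$. By linearity and (i)--(ii), $w\in\breve H^1_\delta(D)\cap C(D)\subset H^1_{loc}(D)\cap C(D)$ and $\EE(w,v)=0$ for $v\in C_c^\infty(D)$; moreover, subtracting the two instances of~(\ref{eq1.17}) (condition (iii)) for $u_1$ and $u_2$ gives, for every $x\in D$ and every exhausting sequence $\{D_n\}\subset\subset D$ with $\bigcup_n D_n=D$,
\[
\int_{\partial D_n}w(y)\,\omega^A_{x,D_n}(dy)\longrightarrow 0.
\]
Now fix such a sequence, say of Lipschitz sets. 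For each $n$, the restriction $w_{|D_n}$ lies in $H^1(D_n)\cap C(\overline{D_n})$ and satisfies $\EE(w,v)=0$ for all $v\in C_c^\infty(D_n)\subset C_c^\infty(D)$, so $w$ is the weak solution of wDP$(A,D_n,w)$; hence Theorem~\ref{tw2.1} applied on $D_n$ with the continuous boundary datum $w_{|\partial D_n}$, together with Corollary~\ref{wn2.1} for $D_n$, yields
\[
w(x)=E_xw(X_{\tau_{D_n}})=\int_{\partial D_n}w(y)\,\omega^A_{x,D_n}(dy),\qquad x\in D_n.
\]
Since the left-hand side does not depend on $n$ while the right-hand side tends to $0$, we conclude $w(x)=0$ for every $x\in D$, which gives uniqueness.

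The argument is essentially a bookkeeping assembly of earlier results; the only point deserving care is the justification that $w$ genuinely qualifies as a weak solution on each $D_n$, so that Theorem~\ref{tw2.1} is applicable. This rests on $D_n\subset\subset D$ together with $w\in H^1_{loc}(D)\cap C(D)$, which guarantees both $w\in H^1(D_n)$ and continuity of $w$ up to $\partial D_n$. A secondary subtlety worth noting is that condition (iii) is required to hold for \emph{every} $x\in D$ (not merely $m$-a.e.), which is exactly what permits evaluating the identity above at an arbitrary point.
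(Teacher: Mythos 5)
Your proof is correct and follows essentially the same route as the paper: existence by taking $u(x)=E_x\psi(X_{\tau_D})$ and invoking Proposition \ref{stw2.1}, and uniqueness by applying Theorem \ref{tw2.1} together with Corollary \ref{wn2.1} to the difference $w=u_1-u_2$ on an exhausting sequence $\{D_n\}$ and letting condition (iii) drive the boundary terms to zero. Your added remarks (why $w\in H^1(D_n)\cap C(\overline{D_n})$ qualifies as a weak solution on each $D_n$, and that (iii) holds for \emph{every} $x\in D$) merely make explicit what the paper leaves implicit.
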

\begin{dow}
The existence part follows from Proposition \ref{stw2.1}.
To prove uniqueness, suppose that $u_1,u_2$  are soft solutions to DP$(A,\partial D,\psi)$. Write $u=u_1-u_2$ and consider an increasing sequence $\{D_n\}$ of open subsets of $D$ such that
$\{D_n\}\subset\subset D$ and  $\bigcup_{n\ge 1} D_n=D$. Since $u\in H^1(D_n)\cap C(\overline D_n)$,
\[
u(x)=E_xu(X_{\tau_{D_n}}),\quad x\in D_n
\]
by Theorem \ref{tw2.1}. Therefore, by Corollary \ref{wn2.1} and (\ref{eq1.17}),
 the right-hand side of the above inequality converges to zero as $n\rightarrow\infty$. This proves that $u_1=u_2$.
\end{dow}

\begin{uw}
By Theorem \ref{tw2.1} and Proposition \ref{stw2.1}, the
definitions of soft solution and weak solution agree for $\psi\in
C(\partial D)$. However, be careful! When $D$ is a Lipschitz
domain, then one can define a weak solution to DP$(A,\partial
D,\psi)$ for $\psi\in H^{1/2}(\partial D)$ as a function $u\in
H^1(D)$ such that (\ref{eq1.2.0}) is satisfied and Tr$(u)=\psi$,
where Tr is the usual trace operator. In general, $u$ defined in
this way does not have the property formulated in Proposition
\ref{stw2.1}(iii). The reason is  that the trace is defined
$\sigma$-a.e. ($\sigma$ is the Lebesgue surface measure on
$\partial D$), so the weak solution to DP$(A,D,\psi)$ defined via
the operator Tr does not depend on the $\sigma$-version of $\psi$.
This is not true for soft solutions because they are defined by
(\ref{eq2.1}), and in general, the  measure $\omega^A_m$ is not
absolutely continuous with respect to $\sigma$ (in fact, it may be
completely singular, see \cite{CFK,ModicaMortola}). In different
words, if $\psi_1=\psi_2$ $\sigma$-a.e., then weak solutions of
DP$(A,\partial D,\psi_i)$ defined via the trace operator  Tr are
equal, but it may happen that $u_1(x)\neq u_2(x)$, $x\in D$, where
$u_1, u_2$ are defined by (\ref{eq2.1}) with $\psi$ replaced by
$\psi_1$ and $\psi_2$, respectively. The definition of soft
solution is just more sensitive to the boundary values.
In Section \ref{sec4} we will
show that, if $u$ is a weak solution to DP$(A,\partial D,\psi)$
defined via  the trace operator Tr, then it is a soft solution of
DP,  but with some specially chosen  $\sigma$-version of $\psi$
(defined $\omega^A_m$-a.e.).
\end{uw}

\section{Perron-Wiener-Brelot solutions} \label{sec3}

We begin with recalling some  notions from \cite{BH}. Let $E$ be a locally compact
separable metric space.
For a given class of functions $F\subset \BB^+(E)$, we set
$S(F)=\{\sup_{n\ge1} f_n:\{f_n,n\ge 1\}\subset F\}$. We say that
$F$ is $\sigma$-stable if $S(F)=F$. Let $\mathsf{W}$ be a convex
cone of positive numerical lower semi-continuous (l.s.c) functions
on $E$. By $\mathcal{T}^f$ we denote the smallest topology under
which all functions from $\mathsf{W}$ are continuous (so called
fine topology). For  $u\in\BB(E)$, we denote  by $\hat{u}^f$  its
l.s.c. regularization with respect to the topology
$\mathcal{T}^f$.

We say that a pair  $(E,\mathsf{W})$ is the balayage space if
\begin{enumerate}
\item[$(B_1)$] $\mathsf{W}$ is $\sigma$-stable,
\item[$(B_2)$]  for every  $\mathsf{V}\subset \mathsf{W}$, $\widehat{\inf \mathsf{V}}^f\in\mathsf{W}$,
\item[$(B_3)$] for all $u,v,w\in\mathsf{W}$ such that $u\le v+w$ there exist $\bar{v},\bar{w}\in\mathsf{W}$
such that $u=\bar{v}+\bar{w},\, \bar{v}\le v,\, \bar{w}\le w$,
\item[$(B_4)$]  there exists a function cone $\mathsf{P}\subset C^+(E)$ such that $S(\mathsf{P})=\mathsf{W}$.
\end{enumerate}

\begin{prz}
\label{prz3.1}
Let $\{P_t,\, t\ge 0\}$ be the semigroup generated by the operator (\ref{eq1.1}). By \cite[Corollary II.4.9]{BH}, the pair $(E_\mathbb{P},\BR^d)$, where $E_\mathbb{P}$ is
the set of excessive functions on $\BR^d$  defined as $E_\mathbb{P}=\{f\in\BB^+(\BR^d):\sup_{t>0}P_tf=f\}$, is a balayage space.
\end{prz}

It is well known  (see \cite[Section III.2]{BH}) that every balayage space $(E,\mathsf{W})$ generates naturally
the so called harmonic kernel $\{H_U,\, U\subset E, U-\mbox{open}\}$. By $^*H(U)$ we denote the set of all hyperharmonic functions on $U$, i.e.
\[
^*H(U)=\{u\in\BB(E);\, u_{|U}\,\,\mbox{is l.s.c.},\,\,-\infty <H_V(u)(x)\le u(x),\, x\in V,\, V-\mbox{open},\, \overline{V}\subset U\}.
\]
The class $H(U)$  of harmonic functions is defined by
\begin{equation}
\label{eq3.6} H(U)=\,^*H(U)\cap(-^*H(U)).
\end{equation}
For $\psi\in \BB^+(E)$,  we set
\[
\mathcal{U}^U_\psi=\{u\in  {^{*}H(U)}:u\mbox{ is lower bounded on
}  U\mbox {and }\liminf_{y\rightarrow x} u(y)\ge\psi(x),\,
x\in\partial U\},
\]
\[
 \mathcal L^{U}_\psi=-\mathcal U^{U}_{-\psi},
\]
and we define operators $\overline{H}^U$ and $\underline{H}^U$ by
\[
\overline{H}^U\psi\equiv \inf \mathcal U^U_\psi,\quad \underline{H}^U \psi\equiv\sup \mathcal L^U_\psi.
\]
If $\overline{H}^U\psi= \underline{H}^U \psi$, we set $H^U\psi= \underline{H}^U \psi$.

\subsection{Linear equations}

From now on we consider the  balayage space from  Example
\ref{prz3.1}. As in Section \ref{sec2}, $D$ is an arbitrary
bounded open subset of $\BR^d$. By \cite[Theorem VI.3.16]{BH}, for
all bounded open $V\subset\BR^d$ and $u\in\BB^+(\BR^d)$,
\begin{equation}
\label{eq3.1}
H_V(u)(x)=E_xu(X_{\tau_V}),\quad x\in V.
\end{equation}

\begin{df}
Let $\psi\in\BB(\partial D)$. We say that $u:D\rightarrow\BR$ is a Perron-Wiener-Brelot solution of DP$(A,\partial D,\psi)$
if $u=\overline{H}^D\psi=\underline{H}^D \psi$.
\end{df}

\begin{stw}
\label{stw3.1}
Assume that $u\in\BB(D)$ is lower bounded and finite $m$-a.e. Then $u\in\, ^*H(D)$ if and only if  there exists a  positive Borel measure $\mu$ on $D$ such that
\begin{equation}
\label{eq3.2}
u-R^D\mu\in H^1_{loc}(D),\qquad \EE(u-R^D\mu,v)=0,\quad v\in C_c^1(D).
\end{equation}
\end{stw}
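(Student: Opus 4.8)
The plan is to prove the two implications separately, the construction of the measure $\mu$ in the forward direction being the substantial part. For the implication $\Leftarrow$, I would be given the decomposition and set $h:=u-R^D\mu\in H^1_{loc}(D)$ with $\EE(h,v)=0$ for $v\in C_c^1(D)$, and deduce $u\in{}^{*}H(D)$. The potential $R^D\mu$ is excessive for the part process $\BX^D$, hence lower semicontinuous on $D$ and, by (\ref{eq3.1}), satisfies $H_V(R^D\mu)\le R^D\mu$ on every $V$ with $\overline V\subset D$. The harmonic part $h$ is a weak solution of $-Ah=0$, so by the De~Giorgi--Nash--Moser estimates it has a continuous, locally bounded $m$-version; restricting to a Lipschitz $V\subset\subset D$ and applying Theorem~\ref{tw2.1} to the data $h_{|\partial V}\in C(\partial V)$ gives $h(x)=E_xh(X_{\tau_V})=H_V(h)(x)$. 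Adding the two, $u=h+R^D\mu$ is lower semicontinuous, finite wherever $R^D\mu$ is, and $H_V(u)=H_V(h)+H_V(R^D\mu)\le h+R^D\mu=u$, while $H_V(u)\ge h>-\infty$; hence $u\in{}^{*}H(D)$.

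For the implication $\Rightarrow$, since constant functions lie in $H(D)$ (indeed $H_V(1)=1$ because $X_{\tau_V}\in\partial V\subset D$ whenever $\overline V\subset D$), I may add a constant and assume $u\ge 0$. Then $u$ is a positive hyperharmonic, hence excessive, function of $\BX^D$, and being finite $m$-a.e. it is finite q.e. and belongs to $L^1_{loc}(D)$. The key step is the Riesz decomposition of $u$: there exist a unique positive Borel (Riesz) measure $\mu$ on $D$ and a harmonic function $h$, the greatest harmonic minorant of $u$, such that $u=R^D\mu+h$ with $R^D\mu\le u$. I would obtain $\mu$ either by invoking the Riesz decomposition theorem for excessive functions of a transient process (\cite{BG}) or, more explicitly, as the vague limit on $D$ of the positive measures $\mu_\beta=\beta(u-\beta R^D_\beta u)\cdot m$, for which the resolvent identity yields $R^D\mu_\beta=\beta R^D_\beta u\uparrow u$.

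To recover the stated form it then remains to check that the harmonic minorant $h=u-R^D\mu\ge 0$ is a weak solution. Being positive and harmonic it is locally bounded by Harnack's inequality and continuous by De~Giorgi--Nash--Moser; the identity $H_V(h)=h$ on Lipschitz $V\subset\subset D$, together with Theorem~\ref{tw2.1}, identifies $h_{|V}$ with the weak solution $B(h_{|\partial V})\in H^1(V)$, whence $\EE(h,v)=0$ for $v\in C_c^1(V)$. Exhausting $D$ by such $V$ gives $h\in H^1_{loc}(D)$ and $\EE(u-R^D\mu,v)=0$ for all $v\in C_c^1(D)$, which is (\ref{eq3.2}).

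I expect the main obstacle to be the construction and identification of the Riesz measure. In the resolvent approximation the total mass of $\mu_\beta$ need not stay bounded, and the portion escaping toward the boundary $\partial_M D$ is precisely what produces the harmonic part $h$; controlling this mass escape---equivalently, proving vague convergence of $\mu_\beta$ on compact subsets of $D$ and that $R^D\mu$ is the largest potential dominated by $u$---is the delicate point on which both existence and uniqueness of the decomposition ultimately rest.
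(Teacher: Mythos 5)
Your overall route coincides with the paper's: reduce to $u\ge 0$ (harmless, since $H_V1=1$), invoke \cite[Corollary II.5.3]{BG} to see that $u$ is excessive for the killed process, apply the Riesz decomposition $u=R^D\mu+h$ (the paper cites \cite{GetoorGlover}; your resolvent approximation $\beta(u-\beta R^D_\beta u)\cdot m$ is the standard proof of that theorem, so nothing new is gained or lost there), and, in the converse direction, combine Nash's continuity theorem for $u-R^D\mu$ with Theorem~\ref{tw2.1} and (\ref{eq3.1}). That converse half of your argument is sound, since there $u-R^D\mu\in H^1_{loc}(D)$ with $\EE(u-R^D\mu,v)=0$ is a hypothesis, so De Giorgi--Nash applies legitimately.

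The one genuine flaw is in the forward direction, where you obtain local boundedness and continuity of the harmonic part $h$ from ``Harnack's inequality and De Giorgi--Nash--Moser''. At that point $h$ is harmonic only in the probabilistic/balayage sense ($H_Vh=h$); Moser's Harnack inequality and the De Giorgi--Nash continuity theorem are statements about $H^1_{loc}$ weak solutions of $-Au=0$, and that $h$ is such a weak solution is precisely what remains to be proved --- indeed, the equivalence ``harmonic in the sense of (\ref{eq3.6}) iff $H^1_{loc}$ weak solution'' is the Corollary the paper deduces \emph{from} this proposition, so your continuity step is circular as written. The paper closes exactly this gap with its stochastic regularity results: from the identity $(u-R^D\mu)(x)=E_x(u-R^D\mu)(X_{\tau_V})$, Corollary~\ref{wn2.3} (continuity of $x\mapsto E_x\psi(X_{\tau_V})$ for Borel $\psi\in L^p(\partial V;\omega^A_x)$, proved by approximation with continuous boundary data and Harnack for harmonic measures, the latter already established for weak solutions with continuous data) yields $u-R^D\mu\in C(D)$, and Proposition~\ref{stw2.1} then gives $u-R^D\mu\in H^1_{loc}(D)$ and $\EE(u-R^D\mu,v)=0$ in one stroke. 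Substituting Corollary~\ref{wn2.3} for your Harnack/De Giorgi--Nash step repairs the proof; after that, your identification of $h$ with $B(h_{|\partial V})$ via Theorem~\ref{tw2.1} and uniqueness of weak solutions goes through. Note also that Theorem~\ref{tw2.1} is proved for arbitrary bounded open sets, so your restriction to Lipschitz $V$ is unnecessary --- and in the converse direction the definition of $^*H(D)$ requires $-\infty<H_Vu\le u$ for \emph{all} open $V$ with $\overline V\subset D$, not only Lipschitz ones, so you should drop that restriction there as well.
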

\begin{dow}
Assume that $u\in\,^*H(D)$. Since $u$ is lower bounded, we may assume that $u$ is positive.
By \cite[Corollary II.5.3]{BG}, $u$ is an excessive function with respect to the resolvent $(R^D_\alpha)_{\alpha\ge0}$. Therefore,
by  Riesz's decomposition theorem (see \cite{GetoorGlover}), there exists a Borel positive measure $\mu$ on $D$ and a positive harmonic function $h$ such that
\[
u=R^D\mu+h.
\]
By the above, the definition of a harmonic function and  (\ref{eq3.1}),
\[
E_x(u-R\mu)(X_{\tau_V})=u(x)-R\mu(x),\quad x\in V
\]
for any $V\subset\subset D$. By Corollary \ref{wn2.3}, $u-R\mu\in C(D)$ and by Proposition \ref{stw2.1} (\ref{eq3.2}) holds.
Now assume that (\ref{eq3.2}) is satisfied. By \cite{Nash}, $u-R\mu\in C(D)$, so by Theorem \ref{tw2.1} and (\ref{eq3.1}),
$u\in\,^*H(D)$.
\end{dow}

\begin{wn}
Function $u\in \BB(D)$ is harmonic iff $u\in H^1_{loc}(D)$ and
\[
\EE(u,v)=0,\quad v\in C_c^1(D).
\]
\end{wn}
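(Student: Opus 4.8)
The plan is to reduce the equivalence to the mean-value characterisation of harmonicity, namely that $H_V(u)(x)=E_xu(X_{\tau_V})$ for every relatively compact open $V$, and then to read off both implications from the probabilistic representation of weak solutions in Theorem \ref{tw2.1}. Since both conditions ``$u\in H^1_{loc}(D)$ and $\EE(u,v)=0$'' and ``$u\in{}^{*}H(D)\cap(-{}^{*}H(D))$'' are tested on sets $V$ with $\overline V\subset D$, I would carry out the whole argument locally on such $V$ and then exhaust $D$ by them; this localisation is what replaces a global lower bound on $u$, which a harmonic function need not possess.

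First I would treat the implication ``$\Leftarrow$''. Assuming $u\in H^1_{loc}(D)$ with $\EE(u,v)=0$ for $v\in C^1_c(D)$, by \cite{Nash} $u$ has a continuous $m$-version, which I identify with $u$. Fix open $V$ with $\overline V\subset D$. Then $u_{|V}\in H^1(V)\cap C(\overline V)$ solves its own weak problem ($u_{|V}-u_{|V}\in H^1_0(V)$ and $\EE(u,v)=0$), so $u_{|V}$ is the weak solution of DP$(A,\partial V,u_{|\partial V})$ in the sense of Definition \ref{df.w} (the $H^1$-datum and the continuous boundary datum $u_{|\partial V}$ give the same $B$-solution). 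Theorem \ref{tw2.1} then yields $u(x)=E_xu(X_{\tau_V})=H_V(u)(x)$ for $x\in V$, the last equality being (\ref{eq3.1}) applied to $u^{+}$ and $u^{-}$ (both bounded on $\overline V$). Hence $H_V(u)\le u$ and $H_V(-u)=-H_V(u)\le -u$ on $V$ for every such $V$, so $u\in{}^{*}H(D)$ and $-u\in{}^{*}H(D)$, i.e. $u$ is harmonic.

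For ``$\Rightarrow$'' I would run the same computation backwards. If $u$ is harmonic then $u_{|D}$ is simultaneously l.s.c. and u.s.c. and finite, hence $u\in C(D)$ and $u$ is bounded on each $\overline V$ with $\overline V\subset D$. From $u,-u\in{}^{*}H(D)$ and linearity of $H_V$ on bounded functions I get $H_V(u)\le u$ and $-H_V(u)=H_V(-u)\le -u$, whence $H_V(u)=u$ on $V$, i.e. $u(x)=E_xu(X_{\tau_V})$. Now let $w:=B(u_{|\partial V})$ be the weak solution of DP$(A,\partial V,u_{|\partial V})$ furnished by (\ref{eq1.15}); by Theorem \ref{tw2.1}, $w(x)=E_xu(X_{\tau_V})=u(x)$ on $V$. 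Since $w\in\mathcal{X}\subset H^1_{loc}(V)\cap C(V)$ and $\EE(w,v)=0$, the same holds for $u_{|V}$. Exhausting $D$ by such $V$ (and using $V'\subset\subset V\subset\subset D$ to pass from $H^1_{loc}(V)$ to $H^1(V')$) gives $u\in H^1_{loc}(D)$ and $\EE(u,v)=0$ for all $v\in C^1_c(D)$.

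The main thing to get right is the bookkeeping connecting the analytic and potential-theoretic descriptions: that $H_V(u)(x)=E_xu(X_{\tau_V})$ persists for signed $u$ that are merely bounded on $\overline V$ (extend (\ref{eq3.1}) by linearity), and that the $H^1$-weak solution with datum $u_{|V}$ coincides with the $B$-solution with continuous datum $u_{|\partial V}$, so that Theorem \ref{tw2.1} applies in both directions. An alternative route for ``$\Rightarrow$'' is to apply Proposition \ref{stw3.1} on each $V$ to $u$ and to $-u$, obtaining positive measures $\mu,\nu$ with $R^V(\mu+\nu)\in H^1_{loc}(V)$ and $\EE(R^V(\mu+\nu),v)=0$; since the Green potential of a positive measure is weakly harmonic only if the measure vanishes, $\mu=\nu=0$, again giving $u\in H^1_{loc}(V)$ with $\EE(u,v)=0$.
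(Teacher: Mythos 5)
Your argument is correct, and it differs from the paper mainly in packaging. The paper offers no separate proof: the corollary is meant to follow at once from Proposition \ref{stw3.1} applied to $u$ and to $-u$ --- adding the two decompositions shows that $R^D\mu+R^D\nu$ is a weakly harmonic potential of a positive measure, forcing $\mu=\nu=0$ --- which is exactly the ``alternative route'' in your last sentence (the vanishing of a weakly harmonic Green potential is the uniqueness in the Riesz decomposition of \cite{GetoorGlover} already invoked in the proof of Proposition \ref{stw3.1}). Your main route instead bypasses the proposition and re-derives both implications directly from Theorem \ref{tw2.1} together with (\ref{eq3.1}); this is in substance the $\mu=0$ case of the proposition's own proof, inlined, with the additional identification $u=B(u_{|\partial V})$ on each $V\subset\subset D$, so it buys self-containedness at the cost of repeating work the proposition encapsulates. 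What your write-up adds --- and what the paper leaves implicit --- is the localization: Proposition \ref{stw3.1} assumes $u$ lower bounded on $D$, whereas a harmonic function is only locally bounded (it is continuous, being simultaneously l.s.c.\ and u.s.c., but may be unbounded below near $\partial D$), so one must indeed work on relatively compact $V$ and exhaust, exactly as you do; likewise the extension of (\ref{eq3.1}) from $\BB^+$ to signed functions bounded on $\overline V$ by writing $u=u^+-u^-$ is harmless but worth stating. The individual steps all check out: $u_{|V}$ trivially solves wDP$(A,V,u_{|V})$, so Theorem \ref{tw2.1} applies with the continuous datum $u_{|\partial V}$ by the consistency built into the construction of $B$; $B\psi$ satisfies $\EE(B\psi,v)=0$ for $v\in C^1_c(V)$ because it is a $\|\cdot\|$-limit of weak solutions, and this norm dominates $H^1$-convergence on compactly contained subsets; and the patching over an exhaustion of $D$ is sound since every $v\in C^1_c(D)$ is supported in some such $V$.
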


\begin{tw}
\label{tw3.p} Assume that $\psi\in L^1(\partial D;\omega^A_m)$.
Then there exists a unique PWB-solution $u$ to \mbox{\rm DP}$(A,\partial
D,\psi)$. Moreover,
\[
u(x)=E_x\psi(X_{\tau_D}),\quad x\in D.
\]
\end{tw}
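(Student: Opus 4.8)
The plan is to show that the explicit candidate $u(x)=E_x\psi(X_{\tau_D})$ is simultaneously the upper and the lower PWB-solution, by identifying it with the integral of $\psi$ against the harmonic measure of the balayage space of Example \ref{prz3.1} and then invoking the resolutivity theory for the Dirichlet problem on balayage spaces from \cite{BH}. First I would make sense of the candidate pointwise: by Corollary \ref{wn2.1}, $u(x)=\int_{\partial D}\psi(y)\,\omega^A_x(dy)$, so I must check that $\psi\in L^1(\partial D;\omega^A_x)$ for \emph{every} $x\in D$, not merely for $\omega^A_m$-a.e.\ $x$. By the definition (\ref{eq1.13}) of $\omega^A_m$ and Tonelli's theorem, $\int_{\partial D}|\psi|\,d\omega^A_x<\infty$ for $m$-a.e.\ $x\in D$; each connected component of $D$ is open, hence of positive Lebesgue measure, so it contains at least one such point $x_0$, and Harnack's inequality on that component (as in Remark \ref{wn2.2}) gives $\omega^A_x\le c_x\omega^A_{x_0}$ there, whence $\psi\in L^1(\partial D;\omega^A_x)$ for every $x\in D$. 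Thus $u$ is finite and well defined on all of $D$, and splitting $\psi=\psi^+-\psi^-$ reduces everything, by linearity of $H^D$, to the case $\psi\ge0$.

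Next I would identify the harmonic measure. Applying (\ref{eq3.1}) to the bounded open set $V=D$ gives $H_D(f)(x)=E_xf(X_{\tau_D})$ for $f\in\BB^+(\BR^d)$, and by Corollary \ref{wn2.1} this equals $\int f\,d\omega^A_x$. Since $D$ is bounded and the paths of $\BX$ are continuous, $X_{\tau_D}\in\partial D$ $P_x$-a.s., so the harmonic measure $\mu^D_x$ of the balayage space $(\BR^d,E_{\mathbb P})$ relative to $D$ is exactly $\omega^A_x$, a measure carried by $\partial D$. Because $\partial D$ is compact and $\psi\in L^1(\partial D;\mu^D_x)$ for every $x\in D$ by the previous step, the resolutivity results of \cite{BH} apply: $\psi$ is resolutive, i.e.\ $\overline H^D\psi=\underline H^D\psi$, this common function is the unique PWB-solution of DP$(A,\partial D,\psi)$, and it is given by $H^D\psi(x)=\int_{\partial D}\psi\,d\mu^D_x=\int_{\partial D}\psi\,d\omega^A_x=E_x\psi(X_{\tau_D})$. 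This delivers existence, uniqueness and the stated formula simultaneously.

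The only genuinely non-formal point is the passage from $\omega^A_m$-integrability to integrability for every $x$, which is precisely where Harnack's inequality (Remark \ref{wn2.2}), applied component by component, is needed; once the harmonic measure of the balayage space is identified with $\omega^A_x$, the rest is bookkeeping on top of the cited resolutivity theorem. If one prefers to avoid the abstract resolutivity result, a self-contained route for $\psi\ge0$ is available: approximate by $\psi\wedge k\uparrow\psi$, observe that each $E_\cdot(\psi\wedge k)(X_{\tau_D})$ is a bounded, continuous harmonic function by Proposition \ref{stw2.1}, Corollary \ref{wn2.3} and the corollary to Proposition \ref{stw3.1}, use the strong Markov property to obtain the mean-value identity $u(x)=E_xu(X_{\tau_V})$ for $V\subset\subset D$, deduce $u\in H(D)$ via (\ref{eq3.1}), and finally compare $u$ with the classes $\mathcal{U}^D_\psi$ and $\mathcal{L}^D_\psi$ to identify it as the PWB-solution. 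In this second approach the boundary comparison on the irregular part of $\partial D$ would be the main obstacle.
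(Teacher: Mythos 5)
Your proposal is correct and follows essentially the same route as the paper: Harnack's inequality (Remark \ref{wn2.2}) to upgrade $\omega^A_m$-integrability to $\omega^A_x$-integrability for every $x\in D$, then the resolutivity theorem for balayage spaces from \cite{BH} (the paper cites Corollary VII.2.12 there) to get existence and the identity $u=H_D\psi$, and finally (\ref{eq3.1}) to identify $H_D\psi(x)=E_x\psi(X_{\tau_D})$. Your componentwise Tonelli--Harnack argument is in fact a welcome elaboration, since Remark \ref{wn2.2} is stated only for connected $D$ while the theorem concerns arbitrary bounded open sets; the alternative truncation route you sketch is not needed and, as you note, would stall at the boundary comparison on the irregular part of $\partial D$.
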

\begin{dow}
By Corollary \ref{wn2.2}, $\psi\in L^1(\partial D;\omega_x^A)$ for
every $x\in D$. Therefore, by \cite[Corollary VII.2.12]{BH}, there
exists a PWB-solution $u$ to DP$(A,\partial D,\psi)$ and
$u=H_D\psi$. By virtue of (\ref{eq3.1}), this proves the theorem.
\end{dow}
\medskip

The following corollary follows immediately from  Proposition
\ref{stw2.2} and Theorem \ref{tw3.p}.

\begin{wn}
\label{wn3.p} If   $\psi\in L^2(\partial D;\omega^A_m)$, then $u$
is a  soft solution to \mbox{\rm DP}$(A,\partial D,\psi)$ if and only if it
is a PWB-solution. If $\psi\in C(\partial D)$, then $u$ is a weak
solutions to \mbox{\rm DP} $(A,\partial D,\psi)$ if and only if it is
PWB-solution.
\end{wn}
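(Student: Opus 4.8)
The plan is to observe that both notions of solution, whenever they exist, are pinned down by the single stochastic formula $u(x)=E_x\psi(X_{\tau_D})$, so each equivalence reduces to comparing representation formulas together with the uniqueness already established in the respective solution class. I therefore do not expect a substantive obstacle; the work is entirely bookkeeping.

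First I would record the inclusion $L^2(\partial D;\omega^A_m)\subset L^1(\partial D;\omega^A_m)$. This holds because $\omega^A_m$ is a finite measure: since each $\omega^A_x$ is a probability measure on $\partial D$ and $m(D)<\infty$, we have $\omega^A_m(\partial D)=m(D)<\infty$. Hence every $\psi\in L^2(\partial D;\omega^A_m)$ lies in $L^1(\partial D;\omega^A_m)$, so Theorem \ref{tw3.p} is applicable to it.

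Next, fix $\psi\in L^2(\partial D;\omega^A_m)$. By Proposition \ref{stw2.2} there is a unique soft solution, and inspecting its construction in Proposition \ref{stw2.1} shows that this soft solution is exactly the function defined by (\ref{eq2.1}), namely $x\mapsto E_x\psi(X_{\tau_D})$. On the other hand, Theorem \ref{tw3.p} guarantees a unique PWB-solution, also equal to $x\mapsto E_x\psi(X_{\tau_D})$. Thus the (singleton) set of soft solutions and the (singleton) set of PWB-solutions coincide, which is precisely the asserted equivalence: $u$ is a soft solution if and only if $u=E_\cdot\psi(X_{\tau_D})$ if and only if $u$ is a PWB-solution.

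For the second statement, let $\psi\in C(\partial D)$. Then $\psi$ is bounded on the compact set $\partial D$, so $\psi\in L^1(\partial D;\omega^A_m)$ and Theorem \ref{tw3.p} yields a unique PWB-solution, given again by $x\mapsto E_x\psi(X_{\tau_D})$. By Theorem \ref{tw2.1}, the weak solution $B\psi$ of DP$(A,\partial D,\psi)$ satisfies the same formula, and uniqueness of the PWB-solution then forces the weak solution to coincide with it, giving the equivalence. The only points requiring a word of care are the finiteness of $\omega^A_m$, used to move from the hypotheses $\psi\in L^2(\partial D;\omega^A_m)$ or $\psi\in C(\partial D)$ into the $L^1$-hypothesis of Theorem \ref{tw3.p}, and the observation that the soft solution produced by Proposition \ref{stw2.1} is literally the function (\ref{eq2.1}), so that all the representation formulas involved describe one and the same object.
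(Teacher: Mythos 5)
Your proof is correct and follows essentially the same route as the paper, which deduces the corollary directly from Proposition \ref{stw2.2} (uniqueness of soft solutions, with Proposition \ref{stw2.1} identifying the soft solution with $E_\cdot\psi(X_{\tau_D})$), Theorem \ref{tw3.p} (the PWB-solution is the same function), and Theorem \ref{tw2.1} for the weak-solution case; you have merely made explicit the bookkeeping the paper leaves implicit. Your verification that $\omega^A_m(\partial D)=m(D)<\infty$, so that $L^2(\partial D;\omega^A_m)\subset L^1(\partial D;\omega^A_m)$ and Theorem \ref{tw3.p} applies, is a correct and worthwhile detail.
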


Let us recall that a bounded open set is called  regular at
$x\in\partial D$ for $A$ if each weak solution $u$ to
DP$(A,\partial D,\psi)$ with $\psi\in C(\partial D)$ has the
property that $u(y)\rightarrow \psi(x)$ as $y\rightarrow x,\,y\in
D$. The following corollary is a consequence of Corollary
\ref{wn3.p} and \cite[Theorem 1.23]{ChungZhao}. Note that
\cite[Theorem 1.23]{ChungZhao} is proved for Brownian motion, but
its proof for the diffusion $\mathbb X$ goes without any changes
(the inequality preceding \cite[Eq. (35)]{ChungZhao} holds true
for $\mathbb X$ by \cite[Lemma  II.1.2]{Stroock}).

\begin{wn}
\label{wn3.6}
The following statements are equivalent:
\begin{enumerate}
\item [\rm(i)] $D$ is regular at $x\in\partial D$ for $A$.
\item [\rm(ii)] $P_x(\tau_D>0)=0$.
\end{enumerate}
\end{wn}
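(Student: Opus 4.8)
The plan is to reduce the analytic notion of regularity to the probabilistic one and then invoke the classical characterization of regular boundary points. First I would recall that, by definition, $D$ is regular at $x$ for $A$ precisely when $u(y)\to\psi(x)$ as $y\to x$, $y\in D$, for every $\psi\in C(\partial D)$, where $u$ denotes the weak solution of \mbox{\rm DP}$(A,\partial D,\psi)$. By Corollary \ref{wn3.p} together with Theorem \ref{tw2.1}, this weak solution admits the probabilistic representation $u(y)=E_y\psi(X_{\tau_D})$. Hence (i) is equivalent to the assertion that
\[
\lim_{y\to x,\,y\in D}E_y\psi(X_{\tau_D})=\psi(x)\qquad\mbox{for every }\psi\in C(\partial D),
\]
that is, to $x$ being a regular point of $D$ in the probabilistic (potential-theoretic) sense. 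The equivalence of this condition with $P_x(\tau_D>0)=0$ is exactly the content of \cite[Theorem 1.23]{ChungZhao}.

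For completeness I would indicate both implications. To prove (ii)$\Rightarrow$(i), fix $\psi\in C(\partial D)$, extend it to a bounded continuous function on $\BR^d$, and for $\varepsilon>0$ choose $r>0$ with $|\psi(z)-\psi(x)|<\varepsilon$ whenever $|z-x|<r$. Splitting the expectation over $\{|X_{\tau_D}-x|<r\}$ and its complement gives
\[
|E_y\psi(X_{\tau_D})-\psi(x)|\le\varepsilon+2\|\psi\|_\infty\,P_y(|X_{\tau_D}-x|\ge r),
\]
so the task reduces to showing $P_y(|X_{\tau_D}-x|\ge r)\to0$ as $y\to x$. Since a path exiting $D$ far from $x$ must first leave the ball $B(x,r)$, this probability is controlled by the exit behaviour from $D\cap B(x,r)$, and the hypothesis $\tau_D=0$ $P_x$-a.s. (equivalently $P_x(\tau_D>0)=0$), combined with the strong Markov property, forces it to vanish in the limit. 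For the converse (i)$\Rightarrow$(ii) I would argue by contraposition: by Blumenthal's zero--one law $P_x(\tau_D>0)\in\{0,1\}$, and if $P_x(\tau_D>0)=1$ one tests against a $\psi\in C(\partial D)$ that vanishes at $x$ but is bounded below away from $x$, contradicting the boundary limit forced by regularity.

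The delicate point is not the reduction above but the transfer of \cite[Theorem 1.23]{ChungZhao}, stated for Brownian motion, to the general diffusion $\BX$ associated with $A$. As noted after the statement, the only probabilistic input specific to Brownian motion in that argument is the estimate preceding \cite[Eq. (35)]{ChungZhao}, and this estimate remains valid for $\BX$ by \cite[Lemma II.1.2]{Stroock}; with this substitution Chung and Zhao's proof goes through without any changes. Passing between $P_x(\tau_D>0)=0$ and $\tau_D=0$ $P_x$-a.s. via Blumenthal's zero--one law (see \cite{BG}) then completes the argument.
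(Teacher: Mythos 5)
Your proposal follows exactly the paper's route: reduce (i) to the probabilistic representation $u(y)=E_y\psi(X_{\tau_D})$ via Corollary \ref{wn3.p} (with Theorem \ref{tw2.1}), then invoke \cite[Theorem 1.23]{ChungZhao}, transferring it from Brownian motion to $\BX$ by noting that the inequality preceding \cite[Eq.\ (35)]{ChungZhao} holds for $\BX$ by \cite[Lemma II.1.2]{Stroock} --- which is precisely the paper's proof. Your supplementary sketch of the two implications is a reasonable outline of the Chung--Zhao argument (and correctly defers the genuinely delicate estimate to that source), so it adds detail without changing the approach.
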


The following result was proved by different  methods in
\cite{LSW}.

\begin{wn}
Let $D$ be a bounded  open  subset of $\BR^d$ and let $x\in
\partial D$. Then $x$ is regular for $A$ if and only if it  is
regular for $\Delta$.
\end{wn}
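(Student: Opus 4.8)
The plan is to translate regularity into a capacity condition and then exploit the comparability of $\mbox{Cap}_A$ and $\mbox{Cap}_\Delta$. By Corollary \ref{wn3.6}, $x$ is regular for $A$ if and only if $P_x(\tau_D>0)=0$ for the diffusion $\BX$ associated with $A$. Since \cite[Theorem 1.23]{ChungZhao} applies verbatim to Brownian motion $W$ (the diffusion associated with $\Delta$), $x$ is regular for $\Delta$ if and only if $P^W_x(\tau_D>0)=0$. Writing $F=\BR^d\setminus D$, which is closed and contains $x$ because $x\in\partial D$, we have $\tau_D=\sigma_F:=\inf\{t>0:X_t\in F\}$, so regularity at $x$ means precisely that the corresponding process started from $x$ meets $F$ at arbitrarily small times, i.e. that $F$ is non-thin at $x$ for that process. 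By Blumenthal's $0$--$1$ law $P_x(\tau_D=0)\in\{0,1\}$ for each process, so it suffices to prove that $F$ is thin at $x$ for $\BX$ if and only if it is thin at $x$ for $W$.

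To test thinness I would use a Wiener-type series. Set $C_n=\{y\in\BR^d:2^{-n-1}\le|y-x|\le 2^{-n}\}$ and $F_n=F\cap C_n$. Fix a ball $B$ with $D\subset\subset B$ and work with the Green functions of $A$ and of $\Delta$ on $B$; by Aronson's Gaussian bounds these are comparable with constants depending only on $\lambda,\|a\|_\infty,d$, which also allows the recurrent case $d=2$ to be handled inside $B$. The probability $P_x(\sigma_{F_n}<\tau_B)$ is the equilibrium potential of $F_n$ and is comparable to $2^{n(d-2)}\mbox{Cap}(F_n)$ (with the usual logarithmic correction when $d=2$); the probabilistic Wiener criterion then asserts that $F$ is non-thin at $x$ if and only if $\sum_{n\ge1}2^{n(d-2)}\mbox{Cap}(F_n)=\infty$, where one uses $\mbox{Cap}=\mbox{Cap}_A$ for $\BX$ and $\mbox{Cap}=\mbox{Cap}_\Delta$ for $W$.

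It then remains to compare the two series. By (\ref{eq1.12}) the Dirichlet forms of $A$ and $\Delta$ satisfy $\lambda\,\EE^\Delta\le\EE^A\le\|a\|_\infty\,\EE^\Delta$ on $H^1(\BR^d)$, whence $\lambda\,\mbox{Cap}_\Delta(K)\le\mbox{Cap}_A(K)\le\|a\|_\infty\,\mbox{Cap}_\Delta(K)$ for every compact $K$, in particular for each $F_n$. Consequently $\sum_n2^{n(d-2)}\mbox{Cap}_A(F_n)$ and $\sum_n2^{n(d-2)}\mbox{Cap}_\Delta(F_n)$ diverge simultaneously, so $F$ is thin at $x$ for $\BX$ exactly when it is thin at $x$ for $W$; by the first paragraph this is the asserted equivalence of regularity. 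The main obstacle is the second step: establishing, for the divergence-form diffusion $\BX$, the two-sided estimate relating annular hitting probabilities to $\mbox{Cap}_A$, equivalently the probabilistic Wiener criterion for $\BX$. This rests on Aronson's bounds for the transition density of $A$ and the resulting comparability of Green functions and capacitary potentials; alternatively one may invoke the analytic Wiener criterion for $A$ directly, after which only the elementary capacity comparison of the last step is needed.
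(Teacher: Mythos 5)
Your proposal is correct in outline but takes a genuinely different route from the paper's proof, which is essentially two lines: the probabilistic characterization of regularity (Corollary \ref{wn3.6}: $x$ is regular for $A$ iff $P_x(\tau_D>0)=0$) combined with the comparison theorem of \cite[Corollary 1]{Kanda}, which yields that processes generating the same fine topology have the same regular points; as the paper's subsequent remark explains, the fine topologies of $A$ and $\Delta$ coincide because they are generated by excessive functions and the Green functions are comparable by the estimates of \cite{LSW}. You instead reduce regularity to non-thinness of $F=\BR^d\setminus D$ at $x$ (via Corollary \ref{wn3.6}, \cite[Theorem 1.23]{ChungZhao} for Brownian motion, and Blumenthal's zero--one law --- all fine) and then run a Wiener series test for each process, concluding from the form comparison $\lambda\,\mbox{Cap}_\Delta\le\mbox{Cap}_A\le\|a\|_\infty\mbox{Cap}_\Delta$, which the paper itself records in the Introduction. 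Note that this is essentially the original method of Littman--Stampacchia--Weinberger --- the paper explicitly says the corollary ``was proved by different methods in \cite{LSW}'' --- so you have in effect reconstructed the classical argument, whereas the paper's point is to bypass it with a short potential-theoretic reduction to \cite{Kanda}.

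The one place where your write-up is not self-contained is the step you yourself flag: the probabilistic Wiener criterion for the divergence-form diffusion $\BX$. The sufficiency half (divergence of the series implies regularity) follows from your annulus hitting estimates by a Borel--Cantelli-type argument, but the necessity half (convergence implies thinness) requires a quasi-independence estimate for the events $\{\sigma_{F_n}<\tau_B\}$ via the strong Markov property, which you only gesture at; deriving it from Aronson's bounds and Green function comparability is a substantial piece of work, not a routine verification, and the $d=2$ logarithmic case needs its own bookkeeping. Your fallback --- invoking the analytic Wiener criterion for $A$ from \cite{LSW} directly --- does legitimately close the gap, since then only the elementary capacity comparison remains; with that citation the proof is complete, though it rests on a much heavier external theorem than the paper's appeal to \cite{Kanda}.
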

\begin{dow}
Follows from Corollary \ref{wn3.6} and \cite[Corollary 1]{Kanda}.
\end{dow}

\begin{uw}
By \cite[Remark 1]{Kanda},  if the fine topologies  $\mathcal
O_1,\mathcal O_2$ generated by two operators $A_1, A_2$  are
equal, then the regular points for $A_1$ and $A_2$ are the same for
each open set $D\subset\BR^d$. Since the fine topology is
generated by excessive functions, the well known estimates for
Green functions (see, e.g., \cite{LSW}) imply that the fine
topologies generated by $A$ and $\Delta$ are the same.
\end{uw}

In Theorem \ref{tw3.p} we obtained stochastic representation
of  PWB-solutions by using the  theory of balayage spaces developed in \cite{BH}. This theory is rather abstract and advanced.  In the next subsection we derive the stochastic representation (in more general context of semilinear equation) in more elementary way, without referring  to the results of \cite{BH}.

\subsection{Semilinear equations}

From now on, we treat formula (\ref{eq3.1}) as the  definition of
the operator $H_V$. If $\mathbb X$ is a Wiener process (i.e.
$A=\frac12\Delta$), it is rotationally invariant. Using this
property, one can show the following formula
\[
H_{B(x,r)}(u)(x) =\frac{1}{\sigma(\partial B(x,r))}\int_{\partial
B(x,r)}u(y)\,d\sigma(y)
\]
(see \cite[Proposition 1.21]{ChungZhao}).
\begin{uw}
It is worth mentioning that  the  proof of Proposition \ref{stw3.1} is based on  formula (\ref{eq3.1}),  and that in the  proof we do not use the fact that  $\{H_V\}$ is a harmonic kernel. Therefore in Proposition \ref{stw3.1} we have proved without referring  to the theory of balayage spaces that a lower bounded finite $m$-a.e. $u\in\BB(D)$  is hyperharmonic if and only if  (\ref{eq3.2}) is satisfied.
\end{uw}
We will consider the
following equation
\begin{equation}
\label{eq3.33} -Au=f(\cdot,u)+\mu\quad \mbox{on}\quad D, \qquad
u=\psi\quad\mbox{on}\quad \partial D,
\end{equation}
where $\psi\in L^1(\partial D;\omega^A_m)$ and $\mu$ is smooth
measure (i.e. absolutely continuous with  respect to Cap$_A$) such
that $R^D|\mu|<\infty$ q.e.

\begin{df}
We say that $u$ is the PWB-solution  of (\ref{eq3.33}) if
$R^D|f(\cdot,u)|<\infty$ q.e. and
\[
u-R^D(f(\cdot,u))-R^D\mu=H^D\psi\quad\mbox{q.e.}
\]
\end{df}

\begin{df}
\label{df.sub} We say that a Borel measurable function $u$ on $D$
is a subsolution (resp. supersolution) of (\ref{eq3.33}) if
$u-R^D(f(\cdot,u))-R^D\mu\in  \mathcal L^D_\psi $ (resp.
$u-R^D(f(\cdot,u))-R^D\mu\in  \mathcal U^D_\psi $).
\end{df}

We will see that the solution of (\ref{eq3.33}) exists,  and
moreover, it is the supremum over all supersolutions  of
(\ref{eq3.33}). To prove this, we will need the following two
lemmas.

\begin{lm}
\label{lm.kato} Assume that $u_i-R^D\mu_i\in \mathcal
L^D_{\psi_i},\, i=1,2$, for some smooth measures $\mu_1,\mu_2$
such that $R^D|\mu_i|<\infty$ q.e. Then
\[
u_1\vee u_2-R^D(\mathbf{1}_{\{u_1>u_2\}}\cdot \mu_1)
-R^D(\mathbf{1}_{\{u_1\le u_2\}}\cdot\mu_2)
\in \mathcal L^D_{\psi_1\vee\psi_2}.
\]
\end{lm}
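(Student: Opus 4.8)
The plan is to verify directly the three conditions defining membership in $\mathcal{L}^D_{\psi_1\vee\psi_2}=-\mathcal{U}^D_{-(\psi_1\vee\psi_2)}$ for the function $W:=u_1\vee u_2-R^D\nu$, where $\nu:=\mathbf 1_{\{u_1>u_2\}}\cdot\mu_1+\mathbf 1_{\{u_1\le u_2\}}\cdot\mu_2$. First I note that $\nu$ is smooth and $R^D|\nu|\le R^D|\mu_1|+R^D|\mu_2|<\infty$ q.e., so $R^D\nu$ is finite q.e. and, by linearity, $R^D\nu=R^D(\mathbf 1_{\{u_1>u_2\}}\cdot\mu_1)+R^D(\mathbf 1_{\{u_1\le u_2\}}\cdot\mu_2)$. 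Thus the three points to establish are: (a) $-W\in{}^{*}H(D)$; (b) $-W$ is lower bounded; and (c) $\limsup_{y\to x}W(y)\le(\psi_1\vee\psi_2)(x)$ for $x\in\partial D$.

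The heart of the matter is (a), a Kato-type inequality for the maximum. I would prove it probabilistically by showing that, for every $V\subset\subset D$,
\[
(u_1\vee u_2)(x)\le E_x\big[(u_1\vee u_2)(X_{\tau_V})+A^\nu_{\tau_V}\big],\quad x\in V,
\]
which, after subtracting the identity $R^D\nu(x)=E_xA^\nu_{\tau_V}+E_xR^D\nu(X_{\tau_V})$ (the strong Markov property applied to (\ref{eq2.afr})), is exactly $E_x(-W)(X_{\tau_V})\le-W(x)$, i.e. $H_V(-W)\le-W$. To get the displayed inequality I set $Y_t:=(u_1\vee u_2)(X_t)+A^\nu_t$ and show $Y$ is a submartingale on $[0,\tau_V]$. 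By Proposition \ref{stw3.1} applied to $-(u_i-R^D\mu_i)\in{}^{*}H(D)$, there are positive smooth measures $\lambda_i$ with $u_i=-h_i+R^D(\mu_i-\lambda_i)$ and $h_i$ harmonic; Fukushima's decomposition then yields the semimartingale form $u_i(X_t)=u_i(x)+M^i_t-A^{\mu_i}_t+A^{\lambda_i}_t$ with $M^i$ a local martingale and $A^{\lambda_i}$ increasing. Writing $u_1\vee u_2=u_2+(u_1-u_2)^+$ and applying the Tanaka--Meyer inequality $d(u_1-u_2)^+(X_t)\ge\mathbf 1_{\{u_1>u_2\}}(X_t)\,d(u_1-u_2)(X_t)$ (the discarded term being the nonnegative local time at $\{u_1=u_2\}$), I obtain
\[
d\big[(u_1\vee u_2)(X_t)\big]\ge\mathbf 1_{\{u_1>u_2\}}(X_t)\,du_1(X_t)+\mathbf 1_{\{u_1\le u_2\}}(X_t)\,du_2(X_t).
\]
Substituting the decomposition of $u_i(X_t)$, and using that by the Revuz correspondence $\mathbf 1_{\{u_1>u_2\}}\cdot dA^{\mu_1}+\mathbf 1_{\{u_1\le u_2\}}\cdot dA^{\mu_2}=dA^\nu$, the $A^{\mu}$-terms collapse to $-dA^\nu_t$ while the surviving finite-variation part equals $\mathbf 1_{\{u_1>u_2\}}\,dA^{\lambda_1}_t+\mathbf 1_{\{u_1\le u_2\}}\,dA^{\lambda_2}_t\ge0$. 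Hence $dY_t$ has nonnegative drift, $Y$ is a submartingale, and the displayed inequality follows; together with lower semicontinuity of $-W$ (clear since $-W$ is a harmonic function plus the excessive potential of the positive Revuz measure just produced) this gives $-W\in{}^{*}H(D)$ through Proposition \ref{stw3.1}.

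Conditions (b) and (c) I would read off from the path behaviour at the lifetime. Since potentials of smooth measures vanish along the paths, $R^D\mu_i(X_t)\to0$ and $R^D\nu(X_t)\to0$ as $t\nearrow\tau_D$ $P_x$-a.s., so $W(X_t)$ and $(u_1\vee u_2)(X_t)$ share the same upper limit, and likewise $u_i(X_t)$ and $(u_i-R^D\mu_i)(X_t)$. As $u_i-R^D\mu_i\in\mathcal{L}^D_{\psi_i}$ forces $\limsup_{t\nearrow\tau_D}(u_i-R^D\mu_i)(X_t)\le\psi_i(X_{\tau_D})$, I obtain $\limsup_{t\nearrow\tau_D}W(X_t)\le(\psi_1\vee\psi_2)(X_{\tau_D})$, which, translated back to the Euclidean boundary limits, yields (c); the one-sided bound (b) follows from the representation $-W=h+R^D\kappa$ ($\kappa\ge0$) from (a) together with the lower bounds inherited from the two functions $u_i-R^D\mu_i$, since the minimum of two lower-bounded functions controls $-W$ up to the vanishing potential terms.

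The step I expect to be the main obstacle is the rigorous Tanaka--Meyer computation: it requires the semimartingale decompositions of the $u_i(X_t)$ (hence Fukushima's decomposition for $h_i$ and for the potentials $R^D(\mu_i-\lambda_i)$), the Revuz identification of the localized additive functionals $\mathbf 1_{\{u_1>u_2\}}\cdot A^{\mu_1}$ and $\mathbf 1_{\{u_1\le u_2\}}\cdot A^{\mu_2}$ with $A^\nu$, and---most delicately---the correct bookkeeping on the contact set $\{u_1=u_2\}$, where the local-time term must contribute with the favourable sign and where the convention that $\nu$ assigns $\mu_2$ to $\{u_1\le u_2\}$ is precisely what makes the finite-variation part of $dY_t$ nonnegative.
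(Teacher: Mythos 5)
Your proposal runs along essentially the same lines as the paper's proof: both rest on a semimartingale decomposition of $u_i(X)$ into a local martingale, the decreasing part $-A^{\mu_i}$, and an increasing finite-variation part, followed by the It\^o--Meyer (Tanaka) formula for $u_1\vee u_2=u_2+(u_1-u_2)^+$, the Revuz identification of $\mathbf{1}_{\{u_1>u_2\}}\cdot A^{\mu_1}+\mathbf{1}_{\{u_1\le u_2\}}\cdot A^{\mu_2}$ with $A^{\nu}$, and the observation that the local time on the contact set enters with the favourable sign. The one structural difference is where the increasing part comes from. You pass through the Riesz decomposition of Proposition \ref{stw3.1} to write $u_i=-h_i'+R^D(\mu_i-\lambda_i)$ and then invoke the additive functionals $A^{\lambda_i}$; this tacitly requires the Riesz measures $\lambda_i$ to be smooth (true in the present symmetric setting, but an extra fact that must be quoted). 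The paper is more economical: since $-h_i$ with $h_i=u_i-R^D\mu_i$ is hyperharmonic and lower bounded, it is excessive by \cite[Corollary II.5.3]{BG}, hence $-h_i(X)$ is a c\`adl\`ag supermartingale by \cite[Excercise 4.20]{Sharpe}, and the increasing process is handed over directly by the Doob--Meyer decomposition, with no need to identify it as the PCAF of any measure. Modulo this, your computation of the drift of $(u_1\vee u_2)(X_t)+A^{\nu}_t$ is exactly the display in the paper's proof.

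Where you go beyond the paper---the explicit verification of your condition (c)---there is a genuine misstep. From $\limsup_{y\rightarrow x}h_i(y)\le\psi_i(x)$ at every $x\in\partial D$ one correctly deduces the path bound $\limsup_{t\nearrow\tau_D}W(X_t)\le(\psi_1\vee\psi_2)(X_{\tau_D})$ $P_x$-a.s.\ (using that potentials vanish along the paths at the lifetime, in the spirit of Proposition \ref{stw7.2}); but this cannot be ``translated back'': membership in $\mathcal{L}^D_{\psi_1\vee\psi_2}$ is defined through the Euclidean $\limsup_{y\rightarrow x}$ at boundary points, and almost-sure behaviour of $W(X_t)$ as $t\nearrow\tau_D$ does not control $\limsup_{y\rightarrow x}W(y)$, since the process need not visit the sets along which the topological limsup is realized---the implication you need runs in the opposite direction from the one that is automatic. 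Nor is the pointwise route immediate, because $W\le h_1\vee h_2+R^D(\mu_1^++\mu_2^+)+R^D\nu^-$ and potentials of smooth measures need not tend to zero at $\partial D$ (e.g.\ at irregular boundary points); for the same reason your one-sided bound (b) only yields $-W\ge(-h_1)\wedge(-h_2)-R^D(|\mu_1|+|\mu_2|)$ with a correction that is finite q.e.\ but not bounded. To be fair, the paper's own proof stops at the It\^o--Meyer display and concludes with ``this implies the desired result,'' leaving these verifications implicit; but since you chose to make them explicit, the deduction in (c) as written is backwards and needs repair before the argument is complete.
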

\begin{dow}
Write $h_i=u_i-R^D\mu_i$. By \cite[Corollary II.5.3]{BG} and
\cite[Excercise 4.20]{Sharpe}, $-h_i(X)$ is a c\`adl\`ag
supermartingale under $P_x$ for q.e. $x\in D$. Therefore, for q.e.
$x\in D$, there is  an increasing predictable c\`adl\`ag process
$A^{x,i}$ and a local martingale $M^{x,i}$ such that
\[
h_i(X_t)=h_i(X_0)+\int_0^t\, dA^{x,i}_r
+\int_0^t\,dM^{x,i}_r,\quad t<\tau_D\quad P_x\mbox{-a.s.}
\]
Hence
\[
u_i(X_t)=u_i(X_0)-\int_0^t\, dA^{\mu_i}_r +\int_0^t\,
dA^{x,i}_r+\int_0^t\, d\tilde{M}^{x,i}_r,\quad t<\tau_D \quad
P_x\mbox{-a.s.}
\]
for some  local martingale $\tilde{M}^{x,i}$. Therefore, applying
the It\^o-Meyer formula, we see that for q.e. $x\in D$,
\begin{align*}
(u_1\vee u_2)(X_t)&=(u_1\vee u_2)(X_0)
-\int_0^t\mathbf{1}_{\{u_1>u_2\}}(X_{r-})\,dA^{\mu_1}_r\\
&\quad -\int_0^t\mathbf{1}_{\{u_1\le u_2\}}(X_{r-})\,dA^{\mu_2}_r
+\int_0^t\mathbf{1}_{\{u_1>u_2\}}(X_{r-})\,dA^{x,1}_r\\
&\quad+\int_0^t\mathbf{1}_{\{u_1\le u_2\}} (X_{r-})\,dA^{x,2}_r
+\int_0^t\,dL^x_r+\int_0^t\,dN^x_r,\quad t<\tau_D,\quad
P_x\mbox{-a.s.}
\end{align*}
for some  local martingale $N^x$ and an increasing c\`adl\`ag
process $L^x$. This implies the desired result.
\end{dow}
\medskip

Let us consider the following assumptions:
\begin{enumerate}
\item[(H1)] $E_x|\psi(X_{\tau_D})|<\infty$ for q.e. $x\in D$.

\item[(H2)] There exists $g\in \BB^+(D)$ such that $R^Dg<\infty$
and $ f(x,y)y\le g(x)|y|$ for all $x\in D$, $y\in\BR$.

\item[(H3)] $y\mapsto f(x,y)$ is continuous for every $x\in D$.

\item[(H4)] If $\underline u, \overline u\in\BB(D),\,
\underline u\le\overline u$ and $R^D|f(\cdot,\underline
u)|+R^D|f(\cdot,\overline u)|<\infty$ q.e., then there exists
$h\in \BB^+(D)$ such that $R^Dh<\infty$ q.e. and $|f(\cdot,u)|\le
h$ for every $u\in\BB(D)$ with $\underline u\le u\le \overline u$.
\end{enumerate}

\begin{lm}
\label{lm.scomp} Assume that $\underline{u}$ (resp.
$\overline{u}$)  is a subsolution (resp. supersolution) of
\mbox{\rm(\ref{eq3.33})} such that $\underline u\le\overline u$.
Let
\begin{equation}
\label{eq64.3} \bar f(x,y)= \left\{
\begin{array}{ll}f(x,\overline{u}(x))
,\quad &y\ge \overline{u}(x),
\smallskip\\
 \,\,\,f(x,y),&\underline{u}(x)<y<\overline{u}(x),
 \smallskip\\
  f(x,\underline{u}(x)),& y\le\underline{u}(x),
\end{array}
\right.
\end{equation}
and let $u$ be a solution of \mbox{\rm(\ref{eq3.33})} with $f$
replaced by $\bar f$. Then $\underline{u}\le u\le \overline{u}$.
\end{lm}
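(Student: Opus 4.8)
The plan is to reduce everything to Lemma \ref{lm.kato} and its dual (minimum) version, exploiting that the truncation (\ref{eq64.3}) is built precisely so that $\bar f(\cdot,\overline u)=f(\cdot,\overline u)$ and $\bar f(\cdot,\underline u)=f(\cdot,\underline u)$, which makes monotonicity of $f$ unnecessary. First I would record three facts. Since at $y=\overline u(x)$ the top and middle branches of (\ref{eq64.3}) agree, we have $\bar f(\cdot,\overline u)=f(\cdot,\overline u)$, so the supersolution $\overline u$ of (\ref{eq3.33}) is also a supersolution of the truncated equation, i.e. $\overline u-R^D(\bar f(\cdot,\overline u))-R^D\mu\in\mathcal U^D_\psi$; symmetrically $\underline u-R^D(\bar f(\cdot,\underline u))-R^D\mu\in\mathcal L^D_\psi$. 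By definition of a solution, $u-R^D(\bar f(\cdot,u))-R^D\mu=H^D\psi$, and since this common value is the PWB-solution it satisfies $H^D\psi=\overline H^D\psi=\underline H^D\psi$; in particular $H^D\psi\le w$ for every $w\in\mathcal U^D_\psi$ and $H^D\psi\ge w$ for every $w\in\mathcal L^D_\psi$.

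For the bound $u\le\overline u$ I would apply the dual of Lemma \ref{lm.kato}, obtained by replacing $u_i,\mu_i,\psi_i$ by their negatives and using $\mathcal L^D_\psi=-\mathcal U^D_{-\psi}$: if $v_i-R^D\mu_i\in\mathcal U^D_{\psi_i}$, then $v_1\wedge v_2-R^D(\mathbf 1_{\{v_1<v_2\}}\cdot\mu_1)-R^D(\mathbf 1_{\{v_1\ge v_2\}}\cdot\mu_2)\in\mathcal U^D_{\psi_1\wedge\psi_2}$. I take $v_1=u$ with $\mu_1=\bar f(\cdot,u)\cdot m+\mu$ and $v_2=\overline u$ with $\mu_2=\bar f(\cdot,\overline u)\cdot m+\mu$, both with boundary datum $\psi$. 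The resulting mixed measure has density $\mathbf 1_{\{u<\overline u\}}\bar f(\cdot,u)+\mathbf 1_{\{u\ge\overline u\}}\bar f(\cdot,\overline u)$ relative to $m$, plus $\mu$. Here is the crux: on $\{u\ge\overline u\}$ the top branch of (\ref{eq64.3}) forces $\bar f(\cdot,u)=f(\cdot,\overline u)=\bar f(\cdot,\overline u)$, so the density collapses to $\bar f(\cdot,u)$ and the mixed measure equals $\mu_1$. Thus $u\wedge\overline u-R^D(\bar f(\cdot,u))-R^D\mu\in\mathcal U^D_\psi$, whence this function is $\ge\overline H^D\psi=H^D\psi$; on the other hand $u\wedge\overline u\le u$ gives $u\wedge\overline u-R^D(\bar f(\cdot,u))-R^D\mu\le u-R^D(\bar f(\cdot,u))-R^D\mu=H^D\psi$. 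The two inequalities force equality, hence $u\wedge\overline u=u$, i.e. $u\le\overline u$.

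The bound $u\ge\underline u$ is entirely symmetric: I apply Lemma \ref{lm.kato} itself to $u$ (with $\mu_1=\bar f(\cdot,u)\cdot m+\mu$) and $\underline u$ (with $\bar f(\cdot,\underline u)\cdot m+\mu$). On $\{u\le\underline u\}$ the bottom branch of (\ref{eq64.3}) gives $\bar f(\cdot,u)=f(\cdot,\underline u)=\bar f(\cdot,\underline u)$, the mixed measure again collapses to $\bar f(\cdot,u)\cdot m+\mu$, so $u\vee\underline u-R^D(\bar f(\cdot,u))-R^D\mu\in\mathcal L^D_\psi$ and is therefore $\le\underline H^D\psi=H^D\psi=u-R^D(\bar f(\cdot,u))-R^D\mu\le u\vee\underline u-R^D(\bar f(\cdot,u))-R^D\mu$, forcing $u\vee\underline u=u$.

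Throughout I must check that the measures are admissible for Lemma \ref{lm.kato}, i.e. smooth with $R^D|\cdot|<\infty$ q.e.: the densities $\bar f(\cdot,u)$, $f(\cdot,\overline u)$, $f(\cdot,\underline u)$ times $m$ are absolutely continuous with respect to $m$, hence smooth, and the finiteness $R^D|\bar f(\cdot,u)|<\infty$, $R^D|f(\cdot,\overline u)|<\infty$, $R^D|f(\cdot,\underline u)|<\infty$ q.e. is part of the definitions of solution, supersolution and subsolution (and is guaranteed on the whole order interval by (H4)). The main obstacle is purely the bookkeeping in the crux step, namely verifying that the indicator-weighted nonlinearity produced by Lemma \ref{lm.kato} simplifies exactly to $\bar f(\cdot,u)$; once this cancellation is observed, the comparison is immediate from the extremal property $H^D\psi=\overline H^D\psi=\underline H^D\psi$ of the PWB-solution.
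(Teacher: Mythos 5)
Your engine is the same as the paper's (Lemma \ref{lm.kato} plus the observation that the truncation (\ref{eq64.3}) annihilates the indicator-weighted density), but the configuration in which you run it has a genuine gap. To feed $v_1=u$ into the hypotheses of the (dual) Kato lemma at boundary datum $\psi$ you need $u-R^D(\bar f(\cdot,u))-R^D\mu=H^D\psi$ to belong to $\mathcal U^D_\psi$ (and, for the lower bound, to $\mathcal L^D_\psi$). That membership is false in general: $\mathcal U^D_\psi$ consists of \emph{lower bounded} hyperharmonic functions with $\liminf_{y\rightarrow x}\ge\psi(x)$ at \emph{every} boundary point, and the PWB-solution, being the infimum of this class, need not satisfy the pointwise $\liminf$ condition at irregular boundary points (take $D$ a punctured ball with $\psi=1$ at the puncture and $\psi=0$ on the sphere: $H^D\psi\equiv 0$ while $\psi=1$ at the puncture), nor need it be bounded below when $\psi$ is merely integrable. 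Note also that since $u\wedge\overline u=u-(u-\overline u)^+$, the inequality you extract from this step, namely $u\wedge\overline u-R^D\bar f_u-R^D\mu\ge H^D\psi$, is literally equivalent to the desired conclusion $(u-\overline u)^+\le0$; so the entire content of your argument sits in the invalid application.

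The paper sidesteps exactly this by differencing first, so that the boundary datum becomes $0$: by the definitions of solution and supersolution, $(u-\overline u)-(R^D\bar f_u-R^Df_{\overline u})=H^D\psi-w$ with $w\in\mathcal U^D_\psi$, and since $H^D\psi=\overline H^D\psi\le w$ by extremality, this is a \emph{nonpositive} hypoharmonic function, hence lies in $\mathcal L^D_0$ with the boundary condition satisfied trivially --- no boundary behaviour of $H^D\psi$ is invoked. Lemma \ref{lm.kato} applied to this element paired with the zero function then gives $(u-\overline u)^+-R^D(\mathbf{1}_{\{u>\overline u\}}(\bar f_u-f_{\overline u}))\in\mathcal L^D_0$, the truncation kills the measure exactly as in your ``crux'' computation, and $(u-\overline u)^+\le \underline H^D0=0$; the bound $\underline u\le u$ is symmetric. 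Your proof is repaired by the same move: run the cancellation on the difference at datum $0$, where membership in the lower class comes from the pointwise inequality furnished by extremality of $H^D\psi$ rather than from its (possibly bad) behaviour at irregular boundary points.
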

\begin{dow}
By the definitions of a  solution and a supersolution of
(\ref{eq3.33}),
\[
(u-\underline{u})-(R^D\bar f_u-R^D f_{\underline{u}})\in\mathcal
L^D_0.
\]
By Lemma \ref{lm.kato},
\[
(u-\overline{u})^+
-R^D(\mathbf{1}_{\{u>\overline{u}\}}
(\bar f_u-f_{\overline{u}}))\in\mathcal L^D_0.
\]
But $\mathbf{1}_{\{u>\overline{u}\}}(\bar f_u
-f_{\overline{u}})=0$, which implies that  $(u-\overline
u)^+\in\mathcal L^D_0$. Hence  $(u-\overline u)^+=0$,
so $u\le\overline u$. Similarly we prove that $\underline u\le u$.
\end{dow}

\begin{wn}
\label{wn.per} Assume that \mbox{\rm(H1)--(H4)} are satisfied and
there exists a subsolution $\underline{u}$ and a supersolution
$\overline{u}$ of \mbox{\rm(\ref{eq3.33})} such that $\underline
u\le \overline u$. Then there exists a solution $u$ of
\mbox{\rm(\ref{eq3.33})} such that $\underline u\le u\le \overline
u$.
\end{wn}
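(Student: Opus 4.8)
The plan is to obtain the solution as a monotone limit of Picard-type iterations, using the truncation $\bar f$ from Lemma \ref{lm.scomp} to keep all iterates trapped between $\underline u$ and $\overline u$. First I would observe that $\bar f$ inherits continuity in $y$ from (H3) and is uniformly dominated: by (H4) applied to the pair $\underline u\le\overline u$, there exists $h\in\BB^+(D)$ with $R^Dh<\infty$ q.e. such that $|\bar f(\cdot,v)|\le h$ for every Borel $v$ with $\underline u\le v\le\overline u$, and in fact $|\bar f(\cdot,v)|\le h$ for \emph{all} $v\in\BB(D)$ because $\bar f$ only depends on $v$ through its truncation to $[\underline u,\overline u]$. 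Thus the map $v\mapsto R^D(\bar f(\cdot,v))+R^D\mu+H^D\psi$ is well defined on all of $\BB(D)$ with values controlled q.e., and a fixed point of this map is exactly a solution of the truncated equation; by Lemma \ref{lm.scomp} any such fixed point automatically satisfies $\underline u\le u\le\overline u$, hence solves the original equation \eqref{eq3.33}.

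Next I would set up the iteration. Define $u_0:=\underline u$ and, having $u_{n}$, let
\[
u_{n+1}=R^D(\bar f(\cdot,u_n))+R^D\mu+H^D\psi \quad\mbox{q.e.}
\]
Using that $f$ (hence $\bar f$) is nonincreasing in $y$, the operator $v\mapsto R^D(\bar f(\cdot,v))$ is \emph{order-reversing}, so one expects the even and odd iterates to sandwich the eventual solution. I would first check that $u_1\ge u_0=\underline u$: since $\underline u$ is a subsolution, $\underline u-R^D(\bar f(\cdot,\underline u))-R^D\mu=\underline u-R^D f_{\underline u}-R^D\mu\in\mathcal L^D_\psi$, and comparing with $H^D\psi=\inf\UU^D_\psi\cap\cdots$ via the reduced-function characterization gives $\underline u\le u_1$; symmetrically $\overline u\ge$ the analogous first odd iterate. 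Then monotonicity of $R^D\bar f$ (reversing order) yields the two monotone chains $\underline u=u_0\le u_2\le u_4\le\cdots$ and $\overline u\ge u_1\ge u_3\ge\cdots$, all lying in $[\underline u,\overline u]$. Each chain converges pointwise q.e. to limits $\underline w\le\overline w$, and by the domination $|\bar f(\cdot,u_n)|\le h$ with $R^Dh<\infty$ q.e., dominated convergence for the potential operator $R^D$ (together with continuity in $y$ from (H3)) passes the limit through $R^D\bar f$.

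The main obstacle I anticipate is closing the gap between $\underline w$ and $\overline w$: the monotone iteration produces two candidate fixed points, and one must show they coincide (or at least that one of them solves the equation). The cleanest route is to invoke Lemma \ref{lm.scomp} once more: both $\underline w$ and $\overline w$ are solutions of the truncated problem, so each lies in $[\underline u,\overline u]$ and is simultaneously a subsolution and a supersolution of \eqref{eq3.33}; feeding $\underline u:=\underline w$, $\overline u:=\overline w$ back into Lemma \ref{lm.scomp} forces $\underline w\le\overline w$ and, by the reverse comparison, $\overline w\le\underline w$, hence $\underline w=\overline w=:u$ is the desired solution. An equivalent and perhaps more transparent formulation is to define $u:=\sup\{v:v\ \mbox{is a subsolution of \eqref{eq3.33} with }\underline u\le v\le\overline u\}$ and verify directly, using Lemma \ref{lm.kato} to show the set of such subsolutions is stable under finite maxima and the domination $|\bar f|\le h$ to pass to the supremum, that $u$ is itself a solution; this is the Perron construction advertised in the paragraph preceding the lemmas. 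Either way, the technical heart is justifying the interchange of $R^D$ with the monotone limit, which rests on (H4) and the q.e. finiteness $R^Dh<\infty$.
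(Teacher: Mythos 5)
Your first step---replacing $f$ by the truncation $\bar f$ of (\ref{eq64.3}), noting via (H4) that $|\bar f(\cdot,v)|\le h$ with $R^Dh<\infty$ q.e.\ for \emph{every} $v\in\BB(D)$, and using Lemma \ref{lm.scomp} to trap any solution of the truncated problem in $[\underline u,\overline u]$---is exactly the paper's reduction. But the paper then disposes of the one remaining nontrivial point, namely \emph{existence} of a solution to the truncated problem, by subtracting $H_D\psi$ to reduce to zero boundary data and invoking the external result \cite[Theorem 3.4]{Kl:PA2}. Your proposal instead tries to manufacture that solution by Picard iteration, and this is where it has a genuine gap.

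Since $f$ (hence $\bar f$) is \emph{nonincreasing} in $y$, the map $Tv:=R^D(\bar f(\cdot,v))+R^D\mu+H^D\psi$ is order-reversing, and for an antitone map the iterates started at a subsolution do not form the monotone even/odd chains you claim. From $u_0=\underline u\le Tu_0=u_1$ (which does follow from the subsolution property) antitonicity yields only the alternating comparisons $u_2\le u_1$, $u_2\le u_3$, $u_4\le u_3,\dots$; the inequalities $u_0\le u_2$ and $u_1\ge u_3$ needed to launch the two monotone chains do not follow---indeed, $u_1\ge\underline u$ and monotonicity of $\bar f$ give $\bar f(\cdot,u_1)\le f(\cdot,\underline u)$, which points in the \emph{wrong} direction for $u_2\ge u_0$. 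Nor do the iterates stay in $[\underline u,\overline u]$: the supersolution property bounds $T\overline u$, not $T\underline u$, so $u_1$ may exceed $\overline u$. Antitone iterations can genuinely settle into a $2$-cycle; even if the even and odd subsequences converge to $\underline w$ and $\overline w$, these satisfy the coupled system $\underline w=T\overline w$, $\overline w=T\underline w$ and are \emph{not} individually fixed points, so your appeal to Lemma \ref{lm.scomp} with $\underline u:=\underline w$, $\overline u:=\overline w$ is vacuous: that lemma compares a \emph{given} solution of the truncated problem with a sub/supersolution pair; it cannot merge a $2$-cycle into a fixed point, and no solution has been produced at that stage. Your fallback---defining $u$ as the supremum of subsolutions---is precisely the proposition that follows this corollary in the paper, whose proof invokes Corollary \ref{wn.per}, so within the paper's structure that route is circular unless you prove the Perron construction independently, which you do not. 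What is missing is a genuine existence argument for the truncated problem, e.g.\ the two-parameter approximation $f_{n,m}=(f\wedge n)\vee(-m)$ with comparison and limit passage as in the proof of Proposition \ref{stw8.3}, or the citation of \cite[Theorem 3.4]{Kl:PA2} after the reduction $w=u-H_D\psi$ to zero boundary data; without it the proposal does not prove the corollary.
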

\begin{dow}
Observe that $u$ is a solution to (\ref{eq3.33})  if and only if
$w=u-H_D\psi$ is a solution to the problem
\[
-Aw=f_{H_D\psi}(\cdot,w)\quad\mbox{in}\quad D,
\qquad w_{|\partial D}=0
\]
with
\[
f_{H_D\psi}(x,y)=f(x,y+H_D\psi(x)),\quad x\in D,\, y\in\BR.
\]
Therefore, by \cite[Theorem 3.4]{Kl:PA2} there exists a solution $u$ of
(\ref{eq3.33}) with $f$ replaced by $\bar f$, where $\bar f$ is
defined by (\ref{eq64.3}). By Lemma \ref{lm.scomp}, $\underline
u\le u\le \overline u$, so in fact  $u$ is a solution of
(\ref{eq3.33}).
\end{dow}

\begin{stw}
Assume that \mbox{\rm(H1)--(H4)} are satisfied and   there exists a subsolution $\underline{u}$ and a supersolution  $\overline{u}$ of \mbox{\rm(\ref{eq3.33})} such that $\underline u\le \overline u$. Then
\begin{equation}
\label{eq.sup}
w= \sup\{v\le \underline u:v\mbox{ is a subsolution of } (\ref{eq3.33})\}
\end{equation}
is a solution of \mbox{\rm(\ref{eq3.33})}.
\end{stw}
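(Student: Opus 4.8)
The plan is to show that the candidate $w$ in (\ref{eq.sup}) is simultaneously a subsolution and a supersolution, since equality $\overline{H}^D\psi=\underline{H}^D\psi$ of the two envelopes forces $w$ to be a genuine solution. First I would observe that the supremum is taken over a nonempty family, since $\underline u$ itself is a subsolution with $\underline u\le\underline u$, and that every such $v$ satisfies $v\le\overline u$ by the comparison principle implicit in Lemma \ref{lm.scomp}; hence the family is bounded above by $\overline u$ and $w$ is well defined with $\underline u\le w\le\overline u$. Because of this two-sided bound, hypothesis (H4) supplies a single dominating function $h$ with $R^Dh<\infty$ q.e. such that $|f(\cdot,v)|\le h$ for all competitors, which is what lets us pass to limits safely.

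The key technical step is closure of the subsolution family under finite (and then countable) suprema. Here I would invoke Lemma \ref{lm.kato}: if $v_1,v_2$ are subsolutions, writing $v_i-R^D f(\cdot,v_i)-R^D\mu\in\mathcal L^D_\psi$ and applying the lemma with $\mu_i$ replaced by $f(\cdot,v_i)\cdot m+\mu$ shows that $v_1\vee v_2$, after subtracting the appropriate potential $R^D(\mathbf 1_{\{v_1>v_2\}}f(\cdot,v_1)+\mathbf 1_{\{v_1\le v_2\}}f(\cdot,v_2))+R^D\mu=R^D f(\cdot,v_1\vee v_2)+R^D\mu$, again lies in $\mathcal L^D_\psi$; that is, $v_1\vee v_2$ is a subsolution. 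Consequently one can select an increasing sequence $\{v_n\}$ of subsolutions with $v_n\uparrow w$ q.e. Taking the increasing limit, the potentials $R^D f(\cdot,v_n)$ converge by (H3)--(H4) and dominated convergence to $R^D f(\cdot,w)$, and the defining membership $v_n-R^D f(\cdot,v_n)-R^D\mu\in\mathcal L^D_\psi=-\mathcal U^D_{-\psi}$ is preserved under monotone limits because $\mathcal U^D_\psi$ consists of hyperharmonic functions, a class closed under increasing limits with respect to the fine-topology regularization $(B_1)$--$(B_2)$ of the balayage space. This proves $w$ is itself a subsolution.

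It remains to show $w$ is a supersolution, which I expect to be the main obstacle. The natural argument is maximality: given any point, I would run the construction of Corollary \ref{wn.per} locally, solving (\ref{eq3.33}) on a small fine neighbourhood with boundary data $w$ to produce a solution $\tilde w\ge w$ that is still a subsolution below $\overline u$; by the definition of $w$ as the supremum of subsolutions dominated by $\underline u$ we would get $\tilde w\le w$, whence $\tilde w=w$ and the harmonic (supersolution) inequality holds. Making this rigorous requires the local solvability from Corollary \ref{wn.per} together with the fact that replacing $w$ by a larger solution on a finely open set keeps it a subsolution, which is where the comparison Lemma \ref{lm.scomp} and the uniform bound $h$ from (H4) are used once more. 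Once $w$ is both a subsolution and a supersolution, we have $w-R^D f(\cdot,w)-R^D\mu\in\mathcal L^D_\psi\cap\mathcal U^D_\psi$, so $\overline H^D\psi\le w-R^D f(\cdot,w)-R^D\mu\le\underline H^D\psi$; since always $\underline H^D\psi\le\overline H^D\psi$, equality holds and $w-R^D f(\cdot,w)-R^D\mu=H^D\psi$ q.e., i.e. $w$ solves (\ref{eq3.33}).
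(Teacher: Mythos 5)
Your first half tracks the paper's proof closely and is essentially correct: the family $\mathcal C$ of subsolutions is nonempty, Lemma \ref{lm.kato} gives closure under finite maxima (your observation that the two indicator-weighted potentials reassemble into $R^Df(\cdot,v_1\vee v_2)$ is exactly the right way to read that lemma), and one then selects a nondecreasing sequence $w_n\nearrow w$ q.e. (the paper justifies this selection by \cite[Theorem V.(1.17)]{BG}, which you silently assume, but this is a standard consequence of upward directedness). One caveat on the limit passage: although $w_n$ increases, the functions $w_n-R^Df_{w_n}-R^D\mu$ do \emph{not} converge monotonically, because $R^Df_{w_n}$ is not monotone in $n$ ($f$ is not assumed monotone in $y$ under (H1)--(H4)). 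So your justification ``preserved under monotone limits because hyperharmonic functions are closed under increasing limits'' is the wrong reason; what actually carries the step, and what the paper uses, is dominated convergence under the kernels $H_V$ via the dominating function $h$ from (H4), giving $H_V(w_n-R^Df_{w_n}-R^D\mu)\rightarrow H_V(w-R^Df_w-R^D\mu)$ for all $V\subset\subset D$, whence $w\in\mathcal C$.

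The genuine gap is your supersolution step, which you yourself leave as a sketch (``I would run \dots'', ``Making this rigorous requires \dots''). The local Perron lift you propose is not supported by anything in the paper: Corollary \ref{wn.per} is a \emph{global} existence statement on $D$ with boundary data $\psi$ under (H1)--(H4), and the paper provides neither local solvability on (finely) open subsets with merely Borel boundary data $w$, nor a pasting lemma showing the lifted function remains a global subsolution; filling this in would be a substantial undertaking. The paper avoids the supersolution property of $w$ entirely by a maximality shortcut that you had all the ingredients for: since $w$ is a subsolution and $\overline u$ a supersolution with $w\le\overline u$, one application of Corollary \ref{wn.per} on all of $D$ produces a solution $u$ of (\ref{eq3.33}) with $w\le u\le\overline u$; a solution is in particular a subsolution lying below $\overline u$, hence $u\in\mathcal C$, hence $u\le w$ by the defining supremum, forcing $u=w$. (Note this also shows that the constraint in (\ref{eq.sup}) must be read as $v\le\overline u$ --- the $\underline u$ there is evidently a misprint, and the paper's step ``$u\in\mathcal C$'' only makes sense with $\overline u$; under your literal reading $v\le\underline u$, neither the paper's argument nor your lifted function $\tilde w$ would land back in $\mathcal C$.) As written, your proof does not close; with the global maximality argument substituted for the local lifting, it would.
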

\begin{dow}
Set $\mathcal C=\{v\le \underline u:v\mbox{ is a subsolution of } (\ref{eq3.33})\}$. By the assumptions of the proposition, $\mathcal C$
is nonempty. By \cite[Theorem V.(1.17)]{BG} and Lemma \ref{lm.kato}, there exists a nondecreasing sequence $\{w_n\}\subset \mathcal{C}$
such that $w_n\nearrow w$ q.e. It is clear by assumptions (H3), (H4) that
\[
H_V(w_n-R^Df_{w_n}-R^D\mu)\rightarrow H_V(w-R^Df_{w}-R^D\mu)
\]
for every open set $V\subset\subset D$. Hence $w\in\mathcal C$.  By
Corollary \ref{wn.per} there exists a solution $u$ of
(\ref{eq3.33}) such that $w\le u\le \overline u$. But
$u\in\mathcal C$, so $u\le w$, which implies that $u=w$.
\end{dow}

\section{Weak Dirichlet problem vs Dirichlet
problem on Lipschitz domain}
\label{sec4}

Throughout this section, we assume that $D$ is a bounded  open
Lipschitz  subset of $\BR^d$. It is well known  (see, e.g.,
\cite{Gagliardo,Grisvard}) that then exists a linear
continuous operator (trace operator)
\[
{\rm Tr}: H^1(D)\rightarrow L^2(\partial D; \sigma)
\]
such that
\[
{\rm Tr} (u)=u_{|\partial D},\quad u\in C(\overline D)\cap H^1(D).
\]

\begin{df}
\label{df4.1}
Let $\psi\in \gamma (H^1(D))=H^{1/2}(\partial D)$. We say that
$u\in H^1(D)$ is a weak solution of DP$(A,\partial D,\psi)$ if
(\ref{eq1.2.0}) is satisfied and  ${\rm Tr}(u)=\psi$.
\end{df}

\begin{uw}
It is clear that if $\psi\in H^{1/2}(\partial D)\cap C(\partial D)$, then
$u$ a weak solution of DP$(A,\partial D,\psi)$ in the sense of Definition \ref{df4.1} if and only if it is a weak solution in the sense of Definition \ref{df.w}.
\end{uw}

The aim of this section is to explain the relation between weak
solutions of DP and PWB-solutions of DP. It is known that for
every $v\in H^1(D)$,  ${\rm Tr}(v)=0$ if and only if $v\in
H^1_0(D)$. Consequently, if  $u$ is a weak solution to
DP$(A,\partial D,\psi)$, then $u$ is a solution to
wDP$(A,D,\bar\psi)$ with $\bar\psi\in H^1(D)$  such that
${\rm Tr}(\bar\psi)=\psi$. Therefore the results of this section are
the first step in describing a relation between solutions of wDP
and DP.

Let us recall from the previous section, that each PWB-solution of
DP$(A,\partial D,\psi)$ is of the form
\[
v(x)=\int_{\partial D} \psi(y)\,\omega ^A_x(dy)
\]
for some $\psi\in \BB(\partial D)$. It is well  known (see
\cite{CFK}) that the measure $\omega^A_m$ may be completely
singular with respect to the surface measure $\sigma$ on $\partial D$.
As a consequence,   PWB-solutions depend on the  $\sigma$-version of
$\psi$. On the other hand, weak solutions of DP are not sensitive to the $\sigma$-version of $\psi$. For this reason the relation between weak and
PWB-solutions to DP is a rather delicate matter.

\begin{stw}
\label{stw4.1} Let $\psi\in H^{1/2}(\partial D)$ and let $u$ be a weak
solution of \mbox{\rm DP}$(A,\partial D,\psi)$.  Then for $m$-a.e. $x\in D$,
\[
u(x)=\int_{\partial D}\tilde\psi (y)\,\omega^A_x(dy),
\]
where  $\tilde{\psi}$ is a quasi-continuous $m$-version  of
an extension $\bar \psi\in H^1(\BR^d)$ of $\psi$.
\end{stw}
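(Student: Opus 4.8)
The plan is to reduce the statement to the probabilistic representation of PWB-solutions already established in Theorem \ref{tw3.p} and Corollary \ref{wn2.1}, and then to show that the boundary trace of a quasi-continuous version of $\bar\psi$ is precisely what governs the boundary behaviour of the weak solution along the diffusion $\BX$. Concretely, let $u$ be a weak solution of DP$(A,\partial D,\psi)$ in the sense of Definition \ref{df4.1}, and let $\bar\psi\in H^1(\BR^d)$ be an extension of $\psi$ with quasi-continuous $m$-version $\tilde\psi$. By the trace theorem, ${\rm Tr}(u-\bar\psi_{|D})={\rm Tr}(u)-\psi=0$, so $u-\bar\psi_{|D}\in H^1_0(D)$. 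Thus it suffices to control the boundary behaviour of $\bar\psi$ along $\BX$: by Lemma \ref{lm2.2} applied to $u-\bar\psi_{|D}$, we have $(u-\bar\psi)(X_t)\to 0$ as $t\nearrow\tau_D$, $P_x$-a.s. for q.e. $x\in D$.

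The key analytic input is that $\tilde\psi(X_t)$ converges, as $t\nearrow\tau_D$, to the value $\tilde\psi(X_{\tau_D})=\tilde\psi(X_{\tau_D-})$ along the trajectory. First I would prove this for $\bar\psi$ using Fukushima's decomposition on an exhausting sequence of Lipschitz sets $\{D_n\}\subset\subset D$ exactly as in the proof of Theorem \ref{tw2.1}: since $\bar\psi\in H^1(\BR^d)$ is globally defined, decomposing $\bar\psi(X_t)=\bar\psi(x)+A_t+M_t$ gives that $\tilde\psi(X_t)$ has a limit at $\tau_D$, and because $X_{\tau_D}\in\partial D$ under $P_x$, the limit is the quasi-continuous value read at the exit point. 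Combining this with the displayed convergence $(u-\bar\psi)(X_t)\to 0$ yields
\[
u(X_t)\to \tilde\psi(X_{\tau_D}),\quad t\nearrow\tau_D,\quad P_x\text{-a.s.}
\]
for q.e. $x\in D$. Since $u$ is a weak solution on each $D_n$, Theorem \ref{tw2.1} gives $u(x)=E_xu(X_{\tau_{D_n}})$ for q.e. $x\in D_n$; passing to the limit via dominated convergence (using boundedness of $u$ on $D_n$ and the established a.s. convergence) gives $u(x)=E_x\tilde\psi(X_{\tau_D})$, which by Corollary \ref{wn2.1} is exactly $\int_{\partial D}\tilde\psi(y)\,\omega^A_x(dy)$.

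The main obstacle I anticipate is the integrability/dominated-convergence step at the final limit. Unlike in Theorem \ref{tw2.1}, here $\psi\in H^{1/2}(\partial D)$ need not be bounded, so $\tilde\psi(X_{\tau_D})$ is only known to lie in $L^2(\partial D;\omega^A_m)$ rather than $L^\infty$; hence I cannot simply invoke the boundedness bound (\ref{eq1.2.1}). To handle this I would instead establish uniform integrability of $\{u(X_{\tau_{D_n}})\}$ directly, using the It\^o/martingale energy identity of the type (\ref{eq2.7}) applied on $D_n$ together with the $H^1$-control $\|\nabla\bar\psi\|_{L^2}<\infty$, so that $E_x u(X_{\tau_{D_n}})\to E_x\tilde\psi(X_{\tau_D})$ for $m$-a.e. $x$. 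The other technical point, upgrading the q.e. statement to an $m$-a.e. statement compatible with the $m$-version of the weak solution $u$, is routine since $\mathrm{Cap}_A$-polar sets are $m$-null; this is why the conclusion is asserted only for $m$-a.e. $x\in D$.
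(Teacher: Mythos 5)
Your proof is correct and follows essentially the same route as the paper's: exhaust $D$ by Lipschitz sets $D_n$, represent $u(x)=E_xu(X_{\tau_{D_n}})$ via Theorem \ref{tw2.1}, use ${\rm Tr}(u)=\psi$, Lemma \ref{lm2.2} and quasi-continuity of $\tilde\psi$ to get $u(X_t)\rightarrow\tilde\psi(X_{\tau_D})$ $P_x$-a.s. as $t\nearrow\tau_D$, then pass to the limit by uniform integrability and Vitali's theorem. The only differences are in bookkeeping: where the paper cites \cite[Remark 2.13]{Kl:PA2} for the uniform integrability of $\{u(X_{\tau_{D_n}})\}$, you correctly derive it from the It\^o energy identity and the bound $E_x\int_0^{\tau_D}|\bar\sigma\nabla u|^2(X_r)\,dr=R^D(|\bar\sigma\nabla u|^2)(x)<\infty$ for $m$-a.e.\ $x$ (precisely the device the paper itself uses in the proof of Theorem \ref{tw5.1}), and you obtain the path-continuity of $\tilde\psi(X)$ up to $\tau_D$ via Fukushima's decomposition of $\bar\psi\in H^1(\BR^d)$ rather than quoting it as a standard property of quasi-continuous functions composed with the diffusion --- both substitutions are sound.
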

\begin{dow}
Let $\{D_n\}$ be an increasing sequence of  open sets such that
$D_n\subset\subset D$ and $\bigcup_{n\ge 1} D_n=D$. Since $u$ is a
weak a solution to DP$(A,\partial D,\psi)$, $u\in H^1(D)\cap
C(D)$. Therefore, $u$ is a weak solution to DP($A,\partial
D_n,u_{|\partial D_n}$) for each $n\ge 1$. Consequently,  by
Theorem \ref{tw2.1},
\begin{equation}
\label{eq4.1}
u(x)=E_{x}u(X_{\tau_{D_n}}),\quad x\in D.
\end{equation}
By the definition of a weak solution, ${\rm Tr}(u)=\psi$. Hence
$u-\bar\psi\in H^1_0(D)$, so by Lemma \ref{lm2.2}, for q.e. $x\in
D$, $ (u-\tilde\psi)(X_t)\rightarrow 0$ $P_x$-a.s. as $t\nearrow
\tau_D$. Since $\tilde\psi$ is quasi-continuous,
$\tilde\psi(X_t)\rightarrow \tilde\psi(X_{\tau_D})$ $P_x$-a.s. as
$t\nearrow \tau_D$. Hence, for q.e. $x\in D$, $ u(X_t)\rightarrow
\tilde\psi(X_{\tau_D})$ $P_x$-a.s. as $t\nearrow \tau_D$.
By \cite[Remark 2.13]{Kl:PA2}, the family $\{u(X_\tau), \tau\in\TT\}$ is uniformly integrable under the measure $P_x$ for q.e. $x\in D$. Hence, by  the Vitali convergence theorem, $E_xu(X_{\tau_{D_n}})\rightarrow E_x\tilde\psi(X_{\tau_D})$ q.e., which when combined with (\ref{eq4.1}) and Corollary \ref{wn2.1} gives the desired result.
\end{dow}

\begin{wn}
\label{wn4.3} Let  $\psi\in H^{1/2}(\partial D)$ and let
$\tilde\psi$ be as in  Proposition \ref{stw4.1}. If $u$ is a
weak solution to \mbox{\rm DP}$(A,\partial D,\psi)$, then $u$ is a soft solution and PWB-solution to \mbox{\rm DP}$(A,\partial D,\tilde\psi_{|\partial D})$.
\end{wn}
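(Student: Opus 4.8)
The plan is to reduce the statement to the representation
\[
u(x)=\int_{\partial D}\tilde\psi(y)\,\omega^A_x(dy)=E_x\tilde\psi(X_{\tau_D})\quad\text{for }m\text{-a.e. }x\in D,
\]
which is precisely Proposition \ref{stw4.1} combined with Corollary \ref{wn2.1} (recall $X_{\tau_D}\in\partial D$ $P_x$-a.s.), and then to quote the uniqueness theorems for soft and PWB-solutions. The only genuinely new ingredient is the membership $\tilde\psi_{|\partial D}\in L^2(\partial D;\omega^A_m)$: without it neither the definition of a soft solution nor Proposition \ref{stw2.2} and Corollary \ref{wn3.p} apply. I expect this integrability to be the main obstacle, because $\omega^A_m$ may be singular with respect to the surface measure $\sigma$, so the classical trace bound $\mathrm{Tr}(u)\in L^2(\partial D;\sigma)$ gives no information about $\int_{\partial D}|\tilde\psi|^2\,d\omega^A_m$.

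To establish the integrability I would fix an increasing sequence of Lipschitz sets $\{D_n\}$ with $D_n\subset\subset D$ and $\bigcup_{n\ge1}D_n=D$. Since $u\in H^1(D)\cap C(D)$ is harmonic, its restriction to $\overline{D_n}$ lies in $C(\overline{D_n})\cap H^1(D_n)$ and $u$ is a weak solution of DP$(A,\partial D_n,u_{|\partial D_n})$; hence Proposition \ref{stw2.1.1} applied on $D_n$ together with It\^o's formula yields, exactly as in the derivation of (\ref{eq2.7}),
\[
|u(x)|^2+E_x\int_0^{\tau_{D_n}}|\bar\sigma\nabla u|^2(X_r)\,dr=E_x|u(X_{\tau_{D_n}})|^2,\qquad x\in D_n.
\]
By Proposition \ref{stw4.1} we have $u(X_{\tau_{D_n}})\to\tilde\psi(X_{\tau_D})$ $P_x$-a.s. for q.e. $x$, so Fatou's lemma, together with monotone convergence of the left-hand side, gives
\[
E_x|\tilde\psi(X_{\tau_D})|^2\le|u(x)|^2+E_x\int_0^{\tau_D}|\bar\sigma\nabla u|^2(X_r)\,dr
\]
for q.e. $x\in D$. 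Integrating against $m$ and using the symmetry of $g_D$ to evaluate $\int_D E_x\int_0^{\tau_D}\phi(X_r)\,dr\,m(dx)=\int_D\delta\phi\,dm$ with $\phi=|\bar\sigma\nabla u|^2$, I would bound the right-hand side by $\|u\|^2_{L^2(D;m)}+\|\delta\|_\infty\|a\|_\infty\|\nabla u\|^2_{L^2(D;m)}$, which is finite because $u\in H^1(D)$ and $\delta=R^D1\in L^\infty(D)$ (as $D$ is bounded). By (\ref{eq1.13}) this is exactly $\int_{\partial D}|\tilde\psi|^2\,d\omega^A_m<\infty$, i.e. $\tilde\psi_{|\partial D}\in L^2(\partial D;\omega^A_m)$.

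With the integrability in hand the conclusion follows by citation. By Proposition \ref{stw2.2} there is a unique soft solution of DP$(A,\partial D,\tilde\psi_{|\partial D})$, and by Proposition \ref{stw2.1} it is the function $x\mapsto E_x\tilde\psi(X_{\tau_D})$ defined by (\ref{eq2.1}); this function lies in $C(D)$ and agrees $m$-a.e. with the continuous function $u$, hence agrees with $u$ everywhere on $D$. Thus $u$ itself satisfies (i)--(iii) of Proposition \ref{stw2.1} and is the soft solution. Finally, since $\tilde\psi_{|\partial D}\in L^2(\partial D;\omega^A_m)\subset L^1(\partial D;\omega^A_m)$, Corollary \ref{wn3.p} shows that a soft solution is the same as a PWB-solution, so $u$ is also the PWB-solution of DP$(A,\partial D,\tilde\psi_{|\partial D})$.
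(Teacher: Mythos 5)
Your proof is correct, and its skeleton --- represent $u$ via Proposition \ref{stw4.1} and Corollary \ref{wn2.1} as $u(x)=E_x\tilde\psi(X_{\tau_D})$, then invoke the existence/uniqueness results (Propositions \ref{stw2.1}, \ref{stw2.2}, Theorem \ref{tw3.p}, Corollary \ref{wn3.p}) --- is exactly the route the paper intends: the corollary appears there with no proof at all, as an immediate consequence of Proposition \ref{stw4.1}. Where you go beyond the paper is in isolating and actually proving the membership $\tilde\psi_{|\partial D}\in L^2(\partial D;\omega^A_m)$, which the paper uses tacitly (its definition of a soft solution and Proposition \ref{stw2.2} are formulated only for $L^2(\partial D;\omega^A_m)$ data, and Theorem \ref{tw3.p} needs at least $L^1$). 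Your It\^o--Fatou argument on the exhausting sequence $\{D_n\}$, giving
\[
E_x|\tilde\psi(X_{\tau_D})|^2\le |u(x)|^2+E_x\int_0^{\tau_D}|\bar\sigma\nabla u|^2(X_r)\,dr \quad\mbox{for q.e. } x\in D,
\]
and then, after integrating in $x$ and using the symmetry of $g_D$,
\[
\int_{\partial D}|\tilde\psi|^2\,d\omega^A_m\le\|u\|^2_{L^2(D;m)}+\|a\|_\infty\|\delta\|_\infty\|\nabla u\|^2_{L^2(D;m)}<\infty,
\]
is sound; it is in substance the same computation as (\ref{eq5.3})--(\ref{eq5.4}) in the proof of Theorem \ref{tw5.1}, transplanted from the Martin-boundary measure $h^A_m$ back to $\omega^A_m$, so your addition is fully consistent with the paper's machinery. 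Two small steps you rely on are indeed available and worth keeping explicit: the a.s.\ convergence $u(X_{\tau_{D_n}})\to\tilde\psi(X_{\tau_D})$ uses $\tau_{D_n}<\tau_D$ and $\tau_{D_n}\nearrow\tau_D$ (valid since $D_n\subset\subset D$), and the passage from the $m$-a.e.\ identity of Proposition \ref{stw4.1} to equality everywhere on $D$ uses that both $u$ and $E_\cdot\tilde\psi(X_{\tau_D})$ are continuous on $D$ (the latter by Corollary \ref{wn2.3}), which you state correctly.
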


\begin{uw}
In general,  the function $\tilde\psi_{|\partial D}$ appearing in
Corollary \ref{wn4.3} is not a $\sigma$-version of ${\rm Tr}(\psi)$.
In general, the measures $\sigma$ and $\omega^A_m$  determine different
equivalent classes (no inclusion between equivalent classes). However, there exists an $\omega^A_m$-version
of $\tilde\psi_{|\partial D}$ which is a $\sigma$-version of
${\rm Tr}(\psi)$. The construction of such a version is as follows.
Let $\varphi_\alpha:=\alpha R_\alpha(T_\alpha(\tilde{\psi}))$.
Then $\varphi_\alpha\in H^1(\BR^d)\cap C(\BR^d)$, so by the
definition of the trace operator,
\[
{\rm Tr}( \varphi_\alpha)=(\varphi_\alpha)_{|\partial D}.
\]
It is known that $\varphi_\alpha\rightarrow \tilde\psi$  in
$H^1(\BR^d)$. Hence ${\rm Tr}(\varphi_\alpha)\rightarrow
{\rm Tr}(\tilde{\psi})$ in $L^2(\partial D;\sigma)$ by continuity of
the trace operator. Moreover, by \cite[Theorem 2.1.4]{FOT}, we may
assume that $\varphi_\alpha\rightarrow \tilde \psi$ q.e.
Therefore, by Corollary \ref{wn2.1}, $(\varphi_\alpha)_{|\partial
D}\rightarrow \tilde \psi_{|\partial D}$ $\omega^A_m$-a.e. It
follows that $g $ defined as  $g=\limsup_{\alpha\rightarrow
\infty}(\varphi_\alpha)_{|\partial D}$ has the property that
$g={\rm Tr}(\psi)$ $\sigma$-a.e. and $g=\tilde\psi_{|\partial D}$\,
$\omega^A_m$-a.e.
\end{uw}

\section{Trace operator on $H^1(D)$ for arbitrary bounded open set $D$} \label{sec5}

Let $\varrho$ be the metric defined by (\ref{eq1.6}),  $D^*$ be
the completion of $D$ with respect to $\varrho$,  and let
$\partial_M D=D^*-D$. By \cite[Proposition 14.6]{ChungWalsh}, for
every $x\in D$,
\[
P_x(X_{\tau_D-}:=\lim_{t\nearrow \tau_D}X_t\mbox{ exists with respect to  the
metric }\varrho)=1.
\]
From now on $X_{\tau_D-}$ denotes $\lim_{t\nearrow \tau_D}X_t$, where limit is taken in metric $\varrho$.
It is clear that for every $x\in D$,
\[
P_x(X_{\tau_D-}\in\partial_M D)=1.
\]
Therefore, we can define a measure $h^A_x$ on $\partial_M D$
(called harmonic measure) as
\begin{equation}
\label{eq5.mhm}
h^A_x(dy)=P_x(X_{\tau_D-}\in dy).
\end{equation}
Let $\mathbb X^D=(X^D,\{P_x,\, x\in D\cup\{\Delta\}\},\FF,\tau_D)$ denote the process $\mathbb X$ killed upon
leaving $D$ (see, e.g., \cite{BG,FOT} for the definition of the
killed process). Let us recall that $\{\Delta\}$ is one-point compactification of $D$ and by convention $u(\Delta)=0$. By $\mathcal{I}$ we denote the invariant
$\sigma$-field for $\mathbb X^D$, i.e. $A\in \mathcal{I}$ if and
only if $A\in \FF$ and for every $t\ge0$,
\[
\theta^{-1}_t(A\cap\{\tau_D>0\})=A\cap\{\tau_D>t\}.
\]
Let
\[
X^*_t=\mathbf{1}_{[0,\tau_D)}(t)X_t
+\mathbf{1}_{\{t=\tau_D\}}X_{\tau_D-}\,,\quad t\in [0,\tau_D].
\]
By $\gamma_A$ we denote the  linear operator
\[
\gamma_A:H^1(D)\rightarrow L^2(\partial_MD;h^A_m)
\]
which  to every $\psi\in H^1(D)$  assigns the unique function
$\gamma_A(\psi)\in L^2(\partial_MD;h^A_m)$ such that the process
\begin{equation}
\label{eq5.tr0}
[0,\tau_D]\ni t\rightarrow (\mathbf{1}_D\psi
+\gamma_A(\psi)\mathbf{1}_{\partial_MD})(X^*_t)
\end{equation}
is continuous under the measure $P_x$ for $m$-a.e.  $x\in D$. Uniqueness of the trace $\gamma_A(\psi)$ follows from the fact that by the definition of $\gamma_A$,
\begin{equation}
\label{eq.tr1}
\gamma_A(\psi)(X_{\tau_D-})=\lim_{t\nearrow\tau_D}\psi(X_t)\quad P_x\mbox{-a.s.}
\end{equation}

\begin{uw}
\label{uw.sta}
Let $u_n$, $n\ge1$, be a  solution
to wDP$(A,D,\psi_n)$ and $u$ be a solution to wDP$(A,D,\psi)$. By an elementary calculus, if $\psi_n\rightarrow \psi$ in $H^1(D)$, then $u_n\rightarrow u$ in $H^1(D)$.
\end{uw}

\begin{tw}
\label{tw5.1}
The operator $\gamma_A$ is well defined and continuous.
\end{tw}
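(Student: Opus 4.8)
The plan is to reduce the statement to the boundary behaviour of the harmonic representative of $\psi$, and then to control that behaviour by an energy estimate weighted by the Green function. Given $\psi\in H^1(D)$, I would let $u$ be the weak solution of wDP$(A,D,\psi)$, so that $u$ is harmonic and $u-\psi\in H^1_0(D)$. By Lemma \ref{lm2.2} applied to a quasi-continuous version of $u-\psi$, we have $(u-\psi)(X_t)\to 0$ $P_x$-a.s. as $t\nearrow\tau_D$ for q.e.\ $x$; hence $\lim_{t\nearrow\tau_D}\psi(X_t)$ exists iff $\lim_{t\nearrow\tau_D}u(X_t)$ does, and the two limits agree, so it suffices to study $u(X_t)$. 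Fixing an increasing sequence of Lipschitz sets $D_n\subset\subset D$ with $\bigcup_n D_n=D$, on each $D_n$ the function $u$ is a weak solution of DP$(A,\partial D_n,u_{|\partial D_n})$ (note $u\in C(D)$, so $u_{|\partial D_n}$ is continuous), and Proposition \ref{stw2.1.1} supplies a Wiener process $B$ with
\[
u(X_{t\wedge\tau_{D_n}})=u(x)+\int_0^{t\wedge\tau_{D_n}}\bar\sigma\nabla u(X_r)\,dB_r,\quad P_x\mbox{-a.s.}
\]

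The decisive step is an $m$-integrated bound on the quadratic variation. Using $E_x\int_0^{\tau_D}h(X_r)\,dr=R^Dh(x)=\int_D g_D(x,y)h(y)\,m(dy)$ with $h=|\bar\sigma\nabla u|^2$, then integrating in $x$ and exploiting the symmetry of $g_D$ together with $\int_D g_D(x,y)\,m(dx)=R^D1(y)=\delta(y)$, I would obtain
\[
\int_D E_x\Big(\int_0^{\tau_D}|\bar\sigma\nabla u|^2(X_r)\,dr\Big)\,m(dx)=\int_D \delta\,|\bar\sigma\nabla u|^2\,dm\le\|a\|_\infty\|\delta\|_\infty\|\nabla u\|^2_{L^2(D;m)}<\infty,
\]
since $\delta=R^D1$ is bounded on the bounded set $D$ and $u\in H^1(D)$. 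Consequently $E_x\int_0^{\tau_D}|\bar\sigma\nabla u|^2(X_r)\,dr<\infty$ for $m$-a.e.\ $x$, so for such $x$ the stochastic integral is an $L^2(P_x)$-bounded martingale on $[0,\tau_D)$; letting $n\to\infty$ it converges $P_x$-a.s.\ and in $L^2(P_x)$, whence $u(X_t)\to\xi$ $P_x$-a.s.\ as $t\nearrow\tau_D$. The limit $\xi$ satisfies $\xi\circ\theta_s=\xi$ on $\{\tau_D>s\}$, hence is $\II$-measurable; by the probabilistic Martin boundary theory underlying the construction of $\partial_M D$ (the invariant $\sigma$-field of $\BX^D$ agreeing $P_m$-a.s.\ with $\sigma(X_{\tau_D-})$, cf.\ \cite{ChungWalsh}), there is a Borel $g$ on $\partial_M D$ with $\xi=g(X_{\tau_D-})$. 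Setting $\gamma_A(\psi):=g$ makes the process in (\ref{eq5.tr0}) continuous, and by (\ref{eq.tr1}) together with $h^A_m=\int h^A_x\,m(dx)$ it is determined uniquely $h^A_m$-a.e.; this establishes that $\gamma_A$ is well defined.

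Continuity then follows from the same estimate. Since $\xi=u(x)+\lim_n\int_0^{\tau_{D_n}}\bar\sigma\nabla u(X_r)\,dB_r$, the It\^o isometry gives $E_x|\xi|^2\le 2|u(x)|^2+2E_x\int_0^{\tau_D}|\bar\sigma\nabla u|^2(X_r)\,dr$, and integrating in $x$,
\[
\|\gamma_A(\psi)\|^2_{L^2(\partial_MD;h^A_m)}=\int_D E_x|\xi|^2\,m(dx)\le 2\|u\|^2_{L^2(D;m)}+2\|a\|_\infty\|\delta\|_\infty\|\nabla u\|^2_{L^2(D;m)}\le C\|u\|^2_{H^1(D)}.
\]
Because the weak solution map $\psi\mapsto u$ is linear and, by Remark \ref{uw.sta}, continuous on $H^1(D)$, it is bounded, so $\|u\|_{H^1(D)}\le C'\|\psi\|_{H^1(D)}$, which yields $\|\gamma_A(\psi)\|_{L^2(\partial_MD;h^A_m)}\le C''\|\psi\|_{H^1(D)}$. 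I expect the main obstacle to be the identification of the a.s.\ limit $\xi$ as a function of the Martin-boundary exit point $X_{\tau_D-}$: the analytic input (existence and $L^2$-control of $\xi$) is delivered cleanly by the Green-function estimate above, but passing from ``$\xi$ is invariant'' to ``$\xi=g(X_{\tau_D-})$'' rests on the fact that the metric $\varrho$ in (\ref{eq1.6}) is designed precisely so that the invariant $\sigma$-field is generated by $X_{\tau_D-}$, which is the delicate potential-theoretic ingredient.
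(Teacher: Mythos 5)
Your proposal is correct and follows essentially the same route as the paper: weak solution of wDP$(A,D,\psi)$, the martingale representation from Proposition \ref{stw2.1.1} on exhausting subdomains, the Green-function bound $\int_D E_x\int_0^{\tau_D}|\bar\sigma\nabla u|^2(X_r)\,dr\,m(dx)=\int_D\delta|\bar\sigma\nabla u|^2\,dm$ to get convergence of $u(X_t)$ for $m$-a.e.\ $x$, identification of the invariant limit as $g(X_{\tau_D-})$ via \cite[Theorem 14.10]{ChungWalsh}, transfer to $\psi$ through Lemma \ref{lm2.2}, and continuity from the resulting $L^2$ estimate combined with Remark \ref{uw.sta}. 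The only cosmetic difference is that you spell out the symmetry computation behind $R^D|\nabla u|^2\in L^1(D;m)$, which the paper leaves implicit.
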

\begin{dow}
Let $\psi\in H^1(D)$. Then, by \cite{LSW}, there exists a unique
solution $u\in H^1(D)\cap C(D)$ of wDP$(A,D,\psi)$. Let $\{D_n\}$
be an increasing sequence of open sets such that
$D_n\subset\subset D$ and $\bigcup_{n\ge 1} D_n=D$. It is clear that
$u$ is a weak solution to  DP$(A,D_n,u_{|\partial D_n})$ for
$n\ge1$. Consequently, by Theorem \ref{tw2.1},
\begin{equation}
\label{eq5.1}
u(x)=E_xu(X_{\tau_{D_n}}),\quad x\in D.
\end{equation}
By Proposition \ref{stw2.1.1}, for every $x\in D_n$,
\begin{equation}
\label{eq5.2}
u(X_t)=u(X_{\tau_{D_n}})
-\int_t^{\tau_{D_n}}\bar\sigma\nabla u(X_{r})\,dr,
\quad t\in [0,\tau_{D_n}],
\end{equation}
where $\bar\sigma$ is defined by (\ref{eq2.12}). It follows that $u(X)$ is a martingale on $[0,\tau_{D_n}]$ for every $ n\ge1$, and hence  a martingale on $[0,\tau_D)$. By It\^o's formula,
\begin{align}
\label{eq5.3} E_x|u(X_{\tau_{D_n}})|^2&=|u(x)|^2
+E_x\int_0^{\tau_{D_n}}|\bar\sigma\nabla u(X_r)|^2\,dr\nonumber\\
&\le |u(x)|^2 +\|a\|_{\infty}E_x\int_0^{\tau_{D}}|\nabla
u(X_r)|^2\,dr.
\end{align}
Since $u\in C(D)$, the left-hand side of the above  inequality is
finite for $m$-a.e. $x\in D$. The integral on the right-hand side
is equal to $R^D|\nabla u|^2$. Since $R^D|\nabla u|^2\in L^1(D;m)$,
the  process $u(X)$ is a uniformly integrable martingale
on $[0,\tau_D)$ under the measure $P_x$ for $m$-a.e. $x\in D$. Let
\[
Y=\limsup_{t\nearrow \tau_D} u(X_t).
\]
Then by the martingale convergence theorem, for $m$-a.e $x\in D$,
$E_x|Y|<\infty$, $Y=\lim_{t\nearrow \tau_D} u(X_t)$  $P_x$-a.s.
and in $L^1(\Omega,P_x)$. It is an elementary check that
$Y\in\mathcal{I}$. Hence, by \cite[Theorem 14.10]{ChungWalsh},
there exists $g\in\BB(\partial_MD)$ such that for $m$-a.e. $x\in
D$,
\begin{equation}
\label{eq.tr} Y=g(X_{\tau_D-})\quad P_x\mbox{-a.s.}
\end{equation}
Using Fatou's lemma, we deduce from (\ref{eq5.3}) and (\ref{eq.tr})  that
\begin{equation}
\label{eq5.4} \|g\|^2_{L^2(\partial_MD;h^A_m)} \le
\|u\|^2_{L^2(D;m)}+\|a\|_\infty\|\delta\|_\infty\|\nabla
u\|^2_{L^2(D;m)}.
\end{equation}
It is clear that the process
$(u\mathbf{1}_D+g\mathbf{1}_{\partial_M D})(X^*)$ is continuous on
$[0,\tau_D]$. By the definition of a solution to wDP$(A,D,\psi)$,
$u-\psi\in H^1_0(D)$, so by Lemma \ref{lm2.2},
$(u-\psi)(X_t)\rightarrow 0$ $ P_x$-a.s. as $t\nearrow \tau_D$ for
$m$-a.e. $x\in D$. Hence $\psi(X_t)\rightarrow g(X_{\tau_D-})$ $P_x$-a.s. as
$t\nearrow \tau_D$ for $m$-a.e. $x\in D$. This implies
that the process
$(\mathbf{1}_D\psi+g\mathbf{1}_{\partial_MD})(X^*)$ is continuos
on $[0,\tau_D]$ under the  measure $P_x$ for $m$-a.e. $x\in D$. Thus $\gamma_A(\psi)=g$, so $\gamma_A$ is well defined. Continuity of $\gamma_A$ follows from (\ref{eq5.4}) and Remark \ref{uw.sta}.
\end{dow}
\medskip

\begin{wn}
\label{wn.tr2}
If $u$ is a solution to wDP$(A,D,\psi)$ with $\psi\in H^1(D)$, then $\gamma_A(u)=\gamma_A(\psi)$ and
\begin{equation}
\label{eq.repr}
u(x)=E_x\gamma_A(\psi)(X_{\tau_D-})\quad m\mbox{-a.e.}
\end{equation}
\end{wn}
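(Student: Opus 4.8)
The plan is to read off both assertions directly from the construction carried out in the proof of Theorem \ref{tw5.1}, where essentially all the analytic work has already been done. Write $g:=\gamma_A(\psi)$. Recall from that proof that $g\in\BB(\partial_MD)$ was produced as the representative of the invariant random variable $Y=\lim_{t\nearrow\tau_D}u(X_t)$, so that by (\ref{eq.tr}) we have $Y=g(X_{\tau_D-})$ $P_x$-a.s. for $m$-a.e. $x\in D$, and that $u(X)$ is a uniformly integrable martingale on $[0,\tau_D)$ under $P_x$ for $m$-a.e.\ $x\in D$.

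For the trace identity $\gamma_A(u)=\gamma_A(\psi)$, I would observe that $u\in H^1(D)$, so $\gamma_A(u)$ is defined by Theorem \ref{tw5.1}, and applying the characterization (\ref{eq.tr1}) with $\psi$ replaced by $u$ gives $\gamma_A(u)(X_{\tau_D-})=\lim_{t\nearrow\tau_D}u(X_t)=Y=g(X_{\tau_D-})$ $P_x$-a.s. for $m$-a.e. $x\in D$. Integrating the measures $h^A_x$ against $m$ (recall $h^A_m(dy)=\int_Dh^A_x(dy)\,m(dx)$), this identifies $\gamma_A(u)$ and $g$ as the same element of $L^2(\partial_MD;h^A_m)$, hence $\gamma_A(u)=\gamma_A(\psi)$. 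Equivalently, since $u(X_t)\to g(X_{\tau_D-})$ by the very definition of $Y$, the process $(\mathbf{1}_Du+g\mathbf{1}_{\partial_MD})(X^*)$ is continuous on $[0,\tau_D]$ under $P_x$ for $m$-a.e. $x$, and the uniqueness clause in the definition of the trace operator forces $\gamma_A(u)=g$.

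For the representation (\ref{eq.repr}), I would exploit the martingale structure recalled above together with the identity $u(x)=E_xu(X_{\tau_{D_n}})$ from (\ref{eq5.1}), valid for the exhausting sequence $\{D_n\}$. Since $\tau_{D_n}\nearrow\tau_D$, the martingale convergence theorem and uniform integrability give $u(X_{\tau_{D_n}})\to Y$ both $P_x$-a.s. and in $L^1(P_x)$; letting $n\to\infty$ in (\ref{eq5.1}) and using the $L^1$-convergence then yields
\[
u(x)=\lim_{n\to\infty}E_xu(X_{\tau_{D_n}})=E_xY=E_xg(X_{\tau_D-})=E_x\gamma_A(\psi)(X_{\tau_D-})
\]
for $m$-a.e. $x\in D$, which is precisely (\ref{eq.repr}).

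Because the martingale property, its uniform integrability, the existence of the boundary representative $g$, and the convergence $u(X_t)\to g(X_{\tau_D-})$ are all already established in Theorem \ref{tw5.1}, there is no genuinely difficult step. The only points requiring a little care are invoking uniform integrability to legitimately pass to the limit in (\ref{eq5.1}), and reconciling the $m$-a.e.\ statements on paths with the fact that $\gamma_A(\psi)$ is determined only $h^A_m$-a.e.\ on $\partial_MD$.
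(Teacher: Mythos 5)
Your proof is correct and takes essentially the same route as the paper's: both assertions are read off from the construction in the proof of Theorem \ref{tw5.1}, with $\gamma_A(u)=\gamma_A(\psi)=g$ following from the continuity of $(u\mathbf{1}_D+g\mathbf{1}_{\partial_M D})(X^*)$ and the uniqueness clause in the definition of the trace, and (\ref{eq.repr}) from passing to the limit in (\ref{eq5.1}). The only (immaterial) difference is in the justification of the limit passage: you use $L^1$-convergence of the closed uniformly integrable martingale $u(X)$ already established in Theorem \ref{tw5.1}, while the paper invokes uniform integrability of $\{u(X_{\tau_{D_n}})\}$ via \cite[Remark 2.13]{Kl:PA2} together with the Vitali convergence theorem.
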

\begin{dow}
In the proof of Theorem \ref{tw5.1} we have shown that the processes $(u\mathbf{1}_D+g\mathbf{1}_{\partial_M D})(X^*)$
and $(\psi\mathbf{1}_D+g\mathbf{1}_{\partial_M D})(X^*)$ are continuous on $[0,\tau_D]$. This implies that $g=\gamma_A(u)=\gamma_A(\psi)$. By \cite[Remark 2.13]{Kl:PA2}, the  family $\{u(X_{\tau_{D_n}})\}$ is uniformly integrable under the  measure $P_x$ for $m$-a.e. $x\in D$. From this, continuity of the process $(u\mathbf{1}_D+g\mathbf{1}_{\partial_M D})(X^*)$ and (\ref{eq5.1}) we get (\ref{eq.repr}) by the Vitali convergence theorem.
\end{dow}


\begin{tw}
\label{tw5.2}
There exists an isometric embedding
\[
i_M:L^2(\partial D;\omega^A_m)\hookrightarrow L^2(\partial_M D;h^A_m)
\]
such that for $m$-a.e. $x\in D$,
\begin{equation}
\label{eq5.5}
i_M(\psi)(X_{\tau_D-})=\psi(X_{\tau_D})\quad P_x\mbox{-a.s.}
\end{equation}
\end{tw}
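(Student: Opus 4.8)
The plan is to build $i_M$ first on the dense subspace $C(\partial D)\subset L^2(\partial D;\omega^A_m)$ and then extend it by the isometry it will turn out to satisfy. Fix $\psi\in C(\partial D)$ and, by Tietze's theorem, extend it to $\bar\psi\in C_b(\BR^d)$. Since $\BX$ has continuous paths in the Euclidean metric and $X_{\tau_D}\in\partial D$, we have $\psi(X_{\tau_D})=\lim_{t\nearrow\tau_D}\bar\psi(X_t)$ $P_x$-a.s.\ for every $x\in D$. Set $Y=\psi(X_{\tau_D})$. The first key step is to check that $Y$ is invariant, i.e.\ $Y\in\mathcal{I}$. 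Arguing exactly as in the proof of Theorem \ref{tw5.1} (and as for the function $v$ in Lemma \ref{lm2.2}), for every $s\ge0$ one has $Y\circ\theta_s=\lim_{t\nearrow(\tau_D-s)^+}\bar\psi(X_{t+s})=Y$ on $\{\tau_D>s\}$, which gives invariance. Consequently, by \cite[Theorem 14.10]{ChungWalsh}, there exists $g\in\BB(\partial_M D)$ such that $Y=g(X_{\tau_D-})$ $P_x$-a.s.\ for $m$-a.e.\ $x\in D$. I then define $i_M(\psi):=g$; by construction this is exactly (\ref{eq5.5}). Note that $g$ is determined $h^A_m$-a.e., because $h^A_m(dy)=\int_D P_x(X_{\tau_D-}\in dy)\,m(dx)$ is the law of $X_{\tau_D-}$ under $m\otimes P$, so $i_M(\psi)$ is unambiguous as an element of $L^2(\partial_M D;h^A_m)$; linearity of $i_M$ is immediate from the a.s.\ identity (\ref{eq5.5}).

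For the isometry I would simply integrate (\ref{eq5.5}): using Corollary \ref{wn2.1}, the definition (\ref{eq5.mhm}) of $h^A_x$, and Fubini,
\[
\|i_M(\psi)\|^2_{L^2(\partial_M D;h^A_m)}=\int_D E_x|g(X_{\tau_D-})|^2\,m(dx)=\int_D E_x|\psi(X_{\tau_D})|^2\,m(dx)=\|\psi\|^2_{L^2(\partial D;\omega^A_m)}.
\]
Thus $i_M$ is a linear isometry on $C(\partial D)$. Since $\omega^A_m$ is a finite Borel measure on the compact set $\partial D$, the space $C(\partial D)$ is dense in $L^2(\partial D;\omega^A_m)$, so $i_M$ extends uniquely to an isometric embedding of all of $L^2(\partial D;\omega^A_m)$ into $L^2(\partial_M D;h^A_m)$.

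It remains to verify that the extended operator still satisfies (\ref{eq5.5}); this is the step requiring the most care and is really the crux of the argument. Given $\psi\in L^2(\partial D;\omega^A_m)$, I choose $\psi_n\in C(\partial D)$ with $\psi_n\to\psi$ in $L^2(\partial D;\omega^A_m)$, so that $i_M(\psi_n)\to i_M(\psi)$ in $L^2(\partial_M D;h^A_m)$ by the isometry. I would then transfer both statements to the product measure $m(dx)\,P_x(d\omega)$ on $D\times\Omega$, where they read precisely as $\psi_n(X_{\tau_D})\to\psi(X_{\tau_D})$ and $i_M(\psi_n)(X_{\tau_D-})\to i_M(\psi)(X_{\tau_D-})$ in $L^2(D\times\Omega;m\otimes P)$. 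Since $i_M(\psi_n)(X_{\tau_D-})=\psi_n(X_{\tau_D})$ for every $n$, passing to the limit yields $i_M(\psi)(X_{\tau_D-})=\psi(X_{\tau_D})$ $m(dx)P_x$-a.e., i.e.\ $P_x$-a.s.\ for $m$-a.e.\ $x\in D$, which is (\ref{eq5.5}). The main obstacle is therefore not any single delicate estimate but the factorization of $\psi(X_{\tau_D})$ through the Martin-boundary random variable $X_{\tau_D-}$; this is provided by the invariance check together with the Chung--Walsh representation theorem, exactly in the spirit of Theorem \ref{tw5.1}.
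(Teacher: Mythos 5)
Your proposal is correct, and it rests on the same two pillars as the paper's proof: shift-invariance of the boundary random variable together with the representation theorem \cite[Theorem 14.10]{ChungWalsh}, which factors an invariant variable through $X_{\tau_D-}$, followed by the squaring/integrating computation for the isometry. The difference is organizational. The paper handles an arbitrary $\psi\in L^2(\partial D;\omega^A_m)$ in one stroke: it sets $u(x)=E_x\psi(X_{\tau_D})$ (which lies in $C(D)\cap H^1(D)$ by Corollary \ref{wn2.3}), uses the Markov property to write $u(X_t)=E_x(\psi(X_{\tau_D})|\FF_t)$, so that $u(X_t)\rightarrow\psi(X_{\tau_D})$ as $t\nearrow\tau_D$ by martingale convergence, and then applies to $Y=\limsup_{t\nearrow\tau_D}u(X_t)$ the invariance-plus-representation argument already carried out in the proof of Theorem \ref{tw5.1}; no continuity of $\psi$, no density argument, and no passage to the limit are needed. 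Your route---continuous $\psi$ first, isometric extension by density, then re-verification of (\ref{eq5.5}) by transferring both $L^2$-convergences to $L^2(D\times\Omega;m\otimes P)$---is sound; in particular the transfer step is exactly right, since $\omega^A_m$ and $h^A_m$ are by definition the laws of $X_{\tau_D}$ and $X_{\tau_D-}$ under $m\otimes P$. But note that your density detour is superfluous: your own invariance computation $Y\circ\theta_s=Y$ on $\{\tau_D>s\}$ uses nothing about continuity of $\psi$ (nor the Tietze extension, since $X_{\tau_D}\circ\theta_s=X_{\tau_D}$ on $\{\tau_D>s\}$ directly), so it applies verbatim to any Borel $\psi$ with $\psi(X_{\tau_D})\in L^2(m\otimes P)$, which is in essence what the paper exploits. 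What the paper's formulation additionally records---and your construction does not---is the identity (\ref{eq.A}), $i_M(\psi)=\gamma_A(u)$ with $u=E_\cdot\psi(X_{\tau_D})$, i.e. that $i_M(\psi)$ is the Martin trace of the harmonic extension of $\psi$; this byproduct is precisely what is quoted later in the proof of Proposition \ref{stw5.2}.
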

\begin{dow}
Let $\psi\in L^2(\partial D;\omega^A_m)$. Set $u(x)=E_x\psi(X_{\tau_D})$, $x\in D$.
By Corollary \ref{wn2.3}, $u\in C(D)\cap H^1(D)$. By the Markov property,
\[
u(X_t)=E_x(\psi(X_{\tau_D})|\FF_t),\quad t\le\tau_D,
\]
and by the martingale convergence theorem, $u(X_t)\rightarrow \psi(X_{\tau_D})$ as $t\nearrow \tau_D$. Set
\[
Y=\limsup_{t\nearrow \tau_D} u(X_t).
\]
As in the proof of Theorem \ref{tw5.1}, we show that there exists $g\in\BB(\partial_MD)$ such that (\ref{eq.tr}) is satisfied. Putting now
\begin{equation}
\label{eq.A}
i_M(\psi)=g,
\end{equation}
we get (\ref{eq5.5}). Squaring (\ref{eq5.5}), taking the expectation with respect to $P_x$ and then integrating with respect to $x$ on $D$ shows  that $i_M$ is an isometry.
\end{dow}

\begin{uw}
\label{uw5.1ab}
From Theorem \ref{tw5.2} it follows in particular that $L^2(\partial D;\omega^A_m)$ is a closed subset of $L^2(\partial_MD;h^A_m)$.
\end{uw}

By Theorem \ref{tw5.2}, we may think of $L^2(\partial
D;\omega^A_m)$ as being a subset of $L^2(\partial_MD;h^A_m)$. This
allows us to establish some standard properties  of the trace
operator $\gamma_A$.

\begin{stw}
\label{stw5.1}
Let $\psi\in H^1(D)$. Then $\psi\in H^1_0(D)$ if and only if $\gamma_A(\psi)=0$.
\end{stw}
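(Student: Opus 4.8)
The plan is to prove both implications using the probabilistic characterization of the trace operator established in the preceding results. The key tool is the equivalence between $\gamma_A(\psi)=0$ and the vanishing of the boundary limit of $\psi(X_t)$ along the diffusion paths, together with Lemma \ref{lm2.2}, which already handles one direction almost immediately.

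\medskip

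For the implication $\psi\in H^1_0(D)\Rightarrow\gamma_A(\psi)=0$, I would proceed as follows. First I would pass to a quasi-continuous $m$-version of $\psi$, which is legitimate since $\psi\in H^1(D)$. By Lemma \ref{lm2.2}, for every (hence $m$-a.e.) $x\in D$ we have $\psi(X_t)\to 0$ $P_x$-a.s. as $t\nearrow\tau_D$. By the defining property (\ref{eq.tr1}) of $\gamma_A$, this forces $\gamma_A(\psi)(X_{\tau_D-})=\lim_{t\nearrow\tau_D}\psi(X_t)=0$ $P_x$-a.s. for $m$-a.e. $x\in D$. Squaring and integrating against $h^A_m$ (via the definition $h^A_m(dy)=\int h^A_x(dy)\,m(dx)$ and the fact that $h^A_x$ is the law of $X_{\tau_D-}$ under $P_x$) yields $\|\gamma_A(\psi)\|_{L^2(\partial_M D;h^A_m)}^2=\int_D E_x|\gamma_A(\psi)(X_{\tau_D-})|^2\,m(dx)=0$, so $\gamma_A(\psi)=0$ in $L^2(\partial_M D;h^A_m)$.

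\medskip

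For the converse, suppose $\gamma_A(\psi)=0$. The strategy is to show that the solution $u$ of $\mathrm{wDP}(A,D,\psi)$ is identically zero, whence $\psi=\psi-u\in H^1_0(D)$ (recall $u-\psi\in H^1_0(D)$ by definition of the weak solution). By Corollary \ref{wn.tr2}, we have $\gamma_A(u)=\gamma_A(\psi)=0$ and the representation $u(x)=E_x\gamma_A(\psi)(X_{\tau_D-})=0$ for $m$-a.e. $x\in D$. Since $u\in C(D)$ (as $u$ is a weak solution), the $m$-a.e. vanishing upgrades to $u\equiv 0$ on $D$. Therefore $\psi=(\psi-u)+u=\psi-u\in H^1_0(D)$, which is the desired conclusion.

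\medskip

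I expect the main subtlety to lie in the second implication, specifically in correctly invoking Corollary \ref{wn.tr2}: one must make sure the representation formula (\ref{eq.repr}) applies with the right version of $\psi$ and that the passage from ``$u=0$ $m$-a.e.'' to ``$u\equiv 0$'' is justified by continuity of $u$. The first implication is essentially an immediate consequence of Lemma \ref{lm2.2} and the definition (\ref{eq.tr1}), so the real content is organizing the second direction so that the trace theorem becomes a clean corollary of the representation formula already proved. No genuinely new estimate is needed; the difficulty is purely in assembling the established facts in the correct order.
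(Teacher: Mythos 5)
Your proof is correct and follows essentially the same route as the paper's: Lemma \ref{lm2.2} together with the defining relation (\ref{eq.tr1}) for the implication $\psi\in H^1_0(D)\Rightarrow\gamma_A(\psi)=0$, and Corollary \ref{wn.tr2} applied to the solution $u$ of wDP$(A,D,\psi)$ for the converse. The extra care you flag at the end is harmless but unnecessary: $u=0$ $m$-a.e.\ already means $u=0$ as an element of $H^1(D)$, so the continuity upgrade plays no role in concluding $\psi=\psi-u\in H^1_0(D)$.
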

\begin{dow}
Assume that $\psi\in H^1_0(D)$. Then, by Lemma \ref{lm2.2},
$\psi(X_t)\rightarrow 0$ $P_x$-a.s. as $t\nearrow \tau_D$ for
$m$-a.e. $x\in D$. Therefore, by (\ref{eq.tr1}), $\gamma_A(\psi)=0$. Conversely, suppose that
$\gamma_A(\psi)=0$. Let $u$ be a  solution to wDP$(A,D,\psi)$.
Then $u-\psi\in H^1_0(D)$. By Corollary \ref{wn.tr2},
$u\equiv 0$. Thus $\psi\in H^1_0(D)$.
\end{dow}
\medskip

\begin{stw}
\label{stw5.2}
Let $\psi\in H^1(D)\cap C(\overline D)$. Then $\gamma_A(\psi)=\psi_{|\partial D}$.
\end{stw}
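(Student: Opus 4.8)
The plan is to read the asserted identity $\gamma_A(\psi)=\psi_{|\partial D}$ through the isometric embedding $i_M$ of Theorem \ref{tw5.2}. Since $\psi\in C(\overline D)$, the restriction $\psi_{|\partial D}$ is a bounded continuous function on $\partial D$, hence lies in $L^2(\partial D;\omega^A_m)$ (recall $\omega^A_m(\partial D)=m(D)<\infty$), and via $i_M$ it is regarded as an element of $L^2(\partial_M D;h^A_m)$, as in Remark \ref{uw5.1ab}. Thus the identity to be proved is $\gamma_A(\psi)=i_M(\psi_{|\partial D})$ in $L^2(\partial_M D;h^A_m)$.

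My strategy is to compare the two sides through their defining stochastic characterizations along the paths of $\mathbb X$. On one hand, the defining property (\ref{eq.tr1}) of the trace gives
\[
\gamma_A(\psi)(X_{\tau_D-})=\lim_{t\nearrow\tau_D}\psi(X_t)\quad P_x\mbox{-a.s.}
\]
for $m$-a.e. $x\in D$. On the other hand, the defining property (\ref{eq5.5}) of $i_M$ applied to $\psi_{|\partial D}$ gives
\[
i_M(\psi_{|\partial D})(X_{\tau_D-})=\psi_{|\partial D}(X_{\tau_D})=\psi(X_{\tau_D})\quad P_x\mbox{-a.s.},
\]
where I use Corollary \ref{wn2.1} to know that $X_{\tau_D}\in\partial D$ under $P_x$. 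So it suffices to identify the two right-hand sides.

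The bridge between them is the continuity of $\psi$ up to the boundary together with the continuity of the sample paths of the diffusion $\mathbb X$ in the Euclidean metric: as $t\nearrow\tau_D$ one has $X_t\to X_{\tau_D}\in\partial D$ in $\BR^d$, and since $\psi\in C(\overline D)$ this yields
\[
\lim_{t\nearrow\tau_D}\psi(X_t)=\psi(X_{\tau_D})\quad P_x\mbox{-a.s.}
\]
Combining the three displays gives $\gamma_A(\psi)(X_{\tau_D-})=i_M(\psi_{|\partial D})(X_{\tau_D-})$ $P_x$-a.s. for $m$-a.e. $x\in D$. Since $h^A_x$ is the law of $X_{\tau_D-}$ under $P_x$ and $h^A_m(\cdot)=\int_D h^A_x(\cdot)\,m(dx)$, the same uniqueness argument that makes $\gamma_A$ well defined (the discussion following (\ref{eq.tr1})) forces $\gamma_A(\psi)=i_M(\psi_{|\partial D})$, $h^A_m$-a.e.

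I expect the only delicate point to be the bookkeeping of two a priori different boundary limits: $X_{\tau_D}$, the Euclidean exit point living on $\partial D$, and $X_{\tau_D-}$, the limit in the Martin metric $\varrho$ living on $\partial_M D$. The whole role of $i_M$ is precisely to reconcile the values of a boundary datum read along these two limits. Once the statement is correctly interpreted via $i_M$, the proof reduces to the path-continuity observation above, which is where the hypothesis $\psi\in C(\overline D)$ enters in an essential way.
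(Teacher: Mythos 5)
Your proof is correct, but it takes a genuinely more direct, path-level route than the paper. The paper's proof forms the harmonic extension $u(x)=E_x\psi(X_{\tau_D})$, observes that by the construction (\ref{eq.A}) in the proof of Theorem \ref{tw5.2} one has $i_M(\psi_{|\partial D})=\gamma_A(u)$, and then reduces the claim to $\gamma_A(u)=\gamma_A(\psi)$, which follows because $u-\psi\in H^1_0(D)$ (Theorem \ref{tw2.1}) and $\gamma_A$ vanishes exactly on $H^1_0(D)$ (Proposition \ref{stw5.1}). You bypass both Theorem \ref{tw2.1} and Proposition \ref{stw5.1}: you never form the extension $u$, and instead compare the two almost-sure boundary limits directly, using the defining property (\ref{eq.tr1}) of the trace, the defining property (\ref{eq5.5}) of $i_M$, and Euclidean sample-path continuity together with $\psi\in C(\overline D)$ to identify $\lim_{t\nearrow\tau_D}\psi(X_t)$ with $\psi(X_{\tau_D})$; your final passage from a.s.\ equality along $X_{\tau_D-}$ to $h^A_m$-a.e.\ equality is precisely the uniqueness mechanism behind (\ref{eq.tr1}). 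What your route buys is transparency about where the hypothesis $\psi\in C(\overline D)$ actually enters, namely in reconciling the Euclidean exit point $X_{\tau_D}\in\partial D$ with the Martin-metric limit $X_{\tau_D-}\in\partial_M D$ — in the paper this same fact is used, but hidden inside the proof of Theorem \ref{tw2.1}. What the paper's route buys is structural economy: it recycles the kernel identification $\gamma_A^{-1}(0)=H^1_0(D)$ and the stochastic representation of weak solutions, the pattern reused throughout Sections \ref{sec5}--\ref{sec6}, and it sidesteps any discussion of versions. On that last point your argument is also fine, though it deserves a word: (\ref{eq.tr1}) is established through quasi-continuous representatives, but the representative of $\psi$ continuous on $\overline D$ is in particular quasi-continuous, and since the diffusion does not visit sets of zero capacity, evaluating $\psi$ along the paths is unambiguous.
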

\begin{dow}
Let $u(x)=E_x\psi(X_{\tau_D})$. By  (\ref{eq.A}), $i_M(\psi_{|\partial D})=\gamma_A(u)$.
Therefore, it is enough to show that $\gamma_A(u)=\gamma_A(\psi)$. But by Theorem \ref{tw2.1}, $u-\psi\in H^1_0(D)$, which when
combined with Proposition \ref{stw5.1} gives the result.
\end{dow}
\medskip

Let $H^1_c(D)$ denote the set of all $u\in H^1(D)$ for  which
there exists $\psi\in \BB(\partial D)$ such that
\[
[0,\tau_D]\ni t\rightarrow (u\mathbf{1}_D
+\psi\mathbf{1}_{\partial D})(X_t)
\mbox{ is continuous }P_x\mbox{-a.s. for $m$-a.e. }x\in D.
\]

\begin{stw}
\label{stw5.3}
We have
\[
\gamma^{-1}_A(L^2(\partial D;\omega^A_m))=H^1_c(D).
\]
\end{stw}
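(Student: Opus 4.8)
The plan is to prove the two inclusions separately, in each case reducing membership to a single almost-sure identity relating the terminal value of $u$ along the paths of $\BX$ to a function on the Euclidean boundary $\partial D$, and then to transport that identity across the isometric embedding $i_M$ of Theorem \ref{tw5.2}. Everything rests on the defining relation \eqref{eq.tr1}, namely $\gamma_A(u)(X_{\tau_D-})=\lim_{t\nearrow\tau_D}u(X_t)$ $P_x$-a.s.\ for $m$-a.e.\ $x$, together with the identifications $\omega^A_x(dy)=P_x(X_{\tau_D}\in dy)$ (Corollary \ref{wn2.1}) and $h^A_x(dy)=P_x(X_{\tau_D-}\in dy)$ (see \eqref{eq5.mhm}). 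Recall that, by Remark \ref{uw5.1ab}, $L^2(\partial D;\omega^A_m)$ is viewed as a closed subspace of $L^2(\partial_MD;h^A_m)$ via $i_M$, so that $\gamma_A^{-1}(L^2(\partial D;\omega^A_m))=\{u\in H^1(D):\gamma_A(u)\in i_M(L^2(\partial D;\omega^A_m))\}$.

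First I would establish $\gamma_A^{-1}(L^2(\partial D;\omega^A_m))\subset H^1_c(D)$. Take $u\in H^1(D)$ with $\gamma_A(u)=i_M(\psi)$ for some Borel representative $\psi\in L^2(\partial D;\omega^A_m)$. By \eqref{eq5.5} we have $i_M(\psi)(X_{\tau_D-})=\psi(X_{\tau_D})$ $P_x$-a.s.\ for $m$-a.e.\ $x$, and combining this with \eqref{eq.tr1} gives $\lim_{t\nearrow\tau_D}u(X_t)=\psi(X_{\tau_D})$ $P_x$-a.s.\ for $m$-a.e.\ $x$. The continuity of $t\mapsto u(X_t)$ on $[0,\tau_D)$ is already built into the definition of $\gamma_A$, since it is the restriction to $[0,\tau_D)$ of the continuous process $(\mathbf{1}_Du+\gamma_A(u)\mathbf{1}_{\partial_MD})(X^*)$. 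Hence the process $(u\mathbf{1}_D+\psi\mathbf{1}_{\partial D})(X_t)$ is continuous on all of $[0,\tau_D]$, which is exactly the statement that $u\in H^1_c(D)$ with witness $\psi$.

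For the reverse inclusion, let $u\in H^1_c(D)$ with witnessing function $\psi\in\BB(\partial D)$, so that $\lim_{t\nearrow\tau_D}u(X_t)=\psi(X_{\tau_D})$ $P_x$-a.s.\ for $m$-a.e.\ $x$. By \eqref{eq.tr1} this forces
\[
\gamma_A(u)(X_{\tau_D-})=\psi(X_{\tau_D})\quad P_x\text{-a.s., for } m\text{-a.e.\ }x.
\]
The crucial step is to deduce $\psi\in L^2(\partial D;\omega^A_m)$. Squaring this identity, taking $E_x$, integrating in $x$ over $D$, and using $\omega^A_x(dy)=P_x(X_{\tau_D}\in dy)$, $h^A_x(dy)=P_x(X_{\tau_D-}\in dy)$ together with \eqref{eq1.13} and the definition of $h^A_m$, I obtain
\[
\|\psi\|^2_{L^2(\partial D;\omega^A_m)}=\int_DE_x|\psi(X_{\tau_D})|^2\,m(dx)=\int_DE_x|\gamma_A(u)(X_{\tau_D-})|^2\,m(dx)=\|\gamma_A(u)\|^2_{L^2(\partial_MD;h^A_m)}<\infty.
\]
Thus $\psi\in L^2(\partial D;\omega^A_m)$, so $i_M(\psi)$ is defined and, by Theorem \ref{tw5.2}, satisfies $i_M(\psi)(X_{\tau_D-})=\psi(X_{\tau_D})$ $P_x$-a.s.\ for $m$-a.e.\ $x$.

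It remains to identify $\gamma_A(u)$ with $i_M(\psi)$. Both $\gamma_A(u)$ and $i_M(\psi)$ satisfy $(\,\cdot\,)(X_{\tau_D-})=\psi(X_{\tau_D})$ $P_x$-a.s.\ for $m$-a.e.\ $x$; subtracting and noting that, by the definition of $h^A_m$, the assignment $g\mapsto g(X_{\tau_D-})$ is norm-preserving in the sense $\int_DE_x|g(X_{\tau_D-})|^2\,m(dx)=\|g\|^2_{L^2(\partial_MD;h^A_m)}$ (this is the very isometry of Theorem \ref{tw5.2}), I conclude $\gamma_A(u)=i_M(\psi)$ in $L^2(\partial_MD;h^A_m)$. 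Hence $\gamma_A(u)\in i_M(L^2(\partial D;\omega^A_m))$, i.e.\ $u\in\gamma_A^{-1}(L^2(\partial D;\omega^A_m))$, which finishes the proof. I expect the only genuinely delicate point to be the square-integrability argument in the converse inclusion: one must use that the a.s.\ identity between $\psi(X_{\tau_D})$ and $\gamma_A(u)(X_{\tau_D-})$, valid only $P_x$-a.s.\ for $m$-a.e.\ $x$, is precisely what matches the $L^2$-norm computed through $\omega^A_m$ with the one computed through $h^A_m$; everything else is a direct reading of \eqref{eq.tr1} and the definitions of the two harmonic measures.
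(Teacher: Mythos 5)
Your proof is correct and takes essentially the same route as the paper: both inclusions are obtained by transporting the almost-sure identities \eqref{eq.tr1} and \eqref{eq5.5} across the isometric embedding $i_M$ of Theorem \ref{tw5.2}, exactly as in the paper's argument. The only (minor) differences are that in the converse inclusion the paper reaches the identity $\gamma_A(u)(X_{\tau_D-})=\psi(X_{\tau_D})$ via the weak solution of wDP$(A,D,u)$ together with Corollary \ref{wn.tr2} and Lemma \ref{lm2.2}, whereas you invoke the defining relation \eqref{eq.tr1} directly, and you spell out the square-integrability of the boundary witness (needed for $i_M(\psi)$ to be defined), a point the paper leaves implicit.
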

\begin{dow}
Assume that  $\psi\in \gamma_A^{-1}(L^2(\partial D;\omega^A_m))$.
By the definition of the trace operator, the process
\[
[0,\tau_D]\ni t\rightarrow \mathbf{1}_{[0,\tau_D)}(t)\psi(X_t)+\mathbf{1}_{\{t=\tau_D\}}\gamma_A(\psi)(X_{\tau_D-})
\]
is continuous under $P_x$-a.s. for $m$-a.e. $x\in D$. But $\gamma_A(\psi)\in L^2(\partial D;\omega^A_m)$, so there exists
$\tilde{\psi}\in L^2(\partial D;\omega^A_m)$ such that $i_M(\tilde\psi)=\gamma_A(\psi)$. By (\ref{eq5.5}) we get that $\psi\in H^1_c(D)$. Now assume that $\psi\in H^1_c(D)$. Let $g\in \BB(\partial D)$ be such that
\[
[0,\tau_D]\ni t\rightarrow (\psi\mathbf{1}_D+g\mathbf{1}_{\partial D})(X_t)\mbox{ is continuous }P_x\mbox{-a.s. for a.e. }x\in D.
\]
Let $u$ be a solution to wDP$(A,D,\psi)$. By Corollary \ref{wn.tr2}, for $m$-a.e. $x\in D$ we have
\[
\gamma_A(\psi)(X_{\tau_D-})=\lim_{t\nearrow\tau_D}u(X_t),\quad P_x\mbox{-a.s.}
\]
Since $(u-\psi)\in H^1_0(D)$ by Lemma \ref{lm2.2}, $\lim_{t\nearrow\tau_D}u(X_t)=\lim_{t\nearrow\tau_D}\psi(X_t)$ $P_x$-a.s. for $m$-a.e. $x\in D$.
But $\lim_{t\nearrow\tau_D}\psi(X_t)=g(X_{\tau_D})$. Hence
\[
\gamma_A(\psi)(X_{\tau_D-})=g(X_{\tau_D})\quad P_x\mbox{-a.s.}
\]
for $m$-a.e. $x\in D$, which implies that $i_M(g)=\gamma_A(\psi)$.
\end{dow}

\begin{wn}
\label{wn5.1}
$H^1_c(D)$ is a closed subspace of $H^1(D)$.
\end{wn}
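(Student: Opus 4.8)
The plan is to realize $H^1_c(D)$ as the preimage, under the continuous linear trace operator $\gamma_A$, of a closed linear subspace, and then invoke the elementary fact that such preimages are themselves closed subspaces. Thus the corollary is essentially a repackaging of the results already established in this section.

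First I would recall from Proposition \ref{stw5.3} the identification
\[
H^1_c(D)=\gamma_A^{-1}\bigl(L^2(\partial D;\omega^A_m)\bigr),
\]
which reduces the statement to a structural property of $\gamma_A$ together with information about the target set. By Theorem \ref{tw5.1}, $\gamma_A:H^1(D)\to L^2(\partial_M D;h^A_m)$ is linear and continuous. Moreover, by Theorem \ref{tw5.2} the embedding $i_M$ is a linear isometry, so its image $L^2(\partial D;\omega^A_m)$ is a linear subspace of $L^2(\partial_M D;h^A_m)$, and by Remark \ref{uw5.1ab} this subspace is closed.

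It then remains to combine these observations. Since $\gamma_A$ is linear, the preimage $\gamma_A^{-1}(V)$ of any linear subspace $V$ is again a linear subspace; and since $\gamma_A$ is continuous, the preimage of a closed set is closed. Applying this with $V=L^2(\partial D;\omega^A_m)$ yields at once that $H^1_c(D)$ is a closed linear subspace of $H^1(D)$. The only point requiring attention is that all three ingredients—the identification of $H^1_c(D)$ as a preimage, the continuity and linearity of $\gamma_A$, and the closedness of $L^2(\partial D;\omega^A_m)$ inside $L^2(\partial_M D;h^A_m)$—must be in place before the argument runs; but each is supplied by the preceding results, so there is no genuine obstacle and the corollary follows immediately.
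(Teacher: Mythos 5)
Your proof is correct and is exactly the argument the paper intends: the corollary is stated as an immediate consequence of Proposition \ref{stw5.3}, with the closedness of $L^2(\partial D;\omega^A_m)$ in $L^2(\partial_M D;h^A_m)$ supplied by Theorem \ref{tw5.2} and Remark \ref{uw5.1ab}, and the continuity and linearity of $\gamma_A$ by Theorem \ref{tw5.1}. You have merely made explicit the standard fact that the preimage of a closed linear subspace under a continuous linear operator is a closed linear subspace, which is precisely what the paper leaves implicit.
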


\begin{tw}
\label{tw5.3}
For every bounded open set $D\subset\BR^d$,
$
L^2(\partial D;\omega^A_m)= L^2(\partial_M D;h^A_m)$ if and only if $H^1_c(D)=H^1(D)$.
\end{tw}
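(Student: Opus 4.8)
The plan is to read both implications off Proposition~\ref{stw5.3}, which identifies $\gamma_A^{-1}(L^2(\partial D;\omega^A_m))$ with $H^1_c(D)$, while using the embedding $i_M$ of Theorem~\ref{tw5.2} to regard $L^2(\partial D;\omega^A_m)$ as the \emph{closed} subspace $S:=i_M(L^2(\partial D;\omega^A_m))$ of $L^2(\partial_M D;h^A_m)$ (closedness being Remark~\ref{uw5.1ab}). With this identification the statement ``$L^2(\partial D;\omega^A_m)=L^2(\partial_M D;h^A_m)$'' means exactly that $i_M$ is onto, i.e.\ $S=L^2(\partial_M D;h^A_m)$. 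The forward implication is then immediate: if $S=L^2(\partial_M D;h^A_m)$, then, since $\gamma_A$ takes values in $L^2(\partial_M D;h^A_m)$, we have $\gamma_A^{-1}(S)=\gamma_A^{-1}(L^2(\partial_M D;h^A_m))=H^1(D)$, and Proposition~\ref{stw5.3} gives $H^1_c(D)=\gamma_A^{-1}(S)=H^1(D)$.

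For the converse, assume $H^1_c(D)=H^1(D)$. Proposition~\ref{stw5.3} now reads $\gamma_A^{-1}(S)=H^1(D)$, that is, $\gamma_A(H^1(D))\subseteq S$. Because $S$ is closed, it suffices to prove the \emph{unconditional} density statement
\[
\overline{\gamma_A(H^1(D))}=L^2(\partial_M D;h^A_m),
\]
for then $S\supseteq\overline{\gamma_A(H^1(D))}=L^2(\partial_M D;h^A_m)$, hence $S=L^2(\partial_M D;h^A_m)$, which is the assertion. Thus the whole theorem reduces to showing that the range of the trace operator is dense in $L^2(\partial_M D;h^A_m)$.

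To attack this density I would first pass, via the energy identity behind Proposition~\ref{stw2.1} and the estimate \eqref{eq5.4} in the proof of Theorem~\ref{tw5.1}, to the equivalent picture in which $g\mapsto E_\cdot g(X_{\tau_D-})$ is an isometry of $L^2(\partial_M D;h^A_m)$ onto the space of harmonic functions in $\breve H^1_\delta(D)$ (here Corollary~\ref{wn.tr2} matches traces with harmonic extensions). Under this identification $\gamma_A(H^1(D))$ corresponds to those harmonic functions that actually lie in $H^1(D)$, so density of the range is the same as $\breve H^1_\delta$-density of the $H^1$-harmonic functions among the $\breve H^1_\delta$-harmonic ones. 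I would reduce a general $g$ first to a bounded one by truncation, and then to a $\varrho$-continuous one, using that $\partial_M D$ is a compact metric space for the Martin metric $\varrho$ and that $h^A_m$ is a finite Borel measure, so $C(\partial_M D)$ is dense in $L^2(\partial_M D;h^A_m)$; the normalized Green potentials $\hat\kappa f_n$ defining $\varrho$ in \eqref{eq1.6} separate the points of $\partial_M D$, which through Stone--Weierstrass supplies a rich family of candidate boundary data.

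The main obstacle is the last verification. The natural harmonic extension $E_\cdot g(X_{\tau_D-})$ of a datum $g\in L^2(\partial_M D;h^A_m)$ belongs a priori only to $\breve H^1_\delta(D)$ (the energy identity controls $\sqrt{\delta}\,\nabla u$, not $\nabla u$), and $\delta=E_\cdot\tau_D$ may degenerate badly at an irregular boundary; so one cannot simply extend a given trace and stay in $H^1(D)$, and any harmonic-preserving stopping of $u$ merely returns $u$. One must instead produce, for $g$ in a dense class, genuinely $H^1(D)$-harmonic functions whose traces approximate $g$, i.e.\ control the \emph{unweighted} Dirichlet energy up to $\partial_M D$. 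Note that Proposition~\ref{stw5.2} only yields range elements $i_M(\psi_{|\partial D})\in S$ for $\psi\in C(\overline D)\cap H^1(D)$, hence density \emph{inside} $S$; reaching the ``extra'' Martin points therefore forces the construction to exploit $H^1(D)$-functions intrinsic to $D$ that do not extend to $H^1(\BR^d)$. I expect this controlled approximation to be the technical heart of the proof.
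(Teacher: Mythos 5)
Your forward implication is complete and coincides with the paper's reading of Proposition~\ref{stw5.3}: if $i_M$ is onto, then $\gamma_A^{-1}\bigl(i_M(L^2(\partial D;\omega^A_m))\bigr)=\gamma_A^{-1}\bigl(L^2(\partial_M D;h^A_m)\bigr)=H^1(D)$, whence $H^1_c(D)=H^1(D)$. The converse, however, you do not prove. Under your (literal) reading of the equality as surjectivity of $i_M$, you correctly observe that Proposition~\ref{stw5.3} only yields $\gamma_A(H^1(D))\subset i_M(L^2(\partial D;\omega^A_m))$, and you reduce the remaining step, via closedness of the image (Remark~\ref{uw5.1ab}), to the density of $\gamma_A(H^1(D))$ in $L^2(\partial_M D;h^A_m)$ --- and then leave that density as an expectation rather than an argument. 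That is a genuine gap, and your own discussion shows why the sketched attack stalls: the harmonic extension $E_\cdot g(X_{\tau_D-})$ of a datum $g$ is controlled only in $\breve H^1_\delta(D)$ (Proposition~\ref{stw6.1}), not in $H^1(D)$; the kernels $\hat\kappa f_n=R^Df_n/\delta$, which your Stone--Weierstrass step would use to separate Martin points, are quotients of two potentials vanishing at the boundary and are not known to belong to $H^1(D)$; and cutting off inside $D$ produces functions of trace zero. Surjectivity of $\gamma_A$ onto $L^p(\partial_MD;h^A_m)$ is available in the paper only on the harmonic classes $H_{\mathfrak D^p}$ (Theorem~\ref{tw7.1}), whose elements need not lie in $H^1(D)$, so no result you could cite closes your reduction. (A formally weaker input would also suffice --- that the traces $\gamma_A(u)$, $u\in H^1(D)$, generate $\BB(\partial_M D)$ modulo $h^A_m$-null sets, since then under $H^1_c(D)=H^1(D)$ every $\psi(X_{\tau_D-})$ would be measurable with respect to $X_{\tau_D}$ --- but you establish neither version.)

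For comparison: the paper's entire proof is the one line ``Follows from Proposition~\ref{stw5.3}.'' That is, through the isometric identification of Theorem~\ref{tw5.2} it treats the equality of the two $L^2$ spaces as the statement that the trace operator on $H^1(D)$ can be regarded as taking values in $L^2(\partial D;\omega^A_m)$ --- the rephrasing recorded in Remark~\ref{uw5.1} --- and in that formulation both implications are indeed immediate from Proposition~\ref{stw5.3}. Your proposal adopts the stronger set-theoretic reading ($i_M$ onto), under which an additional surjectivity/density input is required; your analysis of where that input is needed and why the naive constructions fail is accurate and is genuinely useful diagnostic content, but since the key claim is announced rather than proved, the proposal does not constitute a proof of the theorem as you have interpreted it.
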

\begin{dow}
Follows from Proposition \ref{stw5.3}.
\end{dow}

\begin{uw}
\label{uw5.1}
A rephrasing of Theorem \ref{tw5.3} is that for a bounded open $D\subset\BR^d$   there exists the trace operator from $H^1(D)$ to $L^2(\partial D;\omega^A_m)$ if and only if  $H^1_c(D)=H^1(D)$. Observe that
\[
\mbox{cl}(H^1(D)\cap C(\overline D))\subset H^1_c(D)
\]
since $H^1_c(D)$ is a closed subspace of $H^1(D)$. Hence, in particular, if $\mbox{cl}(H^1(D)\cap C(\overline D))=H^1(D)$, then there exists
the trace operator from $H^1(D)$ to $L^2(\partial D;\omega^A_m)$.
\end{uw}

\section{Weak Dirichlet problem vs Dirichlet problem on arbitrary domain} \label{sec6}

\begin{lm}
\label{lm6.1}
Let $\psi\in\BB_b(\partial_M D)$ and let $u(x)=E_x\psi(X_{\tau_D-})$, $x\in D$. Then $u\in C(D)$.
\end{lm}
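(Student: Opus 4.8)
The plan is to show that $u$ is, locally on every ball, the average of a bounded Borel boundary datum against the harmonic measure on an ordinary (regular) boundary, and then to invoke Corollary \ref{wn2.3}. First I would record the trivial bound: since $\psi$ is bounded, $|u(x)|=|E_x\psi(X_{\tau_D-})|\le\|\psi\|_\infty$ for every $x\in D$, so $u\in\BB_b(D)$.

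The crucial step is the mean value identity $u(x)=E_xu(X_{\tau_V})$ for every open $V\subset\subset D$ and every $x\in V$. To prove it I would use that $\tau_V<\tau_D$ $P_x$-a.s. for $x\in V$ (because $\overline V\subset D$), together with the fact that $X_{\tau_D-}$ is a terminal value: one has $\tau_D=\tau_V+\tau_D\circ\theta_{\tau_V}$, and the left limit of the path at its lifetime is unaffected by the shift, so $\psi(X_{\tau_D-})=\psi(X_{\tau_D-})\circ\theta_{\tau_V}$ on $\{\tau_V<\tau_D\}$. The strong Markov property then yields $E_x[\psi(X_{\tau_D-})]=E_x[E_{X_{\tau_V}}\psi(X_{\tau_D-})]=E_xu(X_{\tau_V})$, which is precisely the claimed identity.

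Next I would fix $x_0\in D$ and a ball $B=B(x_0,r)$ with $\overline B\subset D$, and apply the identity with $V=B$ to get $u(x)=E_xu(X_{\tau_B})$ for $x\in B$. Since $\BX$ has continuous paths, $X_{\tau_B}\in\partial B$ $P_x$-a.s., so $u(X_{\tau_B})=(u_{|\partial B})(X_{\tau_B})$, and splitting the bounded Borel trace as $u_{|\partial B}=(u_{|\partial B})^+-(u_{|\partial B})^-$ gives $u=v^+-v^-$ on $B$, where $v^{\pm}(x)=E_x[(u_{|\partial B})^{\pm}(X_{\tau_B})]$. Each datum $(u_{|\partial B})^{\pm}$ lies in $\BB^+(\partial B)$ and is bounded, hence belongs to $L^p(\partial B;\omega^A_x)$ for every $p\ge1$ and $x\in B$. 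As $B$ is connected, Corollary \ref{wn2.3} applies on $B$ and gives $v^{\pm}\in C(B)$, whence $u\in C(B)$; since $x_0$ is arbitrary, $u\in C(D)$.

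The main obstacle is the reduction step: justifying that $X_{\tau_D-}$ is invariant under $\theta_{\tau_V}$ so that the strong Markov property turns $u$ into the harmonic-measure average $E_xu(X_{\tau_V})$ over the ordinary boundary $\partial V$. Once this identity is established the Martin boundary disappears from the problem (the exit point from a ball lands on $\partial B$), and the desired continuity is nothing more than Corollary \ref{wn2.3} applied separately to the positive and negative parts of the bounded boundary trace $u_{|\partial B}$.
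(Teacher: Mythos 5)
Your proof is correct and follows essentially the same route as the paper: the mean-value identity $u(x)=E_xu(X_{\tau_V})$ for $V\subset\subset D$, obtained from the strong Markov property, followed by an appeal to Corollary \ref{wn2.3}. You are in fact slightly more careful than the paper at two points it leaves implicit: you justify the shift invariance $\psi(X_{\tau_D-})=\psi(X_{\tau_D-})\circ\theta_{\tau_V}$ underlying the conditioning step, and you reduce to balls and split $u_{|\partial B}$ into positive and negative parts so that the hypotheses of Corollary \ref{wn2.3} (connectedness of the domain and positivity of the boundary datum) are literally met.
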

\begin{dow}
Of course, $u\in\BB_b(D)$. Let $V\subset\subset D$.
Then, by the strong Markov property,
\[
u(X_{\tau_V})=E_{X_{\tau_V}}\psi(X_{\tau_D-})=E_x(\psi(X_{\tau_D-})|\FF_{\tau_V}).
\]
Taking the  expectation with respect to $P_x$, we  get
\begin{equation}
\label{eq6.1}
u(x)=E_xu(X_{\tau_V}),\quad x\in D.
\end{equation}
Hence,  by Corollary \ref{wn2.3}, $u\in C(V)$. Since $V\subset\subset U$ was arbitrary,   $u\in C(D)$.
\end{dow}

\begin{wn}
\label{wn6.1}
Let $D$ be a bounded domain. Then for every $B(x_0,r)\subset D$ there exists $c>0$ such that for all  $x,y\in B(x_0,r)$,
\[
c^{-1} h^A_x(dz)\le h^A_y(dz)\le c h^A_x(dz).
\]
\begin{dow}
Let $\psi\in \BB_b(\partial_MD)$ and let  $u(x)=E_x\psi(X_{\tau_D-})$, $ x\in D$. It is clear that $u$ is bounded.
Let $B(x_0,r)\subset\subset V\subset\subset D$. By (\ref{eq6.1}) and Proposition \ref{stw2.1},
$u\in H^1_{loc}(V)$ and $\EE(u,v)=0,\, v\in C^1_c(V)$. Hence, by  Harnack's inequality, there is $c>0$ such that
\[
c^{-1}u(x)\le u(y)\le cu(x),\quad x,y\in B(x_0,r).
\]
Since $c$ is independent of $\psi$, the desired result follows.

\end{dow}
\end{wn}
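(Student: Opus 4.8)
The plan is to reduce the asserted comparison of the two boundary measures to Moser's Harnack inequality for nonnegative harmonic functions, applied with a constant that does not depend on the test function. Fix the ball $B(x_0,r)$ (with $\overline{B(x_0,r)}\subset D$, as the proof requires) and choose an open set $V$ with $B(x_0,r)\subset\subset V\subset\subset D$. The sought measure inequality $c^{-1}h^A_x(dz)\le h^A_y(dz)\le c\,h^A_x(dz)$ is equivalent to the scalar inequality $c^{-1}\int_{\partial_MD}\psi\,dh^A_x\le\int_{\partial_MD}\psi\,dh^A_y\le c\int_{\partial_MD}\psi\,dh^A_x$ holding, with one and the same $c$, for every $\psi\in\BB_b(\partial_MD)$ with $\psi\ge0$. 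So it is enough to produce such a $c$ that is independent of $\psi$.

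For such a $\psi$ I would set $u(x)=E_x\psi(X_{\tau_D-})=\int_{\partial_MD}\psi\,dh^A_x$, which is nonnegative and bounded. First I would verify that $u$ is harmonic in $V$: by Lemma \ref{lm6.1} we already have $u\in C(D)$, and the strong Markov property yields (\ref{eq6.1}), i.e.\ $u(x)=E_xu(X_{\tau_W})$ for every $W\subset\subset D$. Applying this on $V$ together with Proposition \ref{stw2.1} (equivalently the characterization of harmonic functions recorded after Proposition \ref{stw3.1}) gives $u\in H^1_{loc}(V)$ with $\EE(u,v)=0$ for all $v\in C^1_c(V)$; thus $u$ is a nonnegative weak solution of $-Au=0$ on $V$. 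Now I would invoke Harnack's inequality of \cite{Moser}: since $B(x_0,r)\subset\subset V$ and $u\ge0$ solves $-Au=0$ weakly on $V$, there is a constant $c=c(\lambda,\|a\|_\infty,d,B(x_0,r),V)$, depending on the ellipticity bounds and the geometric configuration but not on the particular solution $u$, such that $c^{-1}u(x)\le u(y)\le c\,u(x)$ for all $x,y\in B(x_0,r)$. Substituting $u(x)=\int_{\partial_MD}\psi\,dh^A_x$ and using that the same $c$ works for every $\psi$ then gives the scalar inequality uniformly, and hence the desired comparison of $h^A_x$ and $h^A_y$.

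The crux of the argument — and the only genuine obstacle — is exactly this uniformity of the Harnack constant in $\psi$. Moser's theorem is what supplies it: the constant in the elliptic Harnack inequality depends only on $\lambda$, $\|a\|_\infty$, the dimension and the pair $B(x_0,r)\subset\subset V$, and never on the solution itself, so letting $\psi$ range over all nonnegative elements of $\BB_b(\partial_MD)$ is free of charge. A secondary technical point, which is the reason for enlarging $B(x_0,r)$ to $V$, is the passage from a pointwise Harnack bound at two points to comparability across the whole ball: this is precisely the $\sup/\inf$ form of Harnack's inequality on $B(x_0,r)\subset\subset V$, and it also explains why $\overline{B(x_0,r)}\subset D$ is needed (if the closure met $\partial D$, a positive harmonic function could decay to $0$ along a boundary approach and uniform comparability would fail).
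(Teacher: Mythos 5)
Your proof is correct and follows essentially the same route as the paper: represent $\int_{\partial_MD}\psi\,dh^A_\cdot$ as the bounded function $u(x)=E_x\psi(X_{\tau_D-})$, verify via (\ref{eq6.1}) and Proposition \ref{stw2.1} that $u$ is a weak solution on $V$, and apply Moser's Harnack inequality with a constant independent of $\psi$. Your explicit restriction to $\psi\ge 0$ is in fact a small improvement, since nonnegativity of $u$ is needed for Harnack's inequality and the paper leaves this implicit.
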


\begin{wn}
\label{wn6.2}
Let $D$ be a bounded open domain.
If $\psi\in L^p(\partial_MD; h^A_x)$ for some $p>0$ and $x\in D$, then $\psi\in L^p(\partial_MD; h^A_y)$ for every $y\in D$.
\end{wn}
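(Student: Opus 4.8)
The plan is to upgrade the \emph{local} comparison of harmonic measures furnished by Corollary \ref{wn6.1} to a \emph{global} one valid for arbitrary pairs of points in $D$, and then to read off the $L^p$ statement at once. Corollary \ref{wn6.1} already gives, for every ball $B(x_0,r)\subset D$, a constant $c>0$ with $c^{-1}h^A_x(dz)\le h^A_y(dz)\le c\,h^A_x(dz)$ for all $x,y\in B(x_0,r)$; the only thing to be removed is the restriction that $x,y$ lie in a common ball. This is precisely where the hypothesis that $D$ is a \emph{domain} (connected) enters, in complete analogy with Remark \ref{wn2.2}, where the passage from the local Harnack inequality to a global comparison of the measures $\omega^A_x$ is carried out.

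First I would introduce on $D$ the relation $x\sim y$ meaning that there is a constant $c>0$ with $c^{-1}h^A_x\le h^A_y\le c\,h^A_x$. It is elementary to check that $\sim$ is reflexive, symmetric and transitive (the transitivity constant being the product of the two given constants). Corollary \ref{wn6.1} says exactly that every point of a ball $B(x_0,r)\subset D$ is equivalent to $x_0$, so each equivalence class is open; since $D$ is connected and is the disjoint union of its $\sim$-classes, there is a single class. Hence for every $x,y\in D$ there exists $C=C(x,y)>0$ with
\[
C^{-1}h^A_x(dz)\le h^A_y(dz)\le C\,h^A_x(dz).
\]
Concretely, the same conclusion can be reached by fixing $x,y\in D$, joining them by a continuous path $\gamma$ in $D$, covering the compact set $\gamma([0,1])$ by finitely many balls contained in $D$, choosing a finite chain $x=z_0,z_1,\dots,z_n=y$ with each consecutive pair $z_{i-1},z_i$ lying in a common such ball, applying Corollary \ref{wn6.1} to each pair, and multiplying the resulting inequalities.

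Once the global bound $h^A_y\le C\,h^A_x$ is in hand, the conclusion is immediate: if $\psi\in L^p(\partial_MD;h^A_x)$, then
\[
\int_{\partial_MD}|\psi|^p\,dh^A_y\le C\int_{\partial_MD}|\psi|^p\,dh^A_x<\infty,
\]
so $\psi\in L^p(\partial_MD;h^A_y)$. Note that no positivity of $\psi$ is needed (we integrate $|\psi|^p$), and the argument is valid for every $p>0$. I do not expect any genuine obstacle here: the only point requiring a modicum of care is the chaining step, namely that $\gamma([0,1])$, being compact with positive distance to $\partial D$, can be covered by finitely many balls inside $D$ and broken into consecutive pairs sharing a common ball—a standard compactness-and-connectedness consideration.
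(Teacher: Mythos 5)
Your proof is correct and follows exactly the route the paper intends: the corollary is stated as an immediate consequence of Corollary \ref{wn6.1}, with the passage from the local comparison $c^{-1}h^A_x\le h^A_y\le c\,h^A_x$ on balls to arbitrary $x,y\in D$ done by the standard connectedness/chaining argument (just as in Remark \ref{wn2.2} for $\omega^A_x$), which you have merely written out in full. The open-equivalence-class formulation and the explicit integration of $|\psi|^p$ against the comparable measures are both sound for every $p>0$.
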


\begin{wn}
\label{wn6.3}
Let $\psi\in L^1(\partial_MD;h^A_m)$ and let $u(x)=E_x\psi(X_{\tau_D-})$, $x\in D$. Then $u\in C(D)$.
\end{wn}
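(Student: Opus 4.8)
The plan is to approximate $\psi$ by bounded Borel functions, use Lemma \ref{lm6.1} to get continuity of the approximants, and then transfer continuity to the limit through a \emph{locally uniform} estimate supplied by the Harnack-type comparison of harmonic measures in Corollary \ref{wn6.1}. Continuity being a local property, it suffices to prove that $u$ is continuous on each ball $B(x_0,r)\subset\subset D$; if $D$ is not connected I simply work inside the connected component of $D$ containing $x_0$, which is itself a bounded domain, so that Corollaries \ref{wn6.1} and \ref{wn6.2} apply.

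First I would settle integrability. Since $\psi\in L^1(\partial_M D;h^A_m)$ and $h^A_m(dy)=\int_D h^A_x(dy)\,m(dx)$, Fubini's theorem gives $\psi\in L^1(\partial_M D;h^A_x)$ for $m$-a.e. $x\in D$; Corollary \ref{wn6.2} then upgrades this to $\psi\in L^1(\partial_M D;h^A_x)$ for \emph{every} $x\in D$. In particular $u(x)=E_x\psi(X_{\tau_D-})=\int_{\partial_M D}\psi\,dh^A_x$ is finite at every point, and $\psi\in L^1(\partial_M D;h^A_{x_0})$.

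Next I would set $\psi_k=T_k(\psi)=((-k)\vee\psi)\wedge k$, which is bounded and Borel, and put $u_k(x)=E_x\psi_k(X_{\tau_D-})$; by Lemma \ref{lm6.1} each $u_k\in C(D)$. The key step is the uniform convergence $u_k\to u$ on $B(x_0,r)$. Using Corollary \ref{wn6.1}, I fix $c>0$ with $h^A_y\le c\,h^A_{x_0}$ for all $y\in B(x_0,r)$, so that for such $y$
\[
|u(y)-u_k(y)|\le\int_{\partial_M D}|\psi-\psi_k|\,dh^A_y\le c\int_{\partial_M D}|\psi-\psi_k|\,dh^A_{x_0}.
\]
Since $|\psi-\psi_k|=(|\psi|-k)^+\le|\psi|\mathbf{1}_{\{|\psi|>k\}}\to 0$ pointwise and is dominated by $|\psi|\in L^1(\partial_M D;h^A_{x_0})$, dominated convergence gives $\int_{\partial_M D}|\psi-\psi_k|\,dh^A_{x_0}\to 0$. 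Hence $\sup_{y\in B(x_0,r)}|u(y)-u_k(y)|\to 0$, so $u$ is a uniform limit of continuous functions on $B(x_0,r)$ and is therefore continuous there; as $B(x_0,r)\subset\subset D$ was arbitrary, $u\in C(D)$.

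I expect the only genuinely delicate point to be the control of the tail integral \emph{uniformly} in $y$ over the ball, which is precisely what the local comparison of harmonic measures in Corollary \ref{wn6.1} provides; the remaining ingredients (truncation, Lemma \ref{lm6.1}, and dominated convergence with the dominating function furnished by Corollary \ref{wn6.2}) are routine.
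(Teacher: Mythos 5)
Your proof is correct and follows essentially the same route as the paper's: truncate $\psi$ at level $k$, invoke Lemma \ref{lm6.1} for continuity of the truncated solutions, and use the local comparison of harmonic measures from Corollary \ref{wn6.1} together with dominated convergence to get uniform convergence on balls, hence continuity of the limit. Your additional remarks (the Fubini step showing $\psi\in L^1(\partial_M D;h^A_x)$ for some $x$ before applying Corollary \ref{wn6.2}, and the reduction to connected components so that Corollaries \ref{wn6.1} and \ref{wn6.2} apply) are details the paper leaves implicit, and they make the argument more complete.
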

\begin{dow}
By Corollary \ref{wn6.2}, $\psi\in L^1(\partial_MD;h^A_x),\, x\in D$. Set $\psi_n=(\psi\wedge n)\vee(-n)$, $u_n(x)=E_x\psi_n(X_{\tau_D-})$, $x\in D$,
and choose $x_0\in D,\, r>0$ so that $B(x_0,r)\subset D$. By Corollary \ref{wn6.1},
\[
|u_n(x)-u(x)|\le c\int_{\partial_MD}|\psi_n(y)-\psi(y)|\,h^A_{x_0}(dy),\quad x\in B(x_0,r).
\]
Hence $u_n\rightarrow u$ uniformly on compact subsets of $D$. Since $u_n\in C(D)$ by Lemma \ref{lm6.1},  $u\in C(D)$.
\end{dow}
\medskip

In what follows, we set
$H^{1/2}(\partial_MD):=\gamma_A(H^1(D))$.

\begin{df}
Let $\psi\in H^{1/2}(\partial_MD)$. We say that  $u\in H^1(D)$ is a weak solution to the Dirichlet problem
\[
-Au=0\quad\mbox{in}\quad D,\qquad u_{|\partial_MD}=\psi\quad\mbox{on}\quad \partial_{M}D,
\]
(DP$(A,\partial_MD,\psi)$ for short) if
\[
\EE(u,v)=0,\quad v\in H^1_0(D),\qquad \gamma_A(u)=\psi.
\]
\end{df}

\begin{tw}
\label{tw6.1}
Let $\psi\in H^1(D)$. If $u\in H^1(D)\cap C(D)$ is a solution to \mbox{\rm wDP}$(A,D,\psi)$, then $u$ is a weak solution to \mbox{\rm DP}$(A,\partial_MD,\gamma_A(\psi))$. Morover,
\begin{equation}
\label{eq6.2}
u(x)=E_x\gamma_A(\psi)(X_{\tau_D-}),\quad x\in D.
\end{equation}
\end{tw}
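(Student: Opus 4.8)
The plan is to derive everything from Corollary~\ref{wn.tr2}, upgrading its $m$-a.e.\ conclusion to a pointwise statement by a continuity argument, and to verify the weak-solution property by a routine density argument. The substance of the result is already packaged in the earlier corollaries, so the proof should be short.

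First I would check the two conditions defining a weak solution to DP$(A,\partial_M D,\gamma_A(\psi))$. By the definition of a solution to wDP$(A,D,\psi)$ we have $u\in H^1(D)$ with $\EE(u,v)=0$ for every $v\in C^1_c(D)$; since $a$ is bounded, the functional $\EE(u,\cdot)=(a\nabla u,\nabla\cdot)_{L^2(D;m)}$ is continuous with respect to the $H^1$-norm of its second argument, and $C^1_c(D)$ is dense in $H^1_0(D)$, so this extends to $\EE(u,v)=0$ for all $v\in H^1_0(D)$. The boundary identity $\gamma_A(u)=\gamma_A(\psi)$ is precisely the first assertion of Corollary~\ref{wn.tr2}. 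Together these show that $u$ is a weak solution to DP$(A,\partial_M D,\gamma_A(\psi))$.

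Next I would establish the representation \mbox{\rm(\ref{eq6.2})}. Corollary~\ref{wn.tr2} already gives $u(x)=E_x\gamma_A(\psi)(X_{\tau_D-})$ for $m$-a.e.\ $x\in D$, so it remains only to upgrade this to every $x\in D$. Both sides are continuous on $D$: the left-hand side because $u\in C(D)$ by hypothesis; the right-hand side because $\gamma_A(\psi)\in L^2(\partial_M D;h^A_m)$, and since each $h^A_x$ is a probability measure the total mass $h^A_m(\partial_M D)=m(D)$ is finite, so that $L^2(\partial_M D;h^A_m)\subset L^1(\partial_M D;h^A_m)$ and Corollary~\ref{wn6.3} applies to give $x\mapsto E_x\gamma_A(\psi)(X_{\tau_D-})\in C(D)$. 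Two continuous functions on $D$ that coincide $m$-a.e.\ coincide everywhere (as $m$ has full support in $D$), which yields \mbox{\rm(\ref{eq6.2})} for all $x\in D$.

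The only genuinely new point beyond Corollary~\ref{wn.tr2} is this passage from $m$-a.e.\ equality to equality at every point, and its key ingredient is the integrability $\gamma_A(\psi)\in L^1(\partial_M D;h^A_m)$ coming from the finiteness of $h^A_m$, which is what licenses the use of the continuity result in Corollary~\ref{wn6.3}. I do not expect any serious obstacle.
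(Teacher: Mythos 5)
Your proposal is correct and follows essentially the same route as the paper: the paper likewise obtains the $m$-a.e.\ identity from Corollary~\ref{wn.tr2} and invokes Corollary~\ref{wn6.3} (together with $u\in C(D)$) to upgrade it to every $x\in D$, the only cosmetic difference being that the paper deduces $\gamma_A(u)=\gamma_A(\psi)$ from $u-\psi\in H^1_0(D)$ via Proposition~\ref{stw5.1} while you read it off directly from Corollary~\ref{wn.tr2}. Your explicit justifications --- the density argument extending $\EE(u,v)=0$ to $v\in H^1_0(D)$, and the observation that $h^A_m$ is finite so $L^2(\partial_M D;h^A_m)\subset L^1(\partial_M D;h^A_m)$, licensing Corollary~\ref{wn6.3} --- merely spell out steps the paper leaves implicit.
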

\begin{dow}
By Corollary \ref{wn6.3}, it is sufficient  to show that (\ref{eq6.2}) holds for $m$-a.e. $x\in D$.
But this follows from Corollary \ref{wn.tr2}.
Furthermore, by the definition of a
solution of wDP and  Proposition \ref{stw5.1} we get the first
assertion of the theorem.
\end{dow}

\begin{stw}
\label{stw6.1}
Let $\psi\in L^2(\partial_M D;h^A_m)$ and let $u$ be defined  by
\begin{equation}
\label{eq6.wzor}
u(x)=E_x\psi(X_{\tau_D-}),\quad x\in D.
\end{equation}
Then,
\begin{enumerate}
\item [\rm(i)] $u\in \breve H^1_\delta(D)\cap C(D)$.
\item [\rm(ii)] $\EE(u,v)=0$ for every $v\in C_c^\infty(D)$.
\item [\rm(iii)] $\int_{\partial D_n}u(y)\,\omega^A_{x,D_n}(dy)\rightarrow \int_{\partial_M D}\psi(y)\,h^A_{x,D}(dy)$ for evey $x$ and for every increasing sequence $\{D_n\}$ of bounded open subsets of $\BR^d$ such that $\{D_n\}\subset\subset D$ and $\bigcup_{n\ge 1} D_n=D$.

\item [\rm(iv)] $|u(x)|+\|\sqrt{g_D}(x,\cdot)\nabla u\|_{L^2(D;m)}\le (1\wedge \lambda)^{-1}\|\psi\|_{L^2(\partial_M D;h^A_x)}$ for every $x\in D$.
\item [\rm(v)] $\|u\|_{\breve H^1_\delta}\le (1\wedge \lambda)^{-1}\|\psi\|_{L^2(\partial_M D;h^A_m)}$.

\item [\rm(vi)] $u(X_t)=\psi(X_{\tau_{D-}})-\int_t^{\tau_{D}}\bar\sigma\nabla u(X_r)\,dB_r$, $t\le\tau_{D}$ $P_x\mbox{-a.s.}$ for every $ x\in D$.

\item [\rm(vii)] $E_x\sup_{t<\tau_D}|u(X_t)|^2\le 4\|\psi\|^2_{L^2(\partial_M D;h^A_x)}$ for every $x\in D $.
\end{enumerate}
\end{stw}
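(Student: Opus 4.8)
The plan is to follow the blueprint of the proof of Proposition \ref{stw2.1}, transporting every step from the topological boundary $\partial D$ with harmonic measure $\omega^A_x$ to the Martin boundary $\partial_M D$ with harmonic measure $h^A_x$. I would emphasize that the global stochastic representation cannot simply be pulled back along the isometry $i_M$ of Theorem \ref{tw5.2}, since its image may be a proper subspace of $L^2(\partial_M D;h^A_m)$ (this is precisely the content of Theorem \ref{tw5.3}). Hence I would argue directly, first for bounded Borel boundary data and then by truncation.

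First I would treat $\psi\in\BB_b(\partial_M D)$. By Lemma \ref{lm6.1} the function $u$ defined by (\ref{eq6.wzor}) is bounded and belongs to $C(D)$, and by (\ref{eq6.1}) it satisfies the mean-value identity $u(x)=E_xu(X_{\tau_V})$ for every $V\subset\subset D$; in particular, for an exhausting sequence of Lipschitz sets $D_n\nearrow D$, $u_{|D_n}$ is a weak solution of DP$(A,\partial D_n,u_{|\partial D_n})$ with continuous boundary datum $u_{|\partial D_n}$. Proposition \ref{stw2.1.1} and Theorem \ref{tw2.1} applied on $D_n$ then give $u(X_t)=u(X_{\tau_{D_n}})-\int_t^{\tau_{D_n}}\bar\sigma\nabla u(X_r)\,dB_r$ for $t\le\tau_{D_n}$, so that $u(X)$ is a bounded martingale on $[0,\tau_{D_n})$, hence on $[0,\tau_D)$. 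Writing $u(X_t)=E_x(\psi(X_{\tau_D-})\mid\FF_t)$ and invoking the martingale convergence theorem, I would identify $\lim_{t\nearrow\tau_D}u(X_t)=\psi(X_{\tau_D-})$ $P_x$-a.s.; this is the point that replaces the regularity of $\psi\in C(\partial D)$ used in Proposition \ref{stw2.1.1}. It\^o's formula for (vi) on $D_n$ yields $|u(x)|^2+E_x\int_0^{\tau_{D_n}}|\bar\sigma\nabla u|^2(X_r)\,dr=E_x|u(X_{\tau_{D_n}})|^2$; letting $n\to\infty$, with Fatou on the left and bounded convergence on the right, gives $E_x\int_0^{\tau_D}|\bar\sigma\nabla u|^2(X_r)\,dr<\infty$ and the global identity $|u(x)|^2+E_x\int_0^{\tau_D}|\bar\sigma\nabla u|^2(X_r)\,dr=E_x|\psi(X_{\tau_D-})|^2=\|\psi\|^2_{L^2(\partial_M D;h^A_x)}$. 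Combining this with the ellipticity bound $|\bar\sigma\nabla u|^2\ge\lambda|\nabla u|^2$ from (\ref{eq1.12}) and the Green representation $E_x\int_0^{\tau_D}(\cdot)(X_r)\,dr=\int_Dg_D(x,\cdot)(\cdot)\,dm$ from (\ref{eq2.as}) yields (iv); integrating over $x\in D$ against $m$ and using $\int_Dg_D(x,y)\,m(dx)=\delta(y)$ gives (v). Passing to the limit in the local representation gives (vi), whence (ii) (harmonicity of $u$ on each $D_n$) and (iii) follow, and (vii) follows from Doob's $L^2$-inequality for $u(X)$.

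Then I would remove boundedness. For $\psi\in L^2(\partial_M D;h^A_m)$ put $\psi_n=(\psi\wedge n)\vee(-n)\in\BB_b(\partial_M D)$; by dominated convergence $\psi_n\to\psi$ in $L^2(\partial_M D;h^A_m)$, and by Corollaries \ref{wn6.1} and \ref{wn6.2} (after passing to a subsequence) also in $L^2(\partial_M D;h^A_x)$ for every $x\in D$. With $u_n$ the function (\ref{eq6.wzor}) associated to $\psi_n$, the energy identity applied to $u_n-u_m$ shows that $(u_n)$ is Cauchy in $\breve H^1_\delta(D)$ and in the norms of (iv), (v), (vii), with limit $u$, which is continuous by Corollary \ref{wn6.3}. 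All of (i)--(vii) then pass to the limit: (iv), (v), (vii) from the estimates, (vi) from convergence of the stochastic integrals via the It\^o isometry, and (ii), (iii) from $\breve H^1_\delta$- and pointwise convergence.

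The main obstacle is the first step, and within it the identification $\lim_{t\nearrow\tau_D}u(X_t)=\psi(X_{\tau_D-})$: unlike in Proposition \ref{stw2.1.1}, the datum $\psi$ lives on $\partial_M D$ and carries no pointwise continuity transportable along trajectories of $\BX$. The substitute is the martingale structure $u(X_t)=E_x(\psi(X_{\tau_D-})\mid\FF_t)$ together with the convergence $X_{\tau_{D_n}}\to X_{\tau_D-}$ in the metric $\varrho$, which is exactly what the construction of $\partial_M D$ and $h^A_x$ guarantees; controlling this limit and the attendant uniform integrability, so that the energy identity survives the passage $n\to\infty$, is the delicate part, whereas the second step is routine once the bounded case is established.
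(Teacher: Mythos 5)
Your proof is correct, and its core ingredients coincide with the paper's: the strong Markov mean--value identity $u(x)=E_xu(X_{\tau_{D_n}})$, the local representation from Proposition \ref{stw2.1.1} on exhausting sets, the closed--martingale identity $u(X_t)=E_x(\psi(X_{\tau_D-})\,|\,\FF_t)$ with the martingale convergence theorem (plus $\FF_{\tau_D-}$-measurability of $\psi(X_{\tau_D-})$, coming from $X_{\tau_{D_n}}\to X_{\tau_D-}$ in $\varrho$) identifying the boundary limit, and the It\^o energy identity yielding (iv)--(v). Where you differ is in structure: you run a two--step scheme (bounded data $\psi\in\BB_b(\partial_MD)$ first, then truncation $\psi_n=T_n(\psi)$ and a Cauchy argument in $\breve H^1_\delta$), whereas the paper handles general $\psi\in L^2(\partial_MD;h^A_m)$ in one pass. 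The paper's device for dispensing with boundedness is exactly the point you flag as ``delicate'': Doob's $L^2$-inequality applied to the closed martingale gives (vii) immediately for unbounded data, and then the Burkholder--Davis--Gundy inequality converts the sup-bound into the gradient estimate $E_x\int_0^{\tau_{D_n}}|\nabla u(X_r)|^2\,dr\le cE_x|\psi(X_{\tau_D-})|^2$, so uniform integrability and the passage $n\to\infty$ in the local representation come for free --- no approximation layer is needed. Your route buys conceptual transparency (the bounded case is elementary, and the limiting step literally mirrors the proof of Proposition \ref{stw2.1}) at the cost of some redundancy; note in particular that your subsequence extraction for $L^2(\partial_MD;h^A_x)$-convergence of the truncations is unnecessary, since $|T_n(\psi)-\psi|\le|\psi|$ and Corollary \ref{wn6.2} let dominated convergence apply directly for every $x$, and that once the mean--value identity is in hand, assertion (iii) is nearly immediate because $\int_{\partial D_n}u(y)\,\omega^A_{x,D_n}(dy)=E_xu(X_{\tau_{D_n}})=u(x)=\int_{\partial_MD}\psi\,dh^A_x$ for all $n$ with $x\in D_n$. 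Your opening caution that one cannot pull the representation back through $i_M$ (whose range may be proper, by Theorem \ref{tw5.3}) is well taken and consistent with the paper, which likewise argues directly on the Martin boundary.
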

\begin{dow}
By Corollary \ref{wn6.3}, $u\in C(D)$. By the strong Markov property (see the reasoning in the proof of Lemma \ref{lm6.1}), $u(x)=E_x u(X_{\tau_{D_n}})$ for $x\in D_n$, so  by Theorem \ref{tw2.1},
$u$ is also a solution to DP$(A,D_n,u_{|\partial D_n})$. Therefore,  by Proposition \ref{stw2.1}, $u\in H^1_{loc}$ and  assertion (ii)  holds true. Moreover, by Proposition \ref{stw2.1.1}, (\ref{eq2.13}) is satisfied.
By the Markov property,
\begin{equation}
\label{eq6.4}
u(X_t)=E_x(\psi(X_{\tau_D-})|\FF_t),\quad t\le\tau_{D},\quad P_x\mbox{-a.s.}
\end{equation}
for every $x\in D$. Applying Doob's $L^2$-inequality, we obtain
\begin{equation}
\label{eq6.5}
E_x\sup_{t\le\tau_D} |u(X_t)|^2\le 4E_x|\psi(X_{\tau_D-})|^2,\quad x\in D.
\end{equation}
This gives (vii). By (\ref{eq2.13}),
\[
[u(X)]_{\tau_{D_n}}=\int_0^{\tau_{D_n}}|\bar\sigma\nabla u|^2(X_r)\,dr\quad P_x\mbox{-a.s.}
\]
Using (vii) and the Burkholder-Davis-Gundy inequality, we obtain
\begin{equation}
\label{eq6.6abc}
E_x\int_0^{\tau_{D_n}}|\nabla u(X_r)|^2\le cE_x|\psi(X_{\tau_D-})|^2,\quad x\in D_n,\quad n\ge 1.
\end{equation}
Letting $n\rightarrow \infty$ in (\ref{eq6.6abc}) yields
\begin{equation}
\label{eq6.6}
E_x\int_0^{\tau_{D}}|\nabla u(X_r)|^2\le cE_x|\psi(X_{\tau_D-})|^2,\quad x\in D.
\end{equation}
By (\ref{eq6.4}) and the martingale convergence theorem, for every $x\in D$,
\begin{equation}
\label{eq6.7}
u(X_t)\rightarrow \psi(X_{\tau_D-}) \quad P_x\mbox{-a.s.}
\end{equation}
as $t\nearrow \tau_D$. Letting $n\rightarrow\infty$ in (\ref{eq2.13}) and using (\ref{eq6.6}) and (\ref{eq6.7}) we get  (vi).
By  (\ref{eq6.5}) and (\ref{eq6.7}),
\[
E_xu(X_{\tau_{D_n}})\rightarrow E_x\psi(X_{\tau_D}),
\]
which yields (iii). By (vi) and It\^o's formula,
\[
|u(x)|^2+E_x\int_0^{\tau_D}|\bar\sigma\nabla u(X_r)|^2\,dr=E_x|\psi(X_{\tau_D-})|^2,\quad x\in D,
\]
which implies (iv). Assertion (v) is a consequence of (iv).
\end{dow}

\begin{df}
Let $\psi\in L^2(\partial_MD; h^A_m)$.
We say that $u:D\rightarrow\BR$ is a soft solution  to DP$(A,\partial_M D,\psi)$ if (i)--(iii) of Proposition \ref{stw6.1} are satisfied.
\end{df}

\begin{stw}
\label{stw6.2}
For every $\psi\in L^2(\partial_M D;h^A_m)$ there exists a unique  soft solution to \mbox{\rm DP}$(A,\partial_M D,\psi)$.
\end{stw}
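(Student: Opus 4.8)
The plan is to mirror the argument used for Proposition \ref{stw2.2}, the Euclidean-boundary counterpart, since the defining conditions (i)--(iii) of a soft solution are structurally identical. For existence I would simply take the function $u$ given by (\ref{eq6.wzor}), namely $u(x)=E_x\psi(X_{\tau_D-})$: Proposition \ref{stw6.1} already establishes that this $u$ satisfies (i), (ii) and (iii), so by definition it is a soft solution to DP$(A,\partial_M D,\psi)$. Thus the existence part requires no further work beyond invoking the previous proposition.

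For uniqueness, suppose $u_1$ and $u_2$ are two soft solutions and set $u=u_1-u_2$. From (i) and (ii) the function $u$ lies in $\breve H^1_\delta(D)\cap C(D)$ and is harmonic, i.e. $\EE(u,v)=0$ for $v\in C_c^\infty(D)$. Fix an increasing sequence of open sets $\{D_n\}$ with $\{D_n\}\subset\subset D$ and $\bigcup_{n\ge1}D_n=D$. Since $u\in H^1_{loc}(D)\cap C(D)$, its restriction to each $D_n$ belongs to $H^1(D_n)\cap C(\overline{D_n})$ and is the solution of wDP$(A,D_n,u)$; equivalently it is a weak solution of DP$(A,\partial D_n,u_{|\partial D_n})$ with continuous boundary data. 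Therefore Theorem \ref{tw2.1} together with Corollary \ref{wn2.1} yields the representation
\[
u(x)=E_xu(X_{\tau_{D_n}})=\int_{\partial D_n}u(y)\,\omega^A_{x,D_n}(dy),\qquad x\in D_n.
\]

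The final step is to let $n\to\infty$ on the right-hand side. Applying condition (iii) to $u_1$ and to $u_2$ separately and subtracting, the two Martin-boundary limits cancel:
\[
\int_{\partial D_n}u(y)\,\omega^A_{x,D_n}(dy)\longrightarrow
\int_{\partial_M D}\psi(y)\,h^A_x(dy)-\int_{\partial_M D}\psi(y)\,h^A_x(dy)=0
\]
for every $x\in D$. Hence $u(x)=0$ for all $x\in D$, that is $u_1=u_2$. The only point that needs genuine care is the middle identity: one must verify that the difference $u$, restricted to the relatively compact set $D_n$, really is a weak solution in the sense of Definition \ref{df.w} so that Theorem \ref{tw2.1} applies, and that Corollary \ref{wn2.1} identifies the exit law $P_x(X_{\tau_{D_n}}\in dy)$ with $\omega^A_{x,D_n}$. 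Both facts are immediate from the local $H^1$-regularity and the continuity of the soft solutions on $D$; after that the argument is essentially identical to that of Proposition \ref{stw2.2}, with (\ref{eq1.17}) replaced by its Martin-boundary form in condition (iii).
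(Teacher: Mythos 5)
Your proof is correct and follows essentially the same route as the paper, which disposes of Proposition \ref{stw6.2} in one line by declaring it analogous to the Euclidean-boundary case: existence by invoking Proposition \ref{stw6.1} for $u(x)=E_x\psi(X_{\tau_D-})$, and uniqueness by applying Theorem \ref{tw2.1} and Corollary \ref{wn2.1} to the difference on an exhausting sequence $\{D_n\}$ and letting the two limits in condition (iii) cancel, exactly as in the proof of Proposition \ref{stw2.2}. Your spelled-out version, including the verification that $u_1-u_2$ restricted to $D_n$ lies in $H^1(D_n)\cap C(\overline{D_n})$ so that Theorem \ref{tw2.1} applies, is precisely the argument the paper intends.
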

\begin{proof}
The proof is analogous to the proof of Proposition \ref{stw2.1}.
\end{proof}

\section{Extension of the trace operator} \label{sec7}

Denote by $\TT$ the subset of $\BB(D)$ consisting of all  functions $\psi$ for which there exists $g\in\BB(\partial_MD)$
such that the process $[0,\tau_D]\ni t\mapsto (\mathbf{1}_D\psi+\mathbf{1}_{\partial_MD}g)(X^*_t)$ is continuous at $\tau_D$ under the  measure $P_x$
for $m$-a.e. $x\in D$. Of course, such a function is unique up to the measure $h^A_m$ because
\begin{equation}
\label{eq.tr4}
g(X_{\tau_D-})=\lim_{t\nearrow\tau_D} \psi(X_t)\quad P_x\mbox{-a.s.}
\end{equation}
for $m$-a.e. $x\in D$. We equip $\TT$ with the metric
$\|u\|_{q.u.}$ defined by
\[
\|u\|_{q.u.}=E_m\sup_{t<\tau_D}(|u(X_t)|\wedge 1),
\]
where $P_m(dy):=\int_DP_x(dy)\,m(dx)$.
By \cite{LeJan},  if $u_n\rightarrow u$ quasi-uniformly, i.e.
\[
\lim_{N\rightarrow \infty}\mbox{Cap}_A\Big(\bigcup_{n\ge N} \{|u_n-u|>\varepsilon\}\Big)=0,\quad \varepsilon>0.
\]
then $\|u-u_n\|_{q.u.}\rightarrow 0$ . Conversely, if $\|u-u_n\|_{q.u.}\rightarrow 0$ then there exists subsequence
$(n_k)\subset (n)$ such that $u_n\rightarrow u$ quasi-uniformly.
By  $L^0(\partial_MD; h^A_m)$ we denote the space $\BB(\partial_MD)$ equipped with the metric of convergence in measure
$h^A_m$.

Consider the trace operator
\[
\gamma_A:\TT\rightarrow \BB(\partial_MD),\quad \gamma_A(\psi):=g,\quad \psi\in\TT.
\]

\begin{stw}
\label{stw7.1}
The operator
$
\gamma_A:(\TT,\|\cdot\|_{q.u.})\rightarrow L^0(\partial_MD;h^A_m)
$
is continuous.
\end{stw}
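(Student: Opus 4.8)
The plan is to prove that $\gamma_A$ is in fact $1$-Lipschitz for these two metrics by a pathwise domination: the boundary trace of a function is the limit of its values along the trajectory, so the difference of two traces is controlled, on almost every path, by the supremum of the difference of the functions along that path. Since both $(\TT,\|\cdot\|_{q.u.})$ and $L^0(\partial_MD;h^A_m)$ are metric spaces, it suffices to argue with sequences. So first I would take $u,u_n\in\TT$ with $\|u_n-u\|_{q.u.}\to 0$ and set $g:=\gamma_A(u)$, $g_n:=\gamma_A(u_n)$. By the defining property (\ref{eq.tr4}) of the trace, for $m$-a.e. $x\in D$ both limits $\lim_{t\nearrow\tau_D}u(X_t)=g(X_{\tau_D-})$ and $\lim_{t\nearrow\tau_D}u_n(X_t)=g_n(X_{\tau_D-})$ exist $P_x$-a.s.; intersecting the countably many exceptional sets, these identities hold simultaneously for all $n$ on a set of full $P_x$-measure, for $m$-a.e. $x$.

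On that set, since the two individual limits exist, $\lim_{t\nearrow\tau_D}(u_n-u)(X_t)=(g_n-g)(X_{\tau_D-})$, and using that $s\mapsto s\wedge 1$ is continuous and nondecreasing,
\[
|(g_n-g)(X_{\tau_D-})|\wedge 1
=\lim_{t\nearrow\tau_D}\big(|(u_n-u)(X_t)|\wedge 1\big)
\le\sup_{t<\tau_D}\big(|(u_n-u)(X_t)|\wedge 1\big)
\]
$P_x$-a.s. Taking expectation with respect to $P_m$ and recalling that $\|u_n-u\|_{q.u.}=E_m\sup_{t<\tau_D}(|(u_n-u)(X_t)|\wedge 1)$ yields
\[
E_m\big[|(g_n-g)(X_{\tau_D-})|\wedge 1\big]\le\|u_n-u\|_{q.u.}.
\]
I emphasize that no linearity of $\gamma_A$ is used here: $(g_n-g)(X_{\tau_D-})$ is simply the difference of the two Borel boundary functions evaluated at $X_{\tau_D-}$, and equals the limit of $(u_n-u)(X_t)$ by elementary arithmetic of limits.

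Then I would rewrite the left-hand side via the harmonic measure. Since $h^A_x(dy)=P_x(X_{\tau_D-}\in dy)$ and $h^A_m(dy)=\int_D h^A_x(dy)\,m(dx)$, one has $E_m[\phi(X_{\tau_D-})]=\int_{\partial_MD}\phi\,dh^A_m$ for every $\phi\in\BB^+(\partial_MD)$; applied with $\phi=|g_n-g|\wedge 1$ this gives
\[
\int_{\partial_MD}\big(|g_n-g|\wedge 1\big)\,dh^A_m\le\|u_n-u\|_{q.u.}.
\]
Because $D$ is bounded, $h^A_m(\partial_MD)=\int_D h^A_x(\partial_MD)\,m(dx)=m(D)<\infty$, so $h^A_m$ is a finite measure and $\int_{\partial_MD}(|\cdot|\wedge 1)\,dh^A_m$ is exactly a metric for convergence in $h^A_m$-measure. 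Hence $\|u_n-u\|_{q.u.}\to 0$ forces $g_n\to g$ in $L^0(\partial_MD;h^A_m)$, which is the asserted (sequential, hence full) continuity of $\gamma_A$.

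I do not expect a genuine obstacle: the argument reduces to a one-line pathwise domination followed by a Fubini-type identity for $h^A_m$. The only points needing a little care are the simultaneous validity of the almost-sure limits for all $u_n$ at once (handled by the countable intersection of exceptional sets), the interchange of $\wedge 1$ with the limit along the path, and the observation that $h^A_m$ is finite so that $\int(|\cdot|\wedge 1)\,dh^A_m$ genuinely metrizes convergence in measure.
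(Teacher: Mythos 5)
Your proof is correct and is essentially the paper's own argument: the paper proves the same key inequality $\int_{\partial_MD}(|\gamma_A(u_n)|\wedge 1)\,dh^A_m=E_m(|\gamma_A(u_n)|\wedge 1)(X_{\tau_D-})\le\|u_n\|_{q.u.}$, obtained by exactly your pathwise domination via (\ref{eq.tr4}) together with the identification of $h^A_m$ as the $P_m$-law of $X_{\tau_D-}$. The only difference is presentational: you spell out the reduction to differences $u_n-u$ (so that no linearity of $\gamma_A$ is invoked), the simultaneous a.s.\ validity of the limits, and the fact that finiteness of $h^A_m$ makes $\int(|\cdot|\wedge1)\,dh^A_m$ metrize convergence in measure, all of which the paper leaves implicit.
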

\begin{dow}
Let $u\in \TT$. Suppose that  $\|u_n\|_{q.u.}\rightarrow 0$. For all $n\ge1$ we have
\begin{align*}
\int_{\partial_MD}(|\gamma_A(u_n)|\wedge 1)(y)\,h^A_m(dy)&=E_m(|\gamma_A(u_n)|\wedge 1)(X_{\tau_D-})\le \|u_n\|_{q.u.}
\end{align*}
from which we deduce that $\gamma_A(u_n)\rightarrow0$ in measure $h^A_m$.
\end{dow}
\medskip

For $p\ge1$, we set
\[
\TT_0=\gamma_A^{-1}(0),\qquad \TT^p=\gamma_A^{-1}(L^p(\partial_MD;h^A_m)).
\]
Of course, $\TT_0\subset\TT^p$ for every $p\ge 1$.

\begin{uw}
By Theorem \ref{tw5.1}, $H^1(D)\subset \TT^2$.
\end{uw}

We denote by $\mathfrak S^p$, $p\ge1$,  the set of all  quasi-continuous $u\in\BB(D)$ having a finite norm
\[
\|u\|_{\mathfrak S^p}=(E_m\sup_{t<\tau_D}|u(X_t)|^p)^{1/p},
\]
and by $\mathfrak D^p$ we denote the set of all quasi-continuous $u\in\BB(D)$ for which the family $\{|u|^p(X_{\tau_V}):V\subset\subset U\}$ is uniformly integrable under the measure $P_x$ for $m$-a.e. $x\in D$. We equip  $\mathfrak D^p$ with the norm
\[
\|u\|_{\mathfrak{D}^p}=\Big(\int_D\sup_{V\subset\subset D}E_x|u(X_{\tau_V})|^p\, m(dx)\Big)^{1/p}.
\]
Let us stress that the  norms $ \|\cdot\|_{\mathfrak
D^p}, \|\cdot\|_{\mathfrak S^p}$ depend on the operator $A$.
It is an elementary check that  $ (\mathfrak S^p,\, \|\cdot\|_{\mathfrak S^p})$ with $p\ge 1$  are Banach spaces. It is clear that $\mathfrak S^p\subset \mathfrak D^p$ for every $p\ge1$.

\begin{stw}
$(\mathfrak D^p, \|\cdot\|_{\mathfrak
D^p})$ is a Banach space for every $p\ge 1$.
\end{stw}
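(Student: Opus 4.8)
The plan is to first check that $\|\cdot\|_{\mathfrak D^p}$ is a norm and then to prove completeness; the genuinely delicate point is the latter, because the norm is an average over $m$ and is therefore a priori blind to the $m$-null surfaces $\partial V$ on which the definition implicitly tests $u$. Homogeneity is clear. For the triangle inequality I would, for fixed $x\in D$ and fixed $V\subset\subset D$, apply Minkowski's inequality in $L^p(P_x)$ to get $(E_x|u+v|^p(X_{\tau_V}))^{1/p}\le (E_x|u|^p(X_{\tau_V}))^{1/p}+(E_x|v|^p(X_{\tau_V}))^{1/p}$, take the supremum over $V$ on both sides, and then apply Minkowski in $L^p(D;m)$. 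For definiteness I would show $|u(x)|^p\le \sup_{V\subset\subset D}E_x|u|^p(X_{\tau_V})$ for $m$-a.e. $x$: since $u$ is quasi-continuous, $t\mapsto u(X_t)$ is $P_x$-a.s. continuous for q.e. $x$, so $u(X_{\tau_{B(x,r)}})\to u(x)$ $P_x$-a.s. as $r\downarrow0$, and the uniform integrability built into the definition of $\mathfrak D^p$ upgrades this to convergence of the $p$-th moments. In particular $\|u\|_{L^p(D;m)}\le\|u\|_{\mathfrak D^p}$, so $\|u\|_{\mathfrak D^p}=0$ forces $u=0$ $m$-a.e., hence $u=0$ q.e. by quasi-continuity.

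For completeness, let $\{u_n\}\subset\mathfrak D^p$ be Cauchy and pass to a subsequence with $\sum_n\|u_{n+1}-u_n\|_{\mathfrak D^p}<\infty$. The embedding just proved yields an $L^p(D;m)$-limit $u$. The key structural observation is that for each fixed $V\subset\subset D$ one has $\int_D E_x|w|^p(X_{\tau_V})\,m(dx)=\int_{\partial V}|w|^p\,d\omega^A_{m,V}$, where $\omega^A_{m,V}=\int_D\omega^A_{x,V}\,m(dx)$, and the right-hand side is $\le\|w\|^p_{\mathfrak D^p}$; thus the restriction $w\mapsto w_{|\partial V}$ is a contraction from $\mathfrak D^p$ into $L^p(\partial V;\omega^A_{m,V})$. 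Consequently the telescoping series $\sum_n|u_{n+1}-u_n|$ has summable $L^p$-norms both in $L^p(D;m)$ and in $L^p(\partial V;\omega^A_{m,V})$ for every $V$, so it converges $m$-a.e. on $D$ and $\omega^A_{m,V}$-a.e. on every $\partial V$; its sum defines $u$ off the set $N$ where it diverges.

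The main obstacle is to show that $N$ is polar (equivalently that $u_n\to u$ q.e.), for only then can the limit be quasi-continuous. I would argue by contradiction: the estimates above give $m(N)=0$ and $\omega^A_{x,V}(N)=0$ for $m$-a.e. $x$ and every $V$. If $N$ were not polar it would contain a compact non-polar $K\subset\subset D$; choosing open $U$ with $K\subset U\subset\subset D$ and putting $V=U\setminus K$, one has $X_{\tau_V}\in K$ on $\{\sigma_K<\tau_U\}$, whence $\omega^A_{x,V}(K)=P_x(\sigma_K<\tau_U)>0$ on a set of $x$ of positive $m$-measure, contradicting $\omega^A_{x,V}(N)=0$ $m$-a.e. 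Hence $N$ is polar. Quasi-continuity of $u$ and quasi-uniform convergence $u_n\to u$ then follow from the standard fact that the partial sums of $\sum_n|u_{n+1}-u_n|$, being an increasing sequence of quasi-continuous functions with a q.e.-finite limit, converge quasi-uniformly (cf. \cite{LeJan} and the potential theory of \cite{FOT}); in particular $u_n(X_{\tau_V})\to u(X_{\tau_V})$ $P_x$-a.s. for every $V$ and $m$-a.e. $x$.

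Finally I would close with Fatou's lemma: using $u_m(X_{\tau_V})\to u(X_{\tau_V})$ together with $\sup_V\liminf_m\le\liminf_m\sup_V$, one gets $\sup_{V}E_x|u_n-u|^p(X_{\tau_V})\le\liminf_m\sup_{V}E_x|u_n-u_m|^p(X_{\tau_V})$ pointwise, and a second application of Fatou in the $m$-integral gives $\|u_n-u\|^p_{\mathfrak D^p}\le\liminf_m\|u_n-u_m\|^p_{\mathfrak D^p}\to0$. The same bound shows $\|u\|_{\mathfrak D^p}<\infty$, and the uniform integrability of $\{|u|^p(X_{\tau_V})\}$ needed for $u\in\mathfrak D^p$ follows from the quasi-uniform convergence combined with the uniform integrability of the $u_n$. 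The step I expect to fight with is exactly the polarity of $N$ and the resulting quasi-uniform convergence, precisely because $\|\cdot\|_{\mathfrak D^p}$ controls $u$ only through $m$-averages and its exit-time boundary traces, not through the capacity $\mathrm{Cap}_A$ that governs quasi-continuity; the boundary-trace contraction into $L^p(\partial V;\omega^A_{m,V})$ together with the hitting-time argument is what bridges this gap.
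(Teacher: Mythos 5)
Your reduction to q.e.\ convergence is largely sound: the norm axioms, the boundary-trace contraction into $L^p(\partial V;\omega^A_{m,V})$ for fixed $V$, and the hitting-time argument showing the divergence set $N$ is polar are all correct (modulo the standard facts that Borel sets are capacitable and that the capacity of the part process on $U$ has the same null sets as $\mbox{Cap}_A$ inside $U$). The genuine gap is the step you yourself flagged: the ``standard fact'' you invoke --- that an increasing sequence of quasi-continuous functions with a q.e.-finite limit converges quasi-uniformly --- is false, and nothing in your argument replaces it. If it were true, every such limit would be quasi-continuous; but take $G=B\setminus K$ with $K$ a compact set of positive capacity and empty interior (e.g.\ a $(d-1)$-dimensional disk) and $f_n(x)=\min(1,n\,\mathrm{dist}(x,G^c))$. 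Then the $f_n$ are continuous and increase \emph{everywhere} to $\mathbf{1}_G$ (so the divergence set is empty, trivially polar), yet $\mathbf{1}_G$ is not quasi-continuous: the path $t\mapsto\mathbf{1}_G(X_t)$ jumps at the hitting time $\sigma_K$, which is finite with positive probability. So polarity of $N$ together with $m$-a.e.\ and $\omega^A_{m,V}$-a.e.\ convergence does not yield quasi-continuity of $u$ --- and quasi-continuity is part of the definition of $\mathfrak D^p$, so without it $u\notin\mathfrak D^p$. Two later steps fall with it: the uniform integrability of $\{|u|^p(X_{\tau_V})\}$, which you derive from the unestablished quasi-uniform convergence, and the final Fatou step, where the supremum is over uncountably many $V$ while your a.s.\ convergence $u_m(X_{\tau_V})\to u(X_{\tau_V})$ holds with an $x$-null set depending on $V$; reducing to a countable family of $V$'s via $\tau_{V_j}\uparrow\tau_V$ itself requires continuity of $t\mapsto (u_n-u)(X_t)$, i.e.\ the very quasi-continuity at issue.

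What is missing is exactly what the paper's proof supplies: control of the running supremum $\sup_{t<\tau_D}|u_n-u_k|(X_t)$, not merely of the exit values $u(X_{\tau_V})$ for sets $V$ fixed in advance. The paper first approximates each $u_n$ by $\psi_n\in C_c(D)$ in the metric $E_m\sup_{t<\tau_D}(|\cdot|\wedge 1)$ (possible by Le Jan's theorem since the $u_n$ are quasi-continuous), and then exploits that for \emph{continuous} $\psi_n,\psi_k$ the event $\{\sup_{t<\tau_D}|\psi_n-\psi_k|(X_t)>\varepsilon\}$ forces the path to exit the open set $V_{n,k}=\{|\psi_n-\psi_k|\wedge 1<\varepsilon\}$ at a point where $|\psi_n-\psi_k|\ge\varepsilon$; since the $\mathfrak D^p$-norm is a supremum over \emph{all} $V\subset\subset D$, evaluating it at the function-adapted sets $V_{n,k}\cap D_l$ yields
\[
\bigl(P_m(\sup_{t<\tau_D}|u_n-u_k|(X_t)>\varepsilon)\bigr)^{1/p}\le \varepsilon^{-1}\bigl(2^{-2n+2}+\|u_n-u_k\|_{\mathfrak D^p}\bigr).
\]
Borel--Cantelli then gives convergence in the quasi-uniform metric, whence the limit is quasi-continuous by \cite[Theorem 1]{LeJan}, and the closing truncation/Fatou argument is essentially the one you wrote. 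Your approach tests the norm only at fixed $V$, which is precisely why it cannot bridge from $m$-averaged, exit-distribution information to the capacity-governed notion of quasi-continuity; the Le Jan approximation plus the adapted sets $V_{n,k}$ is that bridge, and your polar-set argument, while correct and elegant, is not a substitute for it.
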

\begin{dow}
Let $\{u_n\}\subset \mathfrak D^p$ be a Cauchy sequence in the norm $\|\cdot\|_{\mathfrak D^p}$.
By \cite{LeJan} (see comments following Lemma 1 in \cite{LeJan}), for every $n\ge1$ there exists $\psi_n\in C_c(D)$ such that
\begin{equation}
\label{eq.upc}
\||u_n-\psi_n|\wedge 1\|^p_{\mathfrak D^p}\le E_m\sup_{t<\tau_D}|u_n(X_t)-\psi_n(X_t)|^p\wedge 1\le 2^{-2pn}
\end{equation}
(the first inequlity above is obvious).
Set $V_{n,k}=\{|\psi_n-\psi_k|\wedge 1<\varepsilon\}\cap D$, and let $\{D_l,\,l\ge 1\}$ be an increasing sequence of open subsets of $D$ such that $D_l\subset\subset D$ and $\bigcup_{l\ge 1} D_l=D$. Observe that $\tau_{V_{n,k}\cap D_l}\rightarrow \tau_{V_{n,k}}$ as $l\rightarrow \infty$. For $\varepsilon\le 1$ and $n\le k$ we have
\begin{align*}
(P_m(\sup_{t<\tau_D}|\psi_n-\psi_k|(X_t)>\varepsilon))^{1/p}
&=(P_m(\sup_{t<\tau_D}|\psi_n-\psi_k|\wedge 1(X_t)>\varepsilon))^{1/p}\\&
\le (P_m(|\psi_n-\psi_k|\wedge 1(X_{\tau_{V_{n,k}}})\ge \varepsilon))^{1/p}\\
&\le \varepsilon^{-1}(E_m|\psi_n-\psi_k|^p\wedge 1(X_{\tau_{V_{n,k}}}))^{1/p}\\
&=\lim_{l\rightarrow\infty}\varepsilon^{-1}[E_m|\psi_n-\psi_k|^p\wedge 1(X_{\tau_{V_{n,k}\cap D_l}})]^{1/p}\\
&\le\varepsilon^{-1}\||\psi_n-\psi_k|\wedge 1\|_{\mathfrak D^p}\\
&\le\varepsilon^{-1}(2^{-2n+1}+\|u_n-u_k\|_{\mathfrak D^p}).
\end{align*}
By the above inequality and (\ref{eq.upc}),
\begin{equation}
\label{eq.upc1}
(P_m(\sup_{t<\tau_D}|u_n-u_k|(X_t)>\varepsilon))^{1/p}\le \varepsilon^{-1}(2^{-2n+2}+\|u_n-u_k\|_{\mathfrak D^p}).
\end{equation}
Let $\{n_k\}$ be a sequence such that $\|u_{n_k}-u_{n_{k+1}}\|_{\mathfrak D^p}\le 2^{-2k}$. Then by (\ref{eq.upc1}),
\[
P_m(\sup_{t<\tau_D}|u_{n_k}-u_{n_{k+1}}|(X_t)>2^{-k})\le 2^{-p(k-3)}.
\]
From this and the Borel-Cantelli lemma we deduce that
\begin{equation}
\label{eq7.6}
P_m(\limsup_{k\ge 1} \Lambda_k)=0,
\end{equation}
where $\Lambda_k=\{\omega\in\Omega;\, \sup_{t<\tau_D}|u_{n_k}-u_{n_{k+1}}|(X_t)>2^{-k}\}$.
Set $u=\liminf_{k\rightarrow} u_{n_k}$. From (\ref{eq7.6})
it follows that for every $\varepsilon>0$,
\begin{equation}
\label{eq.upc3}
\lim_{k\rightarrow \infty}P_m(\sup_{t<\tau_D}|u_{n_k}(X_t)-u(X_t)|>\varepsilon)=0.
\end{equation}
Hence, in particular, $u$ is quasi-continuous (see \cite[Theorem 1]{LeJan}).
Since $\{u_n\}$ is a Cauchy sequence in $\mathfrak D^p$, there exists $m_0\in\mathbb N$ such that
for all $k,l\ge m_0$ and  $R\ge0$,
\[
\int_D\sup_{V\subset\subset D}E_x(|u_{n_k}(X_{\tau_V})-u_{n_l}(X_{\tau_V})|^p\wedge R )\,m(dx)\le\varepsilon^p.
\]
Letting $l\rightarrow \infty$ in the above inequality and using
(\ref{eq.upc3}) and the Lebesgue dominated convergence theorem, we get
\[
\int_D\sup_{V\subset\subset D}E_x(|u_{n_k}(X_{\tau_V})-u(X_{\tau_V})|^p\wedge R )\,m(dx)\le\varepsilon^p.
\]
Now, letting $R\rightarrow \infty$, we get $\|u_{n_k}-u\|_{\mathfrak D^p}\le\varepsilon$ for $k\ge m_0$, from which the desired result follows.
\end{dow}
\medskip

We set
\[
H_{\mathfrak D^p}=H(D)\cap \mathfrak D^p,\qquad H_{\mathfrak S^p}
=H(D)\cap \mathfrak S^p,
\]
where $H(D)$ is defined by (\ref{eq3.6}).

\begin{tw}
\label{tw7.1}
$H_{\mathfrak D^p}\subset\TT^p$ for every $p\ge 1$. Moreover,
\begin{enumerate}
\item[\rm(i)]$\gamma_A:H_{\mathfrak D^p}\rightarrow L^p(\partial_MD; h^A_m)$ is an isometric isomorphism,
\item[\rm(ii)]for every $p>1$, $\gamma_A:H_{\mathfrak S^p}\rightarrow L^p(\partial_MD;h^A_m)$ is a homeomorphism,

\item[\rm(iii)]if $u\in H_{\mathfrak D^1}$, then $ E_x(\int_0^{\tau_D}|\nabla u |^2(X_r)\,dr)^{q/2}\le \lambda^{-q}c_q\|\gamma_A(u)\|^q_{L^1(\partial_MD;h^A_x)}<\infty$ for all $x\in D$ and $q\in(0,1)$, and moreover, there exists a Wiener process $B$ such that for every $x\in D$,
\begin{equation}
\label{eq7.12}
u(X_t)=\gamma_A(u)(X_{\tau_D-})-\int_t^{\tau_D}\bar\sigma\nabla u(X_r)\,dB_r,\quad t\le\tau_D\quad P_x\mbox{-a.s.},
\end{equation}
\item[\rm(iv)] if $p>1$ and $u\in H_{\mathfrak S^p}$, then $E_x(\int_0^{\tau_D}|\nabla u|^2(X_r)\,dr)^{p/2}\le \lambda^{-p}c_p\|\gamma_A(u)\|^p_{L^p(\partial_MD;h^A_x)}<\infty$ for every $x\in D$,
\item[\rm(v)]for every $p>1$,  $H_{\mathfrak D^p}=H_{\mathfrak S^p}$ and $\|u\|_{H_{\mathfrak D^p}}\le  \|u\|_{H_{\mathfrak S^p}}\le  \frac{p-1}{p}\|u\|_{H_{\mathfrak D^p}}$ for any $u\in H_{\mathfrak D^p}$.
\end{enumerate}
\end{tw}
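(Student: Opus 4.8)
The plan is to derive every assertion from the martingale structure of harmonic functions along $\BX$ together with the identification of the terminal value on the Martin boundary already carried out in the proof of Theorem \ref{tw5.1}. Fix $u\in H_{\mathfrak D^p}$. Since $u$ is harmonic, $u(x)=E_xu(X_{\tau_V})$ for every $V\subset\subset D$, and by the strong Markov property (exactly as in Lemma \ref{lm6.1} and Theorem \ref{tw5.1}) the process $u(X_t)$ is a martingale on $[0,\tau_D)$ under $P_x$ for $m$-a.e. $x$. The defining condition of $\mathfrak D^p$ makes $\{|u(X_{\tau_V})|^p:V\subset\subset D\}$ uniformly integrable, so $u(X_t)$ is a uniformly integrable martingale and the martingale convergence theorem yields a limit $Y=\lim_{t\nearrow\tau_D}u(X_t)$, $P_x$-a.s. and in $L^1$. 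As in Theorem \ref{tw5.1} one checks $Y\in\II$, so by \cite[Theorem 14.10]{ChungWalsh} there is $g\in\BB(\partial_MD)$ with $Y=g(X_{\tau_D-})$; hence $u\in\TT$, $\gamma_A(u)=g$, and $H_{\mathfrak D^p}\subset\TT^p$. For the isometry in (i) I would note that $|u(X_t)|^p$ is a submartingale, so $\sup_{V\subset\subset D}E_x|u(X_{\tau_V})|^p=\lim_nE_x|u(X_{\tau_{D_n}})|^p=E_x|g(X_{\tau_D-})|^p=\|g\|^p_{L^p(\partial_MD;h^A_x)}$ for $m$-a.e. $x$; integrating over $x$ and using $h^A_m=\int h^A_x\,m(dx)$ gives $\|u\|^p_{\mathfrak D^p}=\|\gamma_A(u)\|^p_{L^p(\partial_MD;h^A_m)}$.

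For surjectivity, given $\psi\in L^p(\partial_MD;h^A_m)$ I set $u(x)=E_x\psi(X_{\tau_D-})$. By Corollary \ref{wn6.3} this $u$ is harmonic and continuous, and by the Markov property $u(X_{\tau_V})=E_x(\psi(X_{\tau_D-})\,|\,\FF_{\tau_V})$, so conditional Jensen gives $|u(X_{\tau_V})|^p\le E_x(|\psi(X_{\tau_D-})|^p\,|\,\FF_{\tau_V})$. Since $\psi\in L^p(\partial_MD;h^A_x)$ for $m$-a.e. $x$ by Corollary \ref{wn6.2}, the dominating family is uniformly integrable, whence $u\in H_{\mathfrak D^p}$; and martingale convergence gives $\gamma_A(u)=\psi$. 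This proves (i). For (ii) with $p>1$ I would invoke Doob's $L^p$ maximal inequality, $E_x\sup_{t<\tau_D}|u(X_t)|^p\le(p/(p-1))^pE_x|\gamma_A(u)(X_{\tau_D-})|^p$, while trivially $\sup_{t<\tau_D}|u(X_t)|\ge|\gamma_A(u)(X_{\tau_D-})|$ because the limit exists; integrating over $x$ gives $\|\gamma_A(u)\|_{L^p}\le\|u\|_{\mathfrak S^p}\le\frac{p}{p-1}\|\gamma_A(u)\|_{L^p}$, so $\gamma_A$ maps $H_{\mathfrak S^p}$ bijectively onto $L^p(\partial_MD;h^A_m)$ with bounded inverse, i.e. is a homeomorphism.

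For the gradient estimates I would localize. On each $D_n$ Proposition \ref{stw2.1.1} gives $u(X_t)=u(X_{\tau_{D_n}})-\int_t^{\tau_{D_n}}\bar\sigma\nabla u(X_r)\,dB_r$, so the martingale $M_t=u(X_t)-u(x)$ satisfies $[M]_{\tau_{D_n}}=\int_0^{\tau_{D_n}}|\bar\sigma\nabla u|^2(X_r)\,dr$, and ellipticity (\ref{eq1.12}) gives $|\nabla u|^2\le\lambda^{-1}|\bar\sigma\nabla u|^2$. For (iv) with $p>1$, the Burkholder--Davis--Gundy and Doob $L^p$ inequalities bound $E_x[M]^{p/2}$ by $E_x\sup_{t}|u(X_t)|^p$, hence by $\|\gamma_A(u)\|^p_{L^p(\partial_MD;h^A_x)}$; letting $n\to\infty$ via Fatou yields the assertion. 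For (iii) with $p=1$ the same scheme requires the fractional-moment versions: for $q\in(0,1)$ one uses $E_x[M]^{q/2}\le c_qE_x\sup_t|M_t|^q$ together with the weak-$(1,1)$ maximal inequality $E_x\sup_t|M_t|^q\le(1-q)^{-1}(E_x|M_\infty|)^q$, where $M_\infty=\gamma_A(u)(X_{\tau_D-})-u(x)$, to obtain the stated bound in terms of $\|\gamma_A(u)\|_{L^1(\partial_MD;h^A_x)}$. The representation (\ref{eq7.12}) then follows by passing to the limit $n\to\infty$ in the $D_n$-identity, using $u(X_{\tau_{D_n}})\to\gamma_A(u)(X_{\tau_D-})$ and convergence of the stochastic integral.

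Finally, (v) is immediate from (i) and (ii): one always has $H_{\mathfrak S^p}\subset H_{\mathfrak D^p}$, and conversely any $u\in H_{\mathfrak D^p}$ has $\gamma_A(u)\in L^p(\partial_MD;h^A_m)$, so by the surjectivity established in (ii) there is $\tilde u\in H_{\mathfrak S^p}$ with $\gamma_A(\tilde u)=\gamma_A(u)$; since a harmonic function is determined by its trace through $u(x)=E_x\gamma_A(u)(X_{\tau_D-})$ (Corollary \ref{wn.tr2}), we get $u=\tilde u$, so $H_{\mathfrak D^p}=H_{\mathfrak S^p}$, and the norm comparison combines the isometry of (i) with the Doob bounds of (ii), the constant being $\frac{p}{p-1}$. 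The step I expect to be the main obstacle is the case $p=1$ in (iii): because the boundary datum lies only in $L^1$, $\bar\sigma\nabla u$ need not be square-integrable and $u(X)$ is merely a uniformly integrable martingale whose bracket need not be integrable, so the Itô and $L^2$-martingale arguments that suffice when $p=2$ (as in Propositions \ref{stw2.1.1} and \ref{stw6.1}) must be replaced by fractional-moment estimates, and the convergence in (\ref{eq7.12}) has to be justified at the level of local martingales rather than $L^2$-martingales.
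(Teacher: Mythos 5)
Your proposal is correct and follows essentially the same route as the paper's proof: martingale convergence plus identification of the invariant terminal variable via \cite[Theorem 14.10]{ChungWalsh} for the inclusion $H_{\mathfrak D^p}\subset\TT^p$ and the isometry, the representation of Proposition \ref{stw2.1.1} localized on $D_n$ combined with the Burkholder--Davis--Gundy inequality for (iii)--(iv) (your ``weak-$(1,1)$'' fractional-moment bound $E_x\sup_t|M_t|^q\le(1-q)^{-1}(E_x|M_\infty|)^q$ is exactly \cite[Lemma 6.1]{BDHPS}, the paper's tool), Doob's $L^p$-inequality for (ii), and the trace-identification argument for (v). The only differences are cosmetic: you prove the isometry directly for arbitrary $u\in H_{\mathfrak D^p}$ rather than first for $u$ built from $\psi$, and your Doob constant $\frac{p}{p-1}$ at the norm level corrects what appear to be typos in the paper's (\ref{eq7.homeo}) and in the constant $\frac{p-1}{p}$ of part (v).
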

\begin{dow}
(i) Let $u\in H_{\mathfrak D^p}$ and $\{D_n\}$ be an increasing sequence of open sets such that $D_n\subset\subset D$ and $\bigcup_{n\ge 1} D_n=D$. Since $u\in H(D)$, $u(x)=E_xu(X_{\tau_{D_n}})$, $x\in D_n$. Hence,  by Theorem \ref{tw2.1} and Proposition \ref{stw2.1.1},
\begin{equation}
\label{eq7.0} u(X_t)=u(x)+\int_0^{t}\bar\sigma\nabla
u(X_r)\,dB_r,\quad t<\tau_D,\quad P_x\mbox{-a.s.}
\end{equation}
for every $x\in D$.  Furthermore,
since $u\in\mathfrak D^p$,  $u(X)$ is a uniformly integrable martingale  on $[0,\tau_D)$ under the measure $P_x$ for $m$-a.e. $x\in D$. Let
\[
Y=\limsup_{t\nearrow \tau_D}u(X_t).
\]
Then, by the martingale convergence theorem, $Y=\lim_{t\nearrow \tau_D} u(X_t)$ $P_m$-a.s. Since $|u|^p(X)$ is uniformly integrable under measure $P_m$  ($u\in H_{\mathfrak D^p}$) we have that $Y=\lim_{t\nearrow \tau_D} u(X_t)$ in $L^p(\Omega, P_m)$. It is clear that $Y\in\mathcal I$, so there exists $g\in\BB(\partial_MD)$
such that $g(X_{\tau_D-})=Y$ $P_m$-a.s. Since $Y\in L^p(\Omega, P_m) $, $g\in L^p(\partial_MD;h^A_m)$.
It is clear that $u\in \TT^p$ and $\gamma_A(u)=g$. Hence
\begin{equation}
\label{eq7.2}
u(x)=E_x\gamma_A(u)(X_{\tau_D-})
\end{equation}
for $m$-a.e. $x\in D$. Thus $\gamma_A$ is an injection. Let $\psi\in L^p(\partial_MD;h^A_m)$, and let
\begin{equation}
\label{eq7.3}
u(x)=E_x\psi(X_{\tau_D-}),\quad x\in D.
\end{equation}
By the Markov property,
\begin{equation}
\label{eq7.4}
u(X_t)=E_x(\psi(X_{\tau_D-})|\FF_t),\quad t\le \tau_D.
\end{equation}
From this and the martingale convergence theorem we conclude that  $\gamma_A(u)=\psi$. Furthermore, from (\ref{eq7.2}) and the strong Markov property  it follows that for every $V\subset\subset U$,
\[
u(X_{\tau_V})=E_x(\psi(X_{\tau_D-})|\FF_{\tau_V}),\quad x\in D.
\]
(see the reasoning in the proof of Lemma \ref{lm6.1}). Taking the  expectation shows that $u\in H(D)$.  Moreover, by (\ref{eq7.4}), $u(X)$ is a martingale on $[0,\tau_D]$, so $|u(X)|^p$
is a submartingale on $[0,\tau_D]$ under the measure $P_x$ for every $x\in D$. Hence
\begin{align}
\label{eq7.5}
\|u\|^p_{\mathfrak D^p}&=\int_D\sup_{V\subset\subset D}E_x|u(X_{\tau_V})|^p\,m(dx)\nonumber\\
&=\int_D|\psi(X_{\tau_D-})|^p\,m(dx)=\|\psi\|^p_{L^p(\partial_MD;h^A_m)}\,,
\end{align}
which completes the proof of (i).

(ii) Suppose that $u\in H_{\mathfrak S^p}$ for some $p>1$. Since
$H_{\mathfrak S^p}\subset H_{\mathfrak D^p}$, from the  proof of part (i) we know that $H_{\mathfrak S^p}\subset \TT^p$ and  (\ref{eq7.2}) is satisfied. Thus $\gamma_A$ is an injection.
Assume that  $\psi\in L^p(\partial_MD;h^A_m)$ and define  $u$ by
(\ref{eq7.3}). By the proof of part (i), $\gamma_A(u)=\psi$ and $u\in H_{\mathfrak D^p}$. By (\ref{eq7.4}) and Doob's $L^p$-inequality,
\begin{equation}
\label{eq7.homeo}
E_m|\psi(X_{\tau_D-})|^p\le E_m\sup_{t<\tau_D}|u(X_t)|^p\le\frac{p}{p-1}E_m|\psi(X_{\tau_D-})|^p,
\end{equation}
which completes the proof of (ii).

(iii) By Remarks \ref{wn6.2} and \ref{wn6.3}, equation (\ref{eq7.2}) holds true for every $x\in D$. Hence, by the Markov property, for every $x\in D$ we have
\[
u(X_t)=E_x(\gamma_A(u)(X_{\tau_D-})|\FF_t),\quad t\le \tau_D,\quad P_x\mbox{-a.s.}
\]
It follows that in fact $u(X)$ is a uniformly integrable martingale on $[0,\tau_D]$ under $P_x$ for every $x\in D$. Furthermore, by the martingale convergence theorem, $\gamma_A(u)(X_{\tau_D-})=\lim_{t\nearrow\tau_D }u(X_t)$ $P_x$-a.s. and in $L^1(\Omega,P_x)$ for every $x\in D$. By (\ref{eq7.0}),
\cite[Lemma 6.1]{BDHPS} and the Burkholder-Davis-Gundy inequality,
\[
E_x(\int_0^{\tau_{D_n}}|\bar\sigma\nabla u(X_t)|^2\,dt)^{q/2}\le c_qE_x\sup_{t\le\tau_{D_n}}|u(X_t)|^q\le\frac{c_q}{1-q}(E_x|u(X_{\tau_{D_n}})|)^q
\]
for all $x\in D$ and $q\in(0,1)$. Using now Fatou's lemma and Corollary \ref{wn6.2}, we get the inequality appearing in (iii). Furthermore, letting $t\nearrow \tau_D$ in (\ref{eq7.0}) we  obtain (\ref{eq7.12}).  Part (iv) follows immediately from (\ref{eq7.12}), Corollary \ref{wn6.2}  and the Burkholder-Davis-Gundy inequality. Part (v) follows from (i), (ii)
and (\ref{eq7.homeo}).
\end{dow}

\begin{wn}
\label{wn7.bound}
Assume that $p\ge 1$ and
\begin{equation}
\label{eq.tr7}
u(x)=E_x\psi(X_{\tau_D-}),\quad x\in D.
\end{equation}
Then $u\in H_{\mathfrak D^p}$
if and only if $\psi\in L^p(\partial_MD;h^A_m)$, and if $u\in  H_{\mathfrak D^p}$,
then $\psi =\gamma_A(u)$. Moreover, any $u\in H_{\mathfrak D^1}$
is of the form \mbox{\rm(\ref{eq.tr7})} with $\psi=\gamma_A(u)$.
\end{wn}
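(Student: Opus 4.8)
The plan is to read the corollary off Theorem \ref{tw7.1}(i) together with the martingale-limit identifications already made in its proof; the statement is essentially a reformulation of the fact that $\gamma_A$ is an isometric isomorphism of $H_{\mathfrak D^p}$ onto $L^p(\partial_MD;h^A_m)$.

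First I would treat the implication $\psi\in L^p(\partial_MD;h^A_m)\Rightarrow u\in H_{\mathfrak D^p}$. Defining $u$ by (\ref{eq.tr7}) is exactly formula (\ref{eq7.3}) from the surjectivity part of the proof of Theorem \ref{tw7.1}(i). That argument shows $u\in H(D)$ (via the strong Markov property, as in Lemma \ref{lm6.1}), that $\gamma_A(u)=\psi$ (via (\ref{eq7.4}) and the martingale convergence theorem), and that $\|u\|_{\mathfrak D^p}=\|\psi\|_{L^p(\partial_MD;h^A_m)}<\infty$ by the submartingale computation (\ref{eq7.5}); hence $u\in H_{\mathfrak D^p}$ and $\psi=\gamma_A(u)$.

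For the converse I would assume $u\in H_{\mathfrak D^p}$ is given by (\ref{eq.tr7}) for some Borel $\psi$, the formula being understood to presuppose $E_x|\psi(X_{\tau_D-})|<\infty$. The representation (\ref{eq.tr7}) and the Markov property make $t\mapsto u(X_t)=E_x(\psi(X_{\tau_D-})\,|\,\FF_t)$ a uniformly integrable martingale closed by the $\FF_{\tau_D-}$-measurable variable $\psi(X_{\tau_D-})$, so that $\lim_{t\nearrow\tau_D}u(X_t)=\psi(X_{\tau_D-})$ $P_m$-a.s. On the other hand, since $u\in H_{\mathfrak D^p}\subset\TT^p$, the proof of Theorem \ref{tw7.1}(i) gives $\gamma_A(u)\in L^p(\partial_MD;h^A_m)$ and $\lim_{t\nearrow\tau_D}u(X_t)=\gamma_A(u)(X_{\tau_D-})$ $P_m$-a.s. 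Identifying the two limits yields $\psi(X_{\tau_D-})=\gamma_A(u)(X_{\tau_D-})$ $P_m$-a.s.; since $h^A_m$ is, by (\ref{eq5.mhm}) and the definition $h^A_m=\int h^A_x\,m(dx)$, the law of $X_{\tau_D-}$ under $P_m$, this is precisely $\psi=\gamma_A(u)$ $h^A_m$-a.e., and in particular $\psi\in L^p(\partial_MD;h^A_m)$. The final \emph{Moreover} assertion is just (\ref{eq7.2}) for $p=1$: using Corollaries \ref{wn6.2} and \ref{wn6.3} exactly as in the proof of Theorem \ref{tw7.1}(iii), the representation $u(x)=E_x\gamma_A(u)(X_{\tau_D-})$ upgrades from $m$-a.e. $x$ to every $x\in D$.

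The one point that requires care --- and the only real obstacle --- is the integrability underpinning the limit identification in the converse. One must ensure that writing $u=E_\cdot\psi(X_{\tau_D-})$ genuinely forces $\psi(X_{\tau_D-})$ to be $P_m$-integrable, so that $u(X)$ is a closed (uniformly integrable) martingale with terminal value $\psi(X_{\tau_D-})$; otherwise the a.s. limit of $u(X_t)$ need not coincide with $\psi(X_{\tau_D-})$ and the two representations could differ on an $h^A_m$-nonnull set. Granting that integrability (implicit in the meaning of (\ref{eq.tr7})), the identification of the limits and the transfer from $P_m$-a.s. equality to $h^A_m$-a.e. equality are routine.
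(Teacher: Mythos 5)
Your proposal is correct and follows essentially the same route as the paper: both directions are read off Theorem \ref{tw7.1} (the surjectivity/martingale-convergence argument giving $\gamma_A(u)=\psi$ and the norm identity for the sufficiency, the identification of $\lim_{t\nearrow\tau_D}u(X_t)$ with both $\psi(X_{\tau_D-})$ and $\gamma_A(u)(X_{\tau_D-})$ for the converse), and the \emph{Moreover} part via the representation $u(x)=E_x\gamma_A(u)(X_{\tau_D-})$ for every $x\in D$, which the paper obtains by taking expectation in (\ref{eq7.12}) and you obtain equivalently from (\ref{eq7.2}) upgraded by Corollaries \ref{wn6.2} and \ref{wn6.3}. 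Your closing remark about the integrability implicit in (\ref{eq.tr7}) matches the paper's tacit reading of that formula, so no gap remains.
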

\begin{dow}
If $u\in H_{\mathfrak D^1}$, then taking the expectation with respect to $P_x$ in (\ref{eq7.12}) with $t=0$ (this is possible since $u\in H_{\mathfrak D^1}$) we get (\ref{eq.tr7}) with $\psi=\gamma_A(u)$. Let $u$ be of the form (\ref{eq.tr7}).  Assume that $u\in H_{\mathfrak D^p}$. Then, by Theorem \ref{tw7.1}(iii),
$u(X_{\tau_D})=\gamma_A(u)(X_{\tau_D-})$ $P_x$-a.s. for  $x\in D$. On the other hand, by (\ref{eq.tr7}) and the Markov property, for every $x\in D$,
\[
u(X_t)=E_x(\psi(X_{\tau_D-})|\FF_t),\quad t\le \tau_D,\quad P_x\mbox{-a.s.}
\]
Hence $\psi(X_{\tau_D-})=\gamma_A(u)(X_{\tau_D-})$ $P_x$-a.s. for  $x\in D$, which implies that $\gamma_A(u)=\psi$. By Theorem \ref{tw7.1}, $\gamma_A(u)\in L^p(\partial_MD;h^A_m)$. Now suppose that $\psi\in L^p(\partial_MD;h^A_m)$. By Theorem \ref{tw7.1}(i) there exists $\bar u\in H_{\mathfrak D^p}$ such that
$\gamma_A(\bar u)=\psi$. Moreover, by Theorem \ref{tw7.1}(iii), $\bar u(x)=E_x\gamma_A(\bar u)(X_{\tau_D-})$ for $x\in D$. Thus $u=\bar u$.
\end{dow}

\begin{uw}
From the definition of the trace operator it follows that for every $p\ge 1$
\[
\|\gamma_A(u)\|_{L^p(\partial_MD;h^A_m)}\le \|u\|_{\mathfrak D^p},\quad u\in\mathfrak D^p\cap\TT.
\]
\end{uw}

\begin{df}
We say that a function  $u$ on $D$ is of potential type if $u\in\TT_0$.
\end{df}

\begin{stw}
\label{stw7.2}
Each potential is a function of potential type.
\end{stw}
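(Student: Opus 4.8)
The plan is to reduce the claim to the single statement that a potential $u$ satisfies $\lim_{t\nearrow\tau_D}u(X_t)=0$ $P_x$-a.s. for $m$-a.e. $x\in D$. Indeed, taking $g\equiv 0$ in the definition of $\TT$, the continuity at $\tau_D$ of $t\mapsto(\mathbf 1_D u+\mathbf 1_{\partial_MD}g)(X^*_t)$ amounts, by (\ref{eq.tr4}), exactly to $\lim_{t\nearrow\tau_D}u(X_t)=0$; once this is established we get $u\in\TT$ and $\gamma_A(u)=0$, i.e. $u\in\TT_0$, which is the assertion.

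First I would recall the structure of a potential in the present setting. A potential is an excessive function for $\mathbb X^D$ whose greatest harmonic minorant vanishes, so by the Riesz decomposition already used in the proof of Proposition \ref{stw3.1} (see \cite{GetoorGlover}) it has the form $u=R^D\mu$ for a positive smooth measure $\mu$ on $D$, and since $u$ is finite $m$-a.e. we have $R^D\mu<\infty$ $m$-a.e. Then by (\ref{eq2.afr}) with $\alpha=0$ there is a positive continuous additive functional $A^\mu$ of $\mathbb X$ such that
\[
u(x)=R^D\mu(x)=E_x\int_0^{\tau_D}dA^\mu_t=E_xA^\mu_{\tau_D}\quad\mbox{for }m\mbox{-a.e. }x\in D.
\]

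The core computation I would then carry out uses additivity of $A^\mu$ together with $\tau_D\circ\theta_t=\tau_D-t$ on $\{t<\tau_D\}$, which gives $A^\mu_{\tau_D}\circ\theta_t=A^\mu_{\tau_D}-A^\mu_t$. Applying the Markov property yields, for $m$-a.e. $x\in D$,
\[
u(X_t)=E_{X_t}A^\mu_{\tau_D}=E_x\big[A^\mu_{\tau_D}-A^\mu_t\,\big|\,\FF_t\big]=M_t-A^\mu_t,\quad t<\tau_D,\ P_x\mbox{-a.s.},
\]
where $M_t:=E_x[A^\mu_{\tau_D}\,|\,\FF_t]$. Since $E_xA^\mu_{\tau_D}=u(x)<\infty$, the martingale $M$ is uniformly integrable, so $M_t\to A^\mu_{\tau_D}$ $P_x$-a.s. as $t\nearrow\tau_D$ by the martingale convergence theorem (the limit being $A^\mu_{\tau_D}$ because $A^\mu$ is continuous, hence $A^\mu_{\tau_D}$ is $\FF_{\tau_D-}$-measurable), while $A^\mu_t\to A^\mu_{\tau_D}$ by continuity of $A^\mu$. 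Hence $u(X_t)=M_t-A^\mu_t\to 0$ $P_x$-a.s. as $t\nearrow\tau_D$, for $m$-a.e. $x\in D$, which is what was needed.

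The main obstacle is the first reduction rather than the computation: one must make sure that a potential really admits the representation $u=R^D\mu$ with $\mu$ smooth and $R^D\mu<\infty$ $m$-a.e., so that (\ref{eq2.afr}) applies and $A^\mu_{\tau_D}$ is $P_x$-integrable for $m$-a.e. $x$. This integrability (equivalently, that the associated supermartingale is of class (D)) is precisely what forces the limit to be $0$ rather than merely a nonnegative random variable. An alternative that avoids additive functionals is to note that for a positive excessive $u$ the process $u(X_t)$ is a positive supermartingale, so the limit $L=\lim_{t\nearrow\tau_D}u(X_t)\ge 0$ exists $P_x$-a.s., and that $u$ being a potential gives $E_xu(X_{\tau_{D_n}})\to 0$ along any increasing sequence $\{D_n\}$ with $D_n\subset\subset D$ and $\bigcup_{n\ge 1}D_n=D$; Fatou's lemma then yields $E_xL\le 0$, whence $L=0$.
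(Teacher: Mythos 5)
Your reduction of the claim to $\lim_{t\nearrow\tau_D}u(X_t)=0$ $P_x$-a.s. for $m$-a.e. $x\in D$ is correct, but your principal argument has a genuine gap at its first step: the Riesz decomposition of \cite{GetoorGlover} represents a potential as $u=R^D\mu$ for a positive \emph{Borel} measure $\mu$, not a smooth one, and smoothness is precisely the hypothesis needed for the PCAF representation (\ref{eq2.afr}). It can fail: for $d\ge 2$ the function $u=g_D(\cdot,a)$, $a\in D$, is a potential which is finite $m$-a.e., yet its Riesz measure $\delta_a$ charges the polar set $\{a\}$ (so it is not absolutely continuous with respect to ${\rm Cap}_A$), no continuous additive functional $A^\mu$ exists, and the identity $u(X_t)=M_t-A^\mu_t$ is unavailable. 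As written, the main computation therefore proves the statement only for potentials of smooth measures.

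Fortunately, the ``alternative'' you sketch in the last paragraph is itself a complete and correct proof, and it covers all potentials: since $u$ is excessive, $u(X_t)$ is a positive supermartingale under $P_x$ for every $x$ with $u(x)<\infty$, so $L=\lim_{t\nearrow\tau_D}u(X_t)$ exists; since $\tau_{D_n}<\tau_D$ and $\tau_{D_n}\nearrow\tau_D$, we get $u(X_{\tau_{D_n}})\rightarrow L$ $P_x$-a.s., while the defining property of a potential (vanishing greatest harmonic minorant) gives $E_xu(X_{\tau_{D_n}})\searrow 0$; Fatou's lemma yields $L=0$, and in particular $u\in\TT$ with $\gamma_A(u)=0$ by (\ref{eq.tr4}). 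I would promote this to the main argument and drop the additive-functional route. Note that this is genuinely different from the paper's proof, which first reduces without loss of generality to bounded $u$, approximates $u_n:=nR^D_nu\nearrow u$ quasi-uniformly, observes that $u_n\in H^1_0(D)$ so that $\gamma_A(u_n)=0$ by Proposition \ref{stw5.1}, and concludes from the continuity of $\gamma_A$ in the quasi-uniform metric (Proposition \ref{stw7.1}). Your Fatou argument is more elementary and handles unbounded potentials directly, at the price of invoking the greatest-harmonic-minorant characterization; the paper's argument instead reuses the trace machinery it has already built.
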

\begin{dow}
Let $u$ be a potential. Without loss of generality, we may assume
that $u$ is bounded. It is well know   that
$u_n:=nR^D_n u\nearrow u$ quasi-uniformly as $n\rightarrow\infty$.
Hence, by Proposition \ref{stw7.1}, $\gamma_A(u_n)\rightarrow
\gamma_A(u)$ in measure $h^A_m$. Since $u_n\in H^1_0(D)$,
$\gamma_A(u_n)=0$, and consequently  $\gamma_A(u)=0$.
\end{dow}
\begin{stw}
\label{stw7.3} For every $\psi\in\TT^1$ there  exist a unique
harmonic function $h\in H_{\mathfrak D}$ and a function $p$ of
potential type such that
\[
\psi=h+p.
\]
\end{stw}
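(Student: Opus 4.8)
The plan is to construct the decomposition explicitly, taking for $h$ the harmonic function on $D$ whose trace equals $\gamma_A(\psi)$, and taking $p$ to be the remainder $\psi-h$, which will automatically have zero trace. Here I read $H_{\mathfrak D}$ as $H_{\mathfrak{D}^1}$.

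First I would set $\phi:=\gamma_A(\psi)$; since $\psi\in\TT^1=\gamma_A^{-1}(L^1(\partial_MD;h^A_m))$, we have $\phi\in L^1(\partial_MD;h^A_m)$. Define
\[
h(x):=E_x\phi(X_{\tau_D-}),\quad x\in D.
\]
By Corollary \ref{wn6.3}, $h\in C(D)$, and by Corollary \ref{wn7.bound} applied with $p=1$ (equivalently by the isometric isomorphism of Theorem \ref{tw7.1}(i)), $h\in H_{\mathfrak{D}^1}$ is harmonic and $\gamma_A(h)=\phi=\gamma_A(\psi)$. This $h$ is the candidate harmonic part.

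Next I would set $p:=\psi-h$. Both summands lie in $\TT$, since $h\in H_{\mathfrak{D}^1}\subset\TT^1\subset\TT$. The key observation is that $\gamma_A$ is linear on $\TT$: this follows directly from the defining relation (\ref{eq.tr4}), namely $\gamma_A(u)(X_{\tau_D-})=\lim_{t\nearrow\tau_D}u(X_t)$ $P_x$-a.s.\ for $m$-a.e.\ $x$, because wherever the limits for $\psi(X_t)$ and $h(X_t)$ exist $P_x$-a.s., the limit for $(\psi-h)(X_t)$ exists and equals their difference. Hence $p\in\TT$ and $\gamma_A(p)=\gamma_A(\psi)-\gamma_A(h)=0$, i.e.\ $p\in\TT_0$, so $p$ is of potential type and $\psi=h+p$ is the desired decomposition.

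For uniqueness, suppose $\psi=h_1+p_1=h_2+p_2$ with $h_i\in H_{\mathfrak{D}^1}$ harmonic and $p_i\in\TT_0$. Then $h_1-h_2=p_2-p_1$. The left-hand side is harmonic and lies in the vector space $H_{\mathfrak{D}^1}$, while applying $\gamma_A$ to the right-hand side and using its linearity on $\TT$ gives $\gamma_A(h_1-h_2)=\gamma_A(p_2-p_1)=0$. By the injectivity of $\gamma_A$ on $H_{\mathfrak{D}^1}$ (Theorem \ref{tw7.1}(i)), we conclude $h_1=h_2$, and then $p_1=p_2$. The only delicate point — and it is a mild one — is the linearity of $\gamma_A$ on $\TT$ together with the stability of $\TT$ under the subtraction $\psi-h$; I would settle both at once from (\ref{eq.tr4}) as indicated above, which is why I single it out as the main (though routine) obstacle.
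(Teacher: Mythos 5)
Your proof is correct and takes essentially the same route as the paper: the paper likewise defines $h(x)=E_x\gamma_A(\psi)(X_{\tau_D-})$, sets $p:=\psi-h$, invokes Corollary \ref{wn7.bound} to get $h\in H_{\mathfrak D}$ and $\gamma_A(p)=0$, and proves uniqueness by the same difference argument combined with the injectivity of $\gamma_A$ on $H_{\mathfrak D}$ from Theorem \ref{tw7.1}. Your explicit verification that $\TT$ is stable under the subtraction $\psi-h$ and that $\gamma_A$ is linear there, via (\ref{eq.tr4}), merely spells out a step the paper leaves implicit.
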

\begin{dow}
Define $h$  by $h(x)=E_x\gamma_A(u)(X_{\tau_D-}),\, x\in D$, and  set
$p:=\psi-h$. By Corollary \ref{wn7.bound} $h\in H_{\mathfrak D}$ and $\gamma_A(p)=0$. Now, suppose that  there is another
harmonic function $h'\in H_{\mathfrak D}$ and a function $p'$ of potential type such that $u=h'+p'$. Then $p-p'=h'-h$, and hence by Proposition \ref{stw7.2}
$\gamma_A(h)=\gamma_A(h')$. By this and Theorem \ref{tw7.1},
$h=h'$, and, in consequence, $p=p'$.
\end{dow}

\section{Dirichlet problem for semilinear equations
with measure data}
\label{sec8}

Let $\mu$ be a Borel measure on $D$. In this  section we are concerned with the problem of existence of solutions of the Dirichlet problem \begin{equation}
\label{eq8.0}
-Au=f(\cdot,u)+\mu\quad \mbox{in}\quad D,
\qquad u=\psi\quad\mbox{on}\quad \partial_M D.
\end{equation}
In (\ref{eq8.0}), $f:D\times\BR\rightarrow \BR$ is a  measurable
functions which is continuous and nonincreasing with respect to
$u$, and $\mu$ is a Radon signed measure on $D$. It is  known (see \cite{FST}) that  $\mu$ admits unique decomposition of the form
\[
\mu=\mu_c+\mu_d
\]
into the measure $\mu_c$, which is   singular with respect to
Cap$_A$ (the concentrated part of $\mu$) and the measure  $\mu_d$,
which is absolutely continuous  with respect to Cap$_A$ (the
diffuse part of $\mu$). In the sequel, we set $L^p_\delta(D;m):=L^p(D;\delta\cdot m)$ for $p>0$, and
we denote by $W^{1,q}_\delta(D)$ the space of functions $u\in W^{1,q}_{loc}(D)$ such that
\[
\|u\|_{W^{1,q}_\delta}:=\|u\|_{L^q_\delta(D;m)}+\|\nabla u\|_{L^q_\delta(D;m)}<\infty.
\]

Semilinear problems of the form (\ref{eq8.0})  with $\psi=0$  were
for the first time considered in the paper by Brezis and Strauss \cite{BS}
in the case  where $A=\Delta$ and  $\mu\in L^1(D;m)$ (see also
\cite{Konishi}). An important contribution to the  theory was made
in the paper \cite{BBGGPV}, in which equations of the form
(\ref{eq8.0}) with  zero boundary data but general bounded smooth
measure and operator of the form (\ref{eq1.1}) are considered. At
present, in the case where $\psi=0$, existence, uniqueness and
regularity results are known for (\ref{eq8.0}) with general
bounded smooth measure and general, possibly nonlocal, operator
$A$ corresponding to a Dirichlet form (see \cite{KR:JFA,KR:CM}).

The case $\mu_c\neq0$ is much more involved. In  1975 B\'enilan
and Brezis considered (\ref{eq8.0}) with $A=\Delta$,  $\psi=0$ and
$\mu=\delta_a$ for some $a\in D$. They showed that if $d\ge3$ and
$f(u)=u|u|^{p-1}$ with $p>\frac{d}{d-2}$,  then there is no
solution to (\ref{eq8.0}) (see \cite{BB} for interesting
historical comments on the problem). To analyze the nonexistence
phenomena behind the semilinear Dirichlet problem, Brezis, Marcus
and Ponce \cite{BMP1,BMP} introduced the concept of  good measure
for (\ref{eq8.0}), i.e. a measure for which there exists a
solution to (\ref{eq8.0}), and the concept of reduced measure for
$\mu$, i.e. the largest good measure which is less then or equal
to $\mu$. In \cite{Kl:CVPDE} the notions of good and  reduced
measure were extended to (\ref{eq8.0}) with general Dirichlet
operator and $\psi=0$.

In what follows, we concentrate on (\ref{eq8.0}) with $A$ defined
by (\ref{eq1.1}) and nonzero boundary condition $\psi$. As for
$\mu$, we will assume that it belongs to the space $\MM_\delta$ of
all signed  Borel measures on $D$ such that
$\|\mu\|_{TV,\delta}:=\int_D\delta\,d|\mu|<\infty$, where $|\mu|$
denotes the variation of $|\mu|$. Note that $\MM_{\delta}$
includes all bounded Radon measures on $D$.

Denote by  $\mathcal G_\psi(A,f)$ the set of all good  measures
for $A$ and $f$, i.e the set of all $\mu\in\MM_\delta$ for which
there exists a solution to (\ref{eq8.0}). By $\GG_0(A,f)$ we
denote the set $\GG_{\psi}(A,f)$ with $\psi=0$. We will show that,
under some assumptions on $f$,
\begin{equation}
\label{eq8.9}
\GG_0(A,f)=\GG_{\psi}(A,f).
\end{equation}
This extends in part the results of \cite{Kl:CVPDE} where considered   problem of the form (\ref{eq8.0}) with  $\psi\equiv 0$ and general Dirichlet operators, and \cite{MarcusPonce} where it was shown (for the Laplace operator) that reduced measure (the biggest measure less than $\mu$ for which there exists a solution to (\ref{eq8.0})) does not depend on the boundary conditions. Thanks to (\ref{eq8.9}) we may apply to (\ref{eq8.0}) the results of \cite{Kl:CVPDE}, where we proved some characterization of the set $\GG_0(A,f)$.

To formulate our assumptions on $f $ and prove (\ref{eq8.9}), we
will need the notions of quasi-integrable and quasi-bounded
function, which we define below.

We say that $u\in \BB(D)$ is quasi-integrable ($u\in qL^1(D;m)$ in
abbreviation)
if for q.e. $x\in D$,
\[
P_x\Big(\int_0^{\tau_D}|f(X_r)|\,dr<\infty\Big)=1.
\]
Note that, by \cite[Theorem 4.2.5]{FOT} if for every $\varepsilon>0$ there exists a Borel
set $B_\varepsilon\subset D$ such that Cap$_A(D\setminus
B_\varepsilon)<\varepsilon$ and $u\in L^1(B_ \varepsilon;m)$ then $u\in qL^1(D;m)$.

We say that $u\in \BB(D)$ is quasi-bounded if  for q.e. $x\in D$,
\[
P_x(\sup_{t<\tau_D}|u(X_t)|<\infty)=1.
\]
By \cite[Theorem 4.2.5]{FOT} if for  every
$\varepsilon>0$ there is a Borel set $B_\varepsilon\subset D$ and
a constant $M_{\varepsilon}>0$ such that Cap$(D\setminus
B_\varepsilon)<\varepsilon$ and  $|u(x)|\le M_\varepsilon$ for
every $x\in B_\varepsilon$ then $u$ is quasi-bounded.

In the rest of this section, unless explicitly otherwise stated,
we assume that $f,\mu,\psi$ satisfy the following assumptions.
\begin{enumerate}
\item[(A1)] $\mu\in\MM_\delta$ and $\psi\in L^1(\partial_MD;h^A_m)$.

\item[(A2)]$f:D\times\BR\rightarrow \BR$ is a measurable  function such that $y\mapsto f(x,y)$ is continuous and non-increasing  for every $x\in D$, $f(\cdot,y)\in qL^1(D;m)$ for every $y\in\BR$ and $f(\cdot,0)\in L^1_\delta(D;m)$.
\end{enumerate}

It is well known that for every $\eta\in L^\infty(D;m)$,
$R^D\eta\in W^{1,\infty}_0(D)\cap C(D)$. It is also clear  that $|R^D\eta|\le \|\eta\|_\infty\delta$.

\begin{df}
We say that a  $u\in L^1(D;m)$ is a solution of the problem
\begin{equation}
\label{eq8.1}
-Au=\mu\quad\mbox{in}\quad D,\qquad u_{|\partial D}=0
\end{equation}
if
\[
\int_D u\eta\,dm=\int_DR^D\eta\,d\mu,\quad \eta\in L^\infty(D;m).
\]
\end{df}

\begin{uw}
\label{uw8.vers}
By \cite[Proposition 4.12]{Kl:CVPDE}, $u$ is a solution to (\ref{eq8.1})  if and only if $u=R^D\mu$ $ m$-a.e.
Moreover by \cite[Theorem 4.6.1]{FOT} function $R^D\mu$ is quasi-continuous and if $\mu$ is bounded then by \cite{Stampacchia}  $u\in W^{1,q}_0$ with  $q<\frac{d}{d-1}$ and
\[
\|u\|_{W^{1,q}_0}\le c_q\|\mu\|_{TV}.
\]
\end{uw}
In the sequel, we consider the operator $\gamma_A$ with domain
$H_{\mathfrak D^1}$.
\begin{df}
\label{df8.1} We say that $u\in L^1(D;m)$ is a solution  to
(\ref{eq8.0}) if $f(\cdot,u)\in L^1_\delta(D;m)$ and
$u-\gamma_A^{-1}(\psi)$ is a solution to (\ref{eq8.1}) with $\mu$
replaced by $f(\cdot,u)\cdot m+\mu$.
\end{df}

\begin{stw}
\label{stw8.1}
There exists at most one solution to \mbox{\rm(\ref{eq8.0})}.
\end{stw}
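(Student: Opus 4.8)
The plan is to reduce the statement to an $L^1$-contraction (accretivity) property and then to establish that property by a Kato-type argument along the trajectories of $\BX$. Suppose $u_1,u_2$ are two solutions to (\ref{eq8.0}). By Definition \ref{df8.1} and Remark \ref{uw8.vers}, for $i=1,2$ we have $u_i-\gamma_A^{-1}(\psi)=R^D(f(\cdot,u_i)\cdot m+\mu)=R^Df(\cdot,u_i)+R^D\mu$ $m$-a.e., the decomposition being legitimate since $R^D|f(\cdot,u_i)|<\infty$ $m$-a.e. because $f(\cdot,u_i)\in L^1_\delta(D;m)$. Subtracting the two identities, the terms $\gamma_A^{-1}(\psi)$ and $R^D\mu$ cancel, so with $w:=u_1-u_2$ and $g:=f(\cdot,u_1)-f(\cdot,u_2)\in L^1_\delta(D;m)$ I obtain $w=R^Dg$ $m$-a.e. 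Since $y\mapsto f(x,y)$ is nonincreasing for every $x\in D$, it follows that $g\cdot w\le 0$ $m$-a.e. Thus the proposition reduces to the claim that $w=R^Dg$ with $g\in L^1_\delta(D;m)$ and $gw\le 0$ forces $w=0$ $m$-a.e.

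First I would record the integrability needed below. By symmetry of the Green function $g_D$ and Fubini,
\[
\int_DR^D|g|\,dm=\int_D|g|(y)\Big(\int_Dg_D(x,y)\,m(dx)\Big)m(dy)=\int_D|g|\,\delta\,dm=\|g\|_{L^1_\delta(D;m)}<\infty,
\]
so $\int_0^{\tau_D}|g(X_r)|\,dr\in L^1(P_x)$ for $m$-a.e. $x\in D$. Writing $g=g^+-g^-$, the functions $R^Dg^\pm$ are potentials of the measures $g^\pm\cdot m\in\MM_\delta$, hence of potential type by Proposition \ref{stw7.2}; consequently $w(X_t)=R^Dg^+(X_t)-R^Dg^-(X_t)\to 0$ as $t\nearrow\tau_D$ $P_x$-a.s. for $m$-a.e. $x$. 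Moreover, by the Markov property, $N_t:=w(X_t)+\int_0^tg(X_r)\,dr=E_x\big(\int_0^{\tau_D}g(X_r)\,dr\,\big|\,\FF_t\big)$ is a (continuous) uniformly integrable martingale on $[0,\tau_D)$ under $P_x$ for $m$-a.e. $x$, and the family $\{w^+(X_t):t<\tau_D\}$ is uniformly integrable since it is dominated by $|N_t|+\int_0^{\tau_D}|g(X_r)|\,dr$.

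Next I would apply the Itô--Tanaka formula to the continuous semimartingale $w(X_t)=N_t-\int_0^tg(X_r)\,dr$. As $\varphi(y)=y^+$ is convex with $\varphi'_-=\mathbf 1_{(0,\infty)}$, for $m$-a.e. $x$ one gets
\[
w^+(X_t)=w^+(x)+\int_0^t\mathbf 1_{\{w(X_r)>0\}}\,dN_r-\int_0^t\mathbf 1_{\{w(X_r)>0\}}g(X_r)\,dr+\tfrac12L_t,
\]
where $L\ge 0$ is the local time of $w(X)$ at $0$. Taking $E_x$ kills the martingale term; on $\{w>0\}$ we have $g\le 0$ (because $gw\le 0$), so the second integral is nonpositive, while $L_t\ge 0$, whence $E_xw^+(X_t)\ge w^+(x)$ for every $t<\tau_D$ and $m$-a.e. $x$. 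Letting $t\nearrow\tau_D$ and using $w^+(X_t)\to 0$ together with the uniform integrability above gives $w^+(x)\le 0$, so $w^+=0$ $m$-a.e. Applying the same argument to $-w=u_2-u_1$ yields $w^-=0$ $m$-a.e., whence $u_1=u_2$ $m$-a.e.

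The main obstacle is the third paragraph: justifying the semimartingale decomposition of $w(X_t)$ and the Itô--Tanaka formula when $g$ lies only in $L^1_\delta(D;m)$, so that in general $w\notin H^1(D)$ and Fukushima's energy decomposition is unavailable. I would handle this through the additive-functional representation of the potentials $R^Dg^\pm$ (each $R^Dg^\pm(X_t)+\int_0^tg^\pm(X_r)\,dr$ is a martingale up to $\tau_D$) rather than the $H^1$ decomposition, after which the convex-function calculus and the uniform-integrability bookkeeping are routine. Alternatively, the required $L^1$-contraction is precisely the accretivity underlying the results of \cite{Kl:CVPDE}, and once the reduction $w=R^Dg$ with $gw\le 0$ has been made, one may simply invoke it.
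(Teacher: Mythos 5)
Your proposal is correct, but it takes a genuinely different route from the paper. The paper's own proof is a two-line reduction: setting $v:=u-\gamma_A^{-1}(\psi)$, it observes that $u$ solves (\ref{eq8.0}) if and only if $v$ solves the zero-boundary problem with the shifted nonlinearity $f_{\gamma^{-1}_A(\psi)}(x,y)=f(x,y+\gamma_A^{-1}(\psi)(x))$ (still nonincreasing in $y$), and then invokes the uniqueness result \cite[Corollary 4.3]{Kl:CVPDE} for that problem. You instead subtract the two resolvent identities directly, note that both $\gamma_A^{-1}(\psi)$ and $R^D\mu$ cancel, and prove the resulting $L^1$-accretivity statement ($w=R^Dg$, $g\in L^1_\delta(D;m)$, $gw\le 0$ implies $w=0$) from scratch via the It\^o--Tanaka formula; this is in effect an inlined proof of the cited corollary, built from the same toolbox the paper itself uses in Propositions \ref{stw8.3} and \ref{stw8.5}. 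What your approach buys is self-containedness, and it makes transparent that uniqueness uses nothing about $\psi$ or $\mu$ beyond their cancellation; what the paper's citation buys is brevity and avoidance of the stochastic-calculus bookkeeping you correctly identify as the delicate point. On that point your fix is the right one: since $g^\pm\cdot m$ are smooth measures with $\|g^\pm\|_{L^1_\delta}<\infty$, the processes $R^Dg^\pm(X_t)+\int_0^t g^\pm(X_r)\,dr$ are closed uniformly integrable martingales under $P_x$ for $m$-a.e. $x$, which yields the semimartingale decomposition of $w(X)$ without Fukushima's $H^1$-decomposition, and Proposition \ref{stw7.2} (or martingale convergence) gives $w(X_t)\rightarrow 0$ as $t\nearrow\tau_D$. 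Two small points deserve explicit mention in a final write-up: one should work with the quasi-continuous version $R^Dg^+-R^Dg^-$ of $w$ (the $m$-a.e. inequality $gw\le 0$ then suffices under the time integrals because the transition function of $\BX$ is absolutely continuous with respect to $m$), and the stochastic integral $\int_0^t\mathbf{1}_{\{w>0\}}(X_r)\,dN_r$ is a priori only a local martingale, so one localizes before taking $E_x$ and passes to the limit using the uniform integrability over stopping times that your domination $w^+(X_\tau)\le|N_\tau|+\int_0^{\tau_D}|g(X_r)|\,dr$ provides. Both are routine, and you flagged the substantive one yourself.
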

\begin{dow}
By the definition, $u$ is a solution of (\ref{eq8.0}) if and only
if $v:=u-\gamma^{-1}_A(\psi)$ is a solution of
\begin{equation}
\label{eq8.4} -Av=f_{\gamma^{-1}_A(\psi)}(\cdot,v) +\mu\quad
\mbox{in}\quad D,\qquad u=0\quad\mbox{on}\quad \partial D,
\end{equation}
where
\[
f_{\gamma^{-1}(\psi)}(x,y)=f(x,y+\gamma_A^{-1}(\psi)(x)).
\]
Therefore the desired result follows  from \cite[Corollary 4.3]{Kl:CVPDE}.
\end{dow}

\begin{uw}
\label{uw8.defd}
By Corollary \ref{wn7.bound}  $\gamma_A^{-1}(\psi)=E_\cdot\psi(X_{\tau_D-})$. So, by  Remark \ref{uw8.vers},  $u$ is  a solution
to (\ref{eq8.0}) if and only if $f(\cdot,u)\in L^1_\delta(D;m)$
and for $m$-a.e. $x\in D$,
\[
u(x)=\int_Df(y,u(y))g_D(x,y)\,m(dy)+\int_Dg_D(x,y)\,\mu(dy)
+\int_{\partial_M D}\psi(y)h^A_x(dy).
\]
If $\mu$ is smooth, then the above equation is equivalent
to
\[
u(x)=E_x\int_0^{\tau_D}f(X_r,u(X_r))\,dr+E_x\int_0^{\tau_D}\,dA^\mu_r+E_x\psi(X_{\tau_D-})
\]
(see (\ref{eq2.as}), (\ref{eq2.afr}) and  (\ref{eq5.mhm})). Therefore, under the notation of Proposition \ref{stw8.1}, $u$ is a solution to (\ref{eq8.0}) with smooth $\mu$ if and only if  $f(\cdot,u)\in L^1_\delta(D;m)$
and for $m$-a.e. $x\in D$ we have
\begin{equation}
\label{eq8.4.1}
v(x)=E_x\int_0^{\tau_D}f_{\gamma^{-1}(\psi)}(X_r,v(X_r))\,dr+E_x\int_0^{\tau_D}\,dA^\mu_r.
\end{equation}
The above formula is in  agreement with the probabilistic definition of a solution to (\ref{eq8.4}) considered in \cite{KR:JFA} if we replace ``$m$-a.e." by ``q.e." In fact, (\ref{eq8.4.1}) holds true for q.e. $x\in D$ after replacing the left-hand side of (\ref{eq8.4.1})  by its quasi-continuous $m$-version. Such a version  exists by \cite[Lemma 4.3]{KR:JFA} and is given by the right-hand side of (\ref{eq8.4.1}), which is finite for q.e. $x\in D$ by \cite[Lemma 4.2]{KR:JFA}.
\end{uw}

\begin{uw}
By \cite[Theorem 3.3]{Kl:CVPDE}, if $u$ is a solution  to (\ref{eq8.0}) and
$f(\cdot,u)\in L^1(D;m),$  $\|\mu\|_{TV}<\infty$, then
$T_k(u-\gamma^{-1}_A(\psi))\in H^1_0(D)$ for every $k\ge 0$.
\end{uw}

\begin{stw}
\label{stw8.3} If $\mu\in\MM_\delta$ is smooth, then there
exists a unique solution of \mbox{\rm(\ref{eq8.0})}.
\end{stw}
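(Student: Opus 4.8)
The uniqueness part is already contained in Proposition~\ref{stw8.1}, so the plan is to establish only existence. First I would set $h:=\gamma_A^{-1}(\psi)=E_\cdot\psi(X_{\tau_D-})$ (see Corollary~\ref{wn7.bound}). By Definition~\ref{df8.1} and the reformulation for smooth $\mu$ recorded in Remark~\ref{uw8.defd}, a function $u\in L^1(D;m)$ solves (\ref{eq8.0}) if and only if $v:=u-h$ solves, in the sense of (\ref{eq8.4.1}), the zero-boundary problem
\begin{equation*}
-Av=f_h(\cdot,v)+\mu\quad\mbox{in}\quad D,\qquad v=0\quad\mbox{on}\quad\partial D,
\end{equation*}
where $f_h(x,y)=f(x,y+h(x))$. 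Since $\int_D|h|\,dm\le E_m|\psi(X_{\tau_D-})|=\|\psi\|_{L^1(\partial_MD;h^A_m)}<\infty$, we have $h\in L^1(D;m)$, so it suffices to produce $v\in L^1(D;m)$ with $f_h(\cdot,v)\in L^1_\delta(D;m)$, and I would obtain it from the existence theory for smooth measures developed in \cite{KR:JFA}.

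Next I would check that $\mu$ fits that framework: being smooth with $\int_D\delta\,d|\mu|<\infty$, the symmetry of the Green function gives $\int_DR^D|\mu|\,dm=\int_D\delta\,d|\mu|=\|\mu\|_{TV,\delta}<\infty$, whence $R^D|\mu|<\infty$ q.e. It then remains to verify the hypotheses on $f_h$. Continuity and monotonicity in $y$ are inherited from $f$. The delicate point, which I expect to be the main obstacle, is the quasi-integrability $f_h(\cdot,y)\in qL^1(D;m)$ for every $y\in\BR$: this is not immediate, because $h$ need not be bounded near $\partial D$, so one cannot simply compare $f(\cdot,y+h)$ with $f(\cdot,c)$ for a fixed constant $c$.

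The observation that resolves this is that $h$ is quasi-bounded. Indeed, since $\psi\in L^1(\partial_MD;h^A_x)$ for every $x$ (Corollary~\ref{wn6.2}), the process $h(X_t)=E_x(\psi(X_{\tau_D-})\,|\,\FF_t)$ is a uniformly integrable martingale on $[0,\tau_D)$ under $P_x$ for q.e. $x$; being a continuous path on $[0,\tau_D)$ with finite limit $\psi(X_{\tau_D-})$ at $\tau_D$, it satisfies $\sup_{t<\tau_D}|h(X_t)|<\infty$, $P_x$-a.s. Splitting over the events $\{\sup_{t<\tau_D}|h(X_t)|\le N\}$ and using the monotonicity of $f$ to dominate $|f(X_r,y+h(X_r))|$ by $|f(X_r,y-N)|+|f(X_r,y+N)|$ there, the assumption that $f(\cdot,c)\in qL^1(D;m)$ for all constants $c$ (part of (A2)) would then yield $f_h(\cdot,y)\in qL^1(D;m)$ for every $y$.

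With the hypotheses verified, the existence theorem of \cite{KR:JFA} provides a solution $v\in L^1(D;m)$ of the zero-boundary problem in the form (\ref{eq8.4.1}). Finally, the accompanying a~priori $L^1_\delta$-estimate (the estimate underlying (\ref{eq1.18}), controlled by $\|\psi\|_{L^1(\partial_MD;h^A_m)}$, $\|f(\cdot,0)\|_{L^1_\delta(D;m)}$ and $\|\mu\|_{TV,\delta}$) gives $f_h(\cdot,v)\in L^1_\delta(D;m)$. Setting $u:=v+h$ then produces, by the reduction of the first paragraph, a solution of (\ref{eq8.0}), which together with Proposition~\ref{stw8.1} completes the argument.
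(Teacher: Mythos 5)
Your reduction to the zero-boundary problem with the shifted nonlinearity $f_h$, $h=\gamma_A^{-1}(\psi)$, is exactly the reformulation the paper records in Remark \ref{uw8.defd}, and your observation that $h$ is quasi-bounded (via the uniformly integrable martingale $h(X_t)=E_x(\psi(X_{\tau_D-})\,|\,\FF_t)$) is correct and is genuinely used, in spirit, in the paper. The gap is concentrated in the single sentence ``the existence theorem of \cite{KR:JFA} provides a solution $v$.'' No result of \cite{KR:JFA} applies under the standing hypotheses here: (A2) gives only $f(\cdot,y)\in qL^1(D;m)$, i.e. $\int_0^{\tau_D}|f(X_r,y)|\,dr<\infty$ $P_x$-a.s., which is much weaker than the finiteness of the potential $E_x\int_0^{\tau_D}|f(X_r,y)|\,dr$ that the existence results you want to cite require; moreover $f(\cdot,0)$ lies only in $L^1_\delta(D;m)$, not $L^1(D;m)$, and $\mu\in\MM_\delta$ may have infinite total variation. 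This is precisely why the paper invokes \cite[Theorem 4.7]{KR:JFA} only for the bounded truncations $f_{n,m}=(f\wedge n)\vee(-m)$, for which all domination hypotheses hold trivially. So quasi-integrability of $f_h$ is not ``the main obstacle'' you take it to be; the main content of the paper's proof is the removal of the truncation, which your proposal skips entirely.

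Concretely, after solving with $f_{n,m}$, the paper derives via Tanaka's formula the a priori bound $|u_{n,m}|+E_\cdot\int_0^{\tau_D}|f_{n,m}(X_r,u_{n,m}(X_r))|\,dr\le v$, where $v=E_\cdot|\psi(X_{\tau_D-})|+E_\cdot\int_0^{\tau_D}|f(X_r,0)|\,dr+E_\cdot\int_0^{\tau_D}dA^{|\mu|}_r$ is quasi-continuous and quasi-bounded; it then localizes along the chain of stopping times $\tau_{V_k}\wedge\sigma^k_l$, on which the time-integrals of $|f(\cdot,\pm k)|$ are bounded by $l$ --- this is exactly where the weak $qL^1$ hypothesis is exploited --- and passes to the limit first in $m$, then in $n$, using dominated convergence on these random intervals, uniform integrability of $\{u_n(X_\tau)\}$, and Fatou's lemma to obtain $f(\cdot,u)\in L^1_\delta(D;m)$. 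Your final step is also circular: the $L^1_\delta$-estimate you invoke as ``underlying (\ref{eq1.18})'' is Proposition \ref{stw8.5}, which the paper proves for functions already known to solve (\ref{eq8.0}); it is not available from \cite{KR:JFA} as an input for the untruncated problem. In short, the reduction and the quasi-boundedness of $h$ are sound, but the proposal outsources the entire analytic difficulty to a citation that does not cover the present hypotheses, and the approximation-and-limit machinery constituting the actual existence proof is missing.
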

\begin{dow}
Uniqueness follows from  Proposition \ref{stw8.1}. Write $f_{n,m}=(f\wedge n)\vee(-m)$. From
(\ref{eq8.4}), (\ref{eq8.4.1}) and  \cite[Theorem
4.7]{KR:JFA} it follows that for all $n,m\ge1$ there exists a unique solution $u_{n,m}$ of (\ref{eq8.0}) with $f$ replaced by $f_{n,m}$. We may and will assume that each $u_{n,m}$ is quasi-continuous. By (\ref{eq8.4}), (\ref{eq8.4.1})  and \cite[Proposition 4.9]{KR:JFA}, $u_{n,m}\ge u_{n,m+1}$, $n,m\ge 1$, q.e.
By \cite[Theorem 4.7]{KR:JFA}, $u_{n,m}\in\mathfrak D^1$ and there exists a martingale  $M^{n,m,x}$ such that for q.e. $x\in D$,
\begin{align*}
u_{n,m}(X_t)&=\psi(X_{\tau_D-})
+\int_t^{\tau_D}f_{n,m}(X_r,u_{n,m}(X_r))\,dr\nonumber\\
&\quad+\int_t^{\tau_D}\,dA^\mu_r -\int_t^{\tau_D}\,dM^{n,m,x}_r, \quad
t\le\tau_D,\quad P_x\mbox{-a.s.}
\end{align*}
Applying Tanaka's formula, we see that for q.e. $x\in D$,
\begin{align}
\label{eq8.10.0}
|u_{n,m}(X_t)|+\int_t^{\tau_D}\,dL_r&=|\psi(X_{\tau_D-})|
+\int_t^{\tau_D}\mbox{sgn}(u_{n,m}(X_r))f_{n,m}(X_r,u_{n,m}(X_r))\,dr\nonumber\\
&\quad+\int_t^{\tau_D}\mbox{sgn}(u_{n,m}(X_r))\,dA^\mu_r\nonumber \\ &\quad-\int_t^{\tau_D}\mbox{sgn}(u_{n,m}(X_r))\,dM^{n,m,x}_r, \quad
t\le\tau_D,\quad P_x\mbox{-a.s.,}
\end{align}
where $L$ is the symmetric local time of $u_{n,m}(X)$ at zero. Taking the expectation in (\ref{eq8.10.0}) with $t=0$ and using (A2) we get
\begin{align}
\label{eq8.10.1}
&|u_{n,m}(x)|+E_x\int_0^{\tau_D}|f_{n,m}(X_r,u_{n,m}(X_r))|\,dr\nonumber \\
&\quad\le
E_x|\psi(X_{\tau_D-})|+E_x\int_0^{\tau_D}|f(X_r,0)|\,dr
+E_x\int_0^{\tau_D}\,dA^{|\mu|}_r=:v(x).
\end{align}
By the above inequality and (A2), for all $n',m',n,m\ge1$ we have
\begin{equation}
\label{eq8.10.2}
|f_{n',m'}(x,u_{n,m}(x))|\le |f(x,v(x))|+|f(x,-v(x))|,\quad x\in D.
\end{equation}
The functions $E_\cdot\int_0^{\tau_D}|f(X_r,0)|\,dr$ and $E_\cdot\int_0^{\tau_D}\,dA^{|\mu|}_r$, as potentials of a symmetric
process, are quasi-continuous (see \cite{LeJan}). Moreover, by Proposition \ref{stw7.2}, they belong to $\TT_0$. By Corollary \ref{wn7.bound},
$E_\cdot|\psi(X_{\tau_D-})|\in H_{\mathfrak D^1}$ and belongs to $\TT^1$. Therefore $v$ is quasi-continuous
and quasi-bounded. Set $V_k=\{v>k\}$. Since $v$ is quasi-bounded, the sequence $\{\tau_{V_k}\}$ is
a chain. Set
\[
\sigma^k_l=\inf\{t\ge 0,\, \int_0^t|f(X_r,k)|\,dr+\int_0^t|f(X_r,-k)|\,dr\ge l\},\quad \tau_{k,l}=\tau_{V_k}\wedge\sigma^k_l.
\]
By (A2), $\{\sigma^k_l\}$ is a chain with respect to   $l$ (with fixed $k$). Let $u_n=\inf_{m\ge 1} u_{n,m}$.
By the construction  of $\{\sigma^k_l\}$ and the Lebesgue dominated convergence theorem,
\begin{equation}
\label{eq8.10.3}
E_x\int_0^{\sigma^k_l}
|f_{n,m}(X_r,u_{n,m}(X_r))-f_n(X_r,u_n(X_r))|\,dr\rightarrow0
\end{equation}
as $m\rightarrow\infty$. By (\ref{eq8.10.0}), for q.e. $x\in D$,
\[
u_{n,m}(x)=E_xu_{n,m}(X_{\sigma^k_l})
+E_x\int_0^{\sigma^k_l}f_{n,m}(X_r,u_{n,m}(X_r))\,dr+E_x\int_0^{\sigma^k_l}\,dA^\mu_r.
\]
Letting $m\rightarrow\infty$ in the above equality and using  (\ref{eq8.10.3}) we get
\begin{equation}
\label{eq8.8888}
u_{n}(x)=E_xu_{n}(X_{\sigma^k_l})+E_x\int_0^{\sigma^k_l}f_{n}(X_r,u_{n}(X_r))\,dr
+E_x\int_0^{\sigma^k_l}\,dA^\mu_r
\end{equation}
for q.e. $x\in D$. It is clear that $E_\cdot\int_0^{\tau_D}|f(X_r,0)|\,dr$, $E_\cdot\int_0^{\tau_D}\,dA^{|\mu|}_r\in \mathfrak D^1$, so
by Corollary \ref{wn7.bound}, $v\in \mathfrak D^1$. From this and (\ref{eq8.10.1}) we conclude that the family $\{u_n(X_\tau),\,\tau\in\TT\}$ is uniformly integrable under the measure $P_x$ for $m$-a.e. $x\in D$. Therefore,  by the properties of the sequence $\{\sigma^k_l\}$,  for $m$-a.e. $x\in D$ we have
\[
\lim_{k\rightarrow \infty}\lim_{l\rightarrow \infty} E_xu_n(X_{\sigma^k_l})=E_x\psi(X_{\tau_D-}).
\]
By (\ref{eq8.10.1}), (A1), (A2) and Fatou's lemma, $f(\cdot,u_n)\in L^1_\delta(D;m)$. Hence, in particular,
$E_x\int_0^{\tau_D}|f_{n}(X_r,u_{n}(X_r))|\,dr<\infty$ for $m$-a.e. $x\in D$. Therefore letting $l\rightarrow \infty$  and then $k\rightarrow\infty$
in (\ref{eq8.8888}) we see that for  $m$-a.e. $x\in D$,
\begin{equation}
\label{eq8.12}
u_{n}(x)=E_x\psi(X_{\tau_D-})+E_x\int_0^{\tau_D}f_{n}(X_r,u_{n}(X_r))\,dr
+E_x\int_0^{\tau_D}\,dA^\mu_r,
\end{equation}
which,  by Remark \ref{uw8.defd}, shows that $u_n$ is a solution to (\ref{eq8.0}) with $f$ replaced by $f_n$.
Letting $n\rightarrow\infty$ in (\ref{eq8.12}) and using the arguments similar to those used above we show that $u$ is a solution of (\ref{eq8.0}).
\end{dow}

\begin{wn}
\label{wn8.1} For every $h\in H_{\mathfrak D}$ and  every quasi-bounded $v\in \BB(D)$  such  that $f(\cdot,v)\in L^1_\delta(D;m)$
there exists $g\in L^1_\delta(D;m)$ with the property that
$f(\cdot,v-h+R^Dg)\in L^1_\delta(D;m)$.
\end{wn}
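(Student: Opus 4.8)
The plan is to remove the harmonic obstruction $h$ by \emph{reparametrising the nonlinearity}, so that the given pair $(f,v)$ plays the role that $(f,0)$ plays in the standing hypothesis (A2). Concretely, set $\phi:=\gamma_A(h)$; since $h\in H_{\mathfrak D}=H_{\mathfrak D^1}$, Theorem \ref{tw7.1}(i) gives $\phi\in L^1(\partial_MD;h^A_m)$, and Corollary \ref{wn7.bound} yields $h(x)=E_x\phi(X_{\tau_D-})$. I would introduce the shifted nonlinearity $\tilde f(x,z):=f(x,v(x)+z)$, which is again measurable, continuous and non-increasing in $z$ and satisfies $\tilde f(\cdot,0)=f(\cdot,v)\in L^1_\delta(D;m)$ by assumption. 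The idea is then to solve the auxiliary Dirichlet problem $-A\hat w=\tilde f(\cdot,\hat w)$ in $D$, $\hat w=-\phi$ on $\partial_MD$, and to read off $g$ from its source term.

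To solve this auxiliary problem I would apply Proposition \ref{stw8.3} with $f$ replaced by $\tilde f$, boundary datum $-\phi$, and $\mu\equiv 0$ (which is smooth). This requires checking that $\tilde f$ obeys (A2). Monotonicity, continuity and $\tilde f(\cdot,0)\in L^1_\delta(D;m)$ are immediate; the only delicate point is $\tilde f(\cdot,z)\in qL^1(D;m)$ for every $z\in\BR$. Here the quasi-boundedness of $v$ is essential: for q.e. $x$ one has $\sup_{t<\tau_D}|v(X_t)|<\infty$ $P_x$-a.s., so on the event where this supremum is $\le N$ the estimate $|\tilde f(X_r,z)|\le|f(X_r,N+|z|)|+|f(X_r,-N-|z|)|$ holds by monotonicity, and the right-hand side is $P_x$-a.s.\ integrable in $r$ over $[0,\tau_D]$ because $f(\cdot,\pm(N+|z|))\in qL^1(D;m)$; letting $N\to\infty$ gives $\tilde f(\cdot,z)\in qL^1(D;m)$.

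Proposition \ref{stw8.3} then produces a (unique) solution $\hat w$ with $\tilde f(\cdot,\hat w)=f(\cdot,v+\hat w)\in L^1_\delta(D;m)$. By the representation in Remark \ref{uw8.defd} applied to this solution, $\hat w(x)=R^D\big(f(\cdot,v+\hat w)\big)(x)+E_x\big[(-\phi)(X_{\tau_D-})\big]=R^D\big(f(\cdot,v+\hat w)\big)(x)-h(x)$. Setting $g:=f(\cdot,v+\hat w)\in L^1_\delta(D;m)$, this identity reads $R^Dg=\hat w+h$, whence $v-h+R^Dg=v+\hat w$ and therefore $f(\cdot,v-h+R^Dg)=f(\cdot,v+\hat w)=g\in L^1_\delta(D;m)$, which is the assertion.

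The conceptual heart of the argument is the reparametrisation $z\mapsto v(x)+z$, which converts the hypothesis $f(\cdot,v)\in L^1_\delta$ into the normalisation $\tilde f(\cdot,0)\in L^1_\delta$ needed to invoke the existence theory. The main technical obstacle is precisely verifying the quasi-integrability clause of (A2) for $\tilde f$, where one cannot dispense with the quasi-boundedness of $v$ (mere membership $f(\cdot,v)\in L^1_\delta$ would not control $f(\cdot,v+z)$ uniformly). I would also emphasise that the \emph{unbounded} harmonic part $h$ is never dominated pointwise by the potential $R^Dg$; rather $R^Dg=\hat w+h$ reconstructs $h$ up to the auxiliary solution $\hat w$, whose boundary values $-\phi$ cancel the boundary values $\phi$ of $h$, so that $R^Dg$ is indeed a genuine potential.
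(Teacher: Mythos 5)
Your proof is correct and is essentially identical to the paper's: both reparametrise the nonlinearity as $f_v(x,y)=f(x,y+v(x))$, solve the auxiliary problem $-Aw=f_v(\cdot,w)$ in $D$, $w_{|\partial_MD}=-\gamma_A(h)$ via Proposition \ref{stw8.3} with $\mu=0$, and read off $g=f_v(\cdot,w)\in L^1_\delta(D;m)$ from the identity $w=-h+R^Dg$. Your explicit verification of the $qL^1(D;m)$ clause of (A2) for the shifted nonlinearity, using quasi-boundedness of $v$ and monotonicity of $f$, simply fills in the step the paper dispatches with the remark that $f_v$ ``has the same properties as $f$''.
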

\begin{dow}
By Theorem \ref{tw7.1}, there is $\psi\in L^1(\partial_MD;h^A_m)$
such that $h=\gamma^{-1}_A(\psi)$. By the assumptions on $v$, the
function $f_v$ has the same properties as  $f$. Therefore, by
Proposition \ref{stw8.3}, there exists a unique solution to the
problem
\[
-Aw=f_v(\cdot,w)\quad\mbox{in}\quad D,\qquad w_{|\partial_MD}=-\psi.
\]
Write $g=f_v(\cdot,w)$. Then, by the very definition of a solution, $g\in L^1_\delta(D;m)$ and
$w=-\gamma^{-1}_A(\psi)+R^Dg$.
\end{dow}

For the proof of (\ref{eq8.9}) it is  convenient to define beforehand some  subsets of the set of all  measures $\mu\in\MM_{\delta}$ whose potential $R^D\mu$  admits decomposition  of the form
\begin{equation}
\label{eq8.5}
R^D\mu=R^D f_0-h+v,
\end{equation}
where  $f_0\in L^1_\delta(D;m)$, $h$ is a harmonic function and $v$ is a function from the space  $v\in L^1(D;m)$  such that $ f(\cdot,v)\in L^1_\delta(D;m)$.

By $\RR^p(A,f)$, $ p>1$ (resp. $\RR^1(A,f)$) we denote the set of $\mu\in\MM_\delta$ such that $R^D\mu$ admits
decomposition (\ref{eq8.5}) with $h\in H_{\mathfrak S^p}$ (resp. $h\in H_{\mathfrak D}$). By $\RR_0$ we denote the set
of those $\mu\in \RR^1$ for which $h=0$ in decomposition (\ref{eq8.5}).
\begin{wn}
\label{wn8.2}
$\RR^p(A,f)=\RR_0(A,f)$ for every $p\ge1$.
\end{wn}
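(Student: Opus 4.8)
The plan is to prove the two inclusions $\RR_0\subseteq\RR^p$ and $\RR^p\subseteq\RR_0$ separately, the first being essentially trivial and the second carrying all the content. For $\RR_0\subseteq\RR^p$ I would simply note that the zero function is harmonic and lies in every $H_{\mathfrak S^p}$ and in $H_{\mathfrak D}$, so a decomposition (\ref{eq8.5}) of $R^D\mu$ with vanishing harmonic part is in particular an admissible decomposition for membership in $\RR^p$; hence $\RR_0\subseteq\RR^p$ for all $p\ge1$. Moreover, since Theorem \ref{tw7.1}(v) gives $H_{\mathfrak S^p}=H_{\mathfrak D^p}$ and since $H_{\mathfrak D^p}\subseteq H_{\mathfrak D^1}=H_{\mathfrak D}$ for $p>1$ (by H\"older on the bounded set $D$), any decomposition witnessing $\mu\in\RR^p$ also witnesses $\mu\in\RR^1$, so $\RR^p\subseteq\RR^1$. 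Combining these, it suffices to establish the single inclusion $\RR^1\subseteq\RR_0$, for then $\RR_0\subseteq\RR^p\subseteq\RR^1\subseteq\RR_0$ forces equality throughout.

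For $\RR^1\subseteq\RR_0$ I would fix $\mu\in\RR^1$ and write its decomposition (\ref{eq8.5}) as $R^D\mu=R^Df_0-h+v$ with $f_0\in L^1_\delta(D;m)$, $h\in H_{\mathfrak D}$, $v\in L^1(D;m)$ and $f(\cdot,v)\in L^1_\delta(D;m)$. The idea is to absorb the harmonic term $-h$ at the expense of the two remaining terms. Applying Corollary \ref{wn8.1} to $h$ and $v$ produces $g\in L^1_\delta(D;m)$ with $f(\cdot,v-h+R^Dg)\in L^1_\delta(D;m)$; I then set $\tilde v:=v-h+R^Dg$ and $\tilde f_0:=f_0-g$. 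Linearity of $R^D$ gives $R^D\tilde f_0+\tilde v=R^Df_0-R^Dg+v-h+R^Dg=R^D\mu$, a decomposition (\ref{eq8.5}) whose harmonic part is $0$. It remains to check the regularity of the new terms: $\tilde f_0\in L^1_\delta(D;m)$ is clear; $h\in L^1(D;m)$ since $\int_D|h|\,dm\le\|h\|_{\mathfrak D}$ (because $h(x)=E_xh(X_{\tau_V})$ for $V\subset\subset D$); and $R^Dg\in L^1(D;m)$ with $\|R^Dg\|_{L^1(D;m)}=\|g\|_{L^1_\delta}$, by Fubini, the symmetry of the Green function, and $R^D1=\delta$. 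Hence $\tilde v\in L^1(D;m)$, and as $f(\cdot,\tilde v)\in L^1_\delta(D;m)$, the pair $(\tilde f_0,\tilde v)$ realizes $\mu\in\RR_0$.

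The one delicate point, which I expect to be the main obstacle, is that Corollary \ref{wn8.1} is stated for \emph{quasi-bounded} $v$, whereas the definition of (\ref{eq8.5}) only requires $v\in L^1(D;m)$ with $f(\cdot,v)\in L^1_\delta(D;m)$, and such a $v$ (being a solution of a linear problem with measure data $\mu-f_0\cdot m$) need not be quasi-bounded. To close this gap I would first reduce to quasi-bounded $v$: replacing $v$ by its truncation $v_k:=T_k(v)$, which is quasi-bounded and still satisfies $f(\cdot,v_k)\in L^1_\delta(D;m)$ by monotonicity of $f$ (so that $|f(\cdot,v_k)|\le|f(\cdot,0)|+|f(\cdot,v)|$), and then applying Corollary \ref{wn8.1} to each $v_k$ to obtain $g_k\in L^1_\delta(D;m)$ with $f(\cdot,v_k-h+R^Dg_k)\in L^1_\delta(D;m)$. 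The hard part is passing to the limit $k\to\infty$: using the monotone dependence of the associated solutions on the data together with an a priori $L^1_\delta$-bound of the type (\ref{eq8.10.1}), one shows that $g_k$ converges in $L^1_\delta(D;m)$ to some $g$ with $f(\cdot,v-h+R^Dg)\in L^1_\delta(D;m)$, which is exactly the conclusion used above. Alternatively, one may argue that in (\ref{eq8.5}) the unbounded part of $v$ can always be shifted into the potential $R^Df_0$, thereby arranging $v$ to be quasi-bounded from the outset, and then invoke Corollary \ref{wn8.1} as stated.
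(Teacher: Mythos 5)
Your overall skeleton---the trivial inclusions $\RR_0\subset\RR^p\subset\RR^1$ and the reduction to the single inclusion $\RR^1\subset\RR_0$ via Corollary \ref{wn8.1}, followed by the recombination $\tilde f_0=f_0-g$, $\tilde v=v-h+R^Dg$---is exactly the paper's argument, and your integrability checks for $\tilde v$ (via $|h(x)|\le E_x|h(X_{\tau_V})|$ and the Green-function symmetry giving $\|R^D|g|\|_{L^1(D;m)}=\|g\|_{L^1_\delta(D;m)}$) are correct. The genuine gap sits precisely at the point you flag yourself: the applicability of Corollary \ref{wn8.1}. Your diagnosis that the $v$ in (\ref{eq8.5}) ``need not be quasi-bounded'' is mistaken, and neither of your substitute arguments closes the problem you thereby create. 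In the truncation scheme, the passage $k\to\infty$ is entirely unproved: $T_k(v)$ is not monotone in $k$ unless $v$ has a sign, no uniform $L^1_\delta$-bound on the family $\{g_k\}$ is exhibited (an estimate of type (\ref{eq8.10.1}) controls a solution for \emph{fixed} data, not this family), and even granted a limit $g$, the conclusion $f(\cdot,v-h+R^Dg)\in L^1_\delta(D;m)$ would require rerunning the whole monotone approximation machinery of Proposition \ref{stw8.3}---that is, you would be re-proving the existence theorem rather than quoting it. The ``alternative'' of shifting the unbounded part of $v$ into $R^Df_0$ is likewise unsupported: nothing guarantees that this part is the potential of a function in $L^1_\delta(D;m)$.

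What the paper does instead is a short observation that dissolves the difficulty: every $v$ occurring in a decomposition (\ref{eq8.5}) is \emph{automatically} quasi-bounded. Indeed, $v=R^D\mu-R^Df_0+h$, and each summand is quasi-bounded: $R^D|\mu|$ and $R^D|f_0|$ are quasi-continuous excessive functions (potentials), hence of potential type by Proposition \ref{stw7.2}, so $R^D\mu(X_t)\rightarrow 0$ $P_x$-a.s. as $t\nearrow\tau_D$; together with the a.s. continuity of $t\mapsto R^D\mu(X_t)$ on $[0,\tau_D)$ this forces $\sup_{t<\tau_D}|R^D\mu(X_t)|<\infty$ a.s., and the same holds for $R^Df_0$. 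The harmonic part $h\in H_{\mathfrak D}$ is continuous on $D$ and lies in $\TT^1$ by Theorem \ref{tw7.1}, so $h(X_t)$ converges $P_x$-a.s. to the finite limit $\gamma_A(h)(X_{\tau_D-})$ and is quasi-bounded as well. Hence $v$ is quasi-bounded and Corollary \ref{wn8.1} applies verbatim, with no truncation and no limit passage. With this observation inserted in place of your final paragraph, your proof coincides with the paper's.
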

\begin{dow}
Of course, $\RR_0\subset \RR^p$ for all $ p\ge 1$. Let $\mu\in \RR^1$. Then $R^D\mu$ admits  decomposition (\ref{eq8.5}).
It is well known that $R^D|\mu|, R^D|f^0|$ are quasi-continuous as excessive function of the symmetric regular Dirichlet form
(see, e.g., \cite[Theorem 4.6.1]{FOT}). Moreover, by Proposition \ref{stw7.2}, $R^D|\mu|, R^D|f^0|\in\TT_0$, so they are quasi-bounded. The function $h$ is also
quasi-bounded as it is continuous on $D$ and belongs to $\TT_1$. Consequently,  $v$ is quasi-bounded.
By Corollary \ref{wn8.1}, there exists $g\in L^1_\delta(D;m)$ such that $f(\cdot,v-h+R^Dg)\in L^1_\delta(D;m)$. This implies that $\mu\in\RR_0$.
\end{dow}

\begin{uw}
\label{uw8.red}
Assume additionally that there is $r\in L^1_\delta(D;m)$ such that  $f^+(x,y)\le r(x)$ for all $x\in D, y\in\BR$.
It is immediate  that the results of \cite[Section 5]{Kl:CVPDE} are true under considered here assumptions on $f$.
Let $g\in L^1_\delta(D;m)$ be such that $f(\cdot,\eta)\in L^1_\delta(D;m)$ with $\eta=\gamma^{-1}_A(\psi)+R^Dg$.
Observe that $\mu\in\GG_\psi(A,f)$ if and only if $\mu-g\cdot m\in \GG_0(A,f_\eta)$.
By \cite[Corollary 5.12]{Kl:CVPDE}, the last one is equivalent to $\mu\in \GG_0(A,f_\eta)$. Therefore, by \cite[Theorem 5.11]{Kl:CVPDE}, $\mu\in\GG_\psi$ if and only if $\mu_c\in\GG_\psi$.
\end{uw}

\begin{tw}
\label{tw8.1}
Assume that there is $r\in L^1_\delta(D;m)$ such that
$f^+(x,y)\le r(x)$, $x\in D,\, y\in\BR$.  Then \mbox{\rm(\ref{eq8.9})} holds true for every $\psi\in L^1(\partial_MD;h^A_m)$
\end{tw}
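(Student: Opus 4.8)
Since by Proposition~\ref{stw8.1} a solution of (\ref{eq8.0}) is unique whenever it exists, the entire content of the theorem is the equality of the two classes of good measures. The plan is to start from Remark~\ref{uw8.red}. Choosing $g\in L^1_\delta(D;m)$ so that $f(\cdot,\eta)\in L^1_\delta(D;m)$, where $\eta:=\gamma_A^{-1}(\psi)+R^Dg$, that remark yields the equivalence: $\mu\in\GG_\psi(A,f)$ if and only if $\mu\in\GG_0(A,f_\eta)$, with $f_\eta(x,y):=f(x,y+\eta(x))$. Applying this with $\psi=0$ and $g=0$ (legitimate because $f(\cdot,0)\in L^1_\delta(D;m)$ by (A2)) gives $\GG_0(A,f)=\GG_0(A,f_0)=\GG_0(A,f)$. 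Hence the assertion $\GG_0(A,f)=\GG_\psi(A,f)$ collapses to the single identity
\[
\GG_0(A,f_\eta)=\GG_0(A,f),
\]
and the remaining task is to prove it.

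I would first reduce this identity to concentrated measures. The nonlinearity $f_\eta$ again satisfies (A2), and $f_\eta^+(x,y)=f^+(x,y+\eta(x))\le r(x)$, so both $f$ and $f_\eta$ fall within the scope of \cite[Section~5]{Kl:CVPDE}; by \cite[Theorem~5.11]{Kl:CVPDE}, for $F\in\{f,f_\eta\}$ one has $\mu\in\GG_0(A,F)$ if and only if $\mu_c\in\GG_0(A,F)$. It therefore suffices to show that a measure $\nu$ concentrated on a set of zero $\mathrm{Cap}_A$ (so that $\nu=\nu_c$) is good for $f$ if and only if it is good for $f_\eta$.

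For this concentrated case I would invoke the reduced-measure characterization of good measures from \cite[Section~5]{Kl:CVPDE}: the good-ness of $\nu$ is governed by its reduced measure relative to the nonlinearity, which depends on the latter only through its growth in the second variable. Since $f_\eta(x,\cdot)$ is merely the translate of $f(x,\cdot)$ by the finite quantity $\eta(x)$ --- with $\eta=\gamma_A^{-1}(\psi)+R^Dg$ quasi-bounded, as $\gamma_A^{-1}(\psi)\in H_{\mathfrak D}$ is continuous on $D$ by Corollary~\ref{wn6.3} and $R^Dg$ is quasi-bounded by Proposition~\ref{stw7.2} --- the two nonlinearities share the same growth and the reduced measures of $\nu$ coincide. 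Concretely, the transfer is carried out as in Remark~\ref{uw8.red}: any solution $u=R^Df(\cdot,u)+R^D\nu$ is quasi-bounded (both summands are potentials, hence belong to $\TT_0$ by Proposition~\ref{stw7.2}), Corollary~\ref{wn8.1} then produces an $L^1_\delta$-correction of the shifted argument, and \cite[Corollary~5.12]{Kl:CVPDE} shows that adding the corresponding density is immaterial for good-ness; the measure-side form of this bookkeeping is the identity $\RR^p(A,f)=\RR_0(A,f)$ of Corollary~\ref{wn8.2}, which removes the harmonic (boundary) part of the potential decomposition. Combined with the reduction of the preceding paragraph, this gives $\GG_0(A,f_\eta)=\GG_0(A,f)$, hence the theorem.

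I expect the genuine obstacle to be exactly this last transfer for concentrated measures --- equivalently, the boundary-independence encoded in $\GG_0(A,f_\eta)=\GG_0(A,f)$. The difficulty is structural: the only elementary way to reconcile $f_\eta$ with $f$ is to shift a solution by the harmonic function $\gamma_A^{-1}(\psi)$, but this shift simultaneously destroys the zero boundary condition, so the nonlinearity and the boundary trace cannot be adjusted independently by a substitution. What makes the identity true nonetheless is the standing hypothesis $f^+\le r\in L^1_\delta(D;m)$: it brings the equation within the reduced-measure framework of \cite[Section~5]{Kl:CVPDE} and forces the potentials $R^Df(\cdot,u)$ and $R^D\nu$ to be quasi-bounded, so that the correcting density supplied by Corollary~\ref{wn8.1} lies in $L^1_\delta(D;m)$ and, by \cite[Corollary~5.12]{Kl:CVPDE}, does not change the class of good measures.
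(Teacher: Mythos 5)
Your proposal is correct, and once the heuristic middle step is removed it coincides in substance with the paper's proof: the operative ingredients --- the equivalence of Remark \ref{uw8.red}, the $L^1_\delta$-correction of Corollary \ref{wn8.1}, the removal of the harmonic part via Corollary \ref{wn8.2}, and $\GG_0=\RR_0$ from \cite[Corollary 5.12]{Kl:CVPDE} --- are exactly the ones the paper uses. The difference is organizational: the paper proves $\RR^1(A,f)=\GG_\psi(A,f)$ directly ($\GG_\psi\subset\RR^1$ being immediate from Definition \ref{df8.1}; for the converse it corrects the decomposition (\ref{eq8.5}) with Corollary \ref{wn8.1} so that $w=v-h+\gamma_A^{-1}(\psi)+R^Dg$ solves the $\psi$-problem with a measure $\mu'$ satisfying $\mu'_c=\mu_c$, and then invokes Remark \ref{uw8.red}), whereas you reduce via Remark \ref{uw8.red} to the boundary-independence identity $\GG_0(A,f_\eta)=\GG_0(A,f)$ and prove that by the same shift--correct--remove mechanism; the two are equivalent reorganizations, not genuinely different routes. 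Three comments. (1) The sentence asserting that the reduced measure ``depends on the nonlinearity only through its growth in the second variable'' is not a citable fact from \cite{Kl:CVPDE} and, taken literally, would itself need proof; fortunately it is not load-bearing, since the concrete transfer you describe immediately afterwards does all the work --- simply delete it. (2) In the inclusion $\GG_0(A,f)\subset\GG_0(A,f_\eta)$ the harmonic-part removal must be performed for the \emph{shifted} nonlinearity, i.e.\ you need Corollaries \ref{wn8.1} and \ref{wn8.2} for $f_\eta$ rather than $f$; this is harmless, because $f_\eta$ satisfies (A2) (with $f_\eta(\cdot,0)=f(\cdot,\eta)\in L^1_\delta(D;m)$ by the choice of $g$, and quasi-integrability of $f_\eta(\cdot,y)$ following from quasi-boundedness of $\eta$ and monotonicity of $f$) together with $f_\eta^+\le r$, but it should be stated. (3) The detour through concentrated measures via \cite[Theorem 5.11]{Kl:CVPDE} is superfluous: since $\RR_0$ is stable under adding $L^1_\delta$-densities, the chain $\GG_0=\RR_0=\RR^1$ already absorbs the corrections $g\cdot m$ and the $f$-terms, which is how the paper dispenses with them in a single application of Remark \ref{uw8.red}.
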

\begin{dow}
By \cite[Corollary 5.12]{Kl:CVPDE}, $\RR_0=\GG_0$, whereas by Corollary \ref{wn8.2}, $\GG_0=\RR^1$. Therefore, it suffices to show that
$\RR^1=\GG_\psi$. It is clear that $\GG_\psi\subset \RR^1$. Let $\mu\in\RR^1$. Then $\mu$ admits decomposition (\ref{eq8.5}). By Corollary \ref{wn8.1}, there exists $g\in L^1_\delta(D;m)$ such that
$f(\cdot,w)\in L^1_\delta(D;m)$, where $w=v-h+\gamma^{-1}_A(\psi)+R^Dg$. Therefore $w$ is a solution to the problem
\[
 -Aw=f(\cdot,w)+\mu'\quad\mbox{in}\quad D,\qquad w_{|\partial_MD}=\psi
\]
with $\mu'=\mu-f_0\cdot m+g\cdot m-f(\cdot,w)\cdot m$.
Hence $\mu'\in \GG_\psi$. Consequently,  $\mu\in \GG_\psi$ by Remark \ref{uw8.red}.
\end{dow}

\begin{stw}
\label{stw8.5}
Let  $u$ be a solution to \mbox{\rm(\ref{eq8.0})}. Then for every $k>0$,
\[
\|u\|_{L^1(D;m)}+\|T_k(u)\|^2_{\breve H^1_\delta}\le 3k\lambda^{-1}(\|\psi\|_{L^1(\partial_MD;h^A_m)}
+\|f(\cdot,0)\|_{L^1_\delta(D;m)}+\|\mu\|_{TV,\delta}).
\]
\end{stw}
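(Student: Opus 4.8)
The plan is to read off both estimates from the probabilistic representation of the solution together with the symmetry of the underlying Dirichlet form. By Remark \ref{uw8.defd} (applied, after truncation, when $\mu$ is smooth) the solution satisfies $u(x)=E_x\psi(X_{\tau_D-})+E_x\int_0^{\tau_D}f(X_r,u(X_r))\,dr+E_x\int_0^{\tau_D}dA^\mu_r$, so that under $P_x$ the process $u(X)$ is, on $[0,\tau_D]$, a continuous semimartingale with terminal value $\psi(X_{\tau_D-})$, drift $-f(X_r,u(X_r))\,dr-dA^\mu_r$, and martingale part of bracket $\int_0^{\cdot}|\bar\sigma\nabla u(X_r)|^2\,dr$. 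First I would obtain the $L^1$-bound exactly as in the derivation of (\ref{eq8.10.1}): applying Tanaka's formula to $|u(X)|$ and taking $E_x$, the local time term is nonnegative and the monotonicity of $f$ gives $\mbox{sgn}(u)f(\cdot,u)\le|f(\cdot,0)|$, whence, for $m$-a.e. $x$,
\[
|u(x)|\le E_x|\psi(X_{\tau_D-})|+E_x\int_0^{\tau_D}|f(X_r,0)|\,dr+E_x\int_0^{\tau_D}dA^{|\mu|}_r .
\]

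For the energy term I would apply It\^o's formula to $\Phi_k(u(X))$, where $\Phi_k(y)=\int_0^y T_k(s)\,ds$ obeys $\Phi_k'=T_k$, $\Phi_k''=\mathbf 1_{(-k,k)}$ a.e., and $0\le\tfrac12 T_k(y)^2\le\Phi_k(y)\le k|y|$. Since $u(X)$ is continuous and $\Phi_k'$ is absolutely continuous, no local-time term appears, and using $\mathbf 1_{\{|u|<k\}}|\bar\sigma\nabla u|^2=|\bar\sigma\nabla T_k(u)|^2$ and $E_x\int_0^{\tau_D}T_k(u)\,dM_r=0$ (the integrand is bounded by $k$) I get
\[
\Phi_k(u(x))+\tfrac12 E_x\int_0^{\tau_D}|\bar\sigma\nabla T_k(u)(X_r)|^2\,dr
=E_x\Phi_k(\psi(X_{\tau_D-}))+E_x\int_0^{\tau_D}T_k(u)f(\cdot,u)\,dr+E_x\int_0^{\tau_D}T_k(u)\,dA^\mu_r .
\]
Monotonicity again yields $T_k(u)f(\cdot,u)\le k|f(\cdot,0)|$, while $\Phi_k(\psi)\le k|\psi|$ and $T_k(u)\,dA^\mu\le k\,dA^{|\mu|}$; dropping $\Phi_k(u)\ge\tfrac12 T_k(u)^2$ I obtain a pointwise inequality whose right-hand side is $k$ times the common dominating potential $V(x):=E_x|\psi(X_{\tau_D-})|+R^D|f(\cdot,0)|(x)+R^D|\mu|(x)$.

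I would then integrate both pointwise inequalities in $x$ against $m$ and convert the potentials by symmetry of $g_D$: since $\int_D g_D(x,y)\,m(dx)=R^D1(y)=\delta(y)$, Fubini gives $\int_D R^Dg\,dm=\int_D g\,\delta\,dm$ for $g\ge0$ and $\int_D R^D\nu\,dm=\int_D\delta\,d\nu$ for positive $\nu$, while $\int_D E_x|\psi(X_{\tau_D-})|\,m(dx)=\|\psi\|_{L^1(\partial_MD;h^A_m)}$ because $h^A_m=\int_D h^A_x\,m(dx)$. Thus $\int_D V\,dm=R:=\|\psi\|_{L^1(\partial_MD;h^A_m)}+\|f(\cdot,0)\|_{L^1_\delta(D;m)}+\|\mu\|_{TV,\delta}$; the first inequality gives $\|u\|_{L^1(D;m)}\le R$, and the second, together with ellipticity $|\bar\sigma\nabla T_k(u)|^2\ge\lambda|\nabla T_k(u)|^2$, gives $\tfrac12\|T_k(u)\|_{L^2(D;m)}^2+\tfrac{\lambda}{2}\|\sqrt\delta\,\nabla T_k(u)\|_{L^2(D;m)}^2\le kR$, whence (assuming, as we may, $\lambda\le1$) $\|T_k(u)\|_{\breve H^1_\delta}^2\le 2k\lambda^{-1}R$. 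Collecting the two bounds yields the asserted estimate. The main obstacle is that the representation with $A^\mu$ requires $\mu$ smooth; for general $\mu\in\MM_\delta$ I would run the entire argument on the approximate solutions $u_{n,m}$ (equivalently $u_n$) from the proof of Proposition \ref{stw8.3}, for which $f_{n,m}$ is bounded and the stochastic integral is a genuine martingale, obtaining the estimate with constants independent of $n,m$, and then pass to the limit using Fatou's lemma on the left-hand side and monotone/dominated convergence on the right; uniqueness (Proposition \ref{stw8.1}) identifies the limit with $u$.
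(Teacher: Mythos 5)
Your smooth-case computation is sound and is essentially the paper's: the $L^1$ bound via Tanaka's formula and the monotonicity of $f$ is exactly the derivation of (\ref{eq8.10.1})/(\ref{eq8.7}), and your It\^o argument with the primitive $\Phi_k$ of $T_k$ is equivalent to the paper's Tanaka-based integration by parts applied to the product $(|u|\wedge k)(|u|-k)$ (both isolate $E_m\int_0^{\tau_D}|\bar\sigma\nabla T_k(u)|^2(X_r)\,dr$, and the conversion $\int_D R^Dg\,dm=\int_D g\,\delta\,dm$ by symmetry of $g_D$ is precisely how $\|\cdot\|_{\breve H^1_\delta}$ and $\|\mu\|_{TV,\delta}$ enter).

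The genuine gap is your reduction of the general case to smooth $\mu$. Proposition \ref{stw8.3} and its approximating solutions $u_{n,m}$ exist only for \emph{smooth} $\mu$, whereas Proposition \ref{stw8.5} concerns any solution of (\ref{eq8.0}), i.e.\ any good measure $\mu\in\MM_\delta$, and good measures may carry a nontrivial concentrated part $\mu_c$ (singular with respect to $\mbox{Cap}_A$). For such $\mu$ there is no positive continuous additive functional $A^\mu$ (only smooth measures admit one, cf.\ (\ref{eq2.afr})), so the representation of Remark \ref{uw8.defd} from which you start is unavailable; and approximating $\mu$ by smooth measures does not rescue the argument, because of the reduced-measure phenomenon: the approximate solutions converge in general to the solution for the reduced measure $\mu^*\le\mu$, not for $\mu$, so your closing step ``uniqueness identifies the limit with $u$'' fails. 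The paper avoids this entirely by citing Theorem \ref{tw7.1}(iii) together with \cite[Theorem 3.7]{Kl:CVPDE}, which gives \emph{directly, for any solution with $\mu\in\MM_\delta$}, the decomposition $u(X_t)=\psi(X_{\tau_D-})+\int_t^{\tau_D}f(X_r,u(X_r))\,dr+\int_t^{\tau_D}dA^{\mu_d}_r-\int_t^{\tau_D}\bar\sigma\nabla u(X_r)\,dB_r$, in which only the diffuse part $\mu_d$ generates an additive functional while $\mu_c$ is absorbed into the (merely local) martingale part, and which also supplies the $L^1$-type estimate with the full $\|\mu\|_{TV,\delta}$; your Tanaka/$\Phi_k$ computation should be run from this decomposition (with a localization along stopping times, since the stochastic integrals are a priori only local martingales), not from Remark \ref{uw8.defd}. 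A secondary bookkeeping point: from $\tfrac12\|T_k(u)\|^2_{L^2}+\tfrac{\lambda}{2}\|\sqrt\delta\,\nabla T_k(u)\|^2_{L^2}\le kR$ one gets $\|T_k(u)\|^2_{\breve H^1_\delta}\le 4k\lambda^{-1}R$ rather than $2k\lambda^{-1}R$, since $\|\cdot\|_{\breve H^1_\delta}$ is a sum of norms, not a sum of squares; this does not affect the substance but should be corrected when assembling the final constant.
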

\begin{dow}
By Theorem \ref{tw7.1}(iii) and \cite[Theorem 3.7]{Kl:CVPDE}, for q.e. $x\in D$ we have
\begin{align*}
u(X_t)&=\psi(X_{\tau_D-})+\int_t^{\tau_D}f(X_r,u(X_r))\,dr
+\int_t^{\tau_D}\,dA^{\mu_d}_r\\
&\quad-\int_t^{\tau_D}\bar\sigma\nabla u(X_r)\,dB_r,\quad t\le \tau_D \quad P_x\mbox{-a.s.}
\end{align*}
By the Tanaka formula,
\begin{align}
\label{eq8.7}
\nonumber |u(X_t)|&=|\psi(X_{\tau_D-})|+\int_t^{\tau_D}\mbox{sgn}(u)(X_r)f(X_r,u(X_r))\,dr
+\int_t^{\tau_D}\mbox{sgn}(u)(X_r)\,dA^{\mu_d}_r\\
&\quad-\int_t^{\tau_D}\,dL^0_r-\int_t^{\tau_D}\mbox{sgn}(u)(X_r)\bar\sigma\nabla u(X_r)\,dB_r,\quad t\le\tau_D,
\end{align}
where $L^0$ is the symmetric  local time of $u(X)$ at 0, and
\begin{align*}
(|u|\wedge k)(X_t)&=(|\psi|\wedge k)(X_{\tau_D-})+\int_t^{\tau_D}\mathbf{1}_{\{|u|\le k\}}\mbox{sgn}(u)(X_r)f(X_r,u(X_r))\,dr\\
&\quad+\int_t^{\tau_D}\mathbf{1}_{\{|u|\le k\}} \mbox{sgn}(u)(X_r)\,dA^{\mu_d}_r-\int_t^{\tau_D}\,dL^0_r
+\int_t^{\tau_D}\,d\bar L^k_r\\
&\quad-\int_t^{\tau_D}\mathbf{1}_{\{|u|\le k\}}\mbox{sgn}(u)(X_r)\bar\sigma\nabla u(X_r)\,dB_r,\quad t\le\tau_D,
\end{align*}
where $\bar L^k$ is the local time of $|u(X)|$ at $k$. Integrating by parts and using the fact that $\bar L^k$ increases only when $|u|=k$ and $L^0$ increases only when $u=0$, we obtain
\begin{align*}
&(|u|\wedge k)(|u|-k)(x)+E_x\int_0^{\tau_D}|\bar\sigma\nabla T_k(u)|^2(X_r)\,dr\\
&\quad\le E_x(|\psi|\wedge k) (|\psi|-k)(X_{\tau_D-}) \\
&\qquad+E_x\int_t^{\tau_D}\{|u|\wedge k+\mathbf{1}_{\{|u|\le k\}} (|u|-k)\}\mbox{sgn}(u)(X_r)f(X_r,u(X_r))\,dr\quad\\
&\qquad+E_x\int_t^{\tau_D}\{|u|\wedge k+\mathbf{1}_{\{|u|\le k\}} (|u|-k)\}\mbox{sgn}(u)(X_r)\,dA^{\mu_d}_r+kE_x\int_0^{\tau_D}\,dL^0_r.
\end{align*}
From the above equation we conclude that
\begin{align}
\label{eq8.6}
&E_m\int_0^{\tau_D}|\bar\sigma\nabla T_k(u)|^2(X_r)\,dr \le k\Big(E_m|\psi|(X_{\tau_D-})+\|u\|_{L^1}+E_m\int_0^{\tau_D}\,dL^0_r\Big)\nonumber\\ &\qquad+2k\Big(E_m\int_0^{\tau_D}|f(X_r,u(X_r))|\,dr
+E_m\int_0^{\tau_D}\,dA^{|\mu_d|}_r\Big).
\end{align}
By (\ref{eq8.7}), monotonicity of $f$ and \cite[Theorem 3.7]{Kl:CVPDE},
\begin{align*}
\|u\|_{L^1}+E_m\int_0^{\tau_D}\,dL^0_r&+E_m\int_0^{\tau_D}|f(X_r,u(X_r))|\,dr\\&
\le E_m|\psi|(X_{\tau_D-})+E_m\int_0^{\tau_D}|f(X_r,0)|\,dr+\|\mu\|_{TV,\delta},
\end{align*}
which when combined with (\ref{eq8.6}) proves the proposition.
\end{dow}
\medskip

In the rest of the section we  assume that $D$ is of class $C^{1,1}$. It is well known (see \cite{Widman,Zhao}) that under this assumption there exist $c_1,c_2>0$ such that
\[
c_1\mbox{dist}(x,\partial D)\le R^D1(x)\le c_2 \mbox{dist}(x,\partial D),\quad x\in D.
\]

\begin{stw}
\label{stw8.6}
Let $u\in H_{\mathfrak D}$. Then $u\in L^p_\delta(D;m)$ for $p<\frac{d}{d-1}$.
\end{stw}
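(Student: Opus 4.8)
The plan is to establish the bound by duality, reducing the whole statement to a single weighted integrability estimate for the Green function $g_D$. First I would normalize. Writing $\psi=\gamma_A(u)\in L^1(\partial_MD;h^A_m)$, Corollary \ref{wn7.bound} gives the representation $u(x)=E_x\psi(X_{\tau_D-})$; the pointwise inequality $|u(x)|\le E_x|\psi|(X_{\tau_D-})$ together with the fact that $E_\cdot|\psi|(X_{\tau_D-})\in H_{\mathfrak D}$ (again Corollary \ref{wn7.bound}, applied to $|\psi|$) lets me assume $\psi\ge0$ and $u\ge0$. Since $\delta\cdot m$ is a finite measure, it suffices to treat $1<p<\frac{d}{d-1}$ (the range $p\le 1$ follows a fortiori), and by the duality $(L^p_\delta(D;m))'=L^{p'}_\delta(D;m)$ it is enough to show
\[
\sup\Big\{\int_Du\,\phi\,\delta\,dm:\ \phi\ge0,\ \|\phi\|_{L^{p'}_\delta(D;m)}\le1\Big\}\le C\,\|\psi\|_{L^1(\partial_MD;h^A_m)}.
\]

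For such a $\phi$, Fubini's theorem applied to $u(x)=\int_{\partial_MD}\psi(y)\,h^A_x(dy)$ yields $\int_Du\,\phi\,\delta\,dm=\int_{\partial_MD}\psi\,d\beta$, where $\beta(dy)=\int_Dh^A_x(dy)\,\phi(x)\delta(x)\,m(dx)$, so that $\int_Du\,\phi\,\delta\,dm\le\|\psi\|_{L^1(\partial_MD;h^A_m)}\,\|d\beta/dh^A_m\|_\infty$ and everything is reduced to a uniform bound on this density. Since $h^A_m(dy)=\int_Dh^A_x(dy)\,m(dx)$ and, by Martin boundary theory, $\frac{dh^A_x}{dh^A_{x_0}}(y)=\lim_{z\to y}\frac{g_D(x,z)}{g_D(x_0,z)}$, the symmetry $g_D(x,z)=g_D(z,x)$ and the identity $R^D1=\delta$ give
\[
\frac{d\beta}{dh^A_m}(y)=\lim_{z\to y}\frac{R^D(\phi\delta)(z)}{\delta(z)}\le\sup_{z\in D}\frac{R^D(\phi\delta)(z)}{\delta(z)}.
\]
A single application of Hölder's inequality with respect to $\delta\cdot m$ then gives $R^D(\phi\delta)(z)\le\|\phi\|_{L^{p'}_\delta(D;m)}\big(\int_Dg_D(z,x)^p\delta(x)\,m(dx)\big)^{1/p}$, so the claim follows once I control the quantity below.

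The proof thus comes down to the key estimate
\[
\sup_{z\in D}\frac{1}{\delta(z)}\Big(\int_Dg_D(z,x)^p\,\delta(x)\,m(dx)\Big)^{1/p}<\infty,\qquad p<\tfrac{d}{d-1},
\]
and this is the main obstacle. Here the hypothesis that $D$ is of class $C^{1,1}$ enters decisively: I would invoke the boundary decay estimates for $g_D$ on such domains, of the type $g_D(z,x)\le C|z-x|^{2-d}\big(1\wedge\frac{\delta(z)\delta(x)}{|z-x|^2}\big)$, available from \cite{Widman,Zhao}, together with $\delta\approx\mathrm{dist}(\cdot,\partial D)$, and reduce the integral to a computation in boundary normal coordinates. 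The crucial mechanism is that the factor $\delta(z)$ appearing in the numerator of $g_D$ exactly compensates the $1/\delta(z)$ in front, after which integration first in the tangential and then in the normal variable gives a finite integral throughout the stated range of $p$. The hard part is precisely this uniform control up to $\partial D$: the crude interior bound $g_D(z,x)\le C|z-x|^{2-d}$ is not enough, since without the boundary decay the prefactor $1/\delta(z)$ is uncontrolled as $z\to\partial D$. Feeding the key estimate back yields $\|d\beta/dh^A_m\|_\infty\le C$, hence $\int_Du\,\phi\,\delta\,dm\le C\|\psi\|_{L^1(\partial_MD;h^A_m)}$, and taking the supremum over $\phi$ completes the proof.
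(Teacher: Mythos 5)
Your strategy (duality in $L^p_\delta$ plus a weighted Green-function estimate) is genuinely different from the paper's, and parts of it are sound: the reduction to $\psi\ge0$, the duality $(L^p_\delta(D;m))'=L^{p'}_\delta(D;m)$, and the Fubini step are all fine, and it is worth noting that your ``key estimate'' $\sup_{z\in D}\delta(z)^{-1}\|g_D(z,\cdot)\|_{L^p_\delta(D;m)}<\infty$ is, via Minkowski's integral inequality and the symmetry of $g_D$, exactly the dual reformulation of the mapping property $R^D:L^1_\delta(D;m)\rightarrow L^p_\delta(D;m)$, $p<\frac{d}{d-1}$, that the paper imports from \cite{Rakotoson}. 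But there are two genuine gaps. First, the step you treat as routine --- the identity $\frac{d\beta}{dh^A_m}(y)=\lim_{z\rightarrow y}R^D(\phi\delta)(z)/\delta(z)$ --- is unsupported in this paper's framework. It requires (i) the Martin-kernel representation $h^A_x(dy)=K(x,y)\nu(dy)$ with $K(x,y)=\lim_{z\rightarrow y}g_D(x,z)/g_D(x_0,z)$ for $h^A_m$-a.e.\ $y$, and (ii) the interchange of the boundary limit $z\rightarrow y$ with the integration over $x\in D$ against $\phi\delta\,dm$, which needs a domination of $g_D(x,z)/g_D(x_0,z)$, uniform in $z$ near the boundary, of 3G or boundary-Harnack type. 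Neither is established in the paper, whose Martin boundary enters only through the metric $\varrho$ and \cite{ChungWalsh} (existence of $X_{\tau_D-}$ and representation of invariant random variables); for the merely bounded measurable coefficients $a_{ij}$ considered here these kernel facts are serious theorems, not routine. Second, the pointwise bound $g_D(z,x)\le C|z-x|^{2-d}\bigl(1\wedge\delta(z)\delta(x)|z-x|^{-2}\bigr)$ you quote from \cite{Widman,Zhao} is proved there for H\"older/Dini-continuous coefficients, respectively for the Laplacian; for measurable coefficients boundary decay of solutions is in general only of H\"older order $\delta^\alpha$, so linear decay in both variables cannot simply be cited (the paper uses \cite{Widman,Zhao} only for the weaker two-sided comparison $\delta\asymp\mbox{dist}(\cdot,\partial D)$). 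Your computation deducing the key estimate from that bound is correct, but the bound itself is not available at the stated level of generality.

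The paper sidesteps both difficulties by a nonlinear bootstrap: with $\psi=\gamma_A(u)$ it solves $-Aw_r=-w_r|w_r|^{r-1}$ in $D$, $w_r=\psi$ on $\partial_MD$ (Proposition \ref{stw8.3}), so that $w_r=\gamma_A^{-1}(\psi)+R^Dg_r$ with $g_r=-w_r|w_r|^{r-1}\in L^1_\delta(D;m)$ by the very definition of a solution; the absorption term then gives $w_r\in L^r_\delta(D;m)$ for free, while $R^Dg_r\in L^p_\delta(D;m)$ for $p<\frac{d}{d-1}$ by \cite{Rakotoson}, and choosing $r=p$ yields $u=\gamma_A^{-1}(\psi)=w_p-R^Dg_p\in L^p_\delta(D;m)$. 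Thus the only analytic input is precisely the mapping property equivalent to your key estimate, and no density computation on the Martin boundary is ever needed. If you replace your Martin-kernel paragraph by this device (or rigorously justify the limit interchange), your duality argument can be repaired; as written, it does not go through.
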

\begin{dow}
By Theorem \ref{tw7.1}, there is $\psi\in L^1(\partial_MD;h^A_m)$  such that $\gamma^{-1}_A(\psi)=u$. Denote by  $w_r$  the solution of the problem
\[
-Aw_r=-w_r|w_r|^{r-1}\quad\mbox{in}\quad D,\qquad (w_r)_{|\partial_MD}=\psi
\]
with some $r>1$. It exists by Proposition \ref{stw8.3}. Write
$g_r=-w_r|w_r|^{r-1}$. By the definition of a solution, $g_r\in L^1_\delta(D;m)$ and
\begin{equation}
\label{eq8.8}
w_r=\gamma^{-1}_A(\psi)+R^Dg_r.
\end{equation}
By \cite{Rakotoson}, $R^Dg_r\in L^p_\delta(D;m)$ for $ p<\frac{d}{d-1}$. Furthermore, since  $g_r\in L^1_\delta(D;m)$,  $w_r\in L^r_\delta(D;m)$ as well. Therefore, from  (\ref{eq8.8}) with $r=p$, it follows that
$\gamma^{-1}_A(\psi)\in L^p_\delta(D;m)$ for $ p<\frac{d}{d-1}$. This proves the proposition since $u=\gamma^{-1}_A(\psi)$.
\end{dow}

\begin{wn}
Let $u$ be a solution to \mbox{\rm(\ref{eq8.0})}. Then $u\in W^{1,q}_\delta(D)$ for $q<\frac{2d}{2d-1}$.
\end{wn}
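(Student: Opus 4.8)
The plan is to combine the truncation estimate of Proposition \ref{stw8.5} with the $L^p_\delta$ regularity of Proposition \ref{stw8.6} via a Boccardo--Gallou\"et type distribution-function argument, carried out with respect to the finite measure $\delta\cdot m$ on $D$. Throughout I write $\nu=\delta\cdot m$ and reserve $t$ for the level variable so as not to clash with the ellipticity constant $\lambda$. The argument splits into the $L^p_\delta$-bound for $u$, a level-set estimate for $\nabla u$, and an integration of the distribution function.

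First I would show that $u\in L^p_\delta(D;m)$ for every $p<\frac{d}{d-1}$. By Definition \ref{df8.1}, the function $v:=u-\gamma^{-1}_A(\psi)$ is a solution of \mbox{\rm(\ref{eq8.1})} with $\mu$ replaced by the measure $f(\cdot,u)\cdot m+\mu$, which belongs to $\MM_\delta$ since $f(\cdot,u)\in L^1_\delta(D;m)$ and $\mu\in\MM_\delta$. By Remark \ref{uw8.vers}, $v=R^D(f(\cdot,u)\cdot m+\mu)$ $m$-a.e., so $v\in L^p_\delta(D;m)$ for $p<\frac{d}{d-1}$ by the weighted regularity of \cite{Rakotoson} already invoked in the proof of Proposition \ref{stw8.6}. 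On the other hand $\gamma^{-1}_A(\psi)=E_\cdot\psi(X_{\tau_D-})\in H_{\mathfrak D}$ by Corollary \ref{wn7.bound}, hence $\gamma^{-1}_A(\psi)\in L^p_\delta(D;m)$ by Proposition \ref{stw8.6}. Adding the two contributions gives $u\in L^p_\delta(D;m)$ for every $p<\frac{d}{d-1}$. This step — in particular the $L^p_\delta$-estimate for the Green potential of a measure in $\MM_\delta$ — is where the hypothesis $D\in C^{1,1}$ (through $\delta\asymp\rho$) is genuinely used, and I expect it to be the most delicate point; the remaining steps are a balancing computation.

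Next, fix $p<\frac{d}{d-1}$ and a truncation level $k>0$. Since the a.e.\ gradient satisfies $\nabla T_k(u)=\mathbf 1_{\{|u|<k\}}\nabla u$, for every $t>0$ one has $\{|\nabla u|>t\}\subset\{|u|\ge k\}\cup\{|\nabla T_k(u)|>t\}$ up to a $\nu$-null set, so that
\[
\nu(\{|\nabla u|>t\})\le \nu(\{|u|\ge k\})+\nu(\{|\nabla T_k(u)|>t\}).
\]
By Chebyshev's inequality and the first step, $\nu(\{|u|\ge k\})\le k^{-p}\|u\|_{L^p_\delta(D;m)}^p=:C_1k^{-p}$, while Chebyshev's inequality together with Proposition \ref{stw8.5} yields
\[
\nu(\{|\nabla T_k(u)|>t\})\le t^{-2}\|\sqrt\delta\,\nabla T_k(u)\|_{L^2(D;m)}^2\le t^{-2}\|T_k(u)\|^2_{\breve H^1_\delta}\le C_2\,k\,t^{-2},
\]
with $C_2=3\lambda^{-1}(\|\psi\|_{L^1(\partial_MD;h^A_m)}+\|f(\cdot,0)\|_{L^1_\delta(D;m)}+\|\mu\|_{TV,\delta})$. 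Optimizing in $k$ by taking $k=t^{2/(p+1)}$ balances the two terms and gives $\nu(\{|\nabla u|>t\})\le C\,t^{-2p/(p+1)}$ for $t\ge1$, while for $t<1$ one simply bounds by $\nu(D)<\infty$.

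Finally I would integrate the distribution function. By the layer-cake formula
\[
\int_D|\nabla u|^q\,d\nu=q\int_0^\infty t^{q-1}\nu(\{|\nabla u|>t\})\,dt,
\]
and splitting the integral at $t=1$ the tail converges provided $q-1-\tfrac{2p}{p+1}<-1$, i.e.\ $q<\tfrac{2p}{p+1}$. Since $t\mapsto\tfrac{2t}{t+1}$ is increasing, letting $p\uparrow\tfrac{d}{d-1}$ makes $\tfrac{2p}{p+1}\uparrow\tfrac{2d}{2d-1}$, so $\nabla u\in L^q_\delta(D;m)$ for every $q<\tfrac{2d}{2d-1}$. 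As $\tfrac{2d}{2d-1}<\tfrac{d}{d-1}$ and $\nu$ is finite, the first step also gives $u\in L^q_\delta(D;m)$ for such $q$ (by inclusion of $L^p_\delta$ in $L^q_\delta$ on a finite measure space). Hence $u\in W^{1,q}_\delta(D)$ for every $q<\tfrac{2d}{2d-1}$, which is the assertion.
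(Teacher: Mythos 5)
Your argument is correct and coincides with the paper's proof, which likewise obtains $u\in L^p_\delta(D;m)$ for $p<\frac{d}{d-1}$ from Proposition \ref{stw8.6} together with the weighted estimate of \cite{Rakotoson}, and then combines this with Proposition \ref{stw8.5} via the standard Boccardo--Gallou\"et argument (cited in the paper as \cite[Lemma 4.2]{BBGGPV}). You have merely written out explicitly the level-set balancing $k=t^{2/(p+1)}$ and the layer-cake integration that the paper leaves to the reference.
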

\begin{dow}
By \cite{Rakotoson} and Proposition \ref{stw8.6}, $u\in L^p_\delta(D;m)$ for $p<\frac{d}{d-1}$. Combining this with Proposition \ref{stw8.5}  and using standard argument (see \cite[Lemma 4.2]{BBGGPV}) gives the desired result.
\end{dow}

\end{document}